\documentclass[12pt]{amsart}
\usepackage{amssymb}
\usepackage{url}
\usepackage{graphicx}
\usepackage{tikz-cd}
\usetikzlibrary{babel}
\usetikzlibrary{matrix,arrows,decorations.pathmorphing}
\usepackage{mathtools}
\usepackage{mathrsfs}
\usepackage{comment}
\usepackage{stmaryrd}
\usepackage[colorlinks = true,
linkcolor = blue,
urlcolor  = blue,
citecolor = blue,
anchorcolor = blue]{hyperref}
\usepackage[all]{xy}

\allowdisplaybreaks

\setlength{\topmargin}{0truein}
\setlength{\headheight}{.25truein}
\setlength{\headsep}{.25truein}
\setlength{\textheight}{9truein}
\setlength{\footskip}{.25truein}
\setlength{\oddsidemargin}{0truein}
\setlength{\evensidemargin}{0truein}
\setlength{\textwidth}{6.5truein}
\setlength{\voffset}{-0.5truein}
\setlength{\hoffset}{0truein}

\newtheorem{theorem}[equation]{Theorem}

\newtheorem{lemma}[equation]{Lemma}
\newtheorem{proposition}[equation]{Proposition}
\newtheorem{corollary}[equation]{Corollary}

\newtheorem*{theorem:derhamisomorphism}{Theorem~\ref{T:derhamisomorphism}}
\newtheorem*{theorem:characterization}{Theorem~\ref{T:characterization}}

\theoremstyle{definition}
\newtheorem{definition}[equation]{Definition}
\newtheorem{example}[equation]{Example}

\newtheorem{remark}[equation]{Remark}

\numberwithin{equation}{section}

\newcommand{\FF}{\mathbb{F}}

\newcommand{\CC}{\mathbb{C}}

\newcommand{\zz}{\bold{z}}

\newcommand{\Cl}{\mathcal{C}\ell}

\DeclareMathOperator{\Aut}{Aut}

\DeclareMathOperator{\DR}{DR}

\DeclareMathOperator{\Ker}{Ker}
\DeclareMathOperator{\SL}{SL}
\DeclareMathOperator{\GL}{GL}
\DeclareMathOperator{\D}{D}
\DeclareMathOperator{\M}{M}
\DeclareMathOperator{\N}{N}
\DeclareMathOperator{\h}{H}
\DeclareMathOperator{\Sp}{Sp}

\DeclareMathOperator{\End}{End}

\DeclareMathOperator{\Frac}{Frac}
\DeclareMathOperator{\Gal}{Gal}

\DeclareMathOperator{\Coh}{Coh}
\DeclareMathOperator{\Spec}{Spec}
\DeclareMathOperator{\Spf}{Spf}
\DeclareMathOperator{\Tr}{Tr}
\DeclareMathOperator{\Nr}{Nr}
\DeclareMathOperator{\der}{d}
\DeclareMathOperator{\Cusps}{Cusps}
\DeclareMathOperator{\AlgCusps}{AlgCusps}

\DeclareMathOperator{\Lie}{Lie}

\DeclareMathOperator{\Ima}{Im}

\DeclareMathOperator{\Sym}{Sym}

\DeclareMathOperator{\sgn}{sgn}

\DeclareMathOperator{\Id}{Id}
\DeclareMathOperator{\TD}{TD}

\DeclareMathOperator{\im}{Im}

\newcommand{\Knr}{\widehat{K_{\infty}^{\text{nr}}}}

\newcommand{\inorm}[1]{{\lvert #1 \rvert}}

\begin{document}

\title[On nearly holomorphic Drinfeld modular forms]{On nearly holomorphic Drinfeld modular forms for admissible coefficient rings}

	\author{O\u{g}uz Gezm\.{i}\c{s}}
\address{Department of Mathematics, National Tsing Hua University, Hsinchu City 30042, Taiwan R.O.C.}
\email{gezmis@math.nthu.edu.tw}

\author{Sriram Chinthalagiri Venkata}
\address{University of Heidelberg, IWR, Im Neuenheimer Feld 205, 69120, Heidelberg, Germany}
\email{sriram.chinthalagiri@iwr.uni-heidelberg.de}

\date{\today}

\keywords{False Eisenstein series,  Drinfeld modular forms, Drinfeld modules}

\subjclass[2010]{Primary 11F52; Secondary 11G09}

\maketitle

\begin{abstract} Let $X$ be a smooth projective and geometrically irreducible curve over the finite field $\mathbb{F}_q$ with $q$ elements and $K$ be its function field. Let $\infty$ be a fixed closed point of $X$ and $A$ be the ring of functions regular away from $\infty$. In the present paper, by generalizing the previous work of Chen and the first author, we introduce the notion of nearly holomorphic Drinfeld modular forms for congruence subgroups of $\GL_2(K)$ as continuous but non-holomorphic functions on a certain subdomain of the Drinfeld upper half plane. By extending the de Rham sheaf to a compactification $\overline{M_I^2}$ of the Drinfeld moduli space $M_I^2$ parametrizing rank 2 Drinfeld $A$-modules with level $I$-structure over $K$-schemes, we also describe such forms algebraically as global sections of an explicitly described sheaf on $\overline{M_I^2}$ as well as construct a comparison isomorphism between analytic and algebraic description of them. Furthermore, we show the transcendence of special values of nearly holomorphic Drinfeld modular forms at CM points and relate them to the periods of CM Drinfeld $A$-modules. 
    
\end{abstract}
\section{Introduction}
\subsection{Background and motivation} Let $\mathbb{H}$ be the upper half plane and $\Gamma$ be a congruence subgroup of $\SL_2(\mathbb{Z})$. In a series of papers \cite{Shi75a,Shi75b, Shi77}, Shimura studied \textit{nearly holomorphic modular forms of weight $k$ and depth $r$ for $\Gamma$} which are smooth but non-holomorphic functions $f:\mathbb{H}\to \mathbb{C}$ described uniquely as
\[
f(z)=\sum_{i=0}^r\frac{f_i(z)}{\Ima(z)^i}
\]
for some holomorphic functions $f_0,\dots,f_r$ with $f_r\neq 0$, having certain growth conditions and satisfying
\[
f\Big(\frac{az+b}{cz+d}\Big)=(cz+d)^kf(z)
\]
for any $\gamma=\begin{pmatrix}
    a&b\\c&d
\end{pmatrix}\in \Gamma$ and $z\in \mathbb{H}$. For instance, one can consider the non-holomorphic Eisenstein series $\mathcal{G}_2$ of weight 2 given by the following Fourier expansion
\[
\mathcal{G}_2(z):=-\frac{1}{8\pi \Ima(z)\sqrt{-1} }-\frac{1}{24}+\sum_{\ell=1}^{\infty}\Bigg(\sum_{d>0, d|\ell}d\Bigg)e^{2\pi \ell z \sqrt{-1}}.
\]
Indeed, $\mathcal{G}_2$ is a nearly holomorphic modular form of weight 2 and depth one for $\SL_2(\mathbb{Z})$. Furthermore, it also serves as one of the generators of the $\mathbb{C}$-algebra of nearly holomorphic modular forms. In another direction, Shimura, generalizing the work of Maass, introduced a differential operator $\delta_k^r$ so that the image via $\delta_k^r$ of nearly holomorphic modular forms with certain rationality properties when evaluated at CM points, produces CM periods. For these aforementioned results and more details, we refer the reader to \cite[\S8, 12]{Shi07}. Using a geometric point of view, to study overconvergent elliptic modular forms, Urban revisited nearly holomorphic modular forms in his work \cite{Urb14} and constructed an explicit sheaf on the compactification of the moduli space of elliptic curves so that its global sections give rise to nearly holomorphic modular forms. We note that his method mainly relies on the Hodge decomposition of the sheaf of relative degree one de Rham cohomology of the universal elliptic curve.

In the present paper, we focus on the function field analogue of nearly holomorphic modular forms, namely nearly holomorphic Drinfeld modular forms. Our goal is to introduce a general picture for these objects by extending the work in \cite{CG23} which carried out the construction in \cite{Fra11} to the function field setting and study various aspects of them. More precisely, after describing them analytically as continuous but non-rigid analytic $\mathbb{C}_{\infty}$-valued functions on a subdomain of the Drinfeld upper half plane, using Hayes's theory of rank one Drinfeld $A$-modules and Drinfeld's observation on modular functions \cite[Prop.~9.3]{Dri74}, we obtain transcendence properties of their special values at CM points (Theorem \ref{T:MR2}). Furthermore, inspired by the work of Urban, we also describe them algebraically as global sections of a certain sheaf on the compactification of the Drinfeld moduli space and establish a comparison isomorphism between these two constructions (Theorem \ref{T:MR1}). As a by-product of our setting, we also obtain an explicit construction for the extension of the de Rham sheaf introduced  originally by Gekeler in \cite{Gek90} to the compactification of Drinfeld moduli spaces  so that the aforementioned algebraic description holds true. 
\subsection{Nearly holomorphic Drinfeld modular forms} Let $\mathbb{F}_q$ be the finite field with $q$ elements where $q$ is a positive power of a prime $p$. Let $X$ be a smooth projective, geometrically irreducible curve over $\mathbb{F}_q$ and denote by $K$ the function field of $X$. Let $\infty$ be a fixed closed point of $X$ and set $A$ to be the ring of functions regular away from $\infty$. Throughout this paper, we call such a ring \textit{an admissible coefficient ring}. As an illustration, if one considers $X=\mathbb{P}^1_{\mathbb{F}_q}$ and chooses $\infty$ to be the point at infinity, then $A$ becomes the polynomial ring $\mathbb{F}_q[\theta]$ for some variable $\theta$ over $\mathbb{F}_q$.  Note that any admissible coefficient ring is indeed a Dedekind domain. 

Let $\mathfrak{d}$ be the degree of $\infty$ over $\mathbb{F}_q$. For each $a\in A$, we define the $\infty$-adic norm $|a|:=q^{\deg(a)}$ where 
\[
\deg(a):=\mathfrak{d}\cdot (\text{the order of the pole of $a$ at $\infty$})
\]
and extend it canonically to $K$. Note that, for $a\in A$, $|a|$ is the cardinality of $A/aA$. We let $K_{\infty}$ be the completion of $K$ with respect to $|\cdot|$, which can be described as the formal Laurent series ring $\mathbb{F}_{q^\mathfrak{d}}((\pi_{\infty}))$ for a fixed uniformizer $\pi_{\infty}$ at $\infty$. We further set $\mathbb{C}_{\infty}$ to be the completion of an algebraic closure of $K_{\infty}$. 

Let $\overline{\mathbb{F}}_q$ be a fixed algebraic closure of $\mathbb{F}_q$ in $\mathbb{C}_{\infty}$. We consider $\widehat{K_{\infty}^{\text{nr}}}:=\overline{\mathbb{F}}_q((\pi_{\infty}))\subset \mathbb{C}_{\infty}$, which is the maximal unramified extension of $K_{\infty}$. We further define the Frobenius map $\sigma:\widehat{K_{\infty}^{\text{nr}}}\to \widehat{K_{\infty}^{\text{nr}}}$ given by 
\[
\sigma\Bigg(\sum_{i\geq i_0}a_i\pi_{\infty}^i\Bigg):=\sum_{i\geq i_0}a_i^{q^{\mathfrak{d}}}\pi_{\infty}^i, \ \ a_i\in \overline{\mathbb{F}}_q.\]
Observe that $\sigma$ is a continuous field automorphism of $\Knr$ stabilizing elements of $K_{\infty}$. Let $M$ be a field extension of $\Knr$ and $\varphi$ be a continuous automorphism of $\mathbb{C}_{\infty}$ that fixes $K_{\infty}$. We say that $\varphi$ is \textit{an extension of $\sigma$} if $\varphi|_{\Knr}=\sigma$. Let $M^{\varphi}:=\{z\in M \ \ | \varphi(z)=z\}$ and consider $\Omega^{\varphi}(M):=M\setminus M^{\varphi}$.  We note that, for each extension $M$ of $\Knr$, $\Omega^{\varphi}(M)$ lies in \textit{the Drinfeld upper half plane} $\Omega:=\mathbb{P}^1(\mathbb{C}_{\infty})\setminus \mathbb{P}^1(K_{\infty})$ which may be identified by the set $\mathbb{C}_{\infty}\setminus K_{\infty}$ (see \cite{Bos14, FvdP04} for more details on $\Omega$).

Throughout the present paper, we let $Y=\mathfrak{g}+\mathfrak{h}$
be a projective $A$-module of rank two embedded in $K^2$ by 
\begin{equation}\label{E:defofY}
Y=\mathfrak{g}(1,0)+\mathfrak{h}(0,1)\subset K^2
\end{equation}
for some fractional ideals $\mathfrak{g}$ and $\mathfrak{h}$ of $A$. We consider
\[
\Gamma_Y:=\GL(Y)=\Bigg\{\begin{pmatrix}
    a&b\\c&d
\end{pmatrix}\in \GL_2(K)| \ \ a,d\in A, \ \ ad-bc\in \mathbb{F}_q^{\times}, \ \ b\in \mathfrak{g}^{-1}\mathfrak{h}, \ \ c\in \mathfrak{g}\mathfrak{h}^{-1}\Bigg\}.
\]
Let $\Gamma$ be a congruence subgroup of $\Gamma_Y$ and let $M$ ($\varphi$ respectively) be an extension of $\Knr$ ($\sigma$ respectively). Generalizing the construction given in \cite[\S3.2]{CG23} to arbitrary admissible coefficient rings, we define \textit{a nearly holomorphic Drinfeld modular form $F$ of weight $k\in \mathbb{Z}$, type $m\in \mathbb{Z}/(q-1)\mathbb{Z}$ and depth $r\geq 0$ for $\Gamma$} to be a continuous function $F:\Omega^{\varphi}(M)\to \mathbb{C}_{\infty}$ that can be uniquely written as 
\begin{equation}\label{E:nearlholdef}
F(z)=\sum_{i=0}^{r}\frac{f_i(z)}{(z-\varphi(z))^i}
\end{equation}
for some rigid analytic functions $f_0,\dots,f_r$ with $f_r\neq 0$, having a certain growth condition and that also satisfies
\begin{equation}\label{E:funcequation}
F\Big(\frac{az+b}{cz+d}\Big)=(cz+d)^{k}\det(\gamma)^{-m}F(z)
\end{equation}
for each $\gamma=\begin{pmatrix}
    a&b\\c&d
\end{pmatrix}\in \Gamma$ and $z\in \Omega^{\varphi}(M) $. Following the analysis in \cite[\S3]{CG23}, we note that $F$ is not a rigid analytic function whereas it is equipped with the modularity condition as described in \eqref{E:funcequation}. This indeed motivates the notion of \textit{nearly holomorphic}. We refer the reader to \S4 for further details. 

Let $\mathcal{N}_{k}^{\leq r}(\Gamma)$ be the $\mathbb{C}_{\infty}$-vector space of nearly holomorphic Drinfeld modular forms of weight $k$, of any type and depth at most $r$ for $\Gamma$. Clearly, any Drinfeld modular form of weight $k$ for $\Gamma$ can be considered as an element of $\mathcal{N}_{k}^{\leq r}(\Gamma)$ for each $r\geq 0$ (see \S3 for more details on Drinfeld modular forms). In addition, if we let $E$ be \textit{the false Eisenstein series of Gekeler constructed from the $Y$ above}, one can consider the function $E_2:\Omega^{\varphi}(M)\to \mathbb{C}_{\infty}$ given by 
\begin{equation}\label{E:example}
E_2(z):=E(z)-\frac{1}{z-\varphi(z)}
\end{equation}
and show that it is indeed a nearly holomorphic Drinfeld modular form of weight two, type one and depth one for $\Gamma_Y$ as well as for each congruence subgroup of $\Gamma_Y$ (see Lemma \ref{L:E2}). Hence $E_2$ can be considered as a function field analogue of the non-holomorphic weight 2 Eisenstein series $\mathcal{G}_2$. We again refer the reader to \S4 for a detailed discussion on $E_2$.
\subsection{Special values at CM points}
In what follows, we introduce our first result which generalizes \cite[Thm.~6.2.17]{CG23} to the case of an arbitrary admissible coefficient ring. Let $\xi(\mathfrak{g}^{-1}\mathfrak{h})\in \mathbb{C}_{\infty}^{\times}$ be a period of the Drinfeld-Hayes $A$-module associated to the $A$-lattice $\mathfrak{g}^{-1}\mathfrak{h}$ (see \S2.2 for details on Drinfeld-Hayes $A$-modules). Let $t_{\Gamma}$ be a certain choice of a uniformizer for $\Gamma$ at the infinity cusp (see \eqref{E:unifdef} for its explicit definition). Due to the functional equation \eqref{E:funcequation}, $F$, as in \eqref{E:nearlholdef}, has a unique \textit{$t_{\Gamma}$-expansion} given by 
\[
F(z)=\sum_{j=0}^r\frac{1}{\xi(\mathfrak{g}^{-1}\mathfrak{h})^j(z-\varphi(z))^j}\sum_{i=0}^{\infty}a_{i,j}t_{\Gamma}^{i}(z)
\]
for some $a_{i,j}\in \mathbb{C}_{\infty}$ provided that the norm of $z$ is sufficiently large. We call $F$ \textit{arithmetic} if all $a_{i,j}$ lie in a fixed algebraic closure $\overline{K}$ of $K$ in $\mathbb{C}_{\infty}$. For any rigid analytic function $f$ on $\Omega^{\varphi}(M)$, we consider the Maass-Shimura operator $\delta_k^r$ defined, by 
\[
\delta_k^r(f):=\sum_{i=0}^r\binom{k+r-i}{i}\frac{\der^{r-i}f}{\xi(\mathfrak{g}^{-1}\mathfrak{h})^i(\Id-\varphi)^i}
\]
where $\der^{\ell}f$ is a constant multiple of the $\ell$-th hyperderivative of $f$ (see \S4 for more details). 

We call $z_0\in \Omega$ \textit{a CM point} if $K(z_0)$ is a quadratic extension of $K$ where the infinite place does not split. Furthermore, we emphasize that for each CM point, there exists an explicitly constructed field extension $M_{z_0}$ of $K_{\infty}^{\text{nr}}$ and an extension $\varphi_{z_0}$ of $\sigma$ as described above so that their evaluation at a nearly holomorphic Drinfeld modular form $F:\Omega^{\varphi_{z_0}}(M_{z_0})\to \mathbb{C}_{\infty}$ is well-defined (Lemma \ref{L:ext}).

Our first result, which will be restated as Theorem \ref{T:specvalue2} later, is described as follows.
\begin{theorem}\label{T:MR2} Let $z_0\in \Omega$ be a CM point and let $F\in \mathcal{N}_{k}^{\leq r}(\Gamma)$ be an arithmetic nearly holomorphic Drinfeld modular form as above. Then 
	\[
	F(z_0)=c\Bigg(\frac{w_{z_0}}{\xi(\mathfrak{g}^{-1}\mathfrak{h})}\Bigg)^{k} 
	\]
	for some $c\in \overline{K}$ and a period $w_{z_0}$ of a CM Drinfeld $A$-module. In particular, if $f$ is a Drinfeld modular form of weight $k$ for $\Gamma$, then 
	\[
	\delta^r_{k}(f)(z_0)=\tilde{c}\Bigg(\frac{w_{z_0}}{\xi(\mathfrak{g}^{-1}\mathfrak{h})}\Bigg)^{k+2r} 
	\]
	for some $\tilde{c}\in \overline{K}$. Furthermore, if $F(z_0)$ and $\delta_k^r(f)(z_0)$ are non-zero, then they are transcendental over $\overline{K}$.
\end{theorem}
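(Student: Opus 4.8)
The plan is to reduce the evaluation of $F$ at the CM point to the values of ordinary Drinfeld modular forms and of $E_2$, and then to invoke a transcendence theorem for Drinfeld periods. The geometric input is that $z_0$ is algebraic over $K$ and that the lattice $\mathfrak{g}z_0+\mathfrak{h}$ is a rank-one module over the CM order $\mathcal{O}:=\{a\in K(z_0)\mid a(\mathfrak{g}z_0+\mathfrak{h})\subseteq\mathfrak{g}z_0+\mathfrak{h}\}$; hence the associated Drinfeld $A$-module $\phi^{z_0}$ has complex multiplication by $\mathcal{O}$, carries a single independent period $w_{z_0}$, and becomes defined over $\overline{K}$ after a homothety by $w_{z_0}$. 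I would first establish, by induction on the depth as in the classical theory, that every $F\in\mathcal{N}_k^{\leq r}(\Gamma)$ decomposes uniquely as $F=\sum_{j=0}^r g_jE_2^j$ with each $g_j$ a Drinfeld modular form of weight $k-2j$: the leading coefficient $f_r$ in \eqref{E:nearlholdef} is a modular form of weight $k-2r$, and subtracting a suitable multiple of $g_rE_2^r$ lowers the depth. Arithmeticity of $F$ forces each $g_j$ to have algebraic $t_{\Gamma}$-expansion, so it suffices to evaluate the $g_j$ and $E_2$ at $z_0$.

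For the modular forms I would invoke Drinfeld's observation \cite[Prop.~9.3]{Dri74}, that weight-zero modular functions take values in $\overline{K}$ at CM points, and upgrade it to weight $w$ by tracking periods: the complex multiplication contributes the CM period $w_{z_0}$, while the rank-one Hayes period $\xi(\mathfrak{g}^{-1}\mathfrak{h})$ enters through the $\xi(\mathfrak{g}^{-1}\mathfrak{h})$-normalized uniformizer $t_{\Gamma}$ of \eqref{E:unifdef}. This yields $g_j(z_0)=c_j(w_{z_0}/\xi(\mathfrak{g}^{-1}\mathfrak{h}))^{k-2j}$ with $c_j\in\overline{K}$.

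The heart of the argument is the value of $E_2$. By \eqref{E:example} we have $E_2(z_0)=E(z_0)-(z_0-\varphi_{z_0}(z_0))^{-1}$, and since $\varphi_{z_0}$ restricts to the nontrivial automorphism of $K(z_0)/K$ (the place $\infty$ being non-split), the difference $z_0-\varphi_{z_0}(z_0)$ lies in $\overline{K}$ and the correction term is algebraic. By Gekeler's construction \cite{Gek90}, $E$ is the quasi-period function attached to the de Rham cohomology of $\phi^z$; at the CM point the $\mathcal{O}$-action splits this cohomology over $\overline{K}$, and the Legendre-type period--quasi-period relation, specialized to $\mathcal{O}$, expresses $E(z_0)$ as an algebraic multiple of $(w_{z_0}/\xi(\mathfrak{g}^{-1}\mathfrak{h}))^2$ plus an algebraic constant. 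This constant cancels the correction term, giving $E_2(z_0)=c_E(w_{z_0}/\xi(\mathfrak{g}^{-1}\mathfrak{h}))^2$ with $c_E\in\overline{K}$. Substituting into $F=\sum_j g_jE_2^j$, each summand contributes $(w_{z_0}/\xi(\mathfrak{g}^{-1}\mathfrak{h}))^{(k-2j)+2j}=(w_{z_0}/\xi(\mathfrak{g}^{-1}\mathfrak{h}))^k$, which proves the first formula. The second is then immediate: $\delta_k^r(f)$ lies in $\mathcal{N}_{k+2r}^{\leq r}(\Gamma)$ and is again arithmetic (the normalizing powers of $\xi(\mathfrak{g}^{-1}\mathfrak{h})$ in $\delta_k^r$ are precisely those keeping the $t_{\Gamma}$-expansion coefficients in $\overline{K}$), so the first formula in weight $k+2r$ gives $\delta_k^r(f)(z_0)=\tilde{c}(w_{z_0}/\xi(\mathfrak{g}^{-1}\mathfrak{h}))^{k+2r}$.

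Finally, transcendence reduces to that of $w_{z_0}/\xi(\mathfrak{g}^{-1}\mathfrak{h})$ over $K$. The CM Drinfeld module is not isogenous to any product of rank-one modules of Carlitz type, so its period $w_{z_0}$ is not an algebraic multiple of the rank-one period $\xi(\mathfrak{g}^{-1}\mathfrak{h})$, and Yu's analytic subgroup theorem for $t$-modules then forces $w_{z_0}/\xi(\mathfrak{g}^{-1}\mathfrak{h})$ to be transcendental over $\overline{K}$. Consequently, whenever the relevant exponent is nonzero (namely $k\neq 0$ for $F(z_0)$ and $k+2r\neq 0$ for $\delta_k^r(f)(z_0)$), a nonzero algebraic multiple of the corresponding power is transcendental. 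I expect the principal obstacle to be the CM evaluation of $E_2$: identifying Gekeler's $E$ with the quasi-period and making the period--quasi-period relation over $\mathcal{O}$ explicit enough that its algebraic part exactly cancels the correction $(z_0-\varphi_{z_0}(z_0))^{-1}$, leaving a clean weight-two period ratio with coefficient in $\overline{K}$.
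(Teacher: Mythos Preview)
Your overall architecture matches the paper's: decompose $F=\sum_{j} g_jE_2^{\,j}$ (this is Proposition~\ref{P:str}), evaluate each $g_j$ at $z_0$ via the algebraicity of CM values of modular functions (Proposition~\ref{P:jinvariant} and Theorem~\ref{T:specvalue}), and treat $E_2(z_0)$ separately. The derivation of $\delta_k^r(f)(z_0)$ from the general statement, and the appeal to Yu for transcendence, are also the paper's moves.

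The genuine gap is in the evaluation of $E_2(z_0)$. Your proposed mechanism---split $E(z_0)$ via a Legendre-type relation into ``(algebraic)$\cdot(w_{z_0}/\xi)^2$ plus an algebraic constant'' and then claim the constant exactly cancels $(z_0-\varphi_{z_0}(z_0))^{-1}$---is not justified, and the weight bookkeeping is off: with the precise definition in Lemma~\ref{L:E2} the correction term is $\bigl(\xi(\mathfrak{g}^{-1}\mathfrak{h})(z_0-\varphi_{z_0}(z_0))\bigr)^{-1}$, which is \emph{not} algebraic (it is an algebraic multiple of $1/\xi(\mathfrak{g}^{-1}\mathfrak{h})$), so any putative ``algebraic constant'' cannot cancel it. You correctly flag this step as the main obstacle, but as written it is a hope rather than an argument.

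The paper avoids this entirely by a different and very concrete device. It takes the matrix
\[
\alpha_{z_0}=\begin{pmatrix}\Tr(z_0)&-\Nr(z_0)\\1&0\end{pmatrix}\in\GL_2(K),\qquad \alpha_{z_0}\cdot z_0=z_0,
\]
and shows that $\mathcal{G}:=E_2|_{2,1}\alpha_{z_0}-E_2$ equals $-\xi(\mathfrak{g}^{-1}\mathfrak{h})^{-1}\,d\mathfrak{f}/\mathfrak{f}$ for the weight-zero function $\mathfrak{f}=\Delta_a/(\Delta_a|_{q^{2d}-1,0}\alpha_{z_0})$. Thus $\mathcal{G}$ is a genuine \emph{meromorphic} modular form in $\mathcal{A}_2(\overline{K})$, and since $\alpha_{z_0}$ fixes $z_0$ one computes $\mathcal{G}(z_0)=\dfrac{\mathfrak{z}(z_0)-z_0}{z_0}\,E_2(z_0)$ (Lemma~\ref{L:specvalue}). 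Dividing by a nowhere-vanishing weight-two arithmetic form $E_u^2$ gives a modular function with algebraic expansion, so Proposition~\ref{P:jinvariant} yields $\mathcal{G}(z_0)/E_u(z_0)^2\in\overline{K}$ and hence $E_2(z_0)/E_u(z_0)^2\in\overline{K}$. This, combined with Theorem~\ref{T:specvalue} applied to $g_j(z_0)E_u(z_0)^{2j}$, gives the result with no cancellation miracle required. If you want to push through a de Rham/quasi-period argument instead, you would need to make precise the identification of $E$ with a cycle integral on $\mathbb{H}_{\DR}(\bold\Psi^{Y_{z_0}})$ and the CM splitting over $\overline{K}$, and then actually compute the resulting expression rather than assert a cancellation.
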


The crucial point to prove Theorem \ref{T:MR2} relies on an analysis for special values of Drinfeld modular functions at CM points. More precisely, by Drinfeld \cite[Prop.~9.3]{Dri74}, there exists an explicit choice $\mathfrak{J}$ among Drinfeld modular functions for $\Gamma_Y$ so that the function field of the curve $\Gamma_Y\setminus \Omega$ may be realized as an algebraic extension of $\mathbb{C}_{\infty}(\mathfrak{J})$. Combining this with the work of Hayes \cite{Hay79} on Drinfeld-Hayes $A$-modules,  we obtain the algebraicity of the special values at CM points of Drinfeld modular functions for $\Gamma_Y$ having a particular rationality property (Proposition \ref{P:jinvariant}). We remark that this can be seen as a generalization of the result \cite[Satz~(4.3)]{Gek83} of Gekeler to the arbitrary admissible coefficient ring case. Using Proposition \ref{P:jinvariant} as well as implementing the strategy in \cite[\S6]{CG23}, we eventually obtain Theorem \ref{T:MR2}. 

\subsection{Algebraic description of nearly holomorphic Drinfeld modular forms} For any ideal $I$ of $A$, let $V(I)$ be the set of prime ideals of $A$ dividing $I$. Let $I\subset A$  be such that $|V(I)|\geq 2$ and  $\mathcal{M}^{2}_I$ be the moduli space parametrizing Drinfeld $A$-modules of rank two with a level $I$-structure defined over $A$-schemes   (see \cite[\S2]{Leh09} for more details). Due to Drinfeld \cite{Dri74}, we know that $\mathcal{M}^{2}_I$ is a smooth curve over $\Spec(A) \setminus V(I)$. He further constructs a  compactification $\overline{\mathcal{M}^{2}_I}$ of $\mathcal{M}^{2}_I$ over $\Spec(A)$, which is  smooth over $\Spec(A) \setminus V(I)$, by \textit{glueing} $\mathcal{M}^{2}_I$ with a finite disjoint union of formal schemes $\sqcup^{n_I}_{i=1} \mathfrak{M}_i$, where $n_I$ is an explicitly determined integer (see \cite[Chap.~5, Prop.~3.5]{Leh09}), $\mathfrak{M}_i$ are identical for all $i$ and each $\mathfrak{M}_i$ is equipped with a \textit{Tate-Drinfeld module} which could be seen as an analogue of Tate elliptic curves. We follow \cite{vdH03, Hat22} for our terminology. We also note  that the Tate-Drinfeld modules described in this paper are called \textit{universal Drinfeld module with bad reduction} in \cite[Chap. 5, \S2]{Leh09}. For more details on this method, we again refer the reader to \cite[Chap.~5]{Leh09}.  

Let $M^2_I$ be the generic fiber of $\mathcal{M}^{2}_I$, which parametrizes Drinfeld $A$-modules of rank two over $K$-schemes with a level $I$-structure. Since $M^2_I$ is a smooth curve over $K$, it admits a unique smooth compactification $\overline{M^2_I}$ which can be also realized by taking the generic fiber of $\overline{\mathcal{M}^{2}_I}$. For a field $L$, we denote by $\Spf(L[[X_i]])$ the affine formal scheme associated to the ideal $(X_i) \subset L[[X_i]]$ as defined in \cite[Chap.~4, \S1]{Leh09}, for $I=(X_i)$ and $R=L[[X_i]]$. Denoting $\mathcal{H}$ to be the ray class field over $K$ defining the moduli space parametrizing rank $1$ Drinfeld $A$-modules with a level $I$-structure over $K$-schemes and fixing an indeterminate $X_i$ for each $1\leq i \leq n_{I}$, we have that the generic fiber of the formal scheme $\mathfrak{M}_i$ is $\Spf(\mathcal{H}[[X_i]])$ and the Tate-Drinfeld modules are determined by certain maps $\mu_i:\Spec(\mathcal{H}((X_i))) \rightarrow M^2_I$. In particular, by pulling back the universal Drinfeld $A$-module $\mathbb{E}^{un}_I:=(\mathcal{L}^{un},\phi^{un})$ over $M_{I}^2$ via $\mu_i$, we obtain a Drinfeld $A$-module $(\phi_i,\lambda_i)$ of rank two over $\mathcal{H}((X_i))$ with a level $I$-structure. Moreover, the coefficients of $\phi_i$ indeed lie in $\mathcal{H}[[X_i]]$. By construction, the formal completion of  $\overline{M^2_I}$ at $\overline{M^2_I} \backslash M^2_I$ is isomorphic to $\sqcup ^{n_I}_{i=1} \Spf(\mathcal{H}[[X_i]])$ such that the induced map $\Spec(\mathcal{H}((X_i))) \rightarrow M^2_I$ is precisely $\mu_{i}$. We refer the reader to \S6 for more details. 

In what follows, we define the \textit{Hodge bundle} on $M^2_I$ to be the locally free sheaf  $\omega_{un}:=\Lie(\mathcal{L}^{un})^{\vee}$ of rank one. Let $\Gamma_{Y}(I)$ be the principal congruence subgroup of level $I$ (see \S2.6) and $M_Y$ be the connected component of $M^{2}_I \times_K \mathbb{C}_\infty$ associated to the class of $Y$ so that $M_Y(\mathbb{C}_\infty)=\Gamma_Y(I)\backslash \Omega$. Let $\omega_Y$   denote the restriction of $\omega_{un}$ to $M_Y$ after base change by $\mathbb{C}_\infty$. In a similar vein,  we denote by $\mu^{Y}_i:\Spec(\mathbb{C}_\infty((X_i))) \rightarrow M_Y$ the map $\mu_i$ after base change and restricted to $M_Y$. In Proposition \ref{u_expn_0}, using \cite[Thm.~4.16]{Boc02}, we show that each $\mu_i^Y$ corresponds to a unique cusp $b_i$ of $\Gamma_Y(I)$. Later on, this fact motivates us to call the map $\mu_i$ \textit{an algebraic cusp}.

Observe that $(\mu^Y_i)^\ast\omega_Y$ is a free sheaf of rank one generated by $dZ_i$ for some indeterminate $Z_i$. Then, inspired by the terminology used by Goss \cite[Def.~1.54]{Gos80} (see also \cite[pg.~35]{Hat21}), for any positive integer $k$ and $f \in H^0(M_Y,(\omega_Y)^{\otimes k})$, we define \textit{the $t$-expansion of $f \in H^0(M_Y,(\omega_Y)^{\otimes k})$ at $\mu^{Y}_i$} to be the unique Laurent series $P_f(X_i) \in \mathbb{C}_\infty((X_i))$ such that \[(\mu^Y_i)^\ast f  = P_f(X_i)(dZ_i)^{\otimes k}.\]

Next, we associate another unique Laurent series to $f \in H^0(M_Y,(\omega_Y)^{\otimes k})$. Let $\omega_Y^{an}$ be the analytification of  $\omega_Y$. 
Note that via a rigid analytification 
\[
H^0 (M_Y,(\omega_{Y})^{\otimes k}) \hookrightarrow H^0 (\Gamma_Y(I)\backslash \Omega,((\omega_Y)^{\otimes k})^{an}),
\]
the latter space may be identified with the space of weak Drinfeld modular forms of weight $k$ for $\Gamma_Y(I)$ (see \cite[\S1]{Gos80} and \cite[Lem.~10.6]{BBP21})). Therefore, for any given cusp $b_i$ of $\Gamma_{Y}(I)$, one can form a \textit{uniformizer $t_{b_i}$  at the cusp $b_i$ of $\Gamma_{Y}(I)$} so that there exists a unique Laurent series $Q_f$ in $t_{b_i}$ with coefficients in $\mathbb{C}_{\infty}$, which we call \textit{the $t_{b_i}$-expansion of $f$} (see \S3 and \S6 for further details). 

As the Tate elliptic curves are the objects encoding the $q$-expansion of classical modular forms, it is natural to ask whether the Tate-Drinfeld modules encode the $t_{b_i}$-expansions of Drinfeld modular forms. It turns out that one may indeed have an affirmative answer as observed by Goss \cite[Prop.~1.78]{Gos80}. More precisely, in Proposition \ref{u_expn_1}, we provide a certain link between these two expansions of $f$ in the following sense: $P_f$ has no principal part as a Laurent series in $X_i$  if and only if $Q_f$ has no principal part as a Laurent series in $t_{b_i}$.

\begin{remark} At this point, it is crucial to remark that in \cite[Prop.~1.78]{Gos80}, Goss provided a stronger relation between Tate-Drinfeld modules and $t_{b_i}$-expansion of weak Drinfeld modular forms. In particular, although he did not include much details, he stated that one may recover the $t_{b_i}$-expansion of $f$ via the substitution $X_i=t_{b_i}$ in its $t$-expansion at $\mu_i^Y$. In \S6, for the sake of completeness, we provide an explicit analysis on Tate-Drinfeld modules as well as their connection with weak Drinfeld modular forms to obtain Proposition \ref{u_expn_1}. We provide full details due to the lack of a good reference. Furthermore, although it is a weaker statement than \cite[Prop.~1.78]{Gos80}, our Proposition \ref{u_expn_1} is sufficient enough to deduce \cite[Thm.~1.79]{Gos80} as we will explain below. 
\end{remark} 

Let $\overline{\omega_{un}}$ be the unique extension of $\omega_{un}$ over $\overline{M^2_I}$, such that the formal completion of $\overline{\omega_{un}}$ at $\overline{M^2_I} \backslash M^2_I$ is given by $\oplus^{n_I}_{i=1} \mathcal{H}[[X_i]] dZ_i$. Let $\overline{M_Y}$ be the compactification of $M_Y$. We further denote by $\overline{\omega_Y}$ the resulting extension of $\omega_Y$ to $\overline{M_Y}$. Then, using the aforementioned link between $t$-expansion at $\mu_i^Y$ and $t_{b_i}$-expansion of elements of $H^0(M_Y,(\omega_{Y})^{\otimes k})$, one can form a natural isomorphism of $\mathbb{C}_{\infty}$-vector space $H^0(\overline{M_Y},(\overline{\omega_Y})^{\otimes k})$ and the $\mathbb{C}_{\infty}$-vector space of Drinfeld modular forms of weight $k$ with respect to $\Gamma_Y(I)$ (\cite[Thm.~1.79]{Gos80}). Again, we refer the reader to \S6 and \S8 for a thorough discussion.

The main strategy to describe nearly holomorphic Drinfeld modular forms algebraically relies on the previously described technique to realize Drinfeld modular forms as global sections of tensor powers of $\overline{\omega_{un}}$. We first consider the \textit{de Rham sheaf} $\mathbb{H}_{\DR,un}:=\mathbb{H}_{\DR}(\mathbb{E}^{un}_I)$ associated to $\mathbb{E}^{un}_I$ which is the locally free sheaf of rank two over $M_I^2$ introduced by Gekeler \cite{Gek90} (see \S2.4 and \S2.5 for more details). Let $\mathbb{H}_{\DR,Y}$ denote the restriction of $\mathbb{H}_{\DR,un}$ to $M_Y$ after base change by $\mathbb{C}_\infty$.  
Motivated by the classical theory of nearly holomorphic modular forms developed in \cite{Urb14}, for non-negative integers $k$ and $r$ so that $k\geq r$, we define the sheaf 
\[
\mathcal{H}^{r}_{k,Y}:=\Sym^r(\mathbb{H}_{\DR,Y}) \otimes (\omega_{Y})^{\otimes (k-r)}.
\]
Let $\h_{\DR}(\phi_i)$ be the de Rham module associated to the Tate-Drinfeld module $\phi_i$ (see \S2.4 for the explicit definition). From the Hodge decomposition of the de Rham modules, for each $1\leq i \leq n_{I}$, we have 
\[
\h_{\DR}(\phi_i) = \mathcal{H}((X_i))\eta_{i,1} \oplus \mathcal{H}((X_i))\eta_{i,2},
\] 
where $\eta_{i,1},\eta_{i,2}$ are explicitly determined biderivations for the Drinfeld $A$-module $\phi_i$ (Lemma \ref{L:basisderham}). Let $\overline{\mathbb{H}_{\DR,un}}$  be the unique extension of $\mathbb{H}_{\DR,un}$ over $\overline{M_I^2}$ such that its formal completion at  $\overline{M^2_I} \backslash M^2_I$ is given by $\mathcal{H}[[X_i]]\eta_{i,1} \oplus \mathcal{H}[[X_i]]\eta_{i,2}$ (see \S7.2 for the explicit construction).

Next, we carry the notion of $t$-expansion at $\mu_i^Y$ to weak nearly holomorphic Drinfeld modular forms of weight $k$ and depth less than or equal to $r$ for $\Gamma_Y(I)$ whose $\mathbb{C}_{\infty}$-vector space is denoted by $\mathcal{WN}_k^{\leq r}(\Gamma_Y(I))$ (see \S4 for their explicit definition). First we let $F\in H^0(M_Y, \mathcal{H}^r_{k,Y})$. After choosing a basis $\{\eta_{i,1},\eta'_{i,2}\}$ of $\h_{\DR}(\phi_i)$ that differs from $\{\eta_{i,1},\eta_{i,2}\}$ by an explicit unipotent matrix (see \S7.1), note that there exists an $(r+1)$-tuple of unique Laurent series $\{P^{(j)}_F(X_i)\}_{0 \leq j \leq r}$ such that 
    \[
 (\mu^{Y}_i)^\ast(f)=\sum^r_{j=0}P^{(j)}_F(X_i)(\eta_{i,1})^{\otimes (k-r+j)} \otimes (\eta'_{i,2})^{\otimes (r-j)}.
   \] 
We call the tuple $\{P^{(j)}_F(X_i)\}_{0 \leq j \leq r}$   \textit{the $t$-expansion of $F$ at the cusp $\mu^{Y}_i$}. On the other hand, via analytification, each $F\in H^0(M_Y, \mathcal{H}^r_{k,Y})$ may be considered as an element of $\mathcal{WN}_k^{\leq r}(\Gamma_Y(I))$ (Theorem \ref{T:weak}). Hence $F$ also admits a $t_{b_i}$-expansion which is given by an $(r+1)$-tuple of unique Laurent series $\{Q^{(j)}_F(t_{b_i})\}_{0 \leq j \leq r}$ (see Definition \ref{D:1}). Analogous to Proposition \ref{u_expn_1}, in Lemma \ref{L:nhf_t_expn}, we obtain the following link between $t$-expansion of $F$ at $\mu_i^Y$ and $t_{b_i}$-expansion of $F$: For all $0\leq j \leq r$,  $P^{(j)}_F(X_i)$ has no principal part as a Laurent series in $X_i$ if and only if $Q^{(j)}_F(t_{b_i})$ has no principal part as a Laurent series in $t_{b_i}$.

Let $\overline{\mathbb{H}_{\DR,Y}}$ be the corresponding extension of $\mathbb{H}_{\DR,Y}$ to $\overline{M_Y}$. We finally denote by $\overline{\mathcal{H}^r_{k,Y}}$ the extension of $\mathcal{H}^r_{k,Y}$ to $\overline{M_Y}$ which is given by
\[
\overline{\mathcal{H}_{k,Y}^r}:=\Sym^{r}(\overline{\mathbb{H}_{\DR,Y}}) \otimes \overline{\omega}^{\otimes (k-r)}_{Y}.
\]  
Using the aforementioned link between $t$-expansion at $\mu_i^Y$ and $t_{b_i}$-expansion of elements of $H^0(M_Y, \mathcal{H}^r_{k,Y})$, we obtain our next result (restated as Theorem \ref{T:main1} later) which recovers the space of nearly holomorphic Drinfeld modular forms analogous to \cite[Prop.~2.2.3]{Urb14}. We also remark that it also leads to the result of Goss \cite[Thm.~1.79]{Gos80}, alluded to before, on the space of Drinfeld modular forms by setting $r=0$.

\begin{theorem}\label{T:MR1}
	Let $I\subset A$  be such that $|V(I)|\geq 2$. Then there exists a natural isomorphism of $\mathbb{C}_\infty$-vector spaces
	\[H^{0}(\overline{M_Y},\overline{\mathcal{H}^r_{k,Y}}) \cong \mathcal{N}^{\leq r}_k(\Gamma_Y(I)).\]
\end{theorem}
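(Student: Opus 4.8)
The plan is to construct the isomorphism by composing restriction to the open curve $M_Y$ with the analytification comparison of Theorem~\ref{T:weak}, and then to pin down the image by translating the algebraic condition of extending across the cusps into the analytic holomorphy condition that separates genuine nearly holomorphic forms from weak ones. Concretely, restriction along the open immersion $M_Y\hookrightarrow\overline{M_Y}$ gives an injection $H^0(\overline{M_Y},\overline{\mathcal{H}^r_{k,Y}})\hookrightarrow H^0(M_Y,\mathcal{H}^r_{k,Y})$, since $M_Y$ is dense in the integral smooth curve $\overline{M_Y}$ and $\overline{\mathcal{H}^r_{k,Y}}$ is locally free, hence torsion-free. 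Composing with the comparison of Theorem~\ref{T:weak}, which identifies $H^0(M_Y,\mathcal{H}^r_{k,Y})$ with $\mathcal{WN}^{\leq r}_k(\Gamma_Y(I))$, sends a global section $F$ to a weak nearly holomorphic form. The entire argument then reduces to showing that this composition carries $H^0(\overline{M_Y},\overline{\mathcal{H}^r_{k,Y}})$ bijectively onto the subspace $\mathcal{N}^{\leq r}_k(\Gamma_Y(I))\subset\mathcal{WN}^{\leq r}_k(\Gamma_Y(I))$ cut out by holomorphy at the cusps.

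The key step is the cusp analysis. As $\overline{M_Y}$ is a smooth curve and $\overline{\mathcal{H}^r_{k,Y}}$ is locally free, a section $\tilde F\in H^0(M_Y,\mathcal{H}^r_{k,Y})$ extends to a global section over $\overline{M_Y}$ if and only if, at each of the finitely many cusps $\mu_i^Y$, its image in the formal completion of $\overline{\mathcal{H}^r_{k,Y}}$ is integral, i.e.\ has no pole. By construction, this completion is the free $\mathbb{C}_\infty[[X_i]]$-module on the monomials $(\eta_{i,1})^{\otimes(k-r+j)}\otimes(\eta_{i,2})^{\otimes(r-j)}$, $0\leq j\leq r$, arising from $\Sym^r(\overline{\mathbb{H}_{\DR,Y}})\otimes\overline{\omega}_Y^{\otimes(k-r)}$. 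Since the change of basis from $\{\eta_{i,1},\eta_{i,2}\}$ to $\{\eta_{i,1},\eta'_{i,2}\}$ is unipotent with entries in $\mathbb{C}_\infty[[X_i]]$ (§7.1), it induces an automorphism of this free module, so the extendability condition is insensitive to which of the two bases we use and is equivalent to all the $t$-expansion coefficients $P^{(j)}_F(X_i)$ of $\tilde F$ at $\mu_i^Y$ lying in $\mathbb{C}_\infty[[X_i]]$. By Lemma~\ref{L:nhf_t_expn}, this holds for every $0\leq j\leq r$ and every $i$ if and only if the corresponding $t_{b_i}$-expansion coefficients $Q^{(j)}_F(t_{b_i})$ have no principal part, which is exactly the holomorphy-at-cusps condition defining $\mathcal{N}^{\leq r}_k(\Gamma_Y(I))$ inside $\mathcal{WN}^{\leq r}_k(\Gamma_Y(I))$.

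It then remains to assemble the bijection. Injectivity is immediate, since both the restriction to the dense open $M_Y$ and the comparison of Theorem~\ref{T:weak} are injective. For surjectivity, let $F\in\mathcal{N}^{\leq r}_k(\Gamma_Y(I))$; viewing $F$ as a weak form and applying the inverse of the identification in Theorem~\ref{T:weak} produces $\tilde F\in H^0(M_Y,\mathcal{H}^r_{k,Y})$. Because $F$ is holomorphic at every cusp, each $Q^{(j)}_F(t_{b_i})$ has no principal part, so by Lemma~\ref{L:nhf_t_expn} each $P^{(j)}_F(X_i)$ lies in $\mathbb{C}_\infty[[X_i]]$; the cusp analysis above then shows that $\tilde F$ extends to a global section of $\overline{\mathcal{H}^r_{k,Y}}$ over $\overline{M_Y}$ mapping back to $F$. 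This yields the asserted isomorphism, and specializing to $r=0$ recovers the isomorphism $H^0(\overline{M_Y},\overline{\omega}_Y^{\otimes k})\cong\{\text{Drinfeld modular forms of weight }k\text{ for }\Gamma_Y(I)\}$.

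The main obstacle is the cusp analysis of the second paragraph. One must verify carefully that the formal completion of the extended sheaf $\overline{\mathcal{H}^r_{k,Y}}$ at each algebraic cusp $\mu_i^Y$ is genuinely the free $\mathbb{C}_\infty[[X_i]]$-module on the stated de Rham monomials, that the unipotent change of basis of §7.1 preserves this integral structure so that the no-principal-part condition is basis-independent, and that the resulting extendability criterion matches term by term in the depth index $j$ with the analytic holomorphy condition through Lemma~\ref{L:nhf_t_expn}, uniformly across all $n_I$ cusps. Once the open-curve comparison of Theorem~\ref{T:weak} and the principal-part dictionary of Lemma~\ref{L:nhf_t_expn} are granted, essentially all the remaining content lies in this compatibility bookkeeping, and the passage to the compactification is then formal.
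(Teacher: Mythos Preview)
Your argument has a genuine gap in the surjectivity step, stemming from a misreading of Theorem~\ref{T:weak}. That theorem gives an isomorphism
\[
H^{0}(\Gamma_Y(I)\backslash\Omega,\mathcal{H}^{r,an}_{k,Y})\cong\mathcal{WN}^{\leq r}_k(\Gamma_Y(I)),
\]
but only an \emph{injection} $H^{0}(M_Y,\mathcal{H}^{r}_{k,Y})\hookrightarrow\mathcal{WN}^{\leq r}_k(\Gamma_Y(I))$. Since $M_Y$ is affine and not proper, there is no GAGA over $M_Y$, and analytification on global sections is far from surjective. So when you write ``applying the inverse of the identification in Theorem~\ref{T:weak} produces $\tilde F\in H^{0}(M_Y,\mathcal{H}^r_{k,Y})$'', this is unjustified: the inverse only yields an \emph{analytic} section over $\Gamma_Y(I)\backslash\Omega$, and your subsequent formal-completion argument, which pulls back along the algebraic map $\mu_i^Y:\Spec(\mathbb{C}_\infty((X_i)))\to M_Y$, does not apply to it.

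The paper closes this gap precisely by staying on the analytic side: given $F\in\mathcal{N}^{\leq r}_k(\Gamma_Y(I))$, one views it as a section of $(\mathcal{H}^r_{k,Y})^{an}$ over $\Gamma_Y(I)\backslash\Omega$, uses the no-principal-part condition at each $b_i$ to produce compatible sections over the analytic discs $\Sp(\mathbb{C}_\infty\langle t_{b_i}/\theta^e\rangle)$ around the cusps, glues these to a global analytic section over the \emph{proper} curve $\overline{\Gamma_Y(I)\backslash\Omega}=\overline{M_Y}^{an}$, and only then invokes rigid analytic GAGA to descend to $H^{0}(\overline{M_Y},\overline{\mathcal{H}^r_{k,Y}})$. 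Your injectivity and cusp-analysis for the forward direction are fine and match the paper, but the surjectivity argument needs this analytic gluing plus GAGA step; the purely algebraic extension criterion you sketch cannot be applied before one knows the section is algebraic.
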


\begin{remark}
      If $|V(I)|\geq 1$, then the moduli problem for Drinfeld $A$-modules of rank $r$ over $A[I^{-1}]$-schemes with level $I$-structure is still a fine moduli problem (see for instance \cite[Rem.~1.13]{Gos80}). In particular, the moduli space $M^2_I$ over $\Spec(K)$ makes sense even with $|V(I)|=1$. One can naturally ask whether Theorem \ref{T:MR1} holds true in this case. The proof of Theorem \ref{T:MR1} heavily relies on an analysis of Tate-Drinfeld modules for the case $|V(I)|\geq 2$ and the lack of an analogous theory for the case $|V(I)|=1$ refrains us from generalizing our result. We expect Theorem \ref{T:MR1} to hold true also in this general case,  once we develop a theory of Tate-Drinfeld modules in the case $|V(I)|=1$ to compactify $M^2_I$ analogous to what we will discuss in \S6. We hope to come back to this problem in the near future.
\end{remark}

\begin{remark} The authors expect that the results in this paper can be pursued further to gain a broader understanding in the theory of Drinfeld modular forms of arbitrary rank and their special values. We state here three directions one can pursue from the present work.

\begin{itemize}
    \item[(i)]  In the classical case, another way to obtain transcendence of special values of nearly holomorphic modular forms at CM points is to use the geometric description of such forms, as done in \cite[\S2.6]{Urb14}. In the function field setting, a similar approach has been employed by Ayotte in \cite{Ayo23} to obtain transcendence of special values of Drinfeld modular forms. It would be interesting to obtain Theorem \ref{T:MR2} by using this method.
\item[(ii)] Another interesting future research direction concerning our newly defined objects is the action of Hecke operators on them. In the classical setting, there have been results (see for example \cite[\S2]{BJTX12}) already established in this direction and one wonders whether a similar point of view may be pursued in the function field setting, namely defining a suitable Hecke action on both analytic and algebraic side (see also \cite[\S5]{BVdV25} for an analysis of Hecke operators in the setting of Drinfeld quasi-modular forms which are closely related to nearly holomorphic Drinfeld modular forms). We hope that this enables one to show the algebraicity of  eigenvalues of these Hecke operators due to the construction of the locally free sheaf $\overline{\mathcal{H}^{r}_k}$ on the compactification of the Drinfeld moduli space.     
    \item[(iii)] Let $n \geq 2$ and for a non-zero ideal $I$ of $A$, consider the fine moduli space $M^n_I$ parametrizing Drinfeld $A$-modules of rank $n$ with level $I$-structures over $K$-schemes. It admits a universal Drinfeld $A$-module $\mathbb{E}^{un,n}_I$. Similar to the rank two case discussed above, one can consider the following locally free sheaves of rank $n$ and rank one respectively, \[\mathbb{H}^{(n)}_{\DR,un}:=\mathbb{H}_{\DR}(\mathbb{E}^{un,n}_I),\ \omega^{(n)}_{un}:=\Lie(\mathbb{E}^{un,n}_I)^{\vee}.\] Consequently, for non-negative integers $k$ and $r$, one can define the coherent sheaf \[\mathcal{H}^{r,(n)}_k:=\Sym^r(\mathbb{H}^{(n)}_{\DR,un}) \otimes_{\mathcal{O}_{M^n_I}} (\omega^{(n)}_{un})^{\otimes (k-r)}.\]
   At this point, some natural questions arise:
    \begin{itemize}
        \item[(a)] Let $\Omega^{n-1}$ be the $(n-1)$-dimensional Drinfeld upper half plane and let $P \subset K^n$ be a rank $n$ projective $A$-module, so that $\Gamma_P(I) \backslash \Omega^{n-1} \hookrightarrow (M^n_I \times_K \mathbb{C}_\infty)^{an}$ is a connected component and we have a sequence of maps $\pi:\Omega^{n-1} \rightarrow \Gamma_P(I) \backslash \Omega^{n-1} \hookrightarrow (M^n_I \times_K \mathbb{C}_\infty)^{an}.$ Can one give a suitable description of a function $F$ on $\Omega^{n-1}$ arising as sections of $\pi^\ast((\mathcal{H}^{r,(n)}_{k,\mathbb{C}_\infty})^{an})$ similar to the description in \eqref{E:nearlholdef}? Furthermore, what would be a higher dimensional analogue of the space $\Omega^{nr}$? 
        \item[(b)] Consider the same $F$ as above. Does it admit a description similar to that in Proposition \ref{P:str}? In particular, what is the analogue of $E_2$ in this general case? We expect to obtain an affirmative answer to this question by using the false Eisenstein series of higher rank studied in the works of Chen and the first author \cite{CG21,CG22}.
    \end{itemize}
\end{itemize}
\end{remark}

\subsection{Outline of the paper}The outline of the present paper can be described as follows. In \S2, we introduce the necessary background on Drinfeld-Hayes $A$-modules, the de Rham module associated to Drinfeld $A$-modules as well as the main properties of coherent sheaves defined on the quotient space $\Gamma\setminus \Omega$. In \S3, we analyze Drinfeld modular forms and Drinfeld modular functions. In \S4, we continue by describing the basic properties of nearly holomorphic Drinfeld modular forms and discuss the fundamental example $E_2$ defined in \eqref{E:example}. In \S5, we analyze the special values of nearly holomorphic Drinfeld modular forms at CM points and prove Theorem \ref{T:MR2}. In \S6, we discuss  \textit{Tate-Drinfeld modules} (TD modules) which are crucial for extending sheaves on Drinfeld moduli spaces to their cusps. More precisely, we will compare the analytic and algebraic description of TD modules and describe the relationship between these two constructions.  In \S7, using our analysis in \S6, we extend the de Rham sheaf to $\overline{M_I^2}$ and  analyze the de Rham cohomology of Drinfeld $A$-modules defined over $\Omega$ as well as over its arithmetic quotients. Finally in \S8, we provide a proof for Theorem \ref{T:MR1}. 

\subsection*{Acknowledgments} The authors are indebted to Gebhard B\"{o}ckle for insightful discussions and  his careful reading of an earlier version of the paper as well as for his valuable comments. The authors are grateful to Florian Breuer, Shin Hattori and Yen-Tsung Chen for many useful suggestions and fruitful discussions. The authors also would like to thank anonymous referees for their careful reading of the manuscript and for their valuable suggestions that make the presentation of the results clearer. The first author was supported by NSTC Grant 113-2115-M-007-001-MY3. The first author (partially) and the second author acknowledge support by Deutsche Forschungsgemeinschaft (DFG) through CRC-TRR 326 `Geometry and Arithmetic of Uniformized Structures', project number 444845124. Data sharing is not applicable for this article because no datasets were generated or analyzed during the preparation of the manuscript.
\subsection*{Compliance with Ethical Standards} The authors would like to declare that they have no conflicts of interest.

\section{Preliminaries and Background}
The main goal of this section is to overview Drinfeld $A$-modules over $K$-schemes and the de Rham cohomology attached to them as well as Drinfeld-Hayes $A$-modules. Furthermore, we will briefly describe the theory of coherent sheaves on certain rigid analytic spaces which will be used in \S7.

We will mainly use \cite{Gek86,Boc02,Leh09} to bring materials together for preliminary sections and describe the notational differences when there is any.

\subsection{Drinfeld $A$-modules over an $A$-algebra} Let $B\subseteq \mathbb{C}_{\infty}$ be an $A$-algebra containing $A$.  We define the non-commutative power series ring $B[[\tau]]$ subject to the condition 
\[
\tau c=c^q\tau, \ \ c\in B
\] 
and let $B[\tau]\subset B[[\tau]]$ be the subring of polynomials in $\tau$. There exists an action of $B[\tau]$ on $B$ given by 
\[
u\cdot z:=u(z):=\sum_{i\geq 0}a_iz^{q^i}
\]
for each $u=\sum_{i\geq 0} a_i\tau^i\in B[\tau]$ and $z\in B$.

By \textit{an $A$-field}, we mean a field $L$ equipped with a ring homomorphism $i:A\to L$. If $\Ker(i)=\mathfrak{p}$ for some prime ideal $\mathfrak{p}$ of $A$, then we say $L$ has \textit{characteristic $\mathfrak{p}$}. If $\Ker(i)=(0)$, then we say $L$ has \textit{generic characteristic}. Clearly, any subfield of $\mathbb{C}_{\infty}$ containing $K$ is an $A$-field of generic characteristic. We also define the map $\partial:L[\tau]\to L$ sending each $u=\sum_{i=0}^mu_i\tau^i$ to the $\tau^{0}$-th coefficient $u_0$ of $u$.
\begin{definition} \label{D:11}
\begin{itemize}

\item[(i)] A \textit{Drinfeld $A$-module $\phi$ of rank $r\geq 1$ defined over an $A$-field $L$} is an $\mathbb{F}_q$-algebra homomorphism $\phi:A\to L[\tau]$ given by 
\[
\phi_{a}:=\phi(a):=i(a)+a_1\tau+\cdots +a_{r\deg(a)}\tau^{r\deg(a)}
\]
so that $a_{r\deg(a)}\neq 0$ and $\partial\circ \phi=i$. When $L$ has generic characteristic, we call $a_i$ \textit{the $i$-th coefficient of $\phi$}.
\item[(ii)] Let $L\subseteq \mathbb{C}_{\infty}$ be a field containing $K$. A homomorphism between Drinfeld $A$-modules $\phi$ and $\phi'$ (over $L$) is given by an element $u\in L[\tau]$ satisfying
\[
u \phi_a=\phi'_a u
\]
for each $a\in A$. Moreover, we call $\phi$ and $\phi'$ \textit{isomorphic} if $u\in L^{\times}$. We further denote \textit{the set of endomorphisms of $\phi$} by 
\[
\End(\phi):=\{u\in \mathbb{C}_{\infty}[\tau]\ \ | \ \ \phi_au=u\phi_a, \ \ a\in A\}.
\]
By Drinfeld \cite[\S2]{Dri74}, we know that $\End(\phi)$ is a commutative and projective $A$-module of rank less than or equal to $r$. Furthermore, we call $\phi$ a \textit{CM Drinfeld $A$-module} if $\End(\phi)$ has projective rank $r$ as an $A$-module.
\end{itemize}
\end{definition}

Let $\Lambda\subset \mathbb{C}_{\infty}$ be a projective $A$-module of rank $r$. We call $\Lambda$ \textit{an $A$-lattice of rank $r$} if its intersection with any ball of finite radius is finite. We further define \textit{a morphism} between $A$-lattices $\Lambda_1$ and $\Lambda_2$ to be an element $c\in \mathbb{C}_{\infty}^{\times}$ satisfying $c\Lambda_1\subseteq \Lambda_2$. Furthermore, we call $\Lambda_1$ and $\Lambda_2$ \textit{isomorphic} if $c\Lambda_1=\Lambda_2$ for some $c\in \mathbb{C}_{\infty}^{\times}$. 
For any $A$-lattice $\Lambda$, we also define its exponential function $\exp_{\Lambda}$ by 
\[
\exp_{\Lambda}(z):=z\prod_{\lambda\in \Lambda\setminus \{0\}}\Big(1-\frac{z}{\lambda}\Big).
\]

For a Drinfeld $A$-module $\phi$ defined over a field $L$ of generic characteristic, there exists $\exp_{\phi}=\sum_{i\geq 0}\beta_i\tau^i\in \mathbb{C}_{\infty}[[\tau]]$ uniquely defined by the conditions $\beta_0=1$ and 
\begin{equation}\label{E:funceq}
\exp_{\phi}  a=\phi_a \exp_{\phi}
\end{equation}
in $\mathbb{C}_{\infty}[[\tau]]$ for each $a\in A$. Moreover, it induces an entire function $\exp_{\phi}:\mathbb{C}_{\infty}\to \mathbb{C}_{\infty}$ given by
\[
\exp_{\phi}(z)=\sum_{i\geq 0}\beta_iz^{q^i}.
\]
We note that $\Ker(\exp_{\phi})$ is an $A$-lattice of rank $r$. We call each non-zero element in $\Ker(\exp_{\phi})$ \textit{a period of $\phi$}.

By the analytic uniformization of Drinfeld $A$-modules \cite[\S3]{Dri74}, the category of Drinfeld $A$-modules of rank $r$ defined over $\mathbb{C}_{\infty}$ is equivalent to the category of $A$-lattices of rank $r$. In particular, each $A$-lattice $\Lambda$ of rank $r$ corresponds to a unique Drinfeld $A$-module $\phi^{\Lambda}$ and its exponential function is $\exp_{\Lambda}$. Furthermore, each Drinfeld $A$-module of rank $r$ gives rise to the $A$-lattice $\Lambda=\Ker(\exp_{\phi})$ and hence $\exp_{\phi}=\exp_{\Lambda}$. 

We further set $\End(\Lambda):=\{c\in \mathbb{C}_{\infty}| \ \ c\Lambda\subseteq \Lambda\}$. Then there exists a ring isomorphism between $\End(\phi)$ and $\End(\Lambda)$ sending $u\in \End(\phi)$ to its constant term (see \cite[Thm.~13.25]{Ros02}).

\begin{remark}  Following \cite{Hay79}, a subring $R\subseteq K$ containing $1$ and whose fraction field is $K$ is \textit{an order for $A$}. Due to Drinfeld, we know that there exist non-trivial embeddings of $R$ into $B[\tau]$ whenever $B$ is an algebraically closed field. Hence, a theory of Drinfeld $R$-modules is valid by simply replacing $A$ with an order for $A$ in the above description and we refer the reader to \cite[\S1, 2, 4, 5]{Hay79} for further details. We note that such a theory will be used in the proof of Proposition \ref{P:jinvariant}. 
    
\end{remark}

\subsection{Drinfeld-Hayes $A$-modules} Our goal in this subsection is to introduce Drinfeld-Hayes $A$-modules which will be essential later on for the study of rationality properties of nearly holomorphic Drinfeld modular forms. Our exposition and notation are based on  \cite[Chap.~II, IV]{Gek86}. One can also refer to \cite{Hay79, Hay92} for additional details.

Consider a Drinfeld $A$-module $\rho$ of rank $r$ over a field $L$ containing $K$. Let $\mathfrak{b}$ be an integral ideal of $A$ and $I_{\rho,\mathfrak{b}}$ be the left ideal in $L[\tau]$ generated by $\rho_b$ for any $b\in \mathfrak{b}$. Since $L[\tau]$ is a left principal ideal domain \cite[Cor.~1.6.3]{Gos96}, there exists a unique $\rho_{\mathfrak{b}}\in L[\tau]$, which is monic, such that 
$
I_{\rho,\mathfrak{b}}=L[\tau]\rho_{\mathfrak{b}}.
$
Since the right multiplication of $\rho_{\mathfrak{b}}$ with $\rho_x$, for any $x\in A$, also lies in $I_{\rho,\mathfrak{b}}$, there exists a unique element $(\mathfrak{b}*\rho)_x\in L[\tau]$ such that 
\begin{equation}\label{E:isog}
(\mathfrak{b}*\rho)_x \rho_{\mathfrak{b}}=\rho_{\mathfrak{b}}\rho_x.
\end{equation}
Note that the map $(\mathfrak{b}*\rho):A \rightarrow L[\tau]$ forms a Drinfeld $A$-module of rank $r$ over $L$ and  it is indeed the unique Drinfeld $A$-module isogenous to $\rho$ via $\rho_{\mathfrak{b}}$ \cite[\S5]{Hay92}. 

\begin{definition}Let $U_{\infty}^{(1)}$ be the group of 1-units in $K_{\infty}$. \textit{A sign function} is a map $\sgn:K_{\infty}\to \mathbb{F}_{q^{\mathfrak{d}}}$ satisfying 
	\begin{itemize}
		\item[(i)] $\sgn(xy)=\sgn(x)\sgn(y), \ \ x,y\in K_{\infty}$,
		\item[(ii)] $\sgn(x)=1, \ \ x\in  U_{\infty}^{(1)}$,
		\item[(iii)] $\sgn(x)=x, \ \ x\in \mathbb{F}_{q^{\mathfrak{d}}}$.
	\end{itemize}
\end{definition}
Note that there are $(q^{\mathfrak{d}}-1)$-many sign functions. Throughout this paper, we fix a sign function, denoted by $\sgn$, and we assume that the uniformizer $\pi_{\infty}$ at $\infty$ maps to 1 under $\sgn$. 

Now we aim to introduce a certain choice of Drinfeld $A$-modules of rank one for each equivalence class in the class group $\Cl(A)$ of $A$. Firstly, consider the map $\mathfrak{L}:L[\tau]\to L$ sending each $u=\sum_{i=0}^mu_i\tau^i$ with $m\geq 0$ to its leading coefficient $u_m$. 
\begin{definition}A Drinfeld $A$-module $\rho$ of rank one is called \textit{a Drinfeld-Hayes $A$-module} if, for all $a\in A$, $\mathfrak{L}(\rho_a)=\psi\circ \sgn(a)$ for some $\psi\in \Gal(\mathbb{F}_{q^{\mathfrak{d}}}/\mathbb{F}_q)$. 
\end{definition}

By the seminal work of Hayes \cite[\S8]{Hay79} (see also \cite[\S15]{Hay92}), we know that each Drinfeld $A$-module of rank one is isomorphic to a Drinfeld-Hayes $A$-module defined over \textit{the Hilbert class field $H$ of $K$}, the maximal unramified extension of $K$ in which $\infty$ splits completely \cite[Chap.~IV, Cor.~2.11]{Gek86} (see also \cite[\S7.4]{Gos96}). We further call an $A$-lattice $\Lambda$ \textit{special} if $\phi^{\Lambda}$ is a Drinfeld-Hayes $A$-module defined over $H$. As an immediate consequence of Hayes's result, each equivalence class of rank one $A$-lattices contains special $A$-lattices which are conjugate by the elements of $\mathbb{F}_{q^{\mathfrak{d}}}^{\times}$ (\cite[Prop.~13.1]{Hay92}). 

Let $\Lambda\subset \mathbb{C}_{\infty}$ be an $A$-lattice of rank one isomorphic to a fractional ideal $\mathfrak{a}$ and let $\Lambda^{(\mathfrak{a})}$ be a fixed special $A$-lattice in the equivalence class of $\mathfrak{a}$ with the corresponding Drinfeld-Hayes $A$-module $\rho^{\Lambda^{(\mathfrak{a})}}$ defined over $H$. We will  denote $\rho^{\Lambda^{(\mathfrak{a})}}$ by $\rho^{(\mathfrak{a})}$ throughout the paper to ease the notation. We further define $\xi(\Lambda)\in \mathbb{C}_{\infty}^{\times}$ so that 
\[\label{xi}
\xi(\Lambda)\Lambda=\Lambda^{(\mathfrak{a})}.
\]
Since the elements in $\mathbb{F}_{q}^{\times}$ form the automorphism group of $\rho^{(\mathfrak{a})}$, $\xi(\Lambda)$ is determined uniquely up to multiplication by a  $(q-1)$-st root of unity. 
\begin{example} Let $A=\mathbb{F}_q[\theta]$. Fix a $(q-1)$-st root of $-\theta$ and define 
	\[
	\tilde{\pi}:=\theta(-\theta)^{1/(q-1)}\prod_{i=1}^{\infty}\Big(1-\theta^{1-q^i}\Big)^{-1}\in \CC_{\infty}^{\times}.
	\]
	It is known that $\tilde{\pi}A=\Lambda^{(1)}$ is a special $A$-lattice with the corresponding Drinfeld-Hayes $A$-module $C:=\rho^{(1)}$, known as \textit{the Carlitz module}, given by $C_{\theta}:=\theta+\tau$. Furthermore, for any $A$-lattice of rank one $\Lambda=\beta A$ for some $\beta\in \mathbb{C}_{\infty}^{\times}$, we have $\xi(\Lambda)=\tilde{\pi}\beta^{-1}$.
\end{example}

\begin{lemma}[{Yu, \cite{Yu86}}] \label{L:11} Let $\mathfrak{a}$ be a fractional ideal of $A$. Then $\xi(\mathfrak{a})$ is transcendental over $K$.
\end{lemma}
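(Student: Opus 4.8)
The plan is to reduce the assertion to Yu's transcendence theorem for periods of Drinfeld modules defined over $\overline{K}$. First I would observe that a fractional ideal $\mathfrak{a}$ of $A$ is itself an $A$-lattice of rank one inside $\mathbb{C}_{\infty}$, so the quantity $\xi(\mathfrak{a})$ is the element determined (up to a $(q-1)$-st root of unity) by the normalization $\xi(\mathfrak{a})\mathfrak{a}=\Lambda^{(\mathfrak{a})}$, where $\Lambda^{(\mathfrak{a})}$ is the special $A$-lattice whose associated Drinfeld-Hayes $A$-module $\rho^{(\mathfrak{a})}$ is defined over the Hilbert class field $H$. Since $\Lambda^{(\mathfrak{a})}=\Ker(\exp_{\rho^{(\mathfrak{a})}})$, every nonzero element of $\Lambda^{(\mathfrak{a})}$ is by definition a period of $\rho^{(\mathfrak{a})}$.

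Next I would fix a nonzero element $a_0\in\mathfrak{a}\subset K^{\times}$ and note that $\xi(\mathfrak{a})a_0$ is then a nonzero element of $\Lambda^{(\mathfrak{a})}$, hence a nonzero period of $\rho^{(\mathfrak{a})}$. Because $H$ is a finite unramified extension of $K$, we have $H\subset\overline{K}$, so $\rho^{(\mathfrak{a})}$ is a Drinfeld $A$-module whose coefficients are algebraic over $K$; that is, it is defined over $\overline{K}$. This places $\xi(\mathfrak{a})a_0$ exactly in the situation to which the transcendence theorem applies.

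The main input is Yu's theorem \cite{Yu86}, which I would invoke as a black box: any nonzero period of a Drinfeld $A$-module defined over $\overline{K}$ is transcendental over $K$. Applying it to the period $\xi(\mathfrak{a})a_0$ shows that $\xi(\mathfrak{a})a_0$ is transcendental over $K$. Finally, since $a_0\in K^{\times}$ is algebraic over $K$, the quotient $\xi(\mathfrak{a})=\xi(\mathfrak{a})a_0/a_0$ remains transcendental over $K$, which is the desired conclusion.

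The entire analytic and Diophantine content of the statement is packaged inside the cited theorem of Yu, so there is no genuine obstacle in the body of the argument. The only points requiring care are bookkeeping ones: recognizing $\xi(\mathfrak{a})$, up to the harmless $K^{\times}$-factor $a_0$, as a bona fide period of $\rho^{(\mathfrak{a})}$, and confirming that its field of definition $H$ is algebraic over $K$ so that Yu's hypothesis is met. The depth of the result lies entirely in that transcendence input rather than in this reduction.
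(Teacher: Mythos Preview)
Your proposal is correct and follows essentially the same approach as the paper: both arguments recognize $\xi(\mathfrak{a})a_0$ (for a nonzero $a_0\in\mathfrak{a}$) as a nonzero period of the Drinfeld-Hayes module $\rho^{(\mathfrak{a})}$ defined over $H\subset\overline{K}$, and then apply Yu's transcendence theorem. The only cosmetic difference is that the paper phrases the appeal to \cite[Thm.~5.1]{Yu86} as a proof by contradiction via the Hermite--Lindemann form (if $\xi(\mathfrak{a})c$ were algebraic then $\exp_{\rho^{(\mathfrak{a})}}(\xi(\mathfrak{a})c)$ would be transcendental, contradicting that it equals $0$), whereas you invoke directly the equivalent corollary that nonzero periods are transcendental.
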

\begin{proof} For completeness, we recall some steps of the proof. Assume to the contrary that $\xi(\mathfrak{a})$ is algebraic over $K$. Let $c\in \mathfrak{a}\setminus \{0\}$. Since $\rho^{(\mathfrak{a})}$ is defined over $H\subset \overline{K}$, by \cite[Thm.~5.1]{Yu86}, we see that $\exp_{\rho^{(\mathfrak{a})}}(\xi(\mathfrak{a})c)$ is transcendental over $K$. However, by definition, $\exp_{\rho^{(\mathfrak{a})}}(\xi(\mathfrak{a})c)=0$: a contradiction.
\end{proof}

Let $\mathfrak{c}$ and $\mathfrak{u}$ be integral ideals of $A$. Consider the Drinfeld $A$-module $\mathfrak{c}*\rho^{(\mathfrak{u})}$. Since the leading coefficient of $\rho_{\mathfrak{c}}^{(\mathfrak{u})}$ is one, by \eqref{E:isog}, $\mathfrak{c}*\rho^{(\mathfrak{u})}$ is a Drinfeld-Hayes $A$-module over $H$. Moreover, comparing the $A$-lattices corresponding to $\rho^{(\mathfrak{c}^{-1}\mathfrak{u})}$ and $\mathfrak{c}*\rho^{(\mathfrak{u})}$ by using \cite[Thm.~8.14]{Hay92}, we see that $\rho^{(\mathfrak{c}^{-1}\mathfrak{u})}$ is isomorphic to $\mathfrak{c}*\rho^{(\mathfrak{u})}$ over $H$. Let $\mathcal{J}(\mathfrak{c},\mathfrak{u})\in H^{\times}$ be such an isomorphism, which is uniquely determined up to multiplication by a $(q-1)$-st root of unity (It is denoted by $\Theta(\mathfrak{c},\mathfrak{u})$ in \cite[Chap.~IV, (5.2)]{Gek86}).  Then by \cite[Chap.~IV, Prop.~5.4(i)]{Gek86}, we have 
\begin{equation}\label{E:relxi}
\xi(\mathfrak{c}^{-1}\mathfrak{u})=\mathcal{J}(\mathfrak{c},\mathfrak{u})\partial(\rho_{\mathfrak{c}}^{(\mathfrak{u})})\xi(\mathfrak{u}).
\end{equation}

\begin{lemma}\label{L:xi} Let $\mathfrak{a}$ and $\mathfrak{b}$ be fractional ideals of $A$. Then
\[
\xi(\mathfrak{a})=\alpha\xi(\mathfrak{b})
\]
for some $\alpha\in H^{\times}$.
\end{lemma}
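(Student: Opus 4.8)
The plan is to reduce the general statement to the identity \eqref{E:relxi} — which already exhibits $\xi(\mathfrak{c}^{-1}\mathfrak{u})$ as an $H^{\times}$-multiple of $\xi(\mathfrak{u})$ for integral ideals $\mathfrak{c},\mathfrak{u}$ — combined with the fact that inversion is a bijection on the class group $\Cl(A)$. Since each $\xi$ is only defined up to $\FF_q^{\times}\subseteq H^{\times}$, the assertion is independent of the choices involved, so it suffices to produce one valid $\alpha$.

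First I would record how $\xi$ transforms inside a single ideal class. If $\mathfrak{a}=\gamma\mathfrak{a}'$ with $\gamma\in K^{\times}$, then $\mathfrak{a}$ and $\mathfrak{a}'$ lie in the same class and hence share the same fixed special lattice, so $\xi(\mathfrak{a})\mathfrak{a}=\xi(\mathfrak{a}')\mathfrak{a}'$ as lattices. Substituting $\mathfrak{a}=\gamma\mathfrak{a}'$ gives $\bigl(\xi(\mathfrak{a})\gamma/\xi(\mathfrak{a}')\bigr)\mathfrak{a}'=\mathfrak{a}'$, and since the only elements of $\CC_{\infty}^{\times}$ stabilizing a fractional ideal of the Dedekind domain $A$ are the units $A^{\times}=\FF_q^{\times}$, I conclude $\xi(\mathfrak{a})/\xi(\mathfrak{a}')\in K^{\times}\subseteq H^{\times}$. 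Thus it is enough to prove that $\xi(\mathfrak{a})/\xi(A)\in H^{\times}$ for every fractional ideal $\mathfrak{a}$, as the general case then follows by writing $\xi(\mathfrak{a})/\xi(\mathfrak{b})=\bigl(\xi(\mathfrak{a})/\xi(A)\bigr)\bigl(\xi(A)/\xi(\mathfrak{b})\bigr)$.

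Next comes the passage between classes. Given $\mathfrak{a}$ in the class $c\in\Cl(A)$, I would choose an integral ideal $\mathfrak{c}$ representing $c^{-1}$; then $\mathfrak{c}^{-1}$ lies in $c$, so $\mathfrak{a}=\gamma\mathfrak{c}^{-1}$ for some $\gamma\in K^{\times}$ and the previous paragraph gives $\xi(\mathfrak{a})/\xi(\mathfrak{c}^{-1})\in H^{\times}$. Applying \eqref{E:relxi} with $\mathfrak{u}=A$ yields
\[
\xi(\mathfrak{c}^{-1})=\mathcal{J}(\mathfrak{c},A)\,\partial(\rho_{\mathfrak{c}}^{(A)})\,\xi(A),
\]
and since $\mathcal{J}(\mathfrak{c},A)\in H^{\times}$ by construction, the entire problem reduces to checking that the constant-term factor $\partial(\rho_{\mathfrak{c}}^{(A)})$ lies in $H^{\times}$.

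The crux is this last point, and it is the only genuinely delicate step; everything else is class-group bookkeeping and the transformation law of $\xi$ under $K^{\times}$. Because $\rho^{(A)}$ is a Drinfeld-Hayes $A$-module defined over $H$, its isogeny polynomial $\rho_{\mathfrak{c}}^{(A)}$ has coefficients in $H$, so $\partial(\rho_{\mathfrak{c}}^{(A)})\in H$; the subtlety is to verify it is nonzero rather than merely a unit of $\CC_{\infty}^{\times}$. To see this I would pick any $a\in\mathfrak{c}\setminus\{0\}$: then $\rho_a^{(A)}\in I_{\rho^{(A)},\mathfrak{c}}=H[\tau]\rho_{\mathfrak{c}}^{(A)}$, so $\rho_a^{(A)}=g\,\rho_{\mathfrak{c}}^{(A)}$ for some $g\in H[\tau]$, and the multiplicativity of $\partial$ (the $\tau^0$-coefficient map) gives $a=\partial(\rho_a^{(A)})=\partial(g)\,\partial(\rho_{\mathfrak{c}}^{(A)})$. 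As $a\neq 0$ this forces $\partial(\rho_{\mathfrak{c}}^{(A)})\neq 0$, hence $\partial(\rho_{\mathfrak{c}}^{(A)})\in H^{\times}$. Combining the three steps gives $\xi(\mathfrak{a})/\xi(A)\in H^{\times}$ for every fractional ideal $\mathfrak{a}$, and therefore $\xi(\mathfrak{a})=\alpha\,\xi(\mathfrak{b})$ with $\alpha\in H^{\times}$ for all $\mathfrak{a},\mathfrak{b}$.
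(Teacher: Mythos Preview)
Your proof is correct and follows essentially the same route as the paper's: reduce to $\mathfrak{b}=A$, handle the transformation of $\xi$ within an ideal class via a $K^{\times}$-scaling, and then bridge classes using \eqref{E:relxi}. The only cosmetic difference is that the paper applies \eqref{E:relxi} with $\mathfrak{c}=\mathfrak{u}=\tilde{\mathfrak{a}}$ (an integral ideal in the class of $\mathfrak{a}$) to compare $\xi((1))$ with $\xi(\tilde{\mathfrak{a}})$ directly, whereas you choose $\mathfrak{u}=A$ and an integral $\mathfrak{c}$ in the inverse class; your added verification that $\partial(\rho_{\mathfrak{c}}^{(A)})\neq 0$ makes explicit a point the paper leaves implicit.
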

\begin{proof} By comparing both $\xi(\mathfrak{a})$ and $ \xi(\mathfrak{b})$ with $\xi((1))$, we may assume that $\mathfrak{b}=(1)$. Now, since $\mathfrak{a}$ is a fractional ideal, we have $\xi(\mathfrak{a})=\mathfrak{k}\xi(\tilde{\mathfrak{a}})$  for a unique choice of $\mathfrak{k}\in A\setminus\{0\}$ satisfying $\mathfrak{k}\mathfrak{a}=\tilde{\mathfrak{a}}$ for an integral ideal $\tilde{\mathfrak{a}}$. On the other hand, applying \eqref{E:relxi} by choosing $\mathfrak{c}=\mathfrak{u}=\tilde{\mathfrak{a}}$, it follows that $\xi((1))=\alpha_1\xi(\tilde{\mathfrak{a}})$ for some $\alpha_1\in H^{\times}$.
\end{proof}

For any fractional ideal $\mathfrak{a}$, we consider $\mathfrak{a}$ as an $A$-lattice. We choose $\xi(\mathfrak{a})$ uniquely up to a multiple of $\mathbb{F}_q^{\times}$ and let
\begin{equation}\label{E:unifatideal}
t_{\mathfrak{a}}(z):=\exp_{\rho^{(\mathfrak{a})}}(\xi(\mathfrak{a})z)^{-1}=\exp_{\xi(\mathfrak{a})\mathfrak{a}}(\xi(\mathfrak{a})z)^{-1}, \ \ z\in \mathbb{C}_{\infty}\setminus \mathfrak{a}.
\end{equation}

We finish this subsection with our next proposition.
\begin{proposition}\label{P:expression} Let $\mathfrak{a}_1,\dots,\mathfrak{a}_n$ be non-zero fractional ideals of $A$. Then there exists an integral ideal $\mathfrak{m}\subseteq A$ such that, for each $1\leq i \leq n$, $t_{\mathfrak{a}_i}$ may be written as a power series in $t_{\mathfrak{m}}$ whose coefficients are in $H$.
\end{proposition}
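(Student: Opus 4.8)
The plan is to reduce the statement to understanding how the functions $t_{\mathfrak{a}_i}$ transform under enlarging the lattice, and then to choose a single common integral ideal $\mathfrak{m}$ divisible enough to serve all the $\mathfrak{a}_i$ simultaneously. The key elementary observation is this: if $\mathfrak{m}$ is an integral ideal with $\mathfrak{m}\subseteq \mathfrak{a}_i$ (equivalently $\mathfrak{a}_i^{-1}\mathfrak{m}\subseteq A$, so that $\mathfrak{a}_i^{-1}\mathfrak{m}$ is integral), then the lattice $\xi(\mathfrak{m})\mathfrak{m}=\Lambda^{(\mathfrak{m})}$ contains $\frac{\xi(\mathfrak{m})}{\xi(\mathfrak{a}_i)}\cdot \xi(\mathfrak{a}_i)\mathfrak{a}_i$, i.e. a scaled copy of $\Lambda^{(\mathfrak{a}_i)}$. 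Concretely, I would use the isogeny framework of \eqref{E:isog}--\eqref{E:relxi}: writing $\mathfrak{c}=\mathfrak{a}_i^{-1}\mathfrak{m}$ (integral) and $\mathfrak{u}=\mathfrak{a}_i$, the Drinfeld-Hayes module $\mathfrak{c}*\rho^{(\mathfrak{a}_i)}=\rho^{(\mathfrak{m})}$ (up to the isomorphism $\mathcal{J}(\mathfrak{c},\mathfrak{a}_i)$) is isogenous to $\rho^{(\mathfrak{a}_i)}$ via $\rho^{(\mathfrak{a}_i)}_{\mathfrak{c}}$, and this isogeny at the level of exponentials expresses $\exp_{\rho^{(\mathfrak{m})}}$ as a polynomial in $\exp_{\rho^{(\mathfrak{a}_i)}}$ via $\mathfrak{L}$ and \eqref{E:funceq}.

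First I would fix $\mathfrak{m}$ to be any nonzero integral ideal contained in $\bigcap_{i=1}^n \mathfrak{a}_i$ (such an $\mathfrak{m}$ exists since each $\mathfrak{a}_i$ is fractional, so some nonzero $a\in A$ clears all denominators and $\mathfrak{m}=(a)\cdot\prod_i$(integral representatives) works). For each fixed $i$, set $\mathfrak{c}=\mathfrak{a}_i^{-1}\mathfrak{m}$, which is integral. By the isogeny relation \eqref{E:isog} applied to $\rho^{(\mathfrak{a}_i)}$, there is a monic additive polynomial $\rho^{(\mathfrak{a}_i)}_{\mathfrak{c}}\in H[\tau]$ with
\[
\rho^{(\mathfrak{a}_i)}_{\mathfrak{c}}\exp_{\rho^{(\mathfrak{a}_i)}}=\exp_{\mathfrak{c}*\rho^{(\mathfrak{a}_i)}},
\]
and since $\mathfrak{c}*\rho^{(\mathfrak{a}_i)}$ is isomorphic over $H$ to $\rho^{(\mathfrak{m})}$ via $\mathcal{J}(\mathfrak{c},\mathfrak{a}_i)\in H^\times$, combining with the scaling relation \eqref{E:relxi} I can write $t_{\mathfrak{m}}=\exp_{\rho^{(\mathfrak{m})}}(\xi(\mathfrak{m})z)^{-1}$ as the reciprocal of an $H$-coefficient additive polynomial evaluated at $\exp_{\rho^{(\mathfrak{a}_i)}}(\xi(\mathfrak{a}_i)z)=t_{\mathfrak{a}_i}(z)^{-1}$. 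Inverting this relation is where one wants $t_{\mathfrak{a}_i}$ as a power series in $t_{\mathfrak{m}}$: since $\rho^{(\mathfrak{a}_i)}_{\mathfrak{c}}$ is monic of the form $\tau^{d}(\text{leading }1)+(\text{lower})$ with constant term $\partial(\rho^{(\mathfrak{a}_i)}_{\mathfrak{c}})\in H^\times$, its reciprocal-substitution is invertible as a power series over $H$.

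The second step is the inversion itself. Writing $u=t_{\mathfrak{a}_i}$ and $t=t_{\mathfrak{m}}$, the previous step yields a relation of the form $t^{-1}=u^{-1}\cdot g(u^{-1})$ where $g\in H[\![u^{-1}]\!]$ (or more precisely a polynomial in $u^{-1}$ coming from $\rho^{(\mathfrak{a}_i)}_{\mathfrak{c}}$) with $g(0)=\partial(\rho^{(\mathfrak{a}_i)}_{\mathfrak{c}})\cdot\mathcal{J}(\mathfrak{c},\mathfrak{a}_i)^{-1}\in H^\times$. Equivalently $t=u\cdot h(u)$ for a power series $h\in H[\![u]\!]$ with $h(0)\in H^\times$, and by the formal inverse function theorem over $H$ this can be solved to give $u=t\cdot\tilde h(t)$ with $\tilde h\in H[\![t]\!]$; that is, $t_{\mathfrak{a}_i}\in H[\![t_{\mathfrak{m}}]\!]$. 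Since the same $\mathfrak{m}$ works for all $i$ simultaneously, the proposition follows.

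The main obstacle I anticipate is purely bookkeeping rather than conceptual: keeping precise track of the scaling factors $\xi(\mathfrak{m})/\xi(\mathfrak{a}_i)$ and the isomorphism constants $\mathcal{J}(\mathfrak{c},\mathfrak{a}_i)$, and verifying that every coefficient that appears genuinely lies in $H$ rather than merely in $\mathbb{C}_\infty$. The rationality over $H$ is exactly what Lemma \ref{L:xi} and \eqref{E:relxi} are designed to guarantee (the ratios $\xi(\mathfrak{a})/\xi(\mathfrak{b})$ lie in $H^\times$, and $\rho^{(\mathfrak{a}_i)}_{\mathfrak{c}}$ has $H$-coefficients because $\rho^{(\mathfrak{a}_i)}$ is defined over $H$), so the delicate point is to assemble these ingredients so that the invertibility step stays within $H[\![t_{\mathfrak{m}}]\!]$; the leading-coefficient unit $\partial(\rho^{(\mathfrak{a}_i)}_{\mathfrak{c}})\in H^\times$ is what makes the formal inversion legitimate over $H$.
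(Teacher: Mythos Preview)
There is a genuine gap in your inversion step, and it stems from the direction in which you run the isogeny. With $\mathfrak{c}=\mathfrak{a}_i^{-1}\mathfrak{m}$, the $*$-action gives $\mathfrak{c}*\rho^{(\mathfrak{a}_i)}\cong \rho^{(\mathfrak{c}^{-1}\mathfrak{a}_i)}=\rho^{(\mathfrak{a}_i^2\mathfrak{m}^{-1})}$, not $\rho^{(\mathfrak{m})}$ (see the sentence before \eqref{E:relxi}). But the more serious problem is the claimed passage from $t^{-1}=u^{-1}g(u^{-1})$ to ``$t=u\cdot h(u)$ with $h(0)\in H^{\times}$''. The polynomial $g$ coming from $\rho_{\mathfrak{c}}$ has degree $q^{\deg\mathfrak{c}}-1\geq q-1\geq 1$ in its variable, so $g(u^{-1})$ blows up like $u^{-(q^{\deg\mathfrak{c}}-1)}$ as $u\to 0$; hence $h(u):=1/g(u^{-1})$ vanishes to order $q^{\deg\mathfrak{c}}-1$ and $h(0)=0$, not a unit. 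Concretely one gets $t=c\,u^{q^{\deg\mathfrak{c}}}+O(u^{q^{\deg\mathfrak{c}}+1})$, so the formal inverse function theorem does not apply and $u$ cannot be written as a power series in $t$ this way.

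The fix is to run the isogeny in the opposite direction, and then no inversion is needed at all. Setting $I_i=\mathfrak{a}_i^{-1}\mathfrak{m}$ (integral), one has $I_i*\rho^{(\mathfrak{m})}\cong \rho^{(\mathfrak{a}_i)}$, and the isogeny $\rho^{(\mathfrak{m})}_{I_i}$ applied to $t_{\mathfrak{m}}^{-1}=\exp_{\rho^{(\mathfrak{m})}}(\xi(\mathfrak{m})z)$ yields, via \eqref{E:relxi}, the relation
\[
t_{\mathfrak{a}_i}(z)=\mathcal{J}(I_i,\mathfrak{m})^{-1}\cdot\frac{1}{\rho^{(\mathfrak{m})}_{I_i}\bigl(t_{\mathfrak{m}}(z)^{-1}\bigr)},
\]
which is \eqref{E:unifchange21} in the paper. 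Since $\rho^{(\mathfrak{m})}_{I_i}$ is monic in $\tau$ with coefficients in $H$, the right-hand side is directly a power series in $t_{\mathfrak{m}}$ with $H$-coefficients, beginning at degree $q^{\deg I_i}$. This is exactly the paper's argument; your overall plan (choose a common $\mathfrak{m}\subseteq\bigcap\mathfrak{a}_i$ and use the ideal isogeny) is the right one, but the isogeny must go from the finer uniformizer to the coarser one.
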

\begin{proof} Note that there exists an element $c_i\in A\setminus\{0\}$ so that $c_i \mathfrak{a}_i$ is an integral ideal of $A$, say $\widetilde{\mathfrak{a}_i}$ and so, as in the proof of Lemma \ref{L:xi}, we have $\xi(\mathfrak{a}_i)=c_i\xi(\widetilde{\mathfrak{a}_i})$. 
	Since $\mathfrak{a}_i$ and $\widetilde{\mathfrak{a}_i}$ are in the same equivalence class and $\rho^{(\mathfrak{a}_i)}$ has the associated $A$-lattice $ \xi(\mathfrak{a}_i)\mathfrak{a}_i$, using the functional equation in \eqref{E:funceq}, we obtain 
    \begin{multline}\label{E:unifchange01}
        t_{\mathfrak{a}_i}(z)=\exp_{\rho^{(\mathfrak{a}_i)}}(\xi(\mathfrak{a}_i)z)^{-1}=\exp_{\rho^{(\widetilde{\mathfrak{a}_i})}}(\xi(\mathfrak{a}_i)z)^{-1}\\=\exp_{\rho^{(\widetilde{\mathfrak{a}_i})}}(c_i\xi(\widetilde{\mathfrak{a}_i})z)^{-1}=\frac{1}{\rho^{(\widetilde{\mathfrak{a}_i})}_{c_i}(\exp_{\rho^{(\widetilde{\mathfrak{a}_i})}}(\xi(\widetilde{\mathfrak{a}_i})z))}=\frac{1}{\rho^{(\widetilde{\mathfrak{a}_i})}_{c_i}(t_{\widetilde{\mathfrak{a}_i}}(z)^{-1})}.
	\end{multline}
	 Thus, by \eqref{E:unifchange01}, we have 
	\begin{equation}\label{E:algexpansion01}
	t_{\mathfrak{a}_i}=\frac{1}{a_{\deg(c_i)}}t_{\widetilde{\mathfrak{a}}_i}^{q^{\deg(c_i)}}+O(t_{\widetilde{\mathfrak{a}}_i}^{q^{\deg(c_i)}})
	\end{equation}
	where $a_{\deg(c_i)}$ is the leading coefficient of $\rho^{(\widetilde{\mathfrak{a}}_i)}_{c_i}\in H[\tau]$ and $O(t_{\widetilde{\mathfrak{a}}_i}^{q^{\deg(c_i)}})$ is a power series in $t_{\widetilde{\mathfrak{a}}_i}$ with coefficients in $H$ so that the smallest power of $t_{\widetilde{\mathfrak{a}}_i}$ with a non-zero coefficient is higher than $q^{\deg(c_i)}$.
    
Picking a non-zero element $\tilde{c_i}\in \widetilde{\mathfrak{a}_i}$ for each $1\leq i \leq n$, we further set $\tilde{c}:=\prod_{1\leq i \leq n}\tilde{c_i}$ and let $\mathfrak{m}:=(\tilde{c})\subseteq \widetilde{\mathfrak{a}_i}$. There exists an integral ideal $I_i$ of $A$ such that $\mathfrak{m}=I_i \widetilde{\mathfrak{a}_i}$. By \cite[Chap.~VI, (2.5)]{Gek86}, we have 
	\begin{equation}\label{E:unifchange21}
	t_{\widetilde{\mathfrak{a}_i}}(z)=\mathcal{J}(I_i,\mathfrak{m})^{-1}\frac{1}{\rho_{I_i}^{(\mathfrak{m})}(t_{\mathfrak{m}}(z)^{-1})}.
	\end{equation}
As in \eqref{E:unifchange01}, it is also clear from \eqref{E:unifchange21} that $t_{\widetilde{\mathfrak{a}_i}}$ may be written as a power series in $t_{\mathfrak{m}}$ with coefficients in $H$ similar to \eqref{E:algexpansion01}. Hence, combining \eqref{E:unifchange01}, \eqref{E:algexpansion01} and \eqref{E:unifchange21}, we obtain the desired statement.
\end{proof}

\subsection{Drinfeld $A$-modules over $K$-schemes and their moduli spaces} Let $S$ be a $K$-scheme via the structure map $j:K \rightarrow \Gamma(S,\mathcal{O}_S)$. In what follows, we focus on Drinfeld $A$-modules over $S$ and their properties. Our exposition is mainly based on \cite[\S5]{Dri74} and \cite[\S1]{Boc02}.

Let $\mathcal{L}$ be a line bundle over $S$ and  $\End(\mathcal{L})$ be the group of endomorphisms of the group scheme underlying $\mathcal{L}$. Let $\tau:\mathcal{L}\to \mathcal{L}^q$ be the map sending $x\to x^{q}$.  
Due to Drinfeld, we know that any element of $\End(\mathcal{L})$ may be written as a finite sum $\sum_{i\geq 0}\alpha_i\tau^i$ where $\alpha_i\in H^0(S,\mathcal{L}^{\otimes (1-q^i)})$.
\begin{definition} 
	\begin{itemize}
		\item[(i)]  A \textit{Drinfeld $A$-module of rank $r$ over $S$} is a pair $\mathbb{E}=(\mathcal{L},\phi)$ consisting of a line bundle $\mathcal{L}$ on $S$ and a ring homomorphism \[\phi:A \rightarrow \End(\mathcal{L}) 
		\]
		satisfying the following properties:
		\begin{enumerate}
			\item For any homomorphism $s:\Spec(L)\to S$, where $L$ is a field, the pullback  is a Drinfeld $A$-module of rank $r$ over $L$ in the sense of Definition \ref{D:11}. 
			\item For any $a\in A$, we have $\partial(\phi_a)=j(a)$, where, locally, we realize $\phi_a$ as a finite sum $\phi_a=\sum_{i\geq 0}\phi_{a,i}\tau^i\in \End(\mathcal{L})$  and $\partial(\phi_a):=\phi_{a,0}$.  
		\end{enumerate}
		\item[(ii)] Let $I$ be a non-zero ideal of $A$ and $\mathbb{G}_{a.S}$ be the additive group scheme over $S$. The finite subgroup scheme $\mathbb{E}[I] \subset \mathbb{G}_{a,S}$ is defined to be the unique scheme representing the functor (on $S$-schemes) \[ T \mapsto  \{x \in \mathbb{E}(T)|\ \ a\cdot x=0, \ \ a\in I\}.\]
	
		\item[(iii)] Let $T\subset \mathcal{L}$ be a subscheme which is finite flat over $S$. We denote by $[T]$ the corresponding relative Cartier divisor. \textit{A level $I$-structure} on $\mathbb{E}$ is an isomorphism of $A$-modules $$\lambda:(I^{-1}/A)^r \rightarrow \mathbb{E}[I](S)$$ which induces an equality of divisors 
		\[
		 [\mathbb{E}[I]] = \sum_{\substack{\alpha \in (I^{-1}/A)^r}} [\lambda(\alpha)].
		\]
 
 \item[(iv)] \textit{An isogeny} between Drinfeld $A$-modules $\mathbb{E}$ and $\mathbb{E}'=(\mathcal{L}',\phi')$ with level $I$-structure $\lambda$ and $\lambda'$ respectively is given by a homomorphism $\xi:\mathcal{L}\to \mathcal{L}'$ of commutative group schemes over $S$ satisfying 
 \begin{itemize}
     \item[(a)] $\xi\circ\phi_a = \phi_a'\circ \xi$ for all $a\in A$,
 \item[(b)] $\xi(S)\circ \lambda=\lambda'$ where we define $\xi(S):\mathcal{L}(S)\to \mathcal{L}'(S)$ to be the map induced by $\xi$.
\end{itemize}
We further say that $\mathbb{E}$ and $\mathbb{E}'$  are \textit{isomorphic} if $\xi$ is an isomorphism.
\end{itemize}
\end{definition}

Let $I$ be a non-zero proper ideal of $A$. With the above definitions, we define the following functor $$\mathbb{M}_I^r:\textbf{Sch}_K \rightarrow \textbf{Sets}$$ which sends a $K$-scheme $S$ to the set of isomorphism classes of Drinfeld $A$-modules of rank $r$ with a level $I$-structure over $S$. As a consequence of \cite[Prop.~5.3]{Dri74}, we obtain the following crucial result.

\begin{theorem}[cf. {\cite[Prop.~5.3]{Dri74}}]\label{T:fine}
	 The functor $\mathbb{M}^{r}_I$ is represented by a scheme $M^r_{I}$ which is affine and of finite type over $\Spec(K)$. Moreover, it is a smooth scheme of dimension $r-1$ over $K$. 
\end{theorem}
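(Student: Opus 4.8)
The goal is to prove that the moduli functor $\mathbb{M}^r_I$ is representable by a scheme $M^r_I$ that is affine of finite type over $\Spec(K)$, smooth of relative dimension $r-1$. The plan is to follow Drinfeld's original argument \cite[Prop.~5.3]{Dri74}, whose core is a rigidity phenomenon coming from the level structure: once $I$ is a nonzero proper ideal with enough prime factors, a Drinfeld $A$-module equipped with a level $I$-structure admits no nontrivial automorphisms, so the moduli \emph{stack} is in fact an algebraic space, and one then checks affineness and smoothness by hand.

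First I would establish representability. The key input is that imposing a level $I$-structure rigidifies the problem: an isomorphism of Drinfeld $A$-modules with level $I$-structure that fixes the level data must be the identity (its constant term is an $I$-torsion-compatible root of unity, which rigidity forces to be $1$). Concretely, one works in the scheme $\mathbb{M}^r_I$ on an affine base $S=\Spec(R)$: writing $\phi_a=\sum_i \phi_{a,i}\tau^i$ with the top coefficient $\phi_{a,r\deg(a)}$ a unit and the level structure pinning down the $I$-torsion divisor as in the definition above, the functor becomes pro-representable by an explicit quotient of a polynomial ring over $K$ in the coefficients $\phi_{a,i}$ (for $a$ ranging over a finite generating set of $A$), modulo the relations encoding that $\phi$ is a ring homomorphism and that the $\lambda(\alpha)$ give the prescribed Cartier divisor. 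Since $A$ is finitely generated as an $\mathbb{F}_q$-algebra, only finitely many coefficients and relations intervene, which yields a finitely generated $K$-algebra and hence an affine scheme $M^r_I$ of finite type over $\Spec(K)$.

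Next I would verify smoothness of relative dimension $r-1$ via the infinitesimal/deformation criterion. One shows that for any square-zero extension $R'\twoheadrightarrow R$ of $K$-algebras, every Drinfeld $A$-module with level $I$-structure over $R$ lifts to $R'$, and that the lifts form a torsor under a free module of the expected rank. The deformation space is controlled by the coefficients of $\phi_a$ below the leading term, modulo the normalization $\partial(\phi_a)=j(a)$ fixed by the $A$-field structure; a parameter count shows that the universal object depends on $r-1$ essential moduli (the top coefficient is rigidified up to the finitely many choices recorded by the level structure, and the relation $\partial\circ\phi=j$ removes the constant term). Formal smoothness together with finite type over the field $K$ then gives smoothness, and tracking the free rank of the deformation module yields the dimension $r-1$.

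The main obstacle will be the representability step, specifically verifying that the level $I$-structure genuinely kills all automorphisms so that the functor is representable by a scheme rather than merely a stack; this is exactly the place where one must use that $I$ is a nonzero proper ideal and invoke the rigidity of torsion-level data. The smoothness and dimension computations are essentially a careful bookkeeping of the coefficient variables and the Cartier-divisor relations, routine once representability is in place. Since all of this is carried out in detail by Drinfeld, I would cite \cite[Prop.~5.3]{Dri74} for the technical heart and limit the exposition to indicating how his argument applies verbatim in the present normalization.
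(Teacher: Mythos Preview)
Your proposal is essentially correct and aligns with the paper's treatment: the paper does not give an independent proof of this theorem but simply records it as a consequence of \cite[Prop.~5.3]{Dri74}, exactly as you suggest doing. Your sketch of Drinfeld's argument (rigidity from the level structure to kill automorphisms, explicit affine presentation in the coefficients $\phi_{a,i}$, and smoothness via deformation theory) is accurate; one small point is that over $\Spec(K)$ the condition that $I$ be any nonzero proper ideal already suffices for rigidity and representability, and the stronger hypothesis $|V(I)|\geq 2$ only enters later in the paper for the integral model and compactification.
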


As a consequence of the above theorem, there exists an associated \textit{universal Drinfeld $A$-module} over $M^r_{I}$ which we denote by $\mathbb{E}^{un,r}_{I}=(\mathcal{L}^{un,r},\phi^{un,r})$. \\

\begin{remark}\label{R:triv_univ}
    Let $I$ be a non-zero proper ideal of $A$ and $\mathbb{E}=(\mathcal{L},\phi)$ be a Drinfeld $A$-module with a level $I$-structure $\lambda:(I^{-1}/A)^r \rightarrow \mathbb{E}[I](S)$. We note that any non-zero $x \in (I^{-1}/A)^r \setminus \{0\}$ gives rise to a non-vanishing section of the underlying locally free sheaf of $\mathcal{L}$. Consequently, $\mathcal{L}$ is isomorphic to the trivial line bundle. In particular, due to the existence of a level $I$-structure for $\mathbb{E}^{un,r}_I$, its underlying line bundle is trivial. 
\end{remark}

\subsection{de Rham cohomology of Drinfeld $A$-modules} The theory of de Rham cohomology for Drinfeld $A$-modules analogous to the classical setting was developed by Anderson, Deligne, Gekeler and Yu (see \cite{Gek89, Gek90}). In this subsection, we briefly describe the de Rham module  for Drinfeld $A$-modules defined over an affine $K$-scheme $S$ and construct a locally free $\mathcal{O}_{S}$-sheaf of rank $r$. Our exposition mainly follows \cite[\S3, 4]{Gek90}.

For a reduced $K$-algebra $B$, let $S=\Spec(B)$  and let $\mathbb{E}=(\mathcal{L},\phi)$ be a Drinfeld $A$-module over $S$. We are primarily interested in the case of the universal Drinfeld $A$-module and by Remark \ref{R:triv_univ}, we will assume that $\mathcal{L}$ is trivial. We define $\M(\mathbb{E},B) $ to be the set of $\mathbb{F}_q$-linear morphisms $\alpha:\mathcal{L}\to \mathbb{G}_{a,S}$ of $S$-group schemes. It is naturally equipped with the left $B$-module structure and also with a right $A$-module structure given by $\alpha\cdot a:=\alpha \circ \phi_a$ for all $a\in A$ which provides a $B\otimes A$-module structure on $\M(\mathbb{E},B) $.  Moreover, we set $\N(\mathbb{E},B):=\{\alpha\in \M(\mathbb{E},B)\ \ | \Lie(\alpha)=0\}$. Since the left and right actions of $\mathbb{F}_q$ on $\M(\mathbb{E},B)$ and $\N(\mathbb{E},B)$ coincide, we also consider them as $A\otimes A$-modules induced from their $B\otimes A$-module structure via the structural map $\gamma:A\to B$. We note that due to the above construction, throughout this section, all $A\otimes A$-modules may be also considered as $B\otimes A$-modules. One can observe (see for example \cite[Prop.~1.2]{Gos80}) that, if one has a trivialization $\mathcal{L}\cong \mathbb{G}_{a,S}$ over $\Spec(B)$, we have $\M(\mathbb{E},B)=B[\tau]$ and $\N(\mathbb{E},B)=B[\tau]\tau$.

We call an $\mathbb{F}_q$-linear map $\eta:A\to \N(\mathbb{E},B)$ \textit{a biderivation} if for all $a,b\in A$, it satisfies
\[
\eta(ab)=\gamma(a)\eta(b)+\eta(a)\phi_b.
\]
We let $\D(\mathbb{E},B)$ be the $A$-bimodule of biderivations. On the other hand, for each $m\in \M(\mathbb{E},B)$, one constructs a biderivation $\eta^{(m)}$ defined by 
\[
\eta^{(m)}(a):=\gamma(a)m-m \phi_a
\]
for each $a\in A$. We call any biderivation of the form $\eta^{(m)}$ \textit{inner} and moreover we call $\eta^{(m)}$ \textit{strictly inner} if $m\in \N(\mathbb{E},B)$. We also set $\D_i(\mathbb{E},B)$ ($\D_{si}(\mathbb{E},B)$ respectively) to be the $A$-bimodule of inner (strictly inner respectively) biderivations. Observe that
\begin{equation}\label{E:sidecom}
\D_i(\mathbb{E},B)=B\eta^{(1)} \oplus \D_{si}(\mathbb{E},B).
\end{equation}

We define \textit{the de Rham module $\h_{\DR}(\mathbb{E},B)$ of $\mathbb{E}$} by the quotient
\[
\h_{\DR}(\mathbb{E},B):=\D(\mathbb{E},B)/\D_{si}(\mathbb{E},B).\]
By \cite[Prop.~3.6]{Gek90}, we know that, when $B$ is a field, $\h_{\DR}(\mathbb{E},B)$ is a $B$-vector space of rank $r$. We further define \textit{the de Rham sheaf $\mathbb{H}_{\DR}(\mathbb{E})$ associated to $\mathbb{E}$} to be the coherent sheaf on $S$ whose global section is given by 
\[
\mathbb{H}_{\DR}(\mathbb{E})(S)=\h_{\DR}(\mathbb{E},B).
\]
By \cite[Thm.~3.5]{Gek90}, we know that, up to a unique isomorphism, $\mathbb{H}_{\DR}(\mathbb{E})$ is unique and moreover, we have the following result.

\begin{theorem}[{\cite[Cor.~3.7]{Gek90}}]\label{deRham}
	The coherent $S$-sheaf $\mathbb{H}_{\DR}(\mathbb{E})$ is a locally free $\mathcal{O}_S$-sheaf of rank $r$.  
\end{theorem}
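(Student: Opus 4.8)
The plan is to prove that $\mathbb{H}_{\DR}(\mathbb{E})$ is locally free of rank $r$ by exploiting the trivialization of $\mathcal{L}$ available for the universal family (Remark \ref{R:triv_univ}) together with the explicit presentation of biderivations. First I would reduce to the local situation: since the statement is local on $S = \Spec(B)$, I may pass to a cover on which $\mathcal{L} \cong \mathbb{G}_{a,S}$, so that $\M(\mathbb{E},B) = B[\tau]$ and $\N(\mathbb{E},B) = B[\tau]\tau$ as noted in the excerpt. In this trivialization a biderivation $\eta$ is determined by its value $\eta(a) \in B[\tau]\tau$ on a chosen generating set of $A$, and the biderivation relation $\eta(ab) = \gamma(a)\eta(b) + \eta(a)\phi_b$ propagates the data. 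The aim is to produce, \emph{locally}, an explicit $B$-basis of the quotient $\h_{\DR}(\mathbb{E},B) = \D(\mathbb{E},B)/\D_{si}(\mathbb{E},B)$ whose cardinality is $r$ and which is stable under localization, thereby exhibiting $\mathbb{H}_{\DR}(\mathbb{E})$ as free of rank $r$ on each chart.

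The key computational step is the standard one: writing $\eta(a) = \sum_{i \geq 1} c_i(a)\tau^i$, the cocycle condition forces the higher-degree coefficients to be determined recursively by the lower-degree ones and by the coefficients of $\phi_a$, so that modulo strictly inner biderivations one may normalize each class to a \emph{reduced} representative supported in degrees $1 \leq i \leq r\deg(a)$ (subtracting strictly inner biderivations $\eta^{(m)}$ with $m \in B[\tau]\tau$ kills the top degrees). Concretely, for a single generator (e.g.\ when $A = \FF_q[\theta]$ one takes $a = \theta$ with $\phi_\theta$ of $\tau$-degree $r$), the reduced biderivations $\eta^{(1)}, \delta_1, \ldots, \delta_{r-1}$ determined by $\eta(\theta) = \tau, \tau^2, \ldots, \tau^{r}$ modulo strictly inner ones give exactly $r$ classes; I would verify these are linearly independent over $B$ and span the quotient. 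For general $A$ the same bookkeeping applies after choosing generators, with the decomposition \eqref{E:sidecom} separating the inner part $B\eta^{(1)}$ from the strictly inner part. Since this reduction is purely $B$-linear and the recursion coefficients are polynomial in the coefficients of $\phi$, the resulting basis transforms compatibly under the transition maps of the trivializing cover, so the local freeness glues to give a locally free $\mathcal{O}_S$-sheaf.

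The main obstacle I anticipate is not the existence of a reduced representative in each residue fiber---that is precisely the rank-$r$ statement \cite[Prop.~3.6]{Gek90} already invoked for $B$ a field---but rather promoting the fiberwise statement to genuine local freeness over the possibly non-reduced-looking ring $B$, i.e.\ ensuring the recursion has no denominators and that the normalization does not degenerate where some coefficient of $\phi$ vanishes. The cleanest route, which I would follow, is to appeal to \cite[Thm.~3.5]{Gek90}, asserting that $\mathbb{H}_{\DR}(\mathbb{E})$ is a coherent $\mathcal{O}_S$-sheaf whose formation commutes with base change, combined with the fact (from \cite[Prop.~3.6]{Gek90}) that every fiber $\h_{\DR}(\mathbb{E},\kappa(s))$ has dimension exactly $r$ independent of $s$. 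A coherent sheaf on a reduced Noetherian scheme whose fiber dimension is constant is locally free; thus the theorem follows from the semicontinuity argument once the uniform fiber dimension is in hand. I would therefore structure the proof as: (i) recall coherence and base-change compatibility from Theorem~\ref{deRham}'s antecedents; (ii) invoke the constant fiber rank $r$; (iii) conclude local freeness by the constant-rank criterion. The only delicate point to check carefully is that $M^r_I$ (hence $S$) is reduced and that base change to residue fields is permissible, both of which are guaranteed by the smoothness of $M^r_I$ over $K$ from Theorem~\ref{T:fine}.
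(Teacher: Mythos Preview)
The paper does not supply its own proof of this theorem; it merely quotes the statement as \cite[Cor.~3.7]{Gek90} and moves on. Your proposal is therefore not being compared against an argument in the paper but rather against Gekeler's original, and in that regard your ``cleanest route'' (coherence plus base change from \cite[Thm.~3.5]{Gek90}, constant fiber dimension $r$ from \cite[Prop.~3.6]{Gek90}, then the constant-rank criterion on a reduced scheme) is exactly how the corollary is deduced in \cite{Gek90}. Your first, more computational, route via explicit reduced representatives is also viable and is essentially the content of the decomposition $\D(\mathbb{E},B)=\D_r(\mathbb{E},B)\oplus\D_{si}(\mathbb{E},B)$ recalled in \S2.5; the paper in fact relies on this decomposition later (e.g.\ in \eqref{E:decompose} and \eqref{E:Hodge0}), so either approach is consistent with the surrounding material. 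One minor caution: your appeal to smoothness of $M_I^r$ for reducedness is fine for the universal case the paper cares about, but the theorem as stated is for an arbitrary reduced $K$-algebra $B$, so you should invoke the hypothesis that $B$ is reduced directly rather than routing through Theorem~\ref{T:fine}.
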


\subsection{The decomposition of the de Rham module} 
In what follows, we decompose the set of biderivations into two subsets and this gives rise to a structural result on the de Rham module of a Drinfeld $A$-module as well as for the associated de Rham sheaf. One can refer to \cite[\S3]{Gek90} for further details. 

Let $S$ and $\mathbb{E}$ be as in \S2.4, that is, $S=\Spec(B)$ for a reduced $K$-algebra $B$ and the line bundle associated with $\mathbb{E}$ is trivial.  Let $a \in A \setminus \FF_q$. We call a biderivation $\eta$ \textit{reduced} (\textit{strictly reduced} respectively) if $\deg_{\tau}(\eta_a)\leq r\deg(a)$ ($\deg_{\tau}(\eta_a)< r\deg(a)$ respectively). By \cite[Prop.~3.9]{Gek90}, we know that the notion of reducedness is independent of the choice of $a$. Moreover, for any $\eta\in \D(\mathbb{E},B)$, there exists a unique $n\in \N(\mathbb{E},B)$ such that the biderivation $\eta-\eta^{(n)}$ is reduced. These properties indeed allow us to decompose $\D(\mathbb{E},B)$ as 
\[
\D(\mathbb{E},B)=\D_r(\mathbb{E},B) \oplus \D_{si}(\mathbb{E},B)
\]
where $\D_r(\mathbb{E},B)$ is the $B$-module of reduced biderivations. Let us further set $\D_{sr}(\mathbb{E},B)$ to be the $B$-module of strictly reduced biderivations. Observe that the biderivation $\eta^{(1)}$  is reduced but not strictly reduced. Since, by definition of Drinfeld $A$-modules, the leading coefficient of $\eta^{(1)}(a)$ as a polynomial in $\tau$ is a unit for each non-constant $a\in A$, one can obtain a decomposition of $\D_r(\mathbb{E},B)$ as 
\begin{equation}\label{E:decompose}
\D_r(\mathbb{E},B)=B\eta^{(1)}\oplus \D_{sr}(\mathbb{E},B).
\end{equation} 
Noting that $\h_{\DR}(\mathbb{E},B)=\D_r(\mathbb{E},B) \oplus \D_{si}(\mathbb{E},B)/\D_{si}(\mathbb{E},B)\cong \D_r(\mathbb{E},B)$, \eqref{E:decompose} corresponds to \textit{the Hodge decomposition} of $\h_{\DR}(\mathbb{E},B)$ given by 
\begin{equation}\label{E:Hodge0}
\h_{\DR}(\mathbb{E},B)=\h_1(\mathbb{E},B)\oplus \h_2(\mathbb{E},B)
\end{equation}
where $\h_1(\mathbb{E},B):=\D_{i}(\mathbb{E},B)/\D_{si}(\mathbb{E},B)\cong B\eta^{(1)}$ and $\h_2(\mathbb{E},B)\cong\D_{sr}(\mathbb{E},B)$. For each $i=1,2$, we further define the coherent sheaf $\mathbb{H}_i(\mathbb{E})$ on $S$ so that its global section is given by $\mathbb{H}_i(\mathbb{E})(S)=\h_i(\mathbb{E},B)$. Then, by \eqref{E:Hodge0}, we have
\begin{equation}\label{E:Hodge}
\mathbb{H}_{\DR}(\mathbb{E})=\mathbb{H}_1(\mathbb{E})\oplus \mathbb{H}_2(\mathbb{E}).
\end{equation}

\begin{remark}
    \label{H1_hodge} Using \eqref{E:sidecom} and  \cite[Lem.~2.21]{Hat21}, whose proof could be easily generalized to an arbitrary admissible coefficient ring setting, we see that there exists a natural isomorphism of line bundles \[\mathbb{H}_{1}(\mathbb{E},S) \cong \omega(\mathbb{E},S):=\Lie(\mathbb{E})^{\vee}.\]In particular, denoting $\mathbb{H}_{k,un}:=\mathbb{H}_{k}(\mathbb{E}^{un,r}_I)$ for $k=1,2$ and recalling the Hodge bundle $\omega_{un}$ defined in \S1, we obtain 
    \[
    \mathbb{H}_{1,un} \cong \omega_{un}.
    \]
\end{remark}

\subsection{Rigid analytic structure on $\Gamma \backslash \Omega$ and its compactification} In this subsection, we recollect some standard facts about the rigid analytic structure on $\Gamma \backslash \Omega$ where $\Gamma$ is a particular arithmetic subgroup of $\GL_2(K)$. Throughout our exposition, we precisely give proofs of these results whenever we could not locate a reference. 

Our first goal is to describe the Bruhat-Tits tree $\mathcal{T}$. We mainly follow  \cite[\S6]{Dri74} (see also \cite[\S1]{Gek97} and \cite[Chap.~3]{Boc02} for further details). Set $\mathcal{O}_{\infty}:=\mathbb{F}_{q^{\mathfrak{d}}}[[\pi_{\infty}]]\subset K_{\infty}$ and $W:=K_{\infty}\oplus K_{\infty}$ by realizing its elements as two dimensional column vectors. By \textit{an $A$-lattice $V$ in $W$}, we mean a free $\mathcal{O}_{\infty}$-lattice of rank 2. We call two $A$-lattices $V$ and $V'$ \textit{homothetic} if there exists $c\in K_{\infty}^{\times}$ so that $V=cV'$ and denote by $[V]$ the homothety class of $V$. By the elementary divisor theorem, for any given homothety classes $[V]$ and $[V']$,  there exist $V_1\in [V]$ and $V_2\in [V']$ such that $V_2\subseteq V_1$ and a non-negative integer $m$ such that $V_1/V_2\cong \mathcal{O}_{\infty}/\pi_{\infty}^m \mathcal{O}_{\infty}$. We further set $m:=d([V],[V'])$.

\textit{The Bruhat-Tits tree} $\mathcal{T}$ is the connected $(q^{\mathfrak{d}}+1)$-regular tree so that its set of vertices $\mathcal{T}_0$ is given by the homothety classes of $\mathcal{O}_\infty$-lattices in $W$ and its set of edges $\mathcal{T}_1$ is given by pairs of vertices $\{[V],[V']\}$  satisfying $d([V],[V'])=1$. For later use, we define $v_0,v_1\in \mathcal{T}_0$ as well as $e_0\in \mathcal{T}_1$ so that $v_0:=[\mathcal{O}_{\infty}\oplus \mathcal{O}_{\infty}]$, $v_1:=[\mathcal{O}_{\infty}\oplus \pi_{\infty}\mathcal{O}_{\infty}]$ and $e_0:=\{v_0,v_1\}$.

One further defines an action of $\GL_2(K_{\infty})$ on $\mathcal{T}$ as follows: Let $V=w_1\mathcal{O}_{\infty}\oplus w_2\mathcal{O}_{\infty}$ for some generators $w_1,w_2\in W$. Then for any $g=\begin{pmatrix}
    a&b\\
    c&d
\end{pmatrix}$, we let 
\[
g\cdot V:=gw_1\mathcal{O}_{\infty}\oplus gw_2\mathcal{O}_{\infty}.
\]
One sees that this defines a transitive action of $\GL_2(K_{\infty})$ on $\mathcal{T}_0$ and $\mathcal{T}_1$.  For any $e=\{v,v'\}\in \mathcal{T}$, set 
\[
\mathfrak{i}_{e}:=\{(\alpha_{v},\alpha_{v'}) \ \ |  \ \ \alpha_{v}+\alpha_{v'}=1, \ \ 0\leq \alpha_{v},\alpha_{v'}\leq 1\}.
\]
Then \textit{the geometric realization $|\mathcal{T}|$ of $\mathcal{T}$} is given by 
\[
|\mathcal{T}|:=\sqcup_{e\in \mathcal{T}_1}\mathfrak{i}_{e}/\thicksim
\]
where, we mean by $\thicksim$ the identification of each $v\in \mathcal{T}_0$ with the tuple $(\alpha_{v},\alpha_{v'})$ so that $\alpha_v=1$, $\alpha_{v'}=0$ and $\{v,v'\}\in \mathcal{T}_1$.  By \cite{GI63}, we further note that $|\mathcal{T}|$ may be also canonically identified with the set of equivalence classes of norms $|\cdot|$ on $W$. 

In \cite[\S6]{Dri74}, Drinfeld uses the Bruhat-Tits tree $\mathcal{T}$ to provide a rigid analytic structure on the Drinfeld upper half plane $\Omega=\mathbb{P}^1(\mathbb{C}_{\infty})\setminus \mathbb{P}^1(K_\infty)$. We recap the construction which will be used in \S2.7. 

For any $g=\begin{pmatrix}
    a&b\\c&d
\end{pmatrix}\in \GL_2(K_{\infty})$ and $z:=(z_1:z_2)\in \Omega$, let 
\[
g\cdot z:=(az_1+bz_2:cz_1+dz_2)\in \Omega.
\]
We define \textit{the reduction map} 
\[
\lambda:\Omega\to |\mathcal{T}|
\]
sending each $z\in\Omega$ to $\lambda(z)$, which could be identified as a norm on $W$, so that $\lambda(z)(u_1,u_2):=|u_1z+u_2|$. Note that $\lambda$ forms a $\GL_2(K_{\infty})$-equivariant map.

In the rest of this subsection, following \cite[\S3]{Boc02}, we construct an affinoid covering of $\Omega$. We set 
\[
\mathcal{W}_{v_0}:=\{\mathfrak{v}\in |\mathcal{T}| \ \ | d(\mathfrak{v},v_0)\leq 1/3\}
\]
and 
\[
\mathcal{W}_{e_0}:=\{(\alpha_{v_0},\alpha_{v_1}) \in |\mathcal{T}|\ \ | \  \alpha_{v_0}\geq 1/3, \ \ \alpha_{v_1}\geq 1/3, \ \  \alpha_{v_0}+\alpha_{v_1}=1\}.
\]
For an arbitrary vertex $v=\gamma \cdot v_0$ for some $\gamma \in \GL_2(K_{\infty})$, we set $\mathcal{W}_v:=\gamma \mathcal{W}_{v_0}$ and for an arbitrary edge $e=\gamma e_0\in \mathcal{T}_1$, we let $\mathcal{W}_{e}:=\gamma \mathcal{W}_{e_0}$. We note that the definition of $\mathcal{W}_v$ and $\mathcal{W}_e$ are independent of the chosen $\gamma$. 

In what follows, we introduce the barycentric subdivision of $\mathcal{T}$ and the nerve of a particular covering of $\Omega$ (see \cite{Mun84} for more details). The \textit{barycentric subdivision of $\mathcal{T}$} is the $1$-dimensional simplicial complex, whose $0$-simplices is the set $\mathcal{T}_0 \sqcup \mathcal{T}_1$ and the $1$-simplices consist of pairs $\{\nu_0,\nu_1\} \subset \mathcal{T}_0 \sqcup \mathcal{T}_1$ if and only if either $\nu_0 \subsetneq \nu_1$ or $\nu_1 \subsetneq \nu_0$ (that is, it is a pair of a vertex and an edge, the latter containing the former). For any $\nu\in \mathcal{T}_0\sqcup \mathcal{T}_1$, let $\mathfrak{U}_{\nu}:=\lambda^{-1}(\mathcal{W}_\nu)$ and consider the cover  $\mathfrak{U}:=\{\mathfrak{U}_\nu\ \ | \nu\in \mathcal{T}_0\sqcup \mathcal{T}_1\}$ of $\Omega$.  The \textit{nerve} of the covering $\mathfrak{U}$ is the set of all finite subsets $\{\nu_0,...,\nu_k\} \in \mathcal{T}_0 \sqcup \mathcal{T}_1$ such that $\mathfrak{U}_{\nu_0} \cap ... \cap \mathfrak{U}_{\nu_k} \neq \emptyset$. It is clear that this is a simplicial complex whose $0$-simplices lie in $\mathcal{T}_0 \sqcup \mathcal{T}_1$.
    
\begin{proposition}[{\cite[Prop.~6.2]{Dri74}}, {\cite[Prop.~3.11]{Boc02}}] \label{P: covering} The cover $\mathfrak{U}$ is an admissible affinoid cover of $\Omega$. Moreover the nerve of this covering is the barycentric subdivision of  $\mathcal{T}$. 
\end{proposition}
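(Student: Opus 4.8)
The plan is to analyze everything through the reduction map $\lambda\colon\Omega\to|\mathcal{T}|$, separating the combinatorial assertion (the nerve) from the analytic assertion (affinoidness and admissibility). The two structural inputs I would invoke from Drinfeld's work \cite[\S6]{Dri74} are that $\lambda$ is surjective and $\GL_2(K_\infty)$-equivariant, and that $\lambda$ carries each affinoid subdomain of $\Omega$ to a bounded subtree of $|\mathcal{T}|$.

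First I would settle the nerve. Since $\mathfrak{U}_\nu=\lambda^{-1}(\mathcal{W}_\nu)$ and preimages commute with intersection, $\bigcap_j\mathfrak{U}_{\nu_j}=\lambda^{-1}\bigl(\bigcap_j\mathcal{W}_{\nu_j}\bigr)$; surjectivity of $\lambda$ then gives $\bigcap_j\mathfrak{U}_{\nu_j}\neq\emptyset$ if and only if $\bigcap_j\mathcal{W}_{\nu_j}\neq\emptyset$. Thus the nerve of $\mathfrak{U}$ coincides with the nerve of the cover $\{\mathcal{W}_\nu\}$ of $|\mathcal{T}|$, and it remains to compute the latter purely metrically. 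A direct check with the path metric (edges of length $1$, balls $\mathcal{W}_v$ of radius $1/3$, middle-third segments $\mathcal{W}_e$) shows: $\mathcal{W}_v\cap\mathcal{W}_{v'}=\emptyset$ for $v\neq v'$ (distance $\geq 1>2/3$); $\mathcal{W}_e\cap\mathcal{W}_{e'}=\emptyset$ for $e\neq e'$; $\mathcal{W}_v\cap\mathcal{W}_e\neq\emptyset$ exactly when $v$ is an endpoint of $e$ (the two segments abut at the point at distance $1/3$ from $v$); and there are no triple overlaps. Hence the $0$-simplices are $\mathcal{T}_0\sqcup\mathcal{T}_1$ and the $1$-simplices are the pairs $\{v,e\}$ with $v$ an endpoint of $e$, which is precisely the barycentric subdivision of $\mathcal{T}$.

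Next, the affinoid structure. By equivariance of $\lambda$ and transitivity of $\GL_2(K_\infty)$ on vertices and on edges, it suffices to exhibit $\mathfrak{U}_{v_0}$ and $\mathfrak{U}_{e_0}$ as affinoid subdomains. Writing the condition $d(\lambda(z),v_0)\leq 1/3$ out in the coordinate $z\in\mathbb{C}_\infty\setminus K_\infty$, one sees that $\mathfrak{U}_{v_0}$ is obtained from $\mathbb{P}^1(\mathbb{C}_\infty)$ by deleting one open disk for each of the $q^{\mathfrak{d}}+1$ residue directions in $\mathbb{P}^1(\mathbb{F}_{q^{\mathfrak{d}}})$, with radii lying strictly between consecutive values of $|K_\infty^\times|$ (coming from the $1/3$ in the definition of $\mathcal{W}_{v_0}$ together with the fact that the value group of $\mathbb{C}_\infty$ is all of $\mathbb{Q}$), so that the defining inequalities are non-strict and cut out a closed rational subdomain rather than a wide open; this is a connected affinoid whose canonical reduction is $\mathbb{P}^1_{\mathbb{F}_{q^{\mathfrak{d}}}}$ with its rational points deleted. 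An entirely analogous computation presents $\mathfrak{U}_{e_0}$ as an annulus. Finally, for admissibility I would exhaust $\Omega$ by the finite unions $\Omega_n:=\bigcup_{d(\nu,v_0)\leq n}\mathfrak{U}_\nu$, each a connected affinoid ($\mathbb{P}^1$ minus finitely many open disks), which form the standard increasing admissible affinoid cover of $\Omega$ (cf.\ \cite[\S6]{Dri74}). On each $\Omega_n$ only finitely many $\mathfrak{U}_\nu$ occur, giving a finite---hence admissible---cover by affinoid subdomains; since any affinoid mapping to $\Omega$ lands in some $\Omega_n$ by boundedness of $\lambda$ on affinoids and thus meets only finitely many $\mathfrak{U}_\nu$, the cover $\mathfrak{U}$ is admissible.

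The main obstacle I anticipate is the explicit identification of $\mathfrak{U}_{v_0}$ and $\mathfrak{U}_{e_0}$: one must translate the metric condition $d(\lambda(z),\,\cdot\,)\leq 1/3$ into precise inequalities on $|z|$ and on $|z-b|$ for the residue representatives $b$, and verify that the chosen one-third radii make the resulting loci closed rational subdomains (not merely wide opens) reducing correctly. Once this dictionary between the building distance and the $\mathbb{C}_\infty$-valuation is in place, affinoidness, the reduction computation, and the admissibility exhaustion all follow by standard rigid-analytic arguments.
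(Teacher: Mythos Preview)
The paper does not give its own proof of this proposition: it is stated with attributions to \cite[Prop.~6.2]{Dri74} and \cite[Prop.~3.11]{Boc02} and no proof environment follows. Your proposal is a correct outline of the standard argument found in those references---reducing the nerve computation to the metric cover $\{\mathcal{W}_\nu\}$ of $|\mathcal{T}|$ via surjectivity of $\lambda$, identifying $\mathfrak{U}_{v_0}$ and $\mathfrak{U}_{e_0}$ explicitly as rational subdomains of $\mathbb{P}^1$ using $\GL_2(K_\infty)$-equivariance, and deducing admissibility from the Stein exhaustion by the $\Omega_n$---so there is nothing substantive to compare.
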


Our goal from now on is to provide an admissible covering for the quotient of $\Omega$ with certain arithmetic subgroups of $\GL_2(K)$. Recall that $Y$ is the projective $A$-module given as in \eqref{E:defofY}. For the convenience of the reader, we recall that
\begin{multline*}
\Gamma_Y=\GL(Y)=\{\gamma \in \GL_2(K) \ \ | \ \ Y \gamma =Y\}\\
= \Bigg\{ \begin{pmatrix}
a&b\\c&d
\end{pmatrix}\in \GL_2(K) \ \ | \ \  a,d\in A, \ \ ad-bc\in \mathbb{F}_q^{\times}, \ \ b\in \mathfrak{g}^{-1}\mathfrak{h}, \ \ c\in \mathfrak{g}\mathfrak{h}^{-1}  \Bigg\}
\end{multline*}
where the last equality follows from \cite[(7.1)]{Gek90}. For any non-zero ideal $I$ in $A$, we define \textit{the principal congruence subgroup of $\GL_2(K)$ of level $I$} by $\Gamma_Y(I):=\Ker (\Gamma_Y\to \GL(Y/IY))$. Throughout this subsection, we let $\Gamma:=\Gamma_Y(I)$. 

We recall that a map of rigid analytic spaces $f:X \rightarrow W$ over $L$ is said to be \textit{\'{e}tale} if for each $x \in X$, the map on stalks $\mathcal{O}_{W,f(x)} \rightarrow \mathcal{O}_{X,x}$ is flat and unramified (see \cite[\S8.1]{FvdP04} for more details).  Let $B$ be an affinoid algebra over a complete subfield $L$ of $\mathbb{C}_{\infty}$ equipped with an action of a finite group $G$ and let $B^G$ be the set of elements of $B$ invariant under the action of $G$. In \cite[Prop.~6.3]{Dri74}, Drinfeld shows that $B^G$ is also an affinoid algebra over $L$ and that $B^G \rightarrow B$ is a finite map. We further let $\Sp(B)$ be the set of all maximal ideals of $B$ equipped with a suitable ringed space structure (see \cite[Def.~3.3.1, 4.2.7]{FvdP04} for more details on $\Sp(B)$). Using a result of Mumford in \cite[Chap.~2, \S7]{Mum74}, we can enhance the finiteness of the map above to a finite \'{e}tale map in the special case when $G$ acts freely on $\Sp(B)$. 

\begin{lemma} \label{5.2} Let $X=\Sp(B)$ be the affinoid space equipped with a free left action of a finite group $G$. Then the quotient map $\mathtt{p}:\Sp(B):=X \rightarrow  G \backslash X=\Sp(B^G)$  is \'{e}tale.
	\end{lemma}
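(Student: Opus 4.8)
The plan is to verify the two conditions in the definition of étaleness—flatness and unramifiedness of the induced maps on stalks—directly from the freeness of the $G$-action, organizing the argument around Mumford's identification of a free finite quotient with a torsor. Write $C:=B^{G}$. By Drinfeld's result recalled above, $C$ is again an affinoid $L$-algebra and $C\hookrightarrow B$ is finite, so $\mathtt{p}$ is a finite morphism; in particular it is enough to fix a maximal ideal $\mathfrak{n}\subset B$ with $\mathfrak{m}:=\mathfrak{n}\cap C$ and show that $C_{\mathfrak{m}}\to B_{\mathfrak{n}}$ is flat and unramified, which I would do after completing along $\mathfrak{m}$.

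First I would exploit freeness at the level of primes. Since $C=B^{G}$ and $B$ is integral over $C$, the classical transitivity theorem for group actions on integral extensions shows that $G$ permutes the finitely many primes of $B$ over $\mathfrak{m}$ transitively. Freeness of the action on $\Sp(B)$ means precisely that the set-stabilizer (decomposition group) $\{g\in G:\ g\mathfrak{n}=\mathfrak{n}\}$ is trivial, so $G$ acts \emph{simply transitively} on the fiber $\mathtt{p}^{-1}(\mathfrak{m})$, which therefore consists of exactly $|G|$ points.

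Next I would pass to completions. Because $C\to B$ is finite, the base change $B\otimes_{C}\widehat{C_{\mathfrak{m}}}$ is a finite $\widehat{C_{\mathfrak{m}}}$-algebra, hence decomposes as the product $\prod_{\mathfrak{n}\mid\mathfrak{m}}\widehat{B_{\mathfrak{n}}}$ of complete local rings, and $G$ permutes these $|G|$ factors simply transitively by the previous step. Taking $G$-invariants, and using that $G$-invariants commute with the flat base change $C\to\widehat{C_{\mathfrak{m}}}$ (so that $(B\otimes_{C}\widehat{C_{\mathfrak{m}}})^{G}=\widehat{C_{\mathfrak{m}}}$), the simple-transitive permutation of the factors forces the projection onto any single factor to identify $\widehat{C_{\mathfrak{m}}}\xrightarrow{\ \sim\ }\widehat{B_{\mathfrak{n}}}$. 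This isomorphism of completed local rings shows that $C_{\mathfrak{m}}\to B_{\mathfrak{n}}$ is simultaneously flat and unramified, which is exactly Mumford's torsor picture \cite[Chap.~2, \S7]{Mum74} and yields that $\mathtt{p}$ is finite étale.

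The main obstacle is ensuring that set-theoretic freeness on the points of $\Sp(B)$ really propagates to the scheme-theoretic statement used in the completion step—namely that each decomposition group is trivial and that the completed fiber splits cleanly into a product permuted simply transitively—in the affinoid category, where $B$ need not be reduced and $|G|$ may be divisible by the residue characteristic, so that no averaging (Reynolds) operator is available. Once this local product decomposition and the invariants computation are justified (this is the content one borrows from Mumford, adapted to the excellent Noetherian local rings of affinoid spaces), the étale property drops out formally from the fiberwise isomorphism.
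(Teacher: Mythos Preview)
Your proposal is correct and follows essentially the same route as the paper: both pass to completions at a point, use finiteness of $B$ over $B^{G}$ to decompose the $\mathfrak{m}$-adic completion of $B$ as a product over the fiber, invoke freeness to see that $G$ permutes the factors simply transitively, take $G$-invariants (using that invariants commute with the flat base change to the completion—this is the equalizer argument the paper spells out), and conclude $\widehat{C_{\mathfrak{m}}}\cong\widehat{B_{\mathfrak{n}}}$, from which flatness and unramifiedness follow. The paper additionally cites \cite[Prop.~4.6.1(1)]{FvdP04} to identify the algebraic completion $\widehat{B^{G}}$ with the rigid-analytic stalk completion $\widehat{\mathcal{O}_{G\backslash X,y}}$, which addresses the only genuine affinoid-specific point you flag in your final paragraph.
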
 
\begin{proof} Let $x\in X$ and set $y:=\mathtt{p}(x)$. We claim that the map $\widehat{\mathcal{O}_{G \backslash X,y}} \rightarrow \widehat{\mathcal{O}_{X,x}}$ is an isomorphism. Assuming the claim, note that if $\mathtt{m}$ denotes the maximal ideal of $\widehat{\mathcal{O}_{G \backslash X,y}}$, then there is an isomorphism of residue fields $\mathcal{O}_{G\backslash X,y}/\mathtt{m} \cong \mathcal{O}_{X,x}/\mathtt{m}$. Moreover, the faithful flatness of $() \rightarrow \widehat{()}$ implies the flatness of $\mathcal{O}_{G \backslash X,y} \rightarrow \mathcal{O}_{X,x}$, consequently showing $\mathtt{p}$ to be \'{e}tale at $x$. Hence it suffices to show our claim to finish the proof. As above, let $\mathtt{m} \subset B^G$ denote the maximal ideal corresponding to the point $y \in G \backslash X$ and let $\widehat{B}$ ($\widehat{B^G}$ respectively) be the completion of $B$ ($B^G$ respectively) with respect to the ideal $\mathtt{m}B$ ($\mathtt{m}$ respectively). It is clear that the natural map $\widehat{B^G} \otimes_{B^G} B \cong \widehat{B}$ is an isomorphism and by \cite[Prop.~4.6.1(1)]{FvdP04} $\widehat{B^G}$ is isomorphic to the completion $\widehat{\mathcal{O}_{G \backslash X,y}}$ of the Noetherian local ring $\mathcal{O}_{G\backslash X,y}$. Since the elements in the preimage of $y$ under the map $\mathtt{p}$ are of the form $g\cdot x$ for some $g\in G$, which are all distinct due to the free action of $G$ on $X$, by the Chinese Remainder Theorem and again applying \cite[Prop.~4.6.1(1)]{FvdP04}, we have
$$\widehat{B} \cong \prod_{g \in G} \widehat{\mathcal{O}_{X,g\cdot x}}.$$
By the definition of $B^G$, we have the following equalizer diagram $$0 \rightarrow B^G \rightarrow B \rightarrow \prod_{g \in G} B$$ where we send $b \in B \mapsto (b-g\cdot b)_{g \in G} \in \prod_{g \in G} B$. On the other hand, since $B^G \rightarrow \widehat{B^G}$ is flat, we also obtain
$$0 \rightarrow \widehat{B^G} \rightarrow \widehat{B^G} \otimes_{B^G} B \rightarrow \prod_{g \in G} \widehat{B^G} \otimes_{B^G}B.$$  
Equivalently, $\widehat{B^G} = (\widehat{B^G} \otimes_{B^G} B)^G \cong (\widehat{B})^G$. We also have
\[
\widehat{B} \cong \prod_{g \in G} \widehat{\mathcal{O}_{X,g\cdot x}} \cong \prod_{g \in G} \widehat{\mathcal{O}_{X,x}}
\]
where we identify, by a slight abuse of notation,  $\mathcal{O}_{X,x} \cong \mathcal{O}_{X,g\cdot x}$. Thus, we get $(\prod_{g \in G} \widehat{\mathcal{O}_{X,x}})^G = \widehat{\mathcal{O}_{X,x}}$ given by the diagonal embedding of $\widehat{\mathcal{O}_{X,x}} \hookrightarrow \prod_{g \in G} \widehat{\mathcal{O}_{X,x}}$. Hence $\widehat{\mathcal{O}_{G \backslash X,y}} \cong \widehat{B^G} \cong \widehat{\mathcal{O}_{X,x}}$, finishing the proof of the lemma.
\end{proof}

There exists a canonical rigid analytic structure on $\Gamma \backslash \Omega$ induced via the quotient map \[\pi:\Omega \rightarrow \Gamma \backslash \Omega.\] 
In other words, $U \subset  \Gamma \backslash \Omega$ is an admissible open set if and only if $\pi^{-1}U \subset \Omega$ is an admissible open. Similarly, one can also define admissible coverings of $\Gamma\backslash \Omega$. Furthermore, the structure sheaf  $\mathcal{O}_{ \Gamma \backslash \Omega}$ on $\Gamma \backslash \Omega$ is given  by \[\mathcal{O}_{ \Gamma \backslash \Omega}(U):=\mathcal{O}_{\Omega}(\pi^{-1}U)^{\Gamma} \text{ for admissible $U$}.\]

\begin{proposition}[{cf. \cite[\S5, Thm.~2]{SS91}}]
	\label{a6}
	The quotient $\Gamma \backslash \Omega$ admits an admissible covering $\{\overline{\mathfrak{U}}_\nu\}_{\nu \in \mathcal{T}}$ where $\overline{\mathfrak{U}}_\nu \cong \Gamma_\nu \backslash \mathfrak{U}_\nu$ as affinoid spaces. Moreover $\pi:\Omega \rightarrow \Gamma\backslash \Omega$ is an \'{e}tale map. 
    
\end{proposition}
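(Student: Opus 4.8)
The plan is to push the admissible affinoid cover $\mathfrak{U}=\{\mathfrak{U}_\nu\}$ of $\Omega$ from Proposition \ref{P: covering} down to the quotient, using the $\Gamma$-equivariance of the reduction map $\lambda$ together with the finiteness of the vertex/edge stabilizers, and then to read off \'{e}taleness from Lemma \ref{5.2}. First I would record the equivariance: since $\lambda$ is $\GL_2(K_\infty)$-equivariant and $\gamma\mathcal{W}_\nu=\mathcal{W}_{\gamma\nu}$ for every $\gamma\in\Gamma$ and $\nu\in\mathcal{T}_0\sqcup\mathcal{T}_1$, we get $\gamma\mathfrak{U}_\nu=\mathfrak{U}_{\gamma\nu}$, so $\Gamma$ permutes the members of $\mathfrak{U}$ according to its action on the tree. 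For each $\nu$ let $\Gamma_\nu\subset\Gamma$ denote its stabilizer. Because the stabilizer of a vertex (resp.\ of an edge) in $\GL_2(K_\infty)$ is compact modulo the centre, while $\Gamma$ is discrete and its intersection with the scalars $K_\infty^\times$ is the finite group $\mathbb{F}_q^\times$, each $\Gamma_\nu$ is finite.

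Next I would establish that the cover descends with no overlap among the translates of a fixed member. For $\gamma\in\Gamma$ one has $\gamma\mathfrak{U}_\nu\cap\mathfrak{U}_\nu=\lambda^{-1}(\mathcal{W}_{\gamma\nu}\cap\mathcal{W}_\nu)$, and since $\gamma\nu$ and $\nu$ are of the same type, the tree geometry forces $\mathcal{W}_{\gamma\nu}\cap\mathcal{W}_\nu=\emptyset$ whenever $\gamma\nu\neq\nu$ (two radius-$1/3$ balls about distinct vertices, or the two middle-thirds of distinct edges, are disjoint). Hence
\[
\pi^{-1}\bigl(\pi(\mathfrak{U}_\nu)\bigr)=\bigsqcup_{\gamma\Gamma_\nu\in\Gamma/\Gamma_\nu}\gamma\mathfrak{U}_\nu,
\]
a disjoint union of affinoids which, as $\Gamma$ is discrete, is admissible open in $\Omega$. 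By the definition of the quotient structure, $\overline{\mathfrak{U}}_\nu:=\pi(\mathfrak{U}_\nu)$ is therefore admissible open in $\Gamma\backslash\Omega$, and passing to $\Gamma$-invariants identifies it with $\Gamma_\nu\backslash\mathfrak{U}_\nu$. The latter is affinoid by \cite[Prop.~6.3]{Dri74}, since the ring of $\Gamma_\nu$-invariants of the coordinate ring of $\mathfrak{U}_\nu$ is again an affinoid algebra; this yields $\overline{\mathfrak{U}}_\nu\cong\Gamma_\nu\backslash\mathfrak{U}_\nu$ as affinoid spaces. As $\{\mathfrak{U}_\nu\}$ covers $\Omega$ and pulls back to the admissible cover $\mathfrak{U}$, the family $\{\overline{\mathfrak{U}}_\nu\}$ is an admissible cover of $\Gamma\backslash\Omega$. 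I emphasize that this part does not use any freeness hypothesis.

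For the \'{e}tale assertion the key input is that $\Gamma=\Gamma_Y(I)$ acts freely on $\Omega$. I would deduce this from Theorem \ref{T:fine}: representability of the moduli functor forces the parametrised objects to be rigid, so the stabilizer in $\Gamma$ of any $z\in\Omega$, being the automorphism group of the associated Drinfeld $A$-module together with its level $I$-structure, is trivial. (Concretely, any $\gamma\in\Gamma_Y$ fixing a point of $\Omega$ is elliptic with eigenvalues that are units at the unique place above $\infty$ in an imaginary quadratic extension of $K$, hence roots of unity, so torsion; the level $I$-structure then forces $\gamma=1$.) In particular each finite group $\Gamma_\nu$ acts freely on the affinoid $\mathfrak{U}_\nu$. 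I would then factor $\pi$ locally as the composite
\[
\mathfrak{U}_\nu\longrightarrow \Gamma_\nu\backslash\mathfrak{U}_\nu=\overline{\mathfrak{U}}_\nu\hookrightarrow\Gamma\backslash\Omega,
\]
where the second arrow is the open immersion constructed above and the first is the quotient by the free action of the finite group $\Gamma_\nu$, which is \'{e}tale by Lemma \ref{5.2}. Since the $\mathfrak{U}_\nu$ form an admissible cover of $\Omega$, it follows that $\pi$ is \'{e}tale, as in \cite{SS91}.

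The main obstacle is the freeness input of the third paragraph: the descent of the cover and the identification $\overline{\mathfrak{U}}_\nu\cong\Gamma_\nu\backslash\mathfrak{U}_\nu$ are formal tree combinatorics combined with \cite[Prop.~6.3]{Dri74}, whereas the free action is exactly the point where the hypothesis that $\Gamma$ is a principal congruence subgroup enters, via the rigidity furnished by Theorem \ref{T:fine}. It is this freeness that guarantees the finite groups $\Gamma_\nu$ act without fixed points on the affinoids $\mathfrak{U}_\nu$, allowing Lemma \ref{5.2} to be applied and \'{e}taleness to hold at every point of $\Gamma\backslash\Omega$.
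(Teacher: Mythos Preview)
Your argument is correct and follows essentially the same route as the paper: push down the affinoid cover $\{\mathfrak{U}_\nu\}$, identify $\overline{\mathfrak{U}}_\nu$ with the finite quotient $\Gamma_\nu\backslash\mathfrak{U}_\nu$ via \cite[Prop.~6.3]{Dri74}, and invoke Lemma~\ref{5.2} for \'etaleness. The one substantive difference lies in how freeness of the $\Gamma_\nu$-action on $\mathfrak{U}_\nu$ is obtained. The paper cites \cite[Lem.~3.17]{Boc02} to see that each stabilizer $\Gamma_\nu$ is a finite $p$-group, and then uses (implicitly) that a non-trivial $p$-power-order element of $\GL_2(K)$ is unipotent in characteristic $p$ and therefore has no fixed point in $\Omega$. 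You instead deduce freeness of the entire $\Gamma=\Gamma_Y(I)$-action on $\Omega$ from the fineness of the moduli problem (Theorem~\ref{T:fine}), via rigidity of Drinfeld modules with level-$I$ structure. Both arguments are valid; yours is more conceptual and makes the role of the level structure transparent, while the paper's is more self-contained within the group-theoretic setup of \S2.6.

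One small point worth tightening: your justification that $\pi^{-1}(\pi(\mathfrak{U}_\nu))=\bigsqcup_{\gamma\Gamma_\nu}\gamma\mathfrak{U}_\nu$ is admissible open ``as $\Gamma$ is discrete'' is not quite complete in the rigid setting, where one must check that any affinoid of the ambient cover meets only finitely many members of the union. The paper handles this by invoking Proposition~\ref{P: covering}: the nerve of $\mathfrak{U}$ is the barycentric subdivision of $\mathcal{T}$, which is locally finite, so each $\mathfrak{U}_y$ intersects only finitely many translates $\gamma\mathfrak{U}_\nu$.
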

\begin{proof} Let $\nu \in \mathcal{T}$ be a simplex, that is, either an element in $\mathcal{T}_0$ or $\mathcal{T}_1$, and let $\Gamma_\nu$ denote the stabilizer of $\nu$ in $\Gamma$. Then we see that $\Gamma_\nu$ acts on $\mathcal{W}_\nu$ and hence also on  $\mathfrak{U}_\nu$. Moreover, by \cite[Lem.~3.17]{Boc02}, $\Gamma_\nu$ is a finite subgroup and hence being a $p$-group also acts freely on $\mathfrak{U}_\nu$. Consider the set \[\mathfrak{U}_{\Gamma,\nu}:= \sqcup_{g \in \Gamma / \Gamma_\nu} \mathfrak{U}_{g\cdot \nu} = \sqcup_{g \in \Gamma / \Gamma_\nu} g\mathfrak{U}_\nu.\] 
By Proposition \ref{P: covering}, $\mathfrak{U}_{\nu}$ has the barycentric subdivision of $\mathcal{T}$ as its nerve, which is locally finite. Hence for any arbitrary vertex $\nu$, we have $\mathfrak{U}_{\Gamma,\nu} \cap \mathfrak{U}_{y} = \sqcup_i \mathfrak{U}_{g_i\nu} \cap \mathfrak{U}_{y}$, where $\{g_i\}$ is a finite subset of representatives of $\Gamma / \Gamma_\nu$. Thus $\mathfrak{U}_{\Gamma,\nu}$ is an admissible subset of $\Omega$. 
Hence $\overline{\mathfrak{U}}_{\nu}:=\pi(\mathfrak{U}_{\Gamma,\nu})$ is an admissible subset of $\Gamma \backslash \Omega$. On the other hand, note that the surjection $\pi:\mathfrak{U}_{\Gamma,\nu} \rightarrow \overline{\mathfrak{U}}_{\nu}$ induces an isomorphism of rigid analytic spaces $\Gamma_\nu \backslash \mathfrak{U}_{\nu} \xrightarrow{\sim} \overline{\mathfrak{U}}_{\nu}$.
Since $\Gamma_\nu$ is a finite group, by \cite[Prop.~6.3]{Dri74}, $\overline{\mathfrak{U}}_{\Gamma,\nu}$ is an affinoid space. Therefore, by the definition of rigid analytic structure on $\Gamma \backslash \Omega$, the covering $\{\overline{ \mathfrak{U}}_{\nu}\}$ is an admissible covering of $\Gamma \backslash \Omega$. Finally, 
using the fact that $\Gamma_\nu$ acts freely on $\mathfrak{U}_\nu$, we apply Lemma \ref{5.2} to the map $\pi_{|\mathfrak{\mathfrak{U}_\nu}}:\mathfrak{U}_{\nu} \rightarrow \overline{\mathfrak{U}}_{\nu}$, which is a finite free quotient, to conclude that $\pi_{|\mathfrak{\mathfrak{U}_\nu}}$ is finite \'{e}tale. Hence the quotient $\pi:\Omega \rightarrow \Gamma \backslash \Omega$ is locally a finite \'{e}tale cover. 
\end{proof}

In the last part of the present subsection, analogous to the theory of compactification of quotients of the upper half plane by congruence subgroups in $\SL_2(\mathbb{Z})$, following \cite[\S3.3]{vdP97},
we introduce a rigid analytic structure on the compactification $\overline{\Gamma \backslash \Omega}$  of $\Gamma \backslash \Omega$. To do this, we first define 
\[\overline{\Gamma \backslash \Omega}:= \Gamma \backslash \Omega \sqcup \Gamma \backslash \mathbb{P}^1(K)\] 
and set $\Cusps^Y_{I}:=\Gamma \backslash \mathbb{P}^1(K)$ to be \textit{the set of cusps of $\Gamma=\Gamma_Y(I)$}. 

Observe that, by \cite[Prop.~1.69]{Gos80}, for any $\delta \in \GL_2(K)$, there exists a maximal fractional ideal $\mathfrak{a}_{\delta\Gamma \delta^{-1}}$ of $A$ so that 
\[
\Bigg\{ \begin{pmatrix}
	1&a\\0&1
	\end{pmatrix} \ \ | \ \ a\in\mathfrak{a}_{\delta\Gamma \delta^{-1}}  \Bigg\}\subset \delta\Gamma \delta^{-1}.
\]
By a slight abuse of notation, we identify $\mathfrak{a}_{\delta \Gamma \delta^{-1}}$ as a subgroup of $\delta\Gamma \delta^{-1}$ via  above.  
We further set 
\begin{equation}\label{E:unifdef}
t_{\delta\Gamma \delta^{-1}}(z):=t_{\mathfrak{a}_{\delta\Gamma \delta^{-1}}}(z)=\exp_{\rho^{(\mathfrak{a}_{\delta\Gamma \delta^{-1}})}}(\xi(\mathfrak{a}_{\delta\Gamma \delta^{-1}})z)^{-1}=\xi(\mathfrak{a}_{\delta\Gamma \delta^{-1}})^{-1}\exp_{\mathfrak{a}_{\delta\Gamma \delta^{-1}}}(z)^{-1}.
\end{equation}

Consider \textit{the imaginary distance} $|\cdot|_{\text{im}}$ defined by $\inorm{z}_{\text{im}}:=\inf\{\inorm{z-a}\ \ | \ \ a\in K_{\infty}\}$ for any $z\in \Omega$. For any $u\in \mathbb{Z}$, we define the \textit{horicycle neighborhood of infinity} by the set \[\mathcal{N}_u:=\{z \in \Omega \ \ | \ \  \inorm{z}_{\text{im}} \geq u\}.\] 
\begin{lemma}[{\cite[Prop.~1.65, Cor.~1.73]{Gos80}, \cite[Lem.~3.3]{vdP97}}]\label{L:inv} The following statements hold.
   \begin{itemize}
       \item[(i)] For each $u\in \mathbb{Z}$, $\mathcal{N}_u \subset \Omega$ is admissible open.
       \item[(ii)] There exists $u \gg 0$ such that 
       \begin{itemize}
           \item[(a)] $\mathcal{N}_u$ is invariant under the action of $\mathfrak{a}_{\delta \Gamma \delta^{-1}}$ and
           \item[(b)] if there exists $z_1,z_2 \in \mathcal{N}_u$ such that $g\cdot z_1=z_2$ for some $g \in \delta\Gamma \delta^{-1}$, then $g \in \mathfrak{a}_{\delta \Gamma \delta^{-1}}$.
       \end{itemize}
       In particular, the image of $\mathcal{N}_u$ under the map $\Omega\to  \delta\Gamma\delta^{-1}\backslash \Omega$ is an admissible open set, which is in a natural bijection with $\mathfrak{a}_{\delta \Gamma \delta^{-1}} \backslash \mathcal{N}_u$, thus endowing the latter space with a rigid analytic structure.  
   \end{itemize} 
\end{lemma}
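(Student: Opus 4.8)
The plan is to establish the three assertions of Lemma~\ref{L:inv} in order, working with a fixed $\delta \in \GL_2(K)$ and the maximal fractional ideal $\mathfrak{a} := \mathfrak{a}_{\delta\Gamma\delta^{-1}}$ that exists by \cite[Prop.~1.69]{Gos80}. For part (i), I would first recall that the imaginary distance $|z|_{\text{im}} = \inf\{|z-a| \ \ | \ \ a \in K_\infty\}$ is a well-defined continuous function on $\Omega$, and that the horicycle neighborhoods $\mathcal{N}_u$ form a decreasing family exhausting the region near the cusp at infinity. The admissibility of each $\mathcal{N}_u$ should follow from expressing $\mathcal{N}_u$ in terms of the affinoid covering $\mathfrak{U} = \{\mathfrak{U}_\nu\}$ furnished by Proposition~\ref{P: covering}; concretely, one checks that $\mathcal{N}_u$ is a union of those $\mathfrak{U}_\nu$ whose image under the reduction map $\lambda$ lies in the appropriate end of the tree $\mathcal{T}$, i.e.\ sufficiently far along the half-line fixed by the unipotent subgroup. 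Since $\lambda$ is continuous and $\GL_2(K_\infty)$-equivariant, this is a locally finite union of affinoids, hence an admissible open, as recorded in \cite[Prop.~1.65]{Gos80} and \cite[Lem.~3.3]{vdP97}.

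For part (ii), the key is to understand the action of $\delta\Gamma\delta^{-1}$ on $\mathcal{N}_u$ for $u$ large. I would begin by observing that the translation action of $\mathfrak{a} \subset \delta\Gamma\delta^{-1}$, identified with the unipotent matrices $\begin{pmatrix} 1 & a \\ 0 & 1 \end{pmatrix}$, preserves the imaginary distance: since $|z + a|_{\text{im}} = |z|_{\text{im}}$ for $a \in K_\infty \supset \mathfrak{a}$, the set $\mathcal{N}_u$ is automatically $\mathfrak{a}$-invariant for every $u$, which gives (a). The substance of the lemma is assertion (b): that by enlarging $u$, the only elements of $\delta\Gamma\delta^{-1}$ that can move a point of $\mathcal{N}_u$ back into $\mathcal{N}_u$ are the unipotent translations in $\mathfrak{a}$. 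Here the strategy is to compute how a general $g = \begin{pmatrix} a & b \\ c & d \end{pmatrix} \in \delta\Gamma\delta^{-1}$ acts on $|z|_{\text{im}}$. A standard estimate shows that $|g \cdot z|_{\text{im}} = |z|_{\text{im}} / |cz + d|^2$ in spirit (the precise normalization in the rigid setting should be extracted from \cite[\S1]{Gos80}), so that when $c \neq 0$ and $|z|_{\text{im}}$ is large, the quantity $|cz+d|$ is forced to be large and $g \cdot z$ is pushed out of any fixed horicycle neighborhood. Thus for $u \gg 0$ one concludes $c = 0$, and then the determinant and integrality conditions defining $\delta\Gamma\delta^{-1}$ pin $g$ down to a unipotent translation lying in $\mathfrak{a}$.

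The main obstacle I anticipate is making the growth estimate in part (b) rigorous in the non-archimedean rigid-analytic context: unlike the classical case over $\mathbb{C}$, one must argue with the $\infty$-adic absolute value and the imaginary distance $|\cdot|_{\text{im}}$ directly, controlling $|cz+d|$ uniformly on $\mathcal{N}_u$ and ruling out cancellation. The discreteness of $\Gamma$ (equivalently, the finiteness of $\Gamma_\nu$ from \cite[Lem.~3.17]{Boc02}) should guarantee that only finitely many values of $c$ can even occur among elements that do not immediately escape, so a single threshold $u$ works for all of them simultaneously. Once (a) and (b) are in hand, the final sentence is essentially formal: (b) says the natural map $\mathfrak{a}\backslash\mathcal{N}_u \to \delta\Gamma\delta^{-1}\backslash\Omega$ is injective onto the image of $\mathcal{N}_u$, and combined with the $\mathfrak{a}$-invariance from (a) and the admissibility from (i), one transports the rigid analytic structure along this bijection, exactly as in the passage from $\Omega$ to $\Gamma\backslash\Omega$ carried out in Proposition~\ref{a6}.
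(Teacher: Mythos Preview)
The paper does not supply its own proof of this lemma: it is stated with the citations \cite[Prop.~1.65, Cor.~1.73]{Gos80} and \cite[Lem.~3.3]{vdP97} and used as a black box. Your sketch follows exactly the standard line of argument found in those references---admissibility of $\mathcal{N}_u$ via the tree covering, invariance under unipotent translations because $|z+a|_{\mathrm{im}}=|z|_{\mathrm{im}}$, and the growth estimate $|g\cdot z|_{\mathrm{im}}=|\det g|\,|z|_{\mathrm{im}}/|cz+d|^2$ to force $c=0$---so there is no methodological divergence to discuss.

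One small point worth tightening in your write-up of (ii)(b): once you have $c=0$, the sentence ``the determinant and integrality conditions\dots\ pin $g$ down to a unipotent translation lying in $\mathfrak{a}$'' skips a step. An upper-triangular $g=\begin{pmatrix} a & b\\ 0 & d\end{pmatrix}\in\delta\Gamma\delta^{-1}$ is a priori only Borel, not unipotent; you still need to argue that $a=d$. For $\Gamma=\Gamma_Y(I)$ with $I$ proper this follows because the eigenvalues $a,d$ of $g$ coincide with those of the conjugate $\gamma\in\Gamma_Y(I)$, and the congruence and determinant conditions on $\gamma$ force its eigenvalues to be $1$ (they lie in $\mathbb{F}_q^\times$ and are $\equiv 1\pmod I$). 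Alternatively, a non-trivial diagonal part would act as a hyperbolic translation on $\mathcal{T}$ and hence could not preserve any horicycle neighborhood. Either way the fix is short, but it should be said explicitly.
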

Let $\theta \in \mathbb{C}_\infty$ be such that $|\theta|=q$. Letting $e$ and $h$ be non-negative integers and $X$ be an indeterminate over $\mathbb{C}_{\infty}$, we define the affinoid algebras
\[
\mathbb{C}_\infty\Big\langle \frac{X}{\theta^{e}}\Big\rangle:=\Bigg\{\sum_{k=0}^{\infty}a_kX^{k}  \ \ |   \lim_{k\to \infty}a_kq^{ek}=0 \Bigg\}
\] 
and
\[
\mathbb{C}_\infty \Big\langle \frac{X}{\theta^{e}},\frac{\theta^{h}}{X}\Big\rangle:=\Bigg\{\sum_{k=-\infty}^{\infty}a_kX^{k}  \ \ |  \ \  \lim_{j\to -\infty}a_jq^{jh} = \lim_{k\to \infty}a_kq^{ek}=0 \Bigg\}.
\]
Now let $\mathfrak{t}:\Omega\to \mathbb{C}_{\infty}$ be a rigid analytic function. For any $e\in \mathbb{Q}$, we denote the \textit{punctured disc at infinity} by \[\mathbb{D}^{\ast}_e(\mathfrak{t}) : = \{z \in \Omega\ \ | \ \ 0 < |\mathfrak{t}(z)| \leq q^e\} = \bigcup_{h \in \mathbb{Z}_{\leq 0}} \Sp\Big(\mathbb{C}_\infty\Big\langle \frac{\mathfrak{t}}{\theta^e}, \frac{\theta^h}{\mathfrak{t}}\Big\rangle\Big).\]
Choosing a large enough $u$, by \cite[Prop.~4.7(c)]{BBP21}, we see that the image of $\Omega \setminus \mathcal{N}_u = \{z \in \Omega\ \ | \ \  |z| _{\text{im}} < u\}$ under $t_{\delta\Gamma \delta^{-1}}^{-1}$ is bounded above. Thus, the image of $\mathcal{N}_u$ under $t_{\delta\Gamma \delta^{-1}}$ contains a punctured disc $\mathbb{D}^\ast_e(t_{\mathfrak{a}_{\delta \Gamma \delta^{-1}}})$ for some $e$. On the other hand, since $t_{\delta\Gamma \delta^{-1}}^{-1}:\mathfrak{a}_{\delta \Gamma \delta^{-1}} \backslash\mathbb{C}_\infty  \to \mathbb{C}_\infty$ is an isomorphism, we have the following lemma.

\begin{lemma}[{\cite[Thm.~1.76]{Gos80}}]\label{L:nbd_at_infty}
    Choose $u \gg 0$ so that Lemma \ref{L:inv}(ii) holds true. Then the map $t_{\mathfrak{a}_{\delta \Gamma \delta^{-1}}}:\mathcal{N}_u \rightarrow \mathbb{C}_\infty$ induces  an isomorphism of rigid analytic spaces \[\mathfrak{a}_{\delta \Gamma \delta^{-1}} \backslash \mathcal{N}_u \cong \mathbb{D}^\ast_e(t_{\mathfrak{a}_{\delta \Gamma \delta^{-1}}})\] for some $e \in \mathbb{Z}$. 
\end{lemma}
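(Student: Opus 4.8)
The plan is to show that the analytic uniformizer $t := t_{\mathfrak{a}_{\delta\Gamma\delta^{-1}}}$ itself furnishes the desired isomorphism, with the punctured disc arising as its image on $\mathcal{N}_u$. Write $\mathfrak{a} := \mathfrak{a}_{\delta\Gamma\delta^{-1}}$ for brevity. First I would record that, by \eqref{E:unifdef}, $t(z) = \xi(\mathfrak{a})^{-1}\exp_{\mathfrak{a}}(z)^{-1}$, where $\exp_{\mathfrak{a}}$ is $\mathbb{F}_q$-linear, entire and surjective with kernel exactly the lattice $\mathfrak{a}$; in particular, since $z\in\Omega$ never meets $\mathfrak{a}$, the values $t(z)$ are finite and nonzero. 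Consequently $t(z_1)=t(z_2)$ forces $z_1 - z_2 \in \mathfrak{a}$, so $t$ is invariant under the translation action of $\mathfrak{a}$ on $\Omega$ and descends to an injective map out of $\mathfrak{a} \backslash \mathcal{N}_u$. By Lemma \ref{L:inv}(ii), for $u \gg 0$ this quotient is an admissible open in $\delta\Gamma\delta^{-1}\backslash\Omega$ and hence carries a rigid analytic structure, so that $t$ defines a morphism of rigid analytic spaces on it.

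Next I would invoke the rigid analytic isomorphism $\mathfrak{a} \backslash \mathbb{C}_\infty \iso \mathbb{C}_\infty$ induced by $\exp_{\mathfrak{a}}$ (the $\Ga$-analogue of Drinfeld uniformization, recorded just before the statement). Since $t$ is the reciprocal of $\xi(\mathfrak{a})\exp_{\mathfrak{a}}$, this shows that $t$ is an open immersion on $\mathfrak{a}\backslash(\mathbb{C}_\infty\setminus\mathfrak{a})$ with analytic inverse built from $\exp_{\mathfrak{a}}$. It then remains only to identify the image of $\mathfrak{a}\backslash \mathcal{N}_u$ under $t$ with the punctured disc $\mathbb{D}^\ast_e(t)$.

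For this I would argue both inclusions by norm estimates. The discussion preceding the statement already gives, via \cite[Prop.~4.7(c)]{BBP21}, that for $u\gg 0$ the function $1/t = \xi(\mathfrak{a})\exp_{\mathfrak{a}}$ is bounded above on $\Omega\setminus\mathcal{N}_u = \{z\in\Omega \ |\ |z|_{\text{im}}<u\}$; dually $t$ takes arbitrarily small nonzero values on $\mathcal{N}_u$, so its image contains a punctured disc. For the reverse inclusion I would use the standard growth estimate expressing $|\exp_{\mathfrak{a}}(z)|$, and hence $|t(z)|$, monotonically in terms of $|z|_{\text{im}}$ once $|z|_{\text{im}}$ is large, so that after enlarging $u$ the locus $|z|_{\text{im}}\geq u$ coincides on the quotient with $\{z\in\Omega \ |\ 0<|t(z)|\leq q^e\}$ for a suitable $e\in\mathbb{Z}$. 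Combining injectivity and surjectivity onto $\mathbb{D}^\ast_e(t)$ with the analyticity of the inverse then upgrades the set-theoretic bijection to an isomorphism of rigid analytic spaces.

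I expect the main obstacle to be precisely this last identification of the image with an exact punctured disc: one must control $|t(z)|$ sharply in terms of $|z|_{\text{im}}$ for $z\in\mathcal{N}_u$, ruling out any gaps other than the puncture at the origin, and simultaneously check that the admissible-open structure provided by Lemma \ref{L:inv}(ii) matches the affinoid exhaustion $\bigcup_{h} \Sp\bigl(\mathbb{C}_\infty\langle t/\theta^e,\theta^h/t\rangle\bigr)$ defining $\mathbb{D}^\ast_e(t)$, so that the bijection is genuinely bianalytic rather than merely a homeomorphism.
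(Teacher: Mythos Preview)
Your proposal is correct and follows essentially the same approach as the paper. The paper does not supply a self-contained proof of this lemma; it cites \cite[Thm.~1.76]{Gos80} and offers only the short paragraph preceding the statement as justification, invoking exactly the two ingredients you isolate: (i) the isomorphism $t_{\delta\Gamma\delta^{-1}}^{-1}:\mathfrak{a}_{\delta\Gamma\delta^{-1}}\backslash\mathbb{C}_\infty \to \mathbb{C}_\infty$ coming from the exponential, and (ii) the boundedness of $t^{-1}$ on $\Omega\setminus\mathcal{N}_u$ from \cite[Prop.~4.7(c)]{BBP21}, which forces the image of $\mathcal{N}_u$ under $t$ to contain some $\mathbb{D}^\ast_e$.

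Your identified obstacle---matching the image \emph{exactly} with a punctured disc---is the only point where you go beyond what the paper writes. Note that the paper's discussion strictly speaking only yields that the image \emph{contains} $\mathbb{D}^\ast_e$; it then defers to Goss for the precise isomorphism. Your monotonicity argument is the standard way to close this, but an equally clean shortcut (and one consonant with how the lemma is later used in \S2.6 and \S6) is to replace $\mathcal{N}_u$ by the $\mathfrak{a}$-invariant admissible open $t^{-1}(\mathbb{D}^\ast_e)\subset\mathcal{N}_u$, on which the restriction of $t$ is then automatically the desired isomorphism; since only \emph{some} neighborhood of the cusp is needed downstream, this is harmless.
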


Now we are ready to define a rigid analytic structure on $\overline{\Gamma \backslash \Omega}$: Let $b $ be a cusp and $\gamma \in \GL_2(K)$ be such that $\gamma\cdot b=\infty$. Choose a large enough $u$ so that Lemma \ref{L:nbd_at_infty} holds true, after replacing $\delta$ with $\gamma$. Then  the map of rigid analytic spaces $\Gamma \backslash \Omega \rightarrow \gamma \Gamma\gamma^{-1} \backslash \Omega$ sending $[z] \mapsto [\gamma z]$ induces an isomorphism 
\begin{equation}\label{E:isomrigid}
\pi(\gamma^{-1}(\mathcal{N}_u)) \cong \mathfrak{a}_{\gamma \Gamma \gamma^{-1}} \backslash \mathcal{N}_u \cong \mathbb{D}^\ast_e(t_{\mathfrak{a}_{\gamma \Gamma \gamma^{-1}}}).
\end{equation}
Consequently,  \eqref{E:isomrigid} may be extended to a unique isomorphism so that 
\[
\pi(\gamma^{-1}(\mathcal{N}_u)) \cup \{b\}\cong \mathbb{D}_e(t_{\mathfrak{a}_{\gamma \Gamma \gamma^{-1}}}) := \Sp\Big(\mathbb{C}_\infty\Big\langle \frac{t_{\mathfrak{a}_{\gamma \Gamma \gamma^{-1}}}(z)}{\theta^e}\Big\rangle\Big).
\]
Thus, we realize $\pi(\gamma^{-1}(\mathcal{N}_u)) \cup \{b\}$ as an admissible open subset around $b$. Repeating this process for each cusp in $\Cusps^Y_{I}$  gives us a rigid analytic structure on $\overline{\Gamma \backslash \Omega}$. 

\subsection{Sheaves on $\Gamma \backslash \Omega$} We continue with the same notation as in \S2.6. In this subsection, we analyze coherent sheaves on the rigid analytic spaces $\Omega$ and $\Gamma \backslash \Omega$.

Let $X$ be a rigid analytic space equipped with a left action of $\Gamma$. We start with the definition of a $\Gamma$-sheaf on $X$.  

\begin{definition}\label{D: G_sheaf}
	 A coherent sheaf $\mathcal{F}$ on $X$ is called \textit{a (right) $\Gamma$-sheaf} if for every $\gamma\in\Gamma$ there exists an isomorphism $f_\gamma$ between sheaves $$f_\gamma:\mathcal{F} \rightarrow \gamma_\ast\mathcal{F}$$ such that $f_1=\Id$ and $\gamma_\ast (f_\delta) \circ f_\gamma = f_{\gamma\delta}$ for all $\gamma,\delta \in \Gamma$.  
\end{definition}

We denote by $\Coh^{\Gamma}_X$ the category of $\Gamma$-sheaves on $X$, which is the full subcategory of $\Gamma$-sheaves inside the category $\Coh_X$ of coherent sheaves on $X$.

\begin{lemma} For any sheaf $\mathcal{F}\in \Coh^{\Gamma}_X $, the presheaf $\mathcal{F}^\Gamma$ whose section for an admissible open subset $U \subset \Gamma \backslash \Omega$ is given by $\mathcal{F}^\Gamma(U):=(\mathcal{F}(\pi^{-1}U))^{\Gamma}$ is a sheaf.
\end{lemma}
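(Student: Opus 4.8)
The plan is to reduce the sheaf axioms for $\mathcal{F}^\Gamma$ on $\Gamma\backslash\Omega$ to the already-known sheaf axioms for $\mathcal{F}$ on $\Omega$, using two facts: that the rigid structure on the quotient is defined so that admissibility is tested after pulling back along $\pi$, and that the functor of $\Gamma$-invariants is left exact. The one point that genuinely needs care is the $\Gamma$-equivariance of the whole Čech sequence; everything else is formal.

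First I would make the $\Gamma$-action on sections precise. For a $\Gamma$-stable admissible open $W\subset\Omega$ (for instance $W=\pi^{-1}U$, which is $\Gamma$-stable since $U\subset\Gamma\backslash\Omega$) we have $\gamma^{-1}W=W$ for all $\gamma\in\Gamma$, so the structure isomorphism $f_\gamma$ of Definition~\ref{D: G_sheaf} specializes to an automorphism
\[
\rho_W(\gamma):=f_\gamma(W)\colon \mathcal{F}(W)\longrightarrow(\gamma_\ast\mathcal{F})(W)=\mathcal{F}(\gamma^{-1}W)=\mathcal{F}(W).
\]
The normalization $f_1=\Id$ and the cocycle relation $\gamma_\ast(f_\delta)\circ f_\gamma=f_{\gamma\delta}$, evaluated on the $\Gamma$-stable open $W$ (where $\gamma_\ast(f_\delta)(W)=f_\delta(\gamma^{-1}W)=f_\delta(W)$), give $\rho_W(\gamma\delta)=\rho_W(\delta)\circ\rho_W(\gamma)$, so $\rho_W$ is a (right) $\Gamma$-action on $\mathcal{F}(W)$ whose fixed locus is exactly $\mathcal{F}^\Gamma(U)=\mathcal{F}(W)^\Gamma$. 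Because each $f_\gamma$ is a \emph{morphism of sheaves}, it commutes with restriction; hence for an inclusion $W'\subset W$ of $\Gamma$-stable admissible opens the restriction $\mathcal{F}(W)\to\mathcal{F}(W')$ is $\Gamma$-equivariant.

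Next I would invoke the definition of the rigid structure on $\Gamma\backslash\Omega$ recalled in \S2.6: a subset $U$ is admissible open precisely when $\pi^{-1}U$ is, and $\{U_i\}$ is an admissible covering of $U$ precisely when $\{\pi^{-1}U_i\}$ is an admissible covering of $\pi^{-1}U$. Fixing such a $U$ and covering, write $W:=\pi^{-1}U$, $W_i:=\pi^{-1}U_i$, and note $\pi^{-1}(U_i\cap U_j)=W_i\cap W_j$, all of which are $\Gamma$-stable. Since $\mathcal{F}$ is a sheaf on $\Omega$ and $\{W_i\}$ covers $W$ admissibly, the sequence
\[
0\longrightarrow \mathcal{F}(W)\longrightarrow \prod_i\mathcal{F}(W_i)\longrightarrow \prod_{i,j}\mathcal{F}(W_i\cap W_j)
\]
is exact, and by the previous paragraph every arrow in it is $\Gamma$-equivariant for the actions $\rho_{(-)}$ (the action on a product being the diagonal one, and the Čech differential being a difference of equivariant restrictions, hence equivariant).

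Finally I would apply the $\Gamma$-invariants functor, which is left exact since $(-)^\Gamma=\bigcap_{\gamma}\ker(\rho(\gamma)-\Id)$ is a limit. As invariants commute with products, $\bigl(\prod_i\mathcal{F}(W_i)\bigr)^\Gamma=\prod_i\mathcal{F}(W_i)^\Gamma=\prod_i\mathcal{F}^\Gamma(U_i)$ and similarly for the double intersections, so applying invariants to the displayed sequence yields the exact sequence
\[
0\longrightarrow \mathcal{F}^\Gamma(U)\longrightarrow \prod_i\mathcal{F}^\Gamma(U_i)\longrightarrow \prod_{i,j}\mathcal{F}^\Gamma(U_i\cap U_j),
\]
which is exactly the separatedness and gluing axioms for $\mathcal{F}^\Gamma$. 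The main obstacle, as noted, is establishing equivariance of this whole sequence, and this is precisely where the cocycle condition and the fact that the $f_\gamma$ are morphisms of sheaves (not merely abstract isomorphisms of section groups) are used; the comparison of covers of $U$ with covers of $\pi^{-1}U$ is built into the definition of the quotient rigid structure, and left-exactness of invariants is a general fact.
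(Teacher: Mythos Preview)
Your argument is correct and follows essentially the same approach as the paper's proof, which simply observes that $()^\Gamma$ is left exact and hence the sheaf axioms for $\mathcal{F}$ pass to $\mathcal{F}^\Gamma$. You have spelled out in full the details the paper leaves implicit---the $\Gamma$-equivariance of the \v{C}ech sequence coming from the cocycle condition, and the compatibility of invariants with products---but the underlying idea is identical.
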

\begin{proof}
	Observe that $()^\Gamma$ is a left exact functor. Thus, since $\mathcal{F}$ is a sheaf,  the sheaf axioms for $\mathcal{F}^\Gamma$ also hold true.
\end{proof}

By the rigid analytic structure on $\Gamma \backslash \Omega$, $\mathcal{F}^\Gamma$ is a coherent sheaf on $\Gamma \backslash \Omega$. This allows us to define the functor \[()^\Gamma:\Coh^{\Gamma}_{\Omega} \rightarrow \Coh_{\Gamma \backslash \Omega}\] sending $\mathcal{F} \mapsto \mathcal{F}^\Gamma$. In what follows, we aim to show that this functor induces an equivalence of categories with the quasi inverse being $\pi^\ast$.  
We prove this in the corollary below, by first stating the analogue of \cite[Chap.~2, Prop.~7.2]{Mum74} for affinoid spaces, whose proof holds true almost verbatim in our case.

\begin{proposition}\label{P:gamma}
	Let $X=\Sp(B)$ be an affinoid space, for an affinoid algebra $B$, equipped with a free left action of a finite group $G$. Then with respect to the quotient map $\mathtt{p}:X=\Sp(B) \rightarrow G \backslash X=\Sp(B^G)$ given as in Lemma \ref{5.2}, there exists an equivalence of categories $$\mathtt{p}^\ast:\Coh_{G\setminus X} \rightarrow \Coh^{G}_X$$ sending $\mathcal{G} \mapsto \mathtt{p}^{\ast}\mathcal{G}$. Moreover, the quasi inverse of $\mathtt{p}^\ast$ is given by sending $\mathcal{F} \mapsto (\mathtt{p}_{\ast}\mathcal{F})^G$. Furthermore, $\mathtt{p}^\ast$ commutes with $- \otimes -$ and $\Sym^{n}$ for any $n\in \mathbb{Z}_{\geq 1}$, and so does its quasi inverse. 
\end{proposition}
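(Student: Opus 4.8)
The plan is to reduce the statement to a purely algebraic descent along the finite ring map $B^G\to B$ and then invoke faithfully flat descent, exactly as in the scheme-theoretic treatment of free quotients by finite groups. The first step is to record that, by the argument of Lemma~\ref{5.2} together with Proposition~\ref{a6}, the quotient map $\mathtt{p}\colon \Sp(B)\to \Sp(B^G)$ is finite and \'etale; since $G$ acts freely, it acts simply transitively on the fibres of $\mathtt{p}$, so $\mathtt{p}$ has degree exactly $|G|$. In particular $B$ is a finite, locally free $B^G$-module of constant rank $|G|$, hence faithfully flat over $B^G$, and the simple transitivity upgrades $\mathtt{p}$ to a $G$-torsor: the natural map
\[
B\otimes_{B^G} B \longrightarrow \prod_{g\in G} B, \qquad a\otimes b \mapsto (a\cdot g(b))_{g\in G},
\]
is an isomorphism of $B$-algebras. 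Because $B$ is finite over $B^G$, this tensor product is the ordinary algebraic one, so no completed tensor product is needed; one may verify the torsor identity by completing at each point of $\Sp(B^G)$ and using the local computation $\widehat{B}\cong \prod_{g\in G}\widehat{\mathcal{O}_{X,g\cdot x}}$ already obtained in the proof of Lemma~\ref{5.2}, together with the faithfulness of completion.

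Next I would pass from sheaves to modules. By the standard equivalence between coherent sheaves on an affinoid space and finite modules over its affinoid algebra (see \cite{FvdP04}), global sections identify $\Coh_{\Sp(B)}$ with the category of finitely generated $B$-modules and $\Coh_{G\backslash X}=\Coh_{\Sp(B^G)}$ with that of finitely generated $B^G$-modules. Under the first equivalence, a $G$-sheaf in the sense of Definition~\ref{D: G_sheaf} corresponds precisely to a finitely generated $B$-module $M$ equipped with a semilinear $G$-action, the cocycle condition $g_*(f_h)\circ f_g=f_{gh}$ matching the cocycle condition for the semilinear action. Thus $\Coh^G_{\Sp(B)}$ becomes the category of semilinear $G$-representations on finite $B$-modules, and the claim reduces to the assertion that $N\mapsto N\otimes_{B^G} B$ (with its tautological semilinear $G$-action) is an equivalence onto this category, with quasi-inverse $M\mapsto M^G$.

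This last assertion is exactly faithfully flat descent along $B^G\to B$. Via the torsor isomorphism above, a finitely generated $B$-module with semilinear $G$-action is the same datum as a $B$-module together with a descent datum relative to the faithfully flat cover $B^G\to B$: the two incarnations of a descent datum, as an isomorphism $\varphi\colon B\otimes_{B^G} M \iso M\otimes_{B^G} B$ over $B\otimes_{B^G} B$ and as a family of semilinear maps indexed by $G$, are interchanged by $B\otimes_{B^G} B\cong \prod_g B$, and the cocycle conditions correspond. Faithfully flat descent therefore identifies such objects with finitely generated $B^G$-modules, the descent functor being $N\mapsto N\otimes_{B^G} B$ and its inverse the invariants $M\mapsto M^G$ (finiteness of $M^G$ being automatic, as $M$ is finite over the Noetherian ring $B^G$). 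Translating back through the module--sheaf equivalences yields the asserted equivalence $\mathtt{p}^\ast$ with quasi-inverse $\mathcal{F}\mapsto(\mathtt{p}_\ast\mathcal{F})^G$.

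Finally, compatibility with $\otimes$ and $\Sym^n$ is essentially formal: $\mathtt{p}^\ast$ is a pullback functor, hence symmetric monoidal, so it commutes with tensor products and symmetric powers, and these operations on $\Coh^G_{\Sp(B)}$ are computed on the underlying coherent sheaves with the induced $G$-action, so the equivariant structures are respected. Since the quasi-inverse is the inverse of a symmetric monoidal equivalence, it is automatically symmetric monoidal as well, which gives the compatibility of $\mathcal{F}\mapsto(\mathtt{p}_\ast\mathcal{F})^G$ with $\otimes$ and $\Sym^n$. I expect the only genuinely delicate point to be the bookkeeping in the middle two steps: checking that the module--sheaf equivalence carries the $G$-sheaf data of Definition~\ref{D: G_sheaf} to semilinear $G$-actions with matching cocycles, and that the torsor isomorphism converts descent data into such actions. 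All the analytic input --- finiteness, local freeness and \'etaleness of $\mathtt{p}$ --- has already been supplied by Lemma~\ref{5.2}, so beyond this the argument runs verbatim as in the scheme-theoretic case \cite[Chap.~2, \S7]{Mum74}.
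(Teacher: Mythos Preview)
Your proposal is correct and matches the paper's approach: the paper does not give a proof but simply asserts that Mumford's scheme-theoretic argument \cite[Chap.~2, Prop.~7.2]{Mum74} carries over verbatim to the affinoid setting, and your sketch is precisely an unwinding of that argument via the module--sheaf equivalence, the $G$-torsor structure of $B$ over $B^G$, and faithfully flat descent. One minor point: the reference to Proposition~\ref{a6} is unnecessary here, since Lemma~\ref{5.2} alone (together with Drinfeld's finiteness statement) already supplies finiteness and \'etaleness of $\mathtt{p}$ for a general affinoid with free finite group action.
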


We  finish the present subsection with a proof of the next corollary. Recall the affinoid subsets $\overline{\mathfrak{U}}_\nu$ from \S2.6 and denote by $\pi_\nu:\mathfrak{U}_\nu \rightarrow \Gamma_\nu \backslash \mathfrak{U}_\nu \cong \overline{\mathfrak{U}}_\nu$ the natural finite quotient map.

\begin{corollary}
	\label{a12} The following statements hold.
	\begin{itemize}
		\item[(i)] $\mathcal{F}^\Gamma$ is the unique coherent sheaf on $\Gamma \backslash \Omega$ which, restricted to $ \overline{\mathfrak{U}}_\nu$ for all $\nu \in \mathcal{T}$, provides an isomorphism  $(\mathcal{F}^\Gamma) _{|\overline{\mathfrak{U}}_\nu} \cong (\pi_{\nu,\ast}(\mathcal{F}_{|\mathfrak{U}_\nu}))^{\Gamma_\nu}$.
		\item[(ii)] The functor $\pi^\ast:\Coh_{\Gamma \backslash \Omega} \rightarrow \Coh^{\Gamma}_{\Omega}$ sending  $\mathcal{G} \mapsto \pi^{\ast}\mathcal{G}$ is an equivalence of categories. Moreover, the quasi inverse of $\pi^\ast$ is given by $\mathcal{F} \mapsto \mathcal{F}^{\Gamma}$.
		\item[(iii)] Both functors defined in (ii) commute with applying $- \otimes -$ and $\Sym^{n}$, for any $n\in \mathbb{Z}_{\geq 1}$. 
	\end{itemize}
\end{corollary}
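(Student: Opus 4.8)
The plan is to deduce all three statements from the local equivalence established in Proposition \ref{P:gamma} by working over the admissible affinoid covering $\{\overline{\mathfrak{U}}_\nu\}_{\nu\in\mathcal{T}}$ of Proposition \ref{a6}. The guiding principle is that each of the functors $\pi^\ast$ and $(-)^\Gamma$, as well as the operations $-\otimes-$ and $\Sym^n$, is local and compatible with restriction to admissible opens; hence it suffices to verify every assertion after restricting to each $\overline{\mathfrak{U}}_\nu$, where the situation becomes the finite free quotient $\pi_\nu:\mathfrak{U}_\nu\to\Gamma_\nu\backslash\mathfrak{U}_\nu\cong\overline{\mathfrak{U}}_\nu$ of affinoids governed by Proposition \ref{P:gamma}.

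For part (i) I would compute the restriction of $\mathcal{F}^\Gamma$ to $\overline{\mathfrak{U}}_\nu$ directly from the definition. Since $\pi^{-1}(\overline{\mathfrak{U}}_\nu)=\mathfrak{U}_{\Gamma,\nu}=\sqcup_{g\in\Gamma/\Gamma_\nu}g\mathfrak{U}_\nu$ is a disjoint union of the $\Gamma$-translates of $\mathfrak{U}_\nu$, the sections of $\mathcal{F}$ over $\mathfrak{U}_{\Gamma,\nu}$ decompose as a product over the cosets, and $\Gamma$ permutes the factors transitively with stabilizer $\Gamma_\nu$. A $\Gamma$-invariant section is therefore determined by its component on the identity coset, which must itself be $\Gamma_\nu$-invariant; this produces a natural identification $\mathcal{F}^\Gamma(\overline{\mathfrak{U}}_\nu)\cong\mathcal{F}(\mathfrak{U}_\nu)^{\Gamma_\nu}=(\pi_{\nu,\ast}(\mathcal{F}_{|\mathfrak{U}_\nu}))^{\Gamma_\nu}(\overline{\mathfrak{U}}_\nu)$, and these patch to the asserted isomorphism of sheaves. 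Uniqueness is then the gluing lemma for coherent sheaves on rigid analytic spaces: a coherent sheaf on $\Gamma\backslash\Omega$ is determined up to unique isomorphism by its restrictions to the admissible covering $\{\overline{\mathfrak{U}}_\nu\}$.

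For part (ii) I would first note that $\pi\circ\gamma=\pi$ for all $\gamma\in\Gamma$ endows $\pi^\ast\mathcal{G}$ with a canonical $\Gamma$-sheaf structure, so $\pi^\ast$ does land in $\Coh^\Gamma_\Omega$. I would then construct the unit $\mathcal{G}\to(\pi^\ast\mathcal{G})^\Gamma$ and the counit $\pi^\ast(\mathcal{F}^\Gamma)\to\mathcal{F}$ as global morphisms, and check that both are isomorphisms by restricting to the admissible coverings $\{\overline{\mathfrak{U}}_\nu\}$ of $\Gamma\backslash\Omega$ and $\{\mathfrak{U}_\nu\}$ of $\Omega$. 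Because $\pi$ restricted to $\mathfrak{U}_\nu$ factors through $\pi_\nu$, the restriction of $\pi^\ast$ to $\mathfrak{U}_\nu$ agrees with $\pi_\nu^\ast$, while part (i) identifies $\mathcal{F}^\Gamma_{|\overline{\mathfrak{U}}_\nu}$ with $(\pi_{\nu,\ast}(\mathcal{F}_{|\mathfrak{U}_\nu}))^{\Gamma_\nu}$; under these identifications the unit and counit become exactly those of the local equivalence $\mathtt{p}^\ast$ of Proposition \ref{P:gamma} for the finite free quotient $\pi_\nu$, hence are isomorphisms. The point requiring care is that the $\Gamma$-sheaf datum $\{f_\gamma\}$ is precisely the descent datum making these locally defined isomorphisms agree on the overlaps $\overline{\mathfrak{U}}_\nu\cap\overline{\mathfrak{U}}_{\nu'}$, so that they glue to global isomorphisms. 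I expect this compatibility check on overlaps, together with the bookkeeping of the induced structure on $\pi^{-1}(\overline{\mathfrak{U}}_\nu)=\mathfrak{U}_{\Gamma,\nu}=\sqcup_{g\in\Gamma/\Gamma_\nu}g\mathfrak{U}_\nu$ (a Frobenius-reciprocity reduction from $\Gamma$-equivariance to $\Gamma_\nu$-equivariance), to be the main technical obstacle of the argument.

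Finally, for part (iii), since $-\otimes-$ and $\Sym^n$ of coherent sheaves are formed locally and are preserved by pullback along any morphism of rigid analytic spaces, the functor $\pi^\ast$ tautologically commutes with them; that $(-)^\Gamma$ does as well then follows by restricting to $\overline{\mathfrak{U}}_\nu$ and invoking the corresponding compatibility already recorded for $\mathtt{p}^\ast$ and its quasi-inverse in Proposition \ref{P:gamma}, the two sides agreeing on the covering and hence globally by the gluing of coherent sheaves.
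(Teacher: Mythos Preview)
Your proposal is correct and follows essentially the same approach as the paper: both reduce to the local picture over the admissible affinoid cover $\{\overline{\mathfrak{U}}_\nu\}$, use the Frobenius-reciprocity identification $\mathcal{F}^\Gamma(\overline{\mathfrak{U}}_\nu)\cong\mathcal{F}(\mathfrak{U}_\nu)^{\Gamma_\nu}$ coming from the decomposition $\pi^{-1}(\overline{\mathfrak{U}}_\nu)=\sqcup_{g\in\Gamma/\Gamma_\nu}g\mathfrak{U}_\nu$, and then invoke Proposition~\ref{P:gamma} for the finite free quotient $\pi_\nu$. The paper is slightly more terse for (iii), simply observing that since $\pi^\ast$ commutes with $\otimes$ and $\Sym^n$ its quasi-inverse must as well, whereas you verify this locally---but this is the same content.
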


\begin{proof}
	We first prove (i).  Set $\overline{\mathfrak{U}}:=\{\overline{\mathfrak{U}}_\nu\}_{\nu \in \mathcal{T}_0\sqcup \mathcal{T}_1}$. Since $\mathcal{F}^{\Gamma}$ is by definition, a coherent sheaf on $\Gamma \backslash \Omega$, by \cite[Chap.~6, Cor.~5]{Bos14}, $\mathcal{F}^\Gamma$ is also a $\overline{\mathfrak{U}}$-coherent sheaf, that is for each $\nu \in \mathcal{T}$, $(\mathcal{F}^\Gamma)_{|\overline{\mathfrak{U}}_{\nu}}$ is the unique coherent sheaf on $\overline{\mathfrak{U}}_{\nu}$ associated to the $\mathcal{O}(\overline{\mathfrak{U}}_{\nu})$-module $\mathcal{F}^\Gamma(\overline{\mathfrak{U}}_{\nu})$. We analyze this latter module as follows. Note that the natural map \[\mathcal{F}(\mathfrak{U}_\nu) \hookrightarrow \prod_{g \in \Gamma / \Gamma_\nu} \mathcal{F}(\mathfrak{U}_{g\nu}) = \prod_{g \in \Gamma / \Gamma_\nu} (g^{-1})_\ast \mathcal{F}(\mathfrak{U}_\nu)\]
		sending $s \mapsto ((g^{-1})_{\ast}s)_{g\in \Gamma /\Gamma_{\nu}}$ induces an isomorphism \begin{equation}\label{E:finite_Gamma}
        \mathcal{F}(\mathfrak{U}_\nu)^{\Gamma_\nu} \cong (\prod_{g \in \Gamma / \Gamma_\nu} \mathcal{F}(\mathfrak{U}_{gv}))^{\Gamma} = \mathcal{F}^\Gamma(\overline{\mathfrak{U}}_\nu).\end{equation} 
        Consequently, we have an isomorphism
        \begin{equation}\label{E:isomorp}
(\pi_{\nu,\ast}(\mathcal{F}_{|\mathfrak{U}_\nu}))^{\Gamma_\nu} \cong (\mathcal{F}^{\Gamma})_{|\overline{\mathfrak{U}}_\nu}.
\end{equation}
		This finishes the proof of (i). On the other hand, Proposition \ref{P:gamma} and the isomorphism \eqref{E:isomorp} show that the functors defined in (ii) are quasi inverses to each other and hence the equivalence of the aforementioned categories is established. Finally, note that the functor $\pi^{\ast}:\Coh_{\Gamma \backslash \Omega} \rightarrow \Coh^{\Gamma}_{\Omega}$ commutes with $-\otimes -$ and $ \Sym^{n}$, hence so does its quasi inverse and this finishes the proof of (iii). 
\end{proof}

\subsection{Drinfeld $A$-modules over rigid analytic spaces}
Let $L$ be a complete subfield of $\mathbb{C}_{\infty}$ containing $K_\infty$ 
and $X$ denote a rigid analytic space over $L$. When $X=\Sp(\mathbb{C}_\infty)$, in \S2.1, we described the construction of a  Drinfeld $A$-module over $X$ via $A$-lattices inside $\mathbb{C}_\infty$. In what follows, using \cite[\S4]{Boc02}, we introduce a similar procedure to form Drinfeld $A$-modules over an arbitrary rigid analytic space over $L$.

Consider a locally free sheaf $\mathcal{M}$ of rank one on $X$. We define the non-commutative polynomial ring
\[
\Gamma_{X,\mathcal{M}}[\tau]:=\Bigg\{ \sum_{i\geq 0} f_i\tau^i \ \ | \ \ f_i\in \Gamma(X,\mathcal{M}^{\otimes (1-q^i)}), \ \ f_i\equiv 0 \text{ when } i\gg 0 \Bigg\} 
\]
subject to the condition $f_i\tau^if_j\tau^j=f_if_j^{q^i}\tau^{i+j}$ for each $i,j\in \mathbb{Z}_{\geq 0}$ and 
where the map $\tau: \mathcal{M} \rightarrow \mathcal{M}^q$ is defined so that for each admissible open subset $U\subset X$ and $x\in \mathcal{M}(U)$, $\tau(x)=x^q$. 
\begin{definition}\label{ana_drin} \textit{A Drinfeld $A$-module of rank $r$ over $X$} is a pair $(\mathcal{M},\phi)$ such that 
	$
\phi:A \rightarrow \Gamma_{X,\mathcal{M}}[\tau]
	$
is a ring homomorphism given by 
    \[
    \phi_a:=f_0+f_1\tau+\cdots+f_{r\deg(a)}\tau^{r\deg(a)}    \]
  satisfying the following properties:
	\begin{itemize}
    \item[(1)] $f_0=a$ and $f_{r\deg(a)}\in \Gamma(X,\mathcal{M}^{\otimes (1-q^{r\deg(a)})})$ is nowhere vanishing.
    \item[(2)] Let $\partial:\Gamma_{X,\mathcal{M}}[\tau] \rightarrow \mathcal{O}_X(X)$ be the map sending $\sum_{j\geq 0} g_j \tau^j \mapsto g_0$. Then the composition $\partial \circ \phi:A \rightarrow \Gamma(X,\mathcal{O}_X)$ is the map corresponding to $A \hookrightarrow L \rightarrow \mathcal{O}_X(X)$.
    \end{itemize}
   \end{definition} 

Analogous to the work of Drinfeld, we will now construct a Drinfeld $A$-module over $X$ associated to \textit{$A$-lattices over $X$}, which we will define in what follows.

\begin{definition}
\begin{itemize}
    \item[(i)]Let $B$ be an affinoid algebra over $L$ and $|.|_B$ denote a residue norm on $B$. Let $i_B:A \rightarrow B$ be the canonical map. Then an $A$-module $\Lambda \subset B$ is said to be \textit{an $A$-lattice of rank $r$ in $B$} if it satisfies the following:
    \begin{itemize}
        \item[(1)] $\Lambda$ is a projective $A$-module of rank $r$,
        \item[(2)] the elements of $\Lambda \backslash \{0\}$ are units,
        \item[(3)] for all $c \in \mathbb{R}_{>0}$, the set  $\{x \in \Lambda \backslash \{0\}\ \ | \ \ |x^{-1}|_B \geq  c\}$ is finite.
    \end{itemize}
\item[(ii)] An \textit{$A$-lattice of rank $r$ over $X$} is a triple $(\mathcal{M},\underline{\Lambda},s)$, consisting of
\begin{enumerate}
    \item a locally free sheaf $\mathcal{M}$ of rank one over $X$,
    \item a sheaf $\underline{\Lambda}$ of projective $A$-modules of rank $r$ over the rigid analytic site of $X$,
    \item a monomorphism $s:\underline{\Lambda} \rightarrow \mathcal{M}$ such that, for any affinoid open set $U$ of $X$, over which $\mathcal{M}$ can be trivialized, $\underline{\Lambda}(U)$ is  an $A$-lattice of rank $r$ in $\mathcal{M}(U)$ in the sense of part (i). 
\end{enumerate}
\end{itemize}
\end{definition}

Following \cite[Prop.~4.2]{Boc02}, we now sketch how to associate to an $A$-lattice over $X$ a Drinfeld $A$-module. Firstly, passing to a trivializing cover of $\mathcal{M}$, we assume that $X=\Sp(B)$.  Given an $A$-lattice $\Lambda$ in $B$, consider the exponential function $e_{\Lambda}:B\to B$ given by 
\[
e_{\Lambda}(x)=x \prod_{\substack{\lambda\in \Lambda \backslash\{0\}}}\Big(1-\frac{x}{\lambda}\Big).
\] This is an everywhere convergent power series and hence  defines a rigid analytic function on $B$, although unlike in the $\Sp(\mathbb{C}_\infty)$ case, $e_{\Lambda}$ may no longer be surjective.  
Then, for each $a\in A$, the polynomial $\phi_a(z) \in B[z]$ given by
\[
\phi_a(z):=az\prod_{\substack{h \in a^{-1}\Lambda /\Lambda \backslash \{0\}}}\Big(1-\frac{z}{e_{\Lambda}(h)}\Big)
\]
is the unique function satisfying $\phi_a(e_\Lambda(x)) = e_\Lambda(ax)$. Hence $\phi:A \to B[\tau]$ is a Drinfeld $A$-module over $\Sp(B)$. 

Now let $X$ be an arbitrary rigid analytic space over $L$ and  $(\mathcal{M},\underline{\Lambda},s)$ be an $A$-lattice of rank $r$ over $X$. We choose an admissible affinoid covering $\mathcal{U}:=\{U_i\}_{i \in \mathcal{Q}}$ of $X$ so that $\mathcal{M}$ is trivialized by $\mathcal{U}$. Each triple $(\mathcal{M}_{|U_i},\underline{\Lambda}_{|U_i},s_{|U_i})$  as described above determines a Drinfeld $A$-module $\phi_i:A \rightarrow \mathcal{M}(U_i)[\tau]$. Consequently, we obtain a Drinfeld $A$-module $\phi:A \rightarrow \Gamma_{X,\mathcal{M}}[\tau]$ such that $\phi_{|U_i}=\phi_i$ and it is independent of the choice of $\mathcal{U}$. Thus $\phi$ is a Drinfeld $A$ module over $X$ in the sense of Definition \ref{ana_drin} obtained from the $A$-lattice $(\mathcal{M},\underline{\Lambda},s)$. 

In what follows, we provide a fundamental example of a Drinfeld $A$-module defined over $\Omega$.

\begin{example} 
Recall the projective $A$-module $Y$ given as in \eqref{E:defofY} and 
denote by $\underline{Y}$ the constant sheaf on $\Omega$ determined by $Y$. We embed $\underline{Y}$ into $\mathcal{O}_\Omega$ via $s:\underline{Y} \rightarrow \mathcal{O}_\Omega$ sending $(g,h) \mapsto [z \mapsto gz+h]$.
One can see that the triple $(\mathcal{O}_\Omega,\underline{Y},s)$ is an $A$-lattice of rank $2$ over $\Omega$. Hence it gives rise to a Drinfeld $A$-module over $\Omega$, which we denote by $(\mathbb{G}_{a,\Omega},\bold\Psi^Y)$.  

 For each $i \geq 1$, we let $g_{i,a}:\Omega \rightarrow \mathbb{C}_\infty$ be the rigid analytic function so that \[\bold\Psi^{Y}_a=a + \sum_{i=1}^{r\deg(a)}g_{i,a}\tau^i.\] For any $z \in \Omega$, recall from \S2.1 that the $A$-lattice $Y_z:=\mathfrak{g}z+\mathfrak{h} \subset \mathbb{C}_\infty$ gives rise to a Drinfeld $A$-module of rank $2$. Then for any $a\in A$, we write
 \[
 \bold\Psi^{Y_z}_a=a + \sum_{i=1}^{r\deg(a)}g_{i,a}(z)\tau^i.
 \] 
 We refer the reader to Example \ref{Ex:1}(ii) for more details on the rigid analytic function $g_{i,a}$.
\end{example}

We can also describe an action of the group $\GL_2(K)$ on the Drinfeld $A$-module $(\mathbb{G}_{a,\Omega},\bold\Psi^Y)$ as follows. Firstly, we let 
\[
j(\gamma;z):=c_{\gamma}z+d_{\gamma}, \ \ \gamma=\begin{pmatrix}
a_{\gamma}&b_{\gamma}\\
c_{\gamma}& d_{\gamma}
\end{pmatrix}\in \GL_2(K).
\]
Now one can form the triple $\gamma_\ast(\mathcal{O}_\Omega,\underline{Y},s):=(\gamma_\ast\mathcal{O}_\Omega,\underline{Y}\gamma^{-1},j(\gamma;-)s^\gamma)$, where $s^\gamma:\underline{Y}\gamma^{-1} \rightarrow \gamma_\ast\mathcal{O}_\Omega$ sends $(g',h') \mapsto [z \mapsto g' (\gamma \cdot z) + h']$. Consequently, since $Y$ is preserved by $\Gamma_Y$,  we have an action of $\Gamma_Y$ on the Drinfeld $A$-module $(\mathbb{G}_{a,\Omega},\bold\Psi^Y)$. Due to the construction of the level $I$-structure on $(\mathbb{G}_{a,\Omega},\bold\Psi^Y)$ (see \cite[\S4.3]{Boc02} and \cite[\S7]{BBP21} for more details), we can descend the $(\mathbb{G}_{a,\Omega},\bold\Psi^Y)$, together with a canonical level $I$-structure, to a Drinfeld $A$-module with a level $I$-structure on $\Gamma_Y(I) \backslash \Omega$. Indeed, $(\mathbb{G}_{a,\Omega},\bold\Psi^Y)$ has a ``universal property'' which will be stated in our next proposition.  

Let $A_v$ ($K_v$ respectively) be the completion of $A$ ($K$ respectively) at a finite place $v$ and  $\hat{A}\cong \prod_{v\neq \infty}A_{v}$ be the profinite completion of $A$. We set $\mathbb{A}_f:=\hat{A}\otimes_{A} K$ and define $\GL_2(\mathbb{A}_f):=\prod_{v\neq \infty}' \GL_2(K_v)$ where $\prod'$ refers to the restricted product with respect to $\{ \GL_2(A_v)\}_{v\neq \infty}$. 

\begin{proposition}[{\cite[Thm.~4.15]{Boc02}, \cite[\S8]{BBP21}}]
	\label{2.4} Let $\mathcal{K}(I):=\{\gamma\in \GL_2(\hat{A})\ \ |\ \ \gamma \equiv \Id_2 \pmod{I}\}$. 
	There exists a natural isomorphism of rigid analytic spaces $$\GL_{2}(K) \backslash (\Omega \times \GL_{2}(\mathbb{A}_f)/\mathcal{K}(I)) \xrightarrow{\sim} M^{2}_{I}(\mathbb{C}_\infty).$$ 
 Let the composition of the above isomorphism with $\Omega \rightarrow \GL_{2}(K) \backslash (\Omega \times \GL_{2}(\mathbb{A}^f)/\mathcal{K}(I))$ sending $\omega \mapsto [(\omega,g)]$ be denoted by $\pi_g$. Then $$\pi^{\ast}_{g}(\mathbb{E}^{un}_{I}) \cong (\mathbb{G}_{a,\Omega},\bold\Psi^{M_g})$$ where $M_g$ is the projective $A$-module of rank 2 embedded in $K^2$ such that $M_g=\hat{A}^{2}g^{-1} \cap K^{2}$. 
\end{proposition}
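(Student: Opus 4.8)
The plan is to recognize this as Drinfeld's analytic uniformization of the modular curve and to establish it first on $\mathbb{C}_\infty$-points, then promote the resulting bijection to an isomorphism of rigid analytic spaces by invoking the universal property of the fine moduli space in Theorem \ref{T:fine} together with the analytic lattice formalism of \S2.8.

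First I would build the family of Drinfeld modules underlying the right-hand chart. Fixing $g\in\GL_2(\mathbb{A}_f)$, the set $M_g=\hat{A}^2g^{-1}\cap K^2$ is a rank-two projective $A$-module, so after writing it in the form \eqref{E:defofY} the construction of the example preceding this proposition attaches to it the $A$-lattice $(\mathcal{O}_\Omega,\underline{M_g},s)$ over $\Omega$ and hence the analytic rank-two Drinfeld $A$-module $(\mathbb{G}_{a,\Omega},\bold\Psi^{M_g})$ of \S2.8. Its fiber at $z\in\Omega$ is, by the uniformization of \S2.1, the Drinfeld module attached to the rank-two $A$-lattice $\Lambda_{z,g}:=\{\alpha z+\beta\ |\ (\alpha,\beta)\in M_g\}\subset\mathbb{C}_\infty$, which is discrete precisely because $z\notin K_\infty$.

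Next I would encode the level structure: the coset $g\mathcal{K}(I)$ determines an isomorphism of $A$-modules $(I^{-1}/A)^2\xrightarrow{\sim}I^{-1}M_g/M_g$, and composing with $\exp_{\Lambda_{z,g}}$ yields a level $I$-structure on $(\mathbb{G}_{a,\Omega},\bold\Psi^{M_g})$ in the sense of \S2.3, the required equality of torsion divisors being automatic from the uniformization. I would then check on $\mathbb{C}_\infty$-points that two pairs $(z,g)$ and $(z',g')$ yield isomorphic triples exactly when they lie in one orbit of the diagonal action $\gamma\cdot(z,g)=(\gamma z,\gamma g)$ of $\GL_2(K)$: an isomorphism of the associated Drinfeld modules is multiplication by a scalar carrying $\Lambda_{z,g}$ onto $\Lambda_{z',g'}$, and transporting this scalar through the two lattice descriptions identifies it with an element of $\GL_2(K)$ acting by $z\mapsto\gamma z$ on the $\Omega$-factor and by left multiplication on the $\mathbb{A}_f$-factor, the scalar itself being the automorphy factor $j(\gamma;z)=c_\gamma z+d_\gamma$ of \S2.8; compatibility with level structures holds modulo $\mathcal{K}(I)$. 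Conversely, the uniformization of \S2.1 shows every rank-two Drinfeld module over $\mathbb{C}_\infty$ comes from a lattice, and reading off its $I$-torsion recovers a coset $g\mathcal{K}(I)$, producing the inverse map on points.

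Finally I would upgrade this bijection to an isomorphism of rigid analytic spaces, which I expect to be the main obstacle. The analytic Drinfeld module with level structure built above descends along the quotient map $\Omega\to\GL_2(K)\backslash(\Omega\times\GL_2(\mathbb{A}_f)/\mathcal{K}(I))$ --- \'{e}tale by Proposition \ref{a6} --- to a rigid-analytic Drinfeld module with level $I$-structure over the double-coset space; by the universal property of $M^2_I$ from Theorem \ref{T:fine}, applied after analytification, it is classified by a morphism of rigid analytic spaces $\Phi$ to $M^2_I(\mathbb{C}_\infty)$, and $\pi_g$ is by construction the composite classifying $(\mathbb{G}_{a,\Omega},\bold\Psi^{M_g})$, so the asserted pullback $\pi_g^\ast(\mathbb{E}^{un}_I)\cong(\mathbb{G}_{a,\Omega},\bold\Psi^{M_g})$ is immediate. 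The delicate point is that $\Phi$ is an isomorphism and not merely a bijection on points: I would verify this by a local comparison of completed local rings on the affinoid cover $\{\overline{\mathfrak{U}}_\nu\}$ of \S2.6, using that both sides are smooth of dimension one (the target by Theorem \ref{T:fine}), so that $\Phi$ is \'{e}tale and the pointwise inverse is automatically rigid-analytic. This last step is exactly what the cited comparison in \cite[\S6]{Dri74} and the analytic lattice theory of \cite[\S4]{Boc02} (see also \cite[\S8]{BBP21}) provide.
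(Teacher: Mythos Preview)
The paper does not give its own proof of this proposition: it is stated as a citation of \cite[Thm.~4.15]{Boc02} and \cite[\S8]{BBP21}, with no proof environment following. Your outline is the standard argument found in those references---construct the analytic family from the lattice $(\mathcal{O}_\Omega,\underline{M_g},s)$, equip it with the level structure coming from the coset $g\mathcal{K}(I)$, check that the resulting map is a bijection on $\mathbb{C}_\infty$-points, and then promote to a rigid analytic isomorphism via smoothness and the fine moduli property---and it is correct in outline. Since the paper treats this as a black box from the literature rather than supplying a proof, there is nothing to compare your argument against beyond the cited sources themselves.
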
 

We note that one can choose an element $g \in \GL_2(\mathbb{A}_f)$ such that $M_g=Y$. In this case, by Proposition \ref{2.4}, there exists an isomorphism between $\Gamma_Y(I) \backslash \Omega$ and  the $\mathbb{C}_\infty$-points of a unique connected component $M_Y \subset M^2_{I} \times_K \mathbb{C}_\infty$. Hence this gives rise to a natural isomorphism 
\[
M_Y^{an} \cong \Gamma_Y(I) \backslash \Omega
\]
where  $M_Y^{an}$ is the analytification of $M_Y$. Moreover, the pullback of the universal Drinfeld $A$-module via the analytification map $\Gamma_Y(I) \backslash \Omega = M^{an}_Y \rightarrow M_Y$ is the descent of $(\mathbb{G}_{a,\Omega},\bold\Psi^Y)$ to $\Gamma_Y(I)\backslash \Omega$ described above.

Since $M_Y$ is a smooth curve over $\mathbb{C}_\infty$, it  admits a unique compactification, denoted by $\overline{M_Y}$.  Letting $\overline{M_Y}^{an}$ be the analytification of $\overline{M_Y}$, in \cite[Thm.~4.16]{Boc02}, B\"{o}ckle showed that the isomorphism $M_Y^{an} \cong \Gamma_Y(I) \backslash \Omega$ may be extended uniquely to an isomorphism 
\[
\overline{M_Y}^{an} \cong \overline{\Gamma_Y(I) \backslash \Omega}.
\]Note that the compactification $\overline{\Gamma_Y(I) \backslash \Omega}$ is denoted $\Gamma_Y(I) \backslash \overline{\Omega}$ in \cite[Thm.~4.16]{Boc02}.

\section{Drinfeld modular forms} Our next goal is to introduce Drinfeld modular forms and provide several examples of them. Throughout this subsection, we mainly follow \cite[Chap.~V, VI]{Gek86}. One can also refer to \cite{Gek90, Cor97, BBP21} for more details on Drinfeld modular forms.
 
Recall $ \Gamma_Y\subset \GL_2(K)$ from \S1.2. Since, by \cite[Part III]{BBP21}, for any integral ideal $\mathfrak{n}$, $\Gamma_{\mathfrak{n}^{-1}Y}=\Gamma_{Y}$, without loss of generality, we assume that $\mathfrak{g}$ and $\mathfrak{h}$ are integral ideals of $A$. We call $\Gamma \leqslant \GL_2(K)$ \textit{a congruence subgroup of $\GL_2(K)$} if for some non-zero ideal $\mathfrak{m}$ of $A$, we have $\Gamma_Y(\mathfrak{m})\leqslant \Gamma\leqslant \Gamma_Y$ where $\Gamma_Y(\mathfrak{m})$ is the principal congruence subgroup of level $\mathfrak{m}$ in $\Gamma_Y$. Note that if $Y=A^2$, then $\Gamma_Y=\GL_{2}(A)$.

From \S2.8, recall the function $j(\gamma;z)$ for any $\gamma\in \GL_2(K)$ and $z\in \Omega$. For any rigid analytic function $f:\Omega\to \mathbb{C}_{\infty}$ 
and for integers $k$ and $m$, we define \textit{the slash operator} by 
\[
f|_{k,m}\gamma:=\Big[z \mapsto j(\gamma;z)^{-k}\det(\gamma)^mf(\gamma\cdot z)=j(\gamma;z)^{-k}\det(\gamma)^mf\Big(\frac{a_{\gamma}z+b_{\gamma}}{c_{\gamma}z+d_{\gamma}}\Big)\Big].
\]
For $\gamma_1,\gamma_2\in \GL_2(K)$, we have (see \cite[(1.6)]{BBP21}) \begin{equation}\label{E:propslash}
    (f|_{k,m}\gamma_1\gamma_2)(z)=(f|_{k,m}\gamma_1)|_{k,m}\gamma_2)(z) \ \ z\in \Omega.
\end{equation}

Let $b$ be a cusp of $\Gamma$ and $\delta\in \GL_2(K)$ be such that $\delta\cdot b=\infty$. Recall from \S2.6 the fractional ideal $\mathfrak{a}_{\delta\Gamma \delta^{-1}}$ of $A$ as well as the function $ t_{\delta\Gamma \delta^{-1}}$ which is, for any $z\in \Omega$, given by
\begin{equation}
t_{\delta\Gamma \delta^{-1}}(z)=t_{\mathfrak{a}_{\delta\Gamma \delta^{-1}}}(z)=\exp_{\rho^{(\mathfrak{a}_{\delta\Gamma \delta^{-1}})}}(\xi(\mathfrak{a}_{\delta\Gamma \delta^{-1}})z)^{-1}=\xi(\mathfrak{a}_{\delta\Gamma \delta^{-1}})^{-1}\exp_{\mathfrak{a}_{\delta\Gamma \delta^{-1}}}(z)^{-1}.
\end{equation}
 We further call $t_{\delta\Gamma \delta^{-1}}$ \textit{the uniformizer at the cusp $b$ of $\Gamma$}.

  The next lemma will be crucial later to determine arithmetic properties of special values of Drinfeld modular forms.

\begin{lemma}\label{L:algoft} There exists a proper ideal $\mathfrak{m}$ of $A$ such that $t_{\Gamma}$ and $t_{\Gamma_{Y}}$ may be written as a power series in $t_{{\Gamma_Y(\mathfrak{m})}}$ whose coefficients are in $H$.
\end{lemma}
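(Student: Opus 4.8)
The plan is to reduce the statement to a single relation between the uniformizers attached to a nested pair of fractional ideals. Since $t_\Gamma$, $t_{\Gamma_Y}$ and $t_{\Gamma_Y(\mathfrak{m})}$ are the uniformizers at the cusp $\infty$, taking $\delta=\Id$ in \eqref{E:unifdef} identifies them with $t_{\mathfrak{a}_\Gamma}$, $t_{\mathfrak{a}_{\Gamma_Y}}$ and $t_{\mathfrak{a}_{\Gamma_Y(\mathfrak{m})}}$, where $\mathfrak{a}_G$ denotes the maximal fractional ideal with $\bigl\{\left(\begin{smallmatrix}1&a\\0&1\end{smallmatrix}\right) : a\in\mathfrak{a}_G\bigr\}\subseteq G$. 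As $\Gamma$ is a congruence subgroup we may fix a non-zero ideal with $\Gamma_Y(\mathfrak{m})\leqslant\Gamma\leqslant\Gamma_Y$, and after replacing $\mathfrak{m}$ by $\mathfrak{p}\mathfrak{m}$ for a prime $\mathfrak{p}$ (which only shrinks $\Gamma_Y(\mathfrak{m})$) we may assume $\mathfrak{m}$ is proper.

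First I would record the containments of these translation ideals. The inclusions $\Gamma_Y(\mathfrak{m})\leqslant\Gamma\leqslant\Gamma_Y$ together with the maximality defining each $\mathfrak{a}_G$ give $\mathfrak{a}_{\Gamma_Y(\mathfrak{m})}\subseteq\mathfrak{a}_\Gamma\subseteq\mathfrak{a}_{\Gamma_Y}$. Reading off the unipotent upper-triangular elements from the description of $\Gamma_Y$ in \S2.6 yields $\mathfrak{a}_{\Gamma_Y}=\mathfrak{g}^{-1}\mathfrak{h}$ and $\mathfrak{a}_{\Gamma_Y(\mathfrak{m})}=\mathfrak{m}\,\mathfrak{g}^{-1}\mathfrak{h}$, so all three ideals lie in the single ideal class of $\mathfrak{g}^{-1}\mathfrak{h}$, with the two outer ones nested over the smallest ideal $\mathfrak{a}_{\Gamma_Y(\mathfrak{m})}$.

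The heart of the argument is then the following claim, which I would isolate and prove exactly as in Proposition \ref{P:expression}: if $\mathfrak{b}\subseteq\mathfrak{a}$ are fractional ideals of $A$ in the same ideal class, then $t_{\mathfrak{a}}$ is a power series in $t_{\mathfrak{b}}$ with coefficients in $H$. Writing $I:=\mathfrak{a}^{-1}\mathfrak{b}$, which is integral since $\mathfrak{b}\subseteq\mathfrak{a}$, the Drinfeld--Hayes module $\rho^{(\mathfrak{a})}=\rho^{(I^{-1}\mathfrak{b})}$ is isomorphic to $I\ast\rho^{(\mathfrak{b})}$ via the isogeny $\rho^{(\mathfrak{b})}_{I}\in H[\tau]$, and \eqref{E:relxi} gives $\xi(\mathfrak{a})=\mathcal{J}(I,\mathfrak{b})\,\partial(\rho^{(\mathfrak{b})}_{I})\,\xi(\mathfrak{b})$ with $\mathcal{J}(I,\mathfrak{b})\in H^{\times}$. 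Combining these with the functional equation \eqref{E:funceq} as in \eqref{E:unifchange01}--\eqref{E:unifchange21} produces
\[
t_{\mathfrak{a}}(z)=\mathcal{J}(I,\mathfrak{b})^{-1}\frac{1}{\rho^{(\mathfrak{b})}_{I}\!\left(t_{\mathfrak{b}}(z)^{-1}\right)}.
\]
Because $\rho^{(\mathfrak{b})}_{I}(X)=\sum_{j} b_j X^{q^{j}}\in H[\tau]$ is additive with nonzero leading and constant coefficients, the right-hand side is a power series in $t_{\mathfrak{b}}$ with coefficients in $H$ whose lowest term has positive degree $[\mathfrak{a}:\mathfrak{b}]$; in particular it has no principal part. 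Applying this claim to the pairs $(\mathfrak{a}_{\Gamma_Y},\mathfrak{a}_{\Gamma_Y(\mathfrak{m})})$ and $(\mathfrak{a}_\Gamma,\mathfrak{a}_{\Gamma_Y(\mathfrak{m})})$ then yields the two desired expansions simultaneously.

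The step I expect to be the main obstacle is the verification that the formula of \cite[Chap.~VI, (2.5)]{Gek86} underlying \eqref{E:unifchange21} really does extend from integral ideals to an arbitrary nested pair of fractional ideals in the same class, and that the resulting series has strictly positive valuation in $t_{\mathfrak{b}}$ (so that it is a genuine power series and not merely a Laurent series). Both points reduce to checking that $\rho^{(\mathfrak{b})}_{I}$ is defined over $H$ and has the stated shape, which is where I would concentrate the care: a naive reduction to the integral case via a common scaling $c\in A\setminus\{0\}$ fails, since recovering $t_{\mathfrak{b}}$ from $t_{c\mathfrak{b}}$ requires the non-invertible substitution $t_{\mathfrak{b}}\mapsto t_{c\mathfrak{b}}^{q^{\deg c}}$, and for this reason the isogeny relation must be applied directly to the pair $\mathfrak{b}\subseteq\mathfrak{a}$.
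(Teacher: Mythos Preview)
Your approach is correct and is essentially the paper's own proof, which simply invokes Proposition~\ref{P:expression}; you are re-deriving the relevant instance of that proposition for the nested pair $\mathfrak{a}_{\Gamma_Y(\mathfrak{m})}\subseteq\mathfrak{a}_\Gamma\subseteq\mathfrak{a}_{\Gamma_Y}$.

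One inaccuracy to flag: your assertion that ``all three ideals lie in the single ideal class of $\mathfrak{g}^{-1}\mathfrak{h}$'' is false in general. Since $\mathfrak{a}_{\Gamma_Y(\mathfrak{m})}=\mathfrak{m}\,\mathfrak{g}^{-1}\mathfrak{h}$, its class differs from that of $\mathfrak{g}^{-1}\mathfrak{h}$ by $[\mathfrak{m}]$, which need not be trivial; and there is no reason $\mathfrak{a}_\Gamma$ should be in the class of $\mathfrak{g}^{-1}\mathfrak{h}$ either. Fortunately this hypothesis plays no role in the argument you actually give: the formula
\[
t_{\mathfrak{a}}(z)=\mathcal{J}(I,\mathfrak{b})^{-1}\,\rho^{(\mathfrak{b})}_{I}\!\bigl(t_{\mathfrak{b}}(z)^{-1}\bigr)^{-1}
\]
from \eqref{E:unifchange21} requires only that $I=\mathfrak{a}^{-1}\mathfrak{b}$ be integral, which follows from $\mathfrak{b}\subseteq\mathfrak{a}$ alone. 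So simply drop the ``same ideal class'' clause from your claim. A second minor point: \eqref{E:relxi} is stated for integral $\mathfrak{u}$, so to apply it with $\mathfrak{b}=\mathfrak{m}\,\mathfrak{g}^{-1}\mathfrak{h}$ you should first shrink $\mathfrak{m}$ (e.g.\ replace it by $\mathfrak{m}\mathfrak{g}$) to make $\mathfrak{b}$ integral; this is harmless since the statement is existential in $\mathfrak{m}$.
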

\begin{proof} Since, by definition, $t_{\Gamma_Y}=t_{\mathfrak{g}^{-1}\mathfrak{h}}$ and $t_{\Gamma}=t_{\mathfrak{a}_{\Gamma}}$ for some fractional ideal $\mathfrak{a}_{\Gamma}$ of $A$, the lemma follows from Proposition \ref{P:expression}.
\end{proof}
We call a rigid analytic function $f:\Omega\to \mathbb{C}_{\infty}$ \textit{a weak Drinfeld modular form of weight $k$ and type $m\in \mathbb{Z}/(q-1)\mathbb{Z}$ for $\Gamma$} if $f(\gamma\cdot z)=j(\gamma;z)^k\det(\gamma)^{-m}f(z)$ for any  $\gamma \in \Gamma$.

For $\delta\in \GL_2(K)$, note that, by \eqref{E:propslash}, we have $(f|_{k,m}\delta^{-1})(z+a)=(f|_{k,m}\delta^{-1})(z)$ for all $a\in \mathfrak{a}_{\delta\Gamma \delta^{-1}} $. Then there exists 
a unique power series expansion 
\[
f|_{k,m}\delta^{-1}=\sum_{i=-\infty}^{\infty}a_it_{\delta\Gamma \delta^{-1}}^i
\]
for some $a_i\in \mathbb{C}_{\infty}$, where the right hand side is an infinite sum of positive radius of convergence. We call such an infinite sum \textit{the $t_{\delta\Gamma \delta^{-1}}$-expansion of $f$}. Moreover,
we say that $f|_{k,m}\delta^{-1}$ is \textit{holomorphic at $\infty$} if 
\begin{equation}\label{E:holatinf}
f|_{k,m}\delta^{-1}=\sum_{i=0}^{\infty}a_it_{\delta\Gamma \delta^{-1}}^i.
\end{equation}

\begin{definition} We call a weak Drinfeld modular form of weight $k$ and type $m$ for $\Gamma$ \textit{a Drinfeld modular form of weight $k$ and type $m$ for $\Gamma$} if for any $\delta\in \GL_2(K)$, $f|_{k,m}\delta$ is holomorphic at $\infty$. 
\end{definition}
We denote by $\mathcal{M}_{k}^{m}(\Gamma)$ the $\mathbb{C}_{\infty}$-vector space generated by Drinfeld modular forms of weight $k$ and type $m\in \mathbb{Z}/(q-1)\mathbb{Z}$ for $\Gamma$. We also set $\mathcal{M}_{k}(\Gamma):=\cup_{m\in \mathbb{Z}/(q-1)\mathbb{Z}}\mathcal{M}_{k}^{m}(\Gamma)$.  Furthermore, for any field $L\subseteq \mathbb{C}_{\infty}$, we denote by $\mathcal{M}_{k}^{m}(\Gamma;L)$ the $L$-vector space of Drinfeld modular forms of weight $k$ and type $m$ for $\Gamma$ whose coefficients in the  $t_{\Gamma}$-expansion (the unique power series expansion at the cusp $\infty$) coefficients lie in $L$.

\begin{definition}  Let  $\overline{K}$ be a fixed algebraic closure of $K$ in $\mathbb{C}_{\infty}$. We call any rigid analytic function $f\in \mathcal{M}_k^m(\Gamma;\overline{K})$ \textit{an arithmetic Drinfeld modular form of weight $k$ and type $m$ for $\Gamma$}.
\end{definition}

We continue with our next definition.

\begin{definition} We call a meromorphic function $f:\Omega\to \mathbb{C}_{\infty}$ \textit{a meromorphic Drinfeld modular form of weight $k$ and type $m$ for $\Gamma$ over $L$} if it can be written as $f=h_1/h_2$ for some $h_1\in \mathcal{M}_{k+\tilde{k}}^{m+\tilde{m}}(\Gamma;L)$ and $h_2\in \mathcal{M}_{\tilde{k}}^{\tilde{m}}(\Gamma;L)$ for some $\tilde{k}\in \mathbb{Z}_{\geq 1}$ and $\tilde{m}\in \mathbb{Z}/(q-1)\mathbb{Z}$. The $L$-vector space spanned by all the meromorphic Drinfeld modular forms of weight $k$ and type $m$ for $\Gamma$ over $L$ is denoted by $\mathcal{A}_{k}^{m}(\Gamma;L)$. We further let 
\[
\mathcal{A}_k(\Gamma;L):=\bigcup_{m\in \mathbb{Z}/(q-1)\mathbb{Z}}\mathcal{A}_k^m(\Gamma;L) \text{ and } \mathcal{A}_k(L):=\bigcup_{\substack{\Gamma \text{ a congruence subgroup of } \Gamma_Y\\ m\in \mathbb{Z}/(q-1)\mathbb{Z}}}\mathcal{A}_k^m(\Gamma;L)
\]
We further call elements of $\mathcal{A}_0(\Gamma;L)$ \textit{Drinfeld modular functions for $\Gamma$ over $L$}.
\end{definition}

Our next goal is to provide some examples of Drinfeld modular forms and Drinfeld modular functions. 
\begin{example}\label{Ex:1}
\begin{itemize}
\item[(i)] 
Let $v +Y$ be a coset in $K^2$. We define \textit{the Eisenstein series of weight $k$ corresponding to $v$} by 
\[
E_{k,v}(z):=\sum_{(0,0)\neq (x_1,x_2)\in v +Y}\frac{1}{(x_1z+x_2)^k}.
\] 
Indeed, we know that, if $v\in \mathfrak{m}^{-1}Y\setminus Y$ then  
$E_{k,v}$ is a Drinfeld modular form of weight $k$ for $\Gamma_Y(\mathfrak{m})$. Moreover, by \cite[Chap.~VI, (3.9)]{Gek86}, \cite[Cor.~13.7]{BBP21}  and Lemma \ref{L:xi}, we have $\xi(\mathfrak{g}^{-1}\mathfrak{h})E_{1,v}\in \mathcal{M}_{1}^{0}(\Gamma_Y(\mathfrak{m});\overline{K})$.

Now for simplicity, set $E_k:=E_{k,(0,0)}$. Then $E_k$ is a Drinfeld modular form of weight $k$ and type 0 for $\Gamma_Y$. Moreover, we have 
\begin{equation}\label{E:eisenstein}
\begin{split}
E_k(z)&=\sum_{b\in \mathfrak{h}\setminus \{0\}}\frac{1}{b^k}+\sum_{a\in \mathfrak{g}\setminus \{0\}}\sum_{b\in \mathfrak{h}}\frac{1}{(az+b)^k}\\
&=\xi(\mathfrak{h})^k\sum_{b\in \mathfrak{h}\setminus \{0\}}\frac{1}{(\xi(\mathfrak{h})b)^k}+\xi(\mathfrak{h})^k\sum_{a\in \mathfrak{g}\setminus\{0\}}\sum_{b\in \mathfrak{h}}\frac{1}{(\xi(\mathfrak{h})az+\xi(\mathfrak{h})b)^k}\\
&=\xi(\mathfrak{h})^k\sum_{b\in \mathfrak{h}\setminus \{0\}}\frac{1}{(\xi(\mathfrak{h})b)^k}+\xi(\mathfrak{h})^k\sum_{a\in \mathfrak{g}\setminus\{0\}}G_k(t_{\mathfrak{h}}(az))
\end{split}
\end{equation}
where $G_k(X)\in H[X]$ is \textit{the $k$-th Goss polynomial} attached to the $\mathbb{F}_q$-vector space $\xi(\mathfrak{h})\mathfrak{h}$ (see \cite[Prop.~3.4]{Gek88} for more details on Goss polynomials). By \eqref{E:funceq}, the exponential series $\exp_{\rho^{(\mathfrak{h})}}$ of $\rho^{(\mathfrak{h})}$ is defined as a power series in $H[[\tau]]$. Hence, by \cite[Lem.~3.4.10]{Bas14}, we have 
$
\sum_{b\in \mathfrak{h}\setminus \{0\}}\frac{1}{(\xi(\mathfrak{h})b)^k}\in H.
$ On the other hand, by \cite[Chap.~VI, (3.3)]{Gek86} (see also \eqref{E:unifchange21}), for any non-zero $a\in \mathfrak{g}$, we have 
\begin{equation}\label{E:uniformizer}
t_{\mathfrak{h}}(az)=\mathcal{J}((a)\mathfrak{m}\mathfrak{g}^{-1},\mathfrak{m}\mathfrak{g}^{-1}\mathfrak{h})^{-1}\frac{1}{\rho_{(a)\mathfrak{m}\mathfrak{g}^{-1}}^{(\mathfrak{m}\mathfrak{g}^{-1}\mathfrak{h})}(t_{\Gamma_Y(\mathfrak{m})}(z)^{-1})}
\end{equation}
where $(a)=\mathfrak{g}\mathfrak{a}'$ for some integral ideal $\mathfrak{a}'$ of $A$. Thus, $t_{\mathfrak{h}}(az)$ may be written as a power series in $t_{\Gamma_Y(\mathfrak{m})}(z)$ with coefficients in $H$. Hence, by the above discussion and Lemma \ref{L:xi}, we obtain that $\xi(\mathfrak{g}^{-1}\mathfrak{h})^{-k}E_k\in \mathcal{M}_{k}^{0}(\Gamma_Y(\mathfrak{m});\overline{K})$.

\item[(ii)] For any $z\in \Omega$, recall from \S2.8 that $Y_z =\mathfrak{g}z+\mathfrak{h}$ and the Drinfeld $A$-module $\bold\Psi^{Y_{z}}$ given by 
\[
\bold\Psi^{Y_z}_a=a+\sum_{i=1}^{2\deg(a)}g_{i,a}(z)\tau^i
\]
for each $a\in A$. Here we realize $g_{i,a}$, which we call \textit{the $i$-th coefficient form},  as a rigid analytic function $g_{i,a}:\Omega\to \mathbb{C}_{\infty}$  sending $z$ to the $i$-th coefficient $g_{i,a}(z)$ of $\bold\Psi^{Y_z}_a$. Using the definition of Drinfeld $A$-modules, observe that $g_{a,2\deg(a)}$ is a non-vanishing function on $\Omega$. A simple observation on isomorphic $A$-lattices and their corresponding Drinfeld $A$-modules (see also \cite[Prop.~15.12]{BBP21}) implies 
\[
g_{i,a}(\gamma \cdot z)=j(\gamma;z)^{q^{i}-1}g_{i,a}(z), \ \ \gamma\in \Gamma_{Y}.
\]
Indeed, $g_{i,a}\in \mathcal{M}_{q^{i}-1}^{0}(\Gamma_{Y})$. Moreover, by \cite[pg.~251]{Gek90}, we have 
\begin{equation}\label{E:coefEis}
aE_{q^{i}-1}(z)=\sum_{\ell=0}^iE_{q^{\ell}-1}(z)(g_{i-\ell,a}(z))^{q^\ell}.
\end{equation}
Thus, by part (i) and \eqref{E:coefEis}, we further obtain 
\[
\xi(\mathfrak{g}^{-1}\mathfrak{h})^{1-q^{i}}g_{i,a}\in \mathcal{M}_{q^{i}-1}^{0}(\Gamma_Y;\overline{K}).
\]

\item[(iii)] Let $a\in A$ be a non-constant element. Consider the function $\mathfrak{j}_{i,w}:\Omega \to \mathbb{C}_{\infty}$ given by 
\[
\mathfrak{j}_{i,w}(z):=\frac{g_{i,a}(z)^{\ell_1}}{g_{w,a}(z)^{\ell_2}}
\]
so that $(q^i-1)\ell_1=(q^w-1)\ell_2$ and $\ell_1$ and $\ell_2$ are relatively prime. Then, by (i), for any $1\leq i,w \leq 2\deg(a)$, one can see that $\mathfrak{j}_{i,w}$ is indeed a Drinfeld modular function for $\Gamma_Y$. By the previous example, we see that $\mathfrak{j}_{i,w}\in \mathcal{A}_0(\Gamma_{Y};\overline{K})$. We also simply set
\[
\mathfrak{J}:=\frac{g_{\deg(a),a}^{q^{\deg(a)}+1}}{g_{2\deg(a),a}}.
\]
\item[(iv)]  Let $v\in \mathfrak{m}^{-1}Y\setminus Y$ and set $E_{v}:=E_{1,v}$. By \cite[Cor.~13.7]{BBP21}, $E_{v}$ is nowhere vanishing on $\Omega$. We consider the function $\mathbf{f}_v:\Omega \to \mathbb{C}_{\infty}$ given by 
\[
\mathbf{f}_v(z):=\frac{g_{1,a}(z)}{E_{v}(z)^{q-1}}.
\]
Then, by (i) and (ii), we have  $\mathbf{f}_v\in \mathcal{A}_0(\Gamma_Y(\mathfrak{m}))$. Moreover, if we let $\mathfrak{m}'$ be an integral ideal as in Lemma \ref{L:algoft} chosen for $\Gamma_Y(\mathfrak{m})$ and $\Gamma_{Y}$, we further see that  $\mathbf{f}_v\in \mathcal{A}_0(\Gamma_Y(\mathfrak{m}');\overline{K})$.
\end{itemize}
\end{example}

Let $\mathbb{C}_{\infty}(X(\Gamma_Y(\mathfrak{m})))$ be the function field  of the smooth model $X(\Gamma_Y(\mathfrak{m}))$  of $\Gamma_Y(\mathfrak{m})\setminus \Omega$. 

\begin{lemma}\label{L:funcfield} Choose a non-constant $a\in A$ of degree $d$. We have 
	\[
	\mathbb{C}_{\infty}(X(\Gamma_Y))=\mathbb{C}_{\infty}(\mathfrak{j}_{i,w}\mid 1\leq i,w \leq 2d).
	\]
\end{lemma}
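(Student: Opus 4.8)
The plan is to set $L:=\mathbb{C}_{\infty}(\mathfrak{j}_{i,w}\mid 1\leq i,w\leq 2d)$ and prove the two inclusions separately. The inclusion $L\subseteq \mathbb{C}_{\infty}(X(\Gamma_Y))$ is immediate: by Example \ref{Ex:1}(iii) each $\mathfrak{j}_{i,w}$ is a Drinfeld modular function for $\Gamma_Y$, hence a rational function on $X(\Gamma_Y)$. Moreover $L\neq \mathbb{C}_{\infty}$, because the identity $(q^d-1)(q^d+1)=q^{2d}-1$ shows that $\mathfrak{J}=g_{d,a}^{q^d+1}/g_{2d,a}=\mathfrak{j}_{d,2d}$ lies in $L$, while by Drinfeld \cite[Prop.~9.3]{Dri74} the function field $\mathbb{C}_{\infty}(X(\Gamma_Y))$ is a finite algebraic extension of $\mathbb{C}_{\infty}(\mathfrak{J})$. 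Consequently $\mathbb{C}_{\infty}(X(\Gamma_Y))$ is a finite extension of $L$; since both fields have transcendence degree one over the algebraically closed field $\mathbb{C}_{\infty}$, it suffices to prove that the inclusion $L\hookrightarrow \mathbb{C}_{\infty}(X(\Gamma_Y))$ is birational, equivalently that the functions $\mathfrak{j}_{i,w}$ separate the points of a dense admissible open subset of $\Gamma_Y\backslash\Omega$.

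To carry this out I would use the moduli interpretation. By Proposition \ref{2.4}, two points $z,z'\in\Omega$ have the same image in $\Gamma_Y\backslash\Omega$ if and only if the rank-two Drinfeld $A$-modules $\bold\Psi^{Y_z}$ and $\bold\Psi^{Y_{z'}}$ are isomorphic. Fixing the chosen non-constant $a$ with $\deg(a)=d$, recall that a Drinfeld $A$-module over $\mathbb{C}_{\infty}$ is determined by its value at $a$ alone: the defining identity \eqref{E:funceq}, namely $\exp_{\phi}\,a=\phi_a\,\exp_{\phi}$, determines $\exp_{\phi}$, hence $\Lambda=\Ker(\exp_{\phi})$ and therefore $\phi$, recursively from $\phi_a$. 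Thus $\bold\Psi^{Y_z}\cong\bold\Psi^{Y_{z'}}$ exactly when there is $c\in\mathbb{C}_{\infty}^{\times}$ with $c\,\bold\Psi^{Y_z}_a\,c^{-1}=\bold\Psi^{Y_{z'}}_a$; comparing $\tau$-coefficients via $c\tau^i c^{-1}=c^{1-q^i}\tau^i$, this reads
\[
g_{i,a}(z')=c^{1-q^i}g_{i,a}(z),\qquad 1\leq i\leq 2d.
\]
So separating points of $\Gamma_Y\backslash\Omega$ amounts to recovering the coefficient vector $(g_{i,a}(z))_i$ up to this one-parameter scaling from the values of the $\mathfrak{j}_{i,w}$.

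For the recovery I would set $m_i:=(q^i-1)/(q-1)\in\mathbb{Z}$ and note that the normalization $(q^i-1)\cdot 1=(q-1)\cdot m_i$ with $\gcd(1,m_i)=1$ identifies $\mathfrak{j}_{i,1}=g_{i,a}/g_{1,a}^{m_i}$ for $2\leq i\leq 2d$. Working on the dense admissible open subset where $g_{1,a}$ is nonvanishing (the coefficient form $g_{1,a}\in\mathcal{M}_{q-1}^{0}(\Gamma_Y)$ is not identically zero, so its zero locus is discrete), suppose $\mathfrak{j}_{i,w}(z)=\mathfrak{j}_{i,w}(z')$ for all $i,w$. Choosing $c\in\mathbb{C}_{\infty}^{\times}$ with $c^{1-q}=g_{1,a}(z')/g_{1,a}(z)$, the equalities $\mathfrak{j}_{i,1}(z)=\mathfrak{j}_{i,1}(z')$ give precisely
\[
\frac{g_{i,a}(z')}{g_{i,a}(z)}=\Big(\frac{g_{1,a}(z')}{g_{1,a}(z)}\Big)^{m_i}=c^{1-q^i},\qquad 1\leq i\leq 2d,
\]
which is the isomorphism condition above; hence $z$ and $z'$ define the same point. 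This yields the desired generic injectivity, and therefore $L=\mathbb{C}_{\infty}(X(\Gamma_Y))$.

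The step I expect to be the main obstacle is the last one: a priori, matching the invariants $\mathfrak{j}_{i,w}$ pins down the coefficient ratios $g_{i,a}(z')/g_{i,a}(z)$ only up to roots of unity (since a general $\mathfrak{j}_{i,w}$ involves the power $g_{i,a}^{\ell_1}$ with $\ell_1>1$), and this residual ambiguity must be eliminated. The device that makes the argument clean is to privilege the ratios $\mathfrak{j}_{i,1}$, whose first exponent equals $1$ because $q-1\mid q^i-1$; equivalently, the Laurent monomials $g_{i,a}\,g_{1,a}^{-m_i}$ form a $\mathbb{Z}$-basis of the lattice of scaling-invariant monomials in the $g_{i,a}$, so matching them forces the exact relations without ambiguity. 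A secondary technical point requiring care is the non-vanishing (as a function on $\Omega$) of $g_{1,a}$, which one can confirm from the recursion \eqref{E:coefEis} expressing $g_{1,a}$ as a nonzero combination of lower coefficient forms, and which guarantees that the open set on which the separation argument runs is dense.
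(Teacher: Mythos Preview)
Your approach shares its key idea with the paper but contains a genuine gap coming from positive characteristic. You assert that the inclusion $L\hookrightarrow \mathbb{C}_{\infty}(X(\Gamma_Y))$ is birational ``equivalently that the functions $\mathfrak{j}_{i,w}$ separate the points of a dense admissible open subset.'' This equivalence fails over a field of characteristic $p$: a dominant morphism of smooth curves over $\mathbb{C}_{\infty}$ can be bijective on closed points while corresponding to a nontrivial purely inseparable extension of function fields (the Frobenius $x\mapsto x^{p}$ on $\mathbb{P}^1$ is the model example). Your point-separation argument therefore yields only that $\mathbb{C}_{\infty}(X(\Gamma_Y))/L$ is purely inseparable, not that it has degree one.

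The remedy is already latent in what you wrote. You isolate the identities $\mathfrak{j}_{i,1}=g_{i,a}/g_{1,a}^{m_i}$, i.e.\ $g_{i,a}=\mathfrak{j}_{i,1}\cdot g_{1,a}^{m_i}$, and this is exactly the mechanism the paper uses---but algebraically rather than geometrically. Invoking \cite[Chap.~VII, Prop.~1.3]{Gek86}, every element of $\mathbb{C}_{\infty}(X(\Gamma_Y))$ is a ratio $F_1/F_2$ of polynomials in the $g_{i,a}$ homogeneous of the same weight $w$; substituting $g_{i,a}=\mathfrak{j}_{i,1}\cdot g_{1,a}^{m_i}$ into each monomial $\prod_j g_{j,a}^{i_j}$ produces $g_{1,a}^{\sum_j i_j m_j}\prod_j\mathfrak{j}_{j,1}^{i_j}$ with $\sum_j i_j m_j=w/(q-1)$ independent of the monomial, so the factor $g_{1,a}^{w/(q-1)}$ cancels between numerator and denominator and one lands directly in $\mathbb{C}_{\infty}(\mathfrak{j}_{i,1}\mid i)\subseteq L$. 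This gives the reverse inclusion with no appeal to separating points and no separability issue. If you prefer to keep your geometric framing, you must supplement it by showing the extension is separable---for instance by exhibiting some $\mathfrak{j}_{i,1}$ with $\partial_z\mathfrak{j}_{i,1}\not\equiv 0$, so that it is a separating transcendence element.
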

\begin{proof} By \cite[Chap.~VII, Prop.~1.3]{Gek86}, we know that $\mathbb{C}_{\infty}(X(\Gamma_Y))$ is the field generated over $\mathbb{C}_{\infty}$ by Drinfeld modular functions in $\mathbb{C}_{\infty}(g_{i,a}| \ \ 1\leq i \leq 2d)$. Hence, one side of the inclusion is clear. We show the other direction. Let $\mathbb{C}_{\infty}[X_1,\dots,X_{2d}]$ be the graded polynomial ring where $X_i$ is of weight $q^i-1$ for each $1\leq i \leq 2d$. For each $\mu=1,2$, we let
	\[
	F_{\mu}:=\sum_{\underline{i}=(i_1,\dots,i_{2d})\in \mathbb{Z}^{2d}_{\geq 0}}\alpha_{\mu,\underline{i}}X_1^{i_1}\cdots X_{2d}^{i_{2d}}\in\mathbb{C}_{\infty}[X_1,\dots,X_{2d}]
	\]
	be a graded polynomial of weight $w$ namely, $i_1(q-1)+\dots+i_{2d}(q^{2d}-1)=w$ for each $\underline{i}$ so that $\alpha_{\mu,\underline{i}}\neq 0$. Set $J:=F_1(g_{1,a},\dots,g_{2d,a})/F_2(g_{1,a},\dots,g_{2d,a})$. Then $J\in \mathbb{C}_{\infty}(X(\Gamma_Y))$. We write $J=g_{1,a}^{-w}F_1(g_{1,a},\dots,g_{2d,a})/g_{1,a}^{-w}F_2(g_{1,a},\dots,g_{2d,a})$. Since $\gcd(q^i-1| \ \ 1\leq i \leq 2d)=q-1$ and each monomial in $F_{\mu}$ has degree $w$, one can see that 
	\[
	J\in \mathbb{C}_{\infty}(\mathfrak{j}_{i,w}\mid 1\leq i,w \leq 2d),
	\]
	finishing the proof of the lemma.
\end{proof}

Using \cite[Chap.~VII, Prop.~1.6]{Gek86} and Lemma \ref{L:funcfield}, one immediately deduces our next lemma.
\begin{lemma}\label{L:funcfieldN} We have 
	\[
	\mathbb{C}_{\infty}(X(\Gamma_Y(\mathfrak{m})))=\mathbb{C}_{\infty}(\mathfrak{j}_{i,w},\mathbf{f}_{v}\mid 1\leq i,w \leq 2d,\ \  v=(v_1,v_2)\in \mathfrak{m}^{-1}Y\setminus Y ).
	\]
\end{lemma}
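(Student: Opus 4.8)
The plan is to exploit the tower of groups $\Gamma_Y(\mathfrak{m})\subset \Gamma_Y$ and then reduce the claim to a substitution of the base field using Lemma \ref{L:funcfield}. Since $\Gamma_Y(\mathfrak{m})$ is the kernel of the reduction map $\Gamma_Y \to \GL(Y/\mathfrak{m}Y)$, it is normal in $\Gamma_Y$, and the induced morphism of smooth models $X(\Gamma_Y(\mathfrak{m})) \to X(\Gamma_Y)$ is a finite (Galois) covering; in particular it yields an inclusion of function fields $\mathbb{C}_{\infty}(X(\Gamma_Y)) \hookrightarrow \mathbb{C}_{\infty}(X(\Gamma_Y(\mathfrak{m})))$. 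By Example \ref{Ex:1}(iv), each $\mathbf{f}_v$ with $v \in \mathfrak{m}^{-1}Y \setminus Y$ lies in $\mathcal{A}_0(\Gamma_Y(\mathfrak{m}))$ and hence defines an element of $\mathbb{C}_{\infty}(X(\Gamma_Y(\mathfrak{m})))$, so the field on the right-hand side of the asserted equality is a well-defined subfield of the left-hand side.

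The key external input is \cite[Chap.~VII, Prop.~1.6]{Gek86}, which describes the level-$\mathfrak{m}$ function field as the extension of the level-one function field $\mathbb{C}_{\infty}(X(\Gamma_Y))$ generated by the weight-one Eisenstein data attached to the nonzero classes $v \in \mathfrak{m}^{-1}Y/Y$; concretely, these generators can be taken to be the modular functions $\mathbf{f}_v = g_{1,a}/E_v^{q-1}$ of Example \ref{Ex:1}(iv). First I would record this input as the equality $\mathbb{C}_{\infty}(X(\Gamma_Y(\mathfrak{m}))) = \mathbb{C}_{\infty}(X(\Gamma_Y))\big(\mathbf{f}_v \mid v \in \mathfrak{m}^{-1}Y \setminus Y\big)$. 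Substituting the presentation $\mathbb{C}_{\infty}(X(\Gamma_Y)) = \mathbb{C}_{\infty}(\mathfrak{j}_{i,w} \mid 1 \le i,w \le 2d)$ furnished by Lemma \ref{L:funcfield} into this expression then yields exactly the field appearing on the right of the lemma, completing the argument.

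The only delicate point, and where I expect the genuine work to sit, is the identification in the previous paragraph: one must verify that the generators produced by \cite[Chap.~VII, Prop.~1.6]{Gek86} generate the same extension of $\mathbb{C}_{\infty}(X(\Gamma_Y))$ as the normalized weight-zero quotients $\mathbf{f}_v$. This is precisely where the passage from the weight-one forms $E_v$ to the weight-zero functions $\mathbf{f}_v$ must be reconciled with the homogeneity bookkeeping used in the proof of Lemma \ref{L:funcfield} --- namely, that because every relevant weight $q^i-1$ is divisible by $q-1$, equal-weight monomial ratios in the $g_{i,a}$ and the $E_v$ can be rewritten in terms of the $\mathfrak{j}_{i,w}$ and the $\mathbf{f}_v$. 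If \cite[Chap.~VII, Prop.~1.6]{Gek86} already hands one the $\mathbf{f}_v$ directly as generators, this matching is trivial and the conclusion follows at once, consistent with the ``one immediately deduces'' phrasing; otherwise one must carry out this short homogeneity check before concluding.
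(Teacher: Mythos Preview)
Your proposal is correct and follows exactly the same approach as the paper, which states only that the lemma is immediately deduced from \cite[Chap.~VII, Prop.~1.6]{Gek86} together with Lemma~\ref{L:funcfield}. Your expanded discussion of the homogeneity bookkeeping is a reasonable elaboration of the ``immediately deduces'' step, but the paper does not spell it out and treats the conclusion as instantaneous from those two inputs.
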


Let $\Delta_a:=g_{2\deg(a),a}\in \mathcal{M}_{q^{2\deg(a)}-1}^{0}(\Gamma_Y)$. In what follows, we determine a product formula for $\Delta_a$. For any integral ideal $\mathfrak{p}$ of $A$, let
\[
\mathcal{P}_{\mathfrak{p}}^{(\mathfrak{g}^{-1}\mathfrak{h})}(X):=\rho_{\mathfrak{p}}^{(\mathfrak{g}^{-1}\mathfrak{h})}(X^{-1})X^{q^{\deg(\mathfrak{p})}}.
\]
Observe, by \cite[Chap.~VI, (1.3)]{Gek86}, that  $\mathcal{P}_{\mathfrak{p}}^{(\mathfrak{g}^{-1}\mathfrak{h})}(X)\in H[X]$ is a polynomial of degree $q^{\deg(\mathfrak{p})}-1$ satisfying $\mathcal{P}_{\mathfrak{p}}^{(\mathfrak{g}^{-1}\mathfrak{h})}(0)=1$. 

\begin{proposition} [{\cite[Chap.~VI, (4.12)]{Gek86}}] \label{P:prod} For any $z\in \Omega$ in some neighborhood of infinity, we have 
	\[
	\xi(\mathfrak{h})^{1-q^{2\deg(a)}}\Delta_a(z)=ct_{\Gamma_Y}(z)^k\prod_{\mathfrak{p}}\mathcal{P}_{\mathfrak{p}}^{(\mathfrak{g}^{-1}\mathfrak{h})}(t_{\Gamma_Y}(z))^{(q^{2\deg(a)}-1)(q-1)}
	\]
	where the product runs over the integral ideals $\mathfrak{p}$ of $A$ satisfying $\mathfrak{p}=(m)\mathfrak{g}^{-1}$ for $(q-1)$-many different elements $m\in \mathfrak{g}$. In the formula, $k$ is a non-negative integer divisible by $q^{\mathfrak{d}}-1$ and $c\in \mathbb{F}_{q^{\mathfrak{d}}}^{\times}$. Moreover, $\Delta_a$ has $t_{\Gamma_Y}(z)$-powers not divisible by $q$ and the factors $\xi(\mathfrak{h})^{1-q^{2\deg(a)}}$, $c$ as well as $k$ do not depend on the choice of Drinfeld-Hayes $A$-module $\rho^{(\mathfrak{g}^{-1}\mathfrak{h})}$.
\end{proposition}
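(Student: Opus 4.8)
The plan is to adapt Gekeler's computation [Gek86, Chap.~VI] to the present generality: its analytic and combinatorial skeleton survives the passage to an arbitrary admissible coefficient ring, provided every quantity is renormalized through the Drinfeld--Hayes data $\xi(\cdot)$ and $\rho^{(\cdot)}$. Concretely, I would first realize $\Delta_a$ as a leading coefficient. Using the analytic uniformization recalled in \S2.1 and \S2.8, the $a$-division polynomial of the lattice $Y_z$ equals
\[
aX\prod_{0\neq u\in a^{-1}Y_z/Y_z}\Big(1-\frac{X}{\exp_{Y_z}(u)}\Big),
\]
and comparing the coefficient of $X^{q^{2\deg(a)}}$ with the expansion $a+\dots+g_{2\deg(a),a}\tau^{2\deg(a)}$ gives
\[
\Delta_a(z)=g_{2\deg(a),a}(z)=a\prod_{0\neq u\in a^{-1}Y_z/Y_z}\exp_{Y_z}(u)^{-1},
\]
the overall sign being $+1$ since $(-1)^{q^{2\deg(a)}-1}=1$ in characteristic $p$; the index set has $q^{2\deg(a)}-1$ elements.

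Next I would factor the exponential along the rank-one sublattice $\mathfrak{h}\subset Y_z$. Since $Y_z/\mathfrak{h}\cong\mathfrak{g}$, functoriality of exponentials under lattice inclusions gives $\exp_{Y_z}=\exp_{M_z}\circ\exp_{\mathfrak{h}}$ with $M_z:=\exp_{\mathfrak{h}}(\mathfrak{g}z)$, hence
\[
\exp_{Y_z}(X)=\exp_{\mathfrak{h}}(X)\prod_{0\neq g\in\mathfrak{g}}\Big(1-\frac{\exp_{\mathfrak{h}}(X)}{\exp_{\mathfrak{h}}(gz)}\Big).
\]
Substituting $u=gz+h$ into the previous product and separating the $g=0$ part, which is $z$-independent and, after the $\xi$-normalization, accounts for the prefactor $\xi(\mathfrak{h})^{1-q^{2\deg(a)}}$ and part of the constant, from the $g\neq0$ part, which carries the entire $z$-dependence through the infinite product over $\mathfrak{g}$, reduces the statement to converting each value $\exp_{\mathfrak{h}}(gz)^{-1}$ into the uniformizer $t_{\Gamma_Y}=t_{\mathfrak{g}^{-1}\mathfrak{h}}$.

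The crucial conversion is the change-of-uniformizer relation of the type \eqref{E:unifchange21}--\eqref{E:uniformizer} (Gekeler's (VI.2.5) and (VI.3.3)): for $\mathfrak{p}=(g)\mathfrak{g}^{-1}$ it expresses $\exp_{\mathfrak{h}}(gz)^{-1}$, up to a factor $\xi(\mathfrak{h})\mathcal{J}(\mathfrak{p},\mathfrak{g}^{-1}\mathfrak{h})^{-1}$, as $t_{\Gamma_Y}^{q^{\deg(\mathfrak{p})}}/\mathcal{P}_{\mathfrak{p}}^{(\mathfrak{g}^{-1}\mathfrak{h})}(t_{\Gamma_Y})$, so that the reciprocal polynomials $\mathcal{P}_{\mathfrak{p}}^{(\mathfrak{g}^{-1}\mathfrak{h})}(t_{\Gamma_Y})$ appear in the numerators. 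Because $\exp_{\mathfrak{h}}$ is $\mathbb{F}_q$-linear, $g$ and $\zeta g$ with $\zeta\in\mathbb{F}_q^\times$ yield the same $\mathfrak{p}$, which is exactly the grouping of the $(q-1)$ generators $m\in\mathfrak{g}$ into one $\mathfrak{p}$; collecting over the $q^{2\deg(a)}-1$ torsion points and over these orbits produces the exponent $(q^{2\deg(a)}-1)(q-1)$ on each $\mathcal{P}_{\mathfrak{p}}^{(\mathfrak{g}^{-1}\mathfrak{h})}$, while the surviving lower-order factors must reorganize into the single monomial $c\,t_{\Gamma_Y}^{k}$.

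I expect the main obstacle to lie here and in the determination of $c$ and $k$. Verifying that the reorganization leaves no residual $\mathcal{P}_{\mathfrak{p}}$-denominators, that is, that the division-polynomial structure of $\rho^{(\mathfrak{g}^{-1}\mathfrak{h})}_{\mathfrak{p}}$ forces the claimed cancellations, is the technical heart of the argument, and it is genuinely new relative to the case $A=\mathbb{F}_q[\theta]$, where $\mathfrak{d}=1$ and the finer assertions are vacuous. For the constants I would argue $c\in\mathbb{F}_{q^{\mathfrak{d}}}^\times$ and $q^{\mathfrak{d}}-1\mid k$ from the fact that the leading coefficients of the Drinfeld--Hayes modules $\rho^{(\cdot)}$ are $\sgn$-values lying in $\mathbb{F}_{q^{\mathfrak{d}}}$, the degree $\mathfrak{d}$ of $\infty$ being what forces the divisibility of the vanishing order $k$. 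Independence of the choice of $\rho^{(\mathfrak{g}^{-1}\mathfrak{h})}$, which is unique only up to $\mathbb{F}_{q^{\mathfrak{d}}}^\times$-conjugacy of special lattices, then follows by checking that rescaling it by $\zeta\in\mathbb{F}_{q^{\mathfrak{d}}}^\times$ rescales $\xi(\mathfrak{g}^{-1}\mathfrak{h})$, $t_{\Gamma_Y}$ and the $\mathcal{P}_{\mathfrak{p}}$ by compensating powers of $\zeta$, the $\xi(\mathfrak{h})$-prefactor absorbing the remainder via \eqref{E:relxi} and Lemma~\ref{L:xi}. Finally, the assertion that the $t_{\Gamma_Y}$-powers occurring in $\Delta_a$ are not divisible by $q$ is read off from the two lowest-order terms of the product, since each $\mathcal{P}_{\mathfrak{p}}^{(\mathfrak{g}^{-1}\mathfrak{h})}$ has constant term $1$.
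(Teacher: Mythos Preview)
The paper does not give its own proof of this proposition; it is simply quoted from \cite[Chap.~VI, (4.12)]{Gek86}, where Gekeler already establishes the product formula for arbitrary admissible coefficient rings. Your sketch correctly reconstructs the skeleton of Gekeler's argument---expressing $\Delta_a$ as the product over nonzero $a$-torsion values of $\exp_{Y_z}$, factoring the exponential through the rank-one sublattice $\mathfrak{h}\subset Y_z$, and then converting each resulting factor $\exp_{\mathfrak{h}}(gz)^{-1}$ into the uniformizer $t_{\Gamma_Y}$ via the ideal-theoretic change-of-uniformizer relations of the type \eqref{E:unifchange21}---and is sound as an outline of that approach.

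One correction is warranted: your remark that the technical heart ``is genuinely new relative to the case $A=\mathbb{F}_q[\theta]$'' is mistaken. Gekeler's monograph \cite{Gek86} is written throughout for general admissible $A$; the cancellation of the $\mathcal{P}_{\mathfrak{p}}$-denominators, the identification of $c\in\mathbb{F}_{q^{\mathfrak{d}}}^\times$, and the divisibility $q^{\mathfrak{d}}-1\mid k$ are all worked out in \cite[Chap.~VI, \S3--4]{Gek86}. The steps you flag as the ``main obstacle'' are therefore not gaps to be filled but the content of the cited reference, and the present paper accordingly does not reproduce them.
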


\section{Nearly holomorphic Drinfeld modular forms}
Our goal in this section is to illustrate the fundamental objects of the present paper, namely the nearly holomorphic Drinfeld modular forms. We note that many of the results here follow mainly from the ideas in \cite[\S3]{CG23} and hence we will refer the reader to the suitable references. However, there is a certain difference between this general setting and the $A=\mathbb{F}_q[\theta]$ case on determining a non-trivial nearly holomorphic Drinfeld modular form and analyzing its behavior at the cusp $\infty$. It will be emphasized explicitly in this section.

Recall the Frobenius map $\sigma: \widehat{K_{\infty}^\text{nr}} \to \widehat{K_{\infty}^\text{nr}}$ from \S1 which is defined by 
\[
\sigma\Big(\sum_{i\geq i_0}a_i\pi_{\infty}^i \Big):=\sum_{i\geq i_0}a_i^{q^{\mathfrak{d}}}\pi_{\infty}^i.
\] 
It is easy to see that $\sigma$ is a continuous map on $\widehat{K_{\infty}^\text{nr}}$ and it fixes elements of $K_{\infty}$. Let $M$ be a fixed extension of $\Knr$ and $\varphi$ be a fixed extension of $\sigma$ as defined in \S1. Recall also that $\Omega^{\varphi}(M)=M\setminus M^{\varphi}$. In what follows, we state a useful lemma whose proof is along the same lines as the proof of \cite[Thm.~6.2.1]{CG23} after noting that $K_{\infty}=\mathbb{F}_{q^{\mathfrak{d}}}((\pi_{\infty}))$.
\begin{lemma}\cite[Thm.~6.2.1]{CG23} \label{L:ext} For any CM point $z_0\in \Omega$, there exists an extension $M_{z_0}$ of $\Knr$ containing $z_0$ and an extension $\varphi_{z_0}$ of $\sigma$.
\end{lemma}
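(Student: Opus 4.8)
The plan is to produce the pair $(M_{z_0},\varphi_{z_0})$ by hand and then verify that $z_0$ actually lies in $\Omega^{\varphi_{z_0}}(M_{z_0})$, i.e. that $\varphi_{z_0}(z_0)\neq z_0$; this is exactly what makes evaluation of a form at $z_0$ meaningful. First I would record the local structure of a CM point. By definition $F:=K(z_0)$ is a (separable) quadratic extension of $K$ in which $\infty$ does not split, so there is a unique place $\infty_F$ of $F$ above $\infty$ and $[F_{\infty_F}:K_\infty]=2$, where $F_{\infty_F}$ denotes the completion. Since $z_0\in\Omega=\mathbb{C}_\infty\setminus K_\infty$ lies in $F_{\infty_F}$, we have $F_{\infty_F}=K_\infty(z_0)$, and this is a Galois extension of $K_\infty$ whose nontrivial automorphism sends $z_0$ to its conjugate $z_0'$; in particular the trace $z_0+z_0'$ and norm $z_0z_0'$ lie in $K_\infty$, so the minimal polynomial $p(X)\in K_\infty[X]$ of $z_0$ over $K_\infty$ is $\sigma$-invariant. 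I then set $M_{z_0}:=\Knr(z_0)$, which by construction is an extension of $\Knr$ containing $z_0$.

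The key step is to extend $\sigma$ from $\Knr$ to an automorphism $\widetilde{\sigma}$ of $M_{z_0}$ with $\widetilde{\sigma}(z_0)=z_0'$, and here I distinguish the two local behaviours of $\infty$ in $F$. If $\infty$ is inert, then $F_{\infty_F}$ is the unramified quadratic extension $\mathbb{F}_{q^{2\mathfrak{d}}}((\pi_\infty))$ of $K_\infty$, which is already contained in $\Knr=\oFq((\pi_\infty))$; thus $M_{z_0}=\Knr$ and I take $\widetilde{\sigma}=\sigma$, noting that $\sigma$ induces the $q^{\mathfrak{d}}$-power map on the residue field $\mathbb{F}_{q^{2\mathfrak{d}}}$, i.e. the nontrivial element of $\Gal(\mathbb{F}_{q^{2\mathfrak{d}}}/\mathbb{F}_{q^{\mathfrak{d}}})$, so that $\sigma(z_0)=z_0'$. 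If $\infty$ is ramified, then $\Knr/K_\infty$ is unramified while $F_{\infty_F}/K_\infty$ is ramified, forcing $\Knr\cap F_{\infty_F}=K_\infty$; hence $p$ remains the minimal polynomial of $z_0$ over $\Knr$, and because its coefficients lie in $K_\infty$ it is fixed by $\sigma$. Consequently $\sigma$ extends to $M_{z_0}=\Knr[X]/(p(X))$ in exactly two ways, and I choose the extension $\widetilde{\sigma}$ with $\widetilde{\sigma}(z_0)=z_0'$. In either case $\widetilde{\sigma}$ fixes $K_\infty$ pointwise and stabilizes $M_{z_0}$.

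It then remains to promote $\widetilde{\sigma}$ to a continuous automorphism $\varphi_{z_0}$ of $\mathbb{C}_\infty$. Since $M_{z_0}$ is a finite extension of the complete field $\Knr$, an algebraic closure $\overline{M_{z_0}}$ has completion $\mathbb{C}_\infty$. I first extend $\widetilde{\sigma}$ to an automorphism of $\overline{M_{z_0}}$ by the standard prolongation of automorphisms to algebraic closures; as this automorphism fixes $K_\infty$, it preserves the unique valuation of $\overline{M_{z_0}}$ extending that of $K_\infty$, hence is an isometry and extends by continuity to $\mathbb{C}_\infty$. The resulting $\varphi_{z_0}$ is a continuous automorphism of $\mathbb{C}_\infty$ fixing $K_\infty$ with $\varphi_{z_0}|_{\Knr}=\sigma$, so it is an extension of $\sigma$ in the sense of \S1, and $\varphi_{z_0}(z_0)=z_0'\neq z_0$. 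Therefore $z_0\in M_{z_0}\setminus M_{z_0}^{\varphi_{z_0}}=\Omega^{\varphi_{z_0}}(M_{z_0})$, as required.

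The main obstacle, and the only place where the arithmetic of the coefficient ring really enters, is the key step of realizing the conjugation $z_0\mapsto z_0'$ through an extension of $\sigma$: in the ramified case this rests on the disjointness $\Knr\cap F_{\infty_F}=K_\infty$ (so that the trace and norm of $z_0$ stay $\sigma$-fixed and $p$ acquires no coefficients outside $K_\infty$), and in the inert case on matching the residual action of $\sigma$, which is now the $q^{\mathfrak{d}}$-power Frobenius rather than the $q$-power one, with the nontrivial automorphism of $F_{\infty_F}/K_\infty$. This replacement of $q$ by $q^{\mathfrak{d}}$ is precisely the adjustment needed to transport the argument of \cite[Thm.~6.2.1]{CG23} to an arbitrary admissible coefficient ring, while the final continuity and extension step is routine. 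One should also note at the outset that the CM hypothesis forces $K(z_0)/K$ to be separable, which guarantees the existence of the conjugate $z_0'\neq z_0$.
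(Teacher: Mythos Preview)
Your proof is correct and is exactly the adaptation the paper has in mind: the paper does not supply its own argument but cites \cite[Thm.~6.2.1]{CG23} and remarks that the proof carries over ``after noting that $K_{\infty}=\mathbb{F}_{q^{\mathfrak{d}}}((\pi_{\infty}))$,'' which is precisely the adjustment you identify in the inert case (matching $\sigma$ with the $q^{\mathfrak{d}}$-power residual Frobenius rather than the $q$-power one). Your additional verification that $\varphi_{z_0}(z_0)=z_0'\neq z_0$, so that $z_0\in\Omega^{\varphi_{z_0}}(M_{z_0})$, is the real content of the lemma even though it is not stated explicitly, and the inert/ramified dichotomy together with the disjointness $\Knr\cap F_{\infty_F}=K_\infty$ in the ramified case is the expected route.
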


We summarize the main results derived from \cite[\S3.1]{CG23} in our general setting as follows.
\begin{theorem}[{see \cite[Thm.~3.1.4, Prop. 3.1.9, Thm. 3.1.10]{CG23}}]\label{T:1} Let $C(\Omega^{\varphi}(M),\mathbb{C}_{\infty})$ be the space of $\mathbb{C}_{\infty}$-valued continuous functions on $\Omega^{\varphi}(M)$ and $\mathcal{O}$ be the ring of rigid analytic functions on $\Omega$. 
	\begin{itemize}
 \item[(i)] There exists no meromorphic function $f$ on $\Omega$ such that $f{|_{\Omega^{\varphi}(M)}}=\varphi$.
		\item[(ii)] The restriction map $\mathcal{O}\to C(\Omega^{\varphi}(M),\mathbb{C}_{\infty})$ is injective.
		\item[(iii)] Let $X$ be a variable over $\mathcal{O}$. The function $\iota:\mathcal{O}[X]\to  C(\Omega^{\varphi}(M),\mathbb{C}_{\infty})$ sending $X\to \frac{1}{\Id-\varphi}$ is an injective ring homomorphism.
	\end{itemize}
\end{theorem}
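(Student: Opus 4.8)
The plan is to treat the three parts in the order (ii), (i), (iii): part (ii) supplies the rigid-analytic identity principle that drives everything, and part (iii) carries the genuine content. The structural fact I would isolate at the outset is the inclusion $\Knr\setminus K_{\infty}\subseteq\Omega^{\varphi}(M)$, valid because $\varphi|_{\Knr}=\sigma$ has fixed field exactly $K_{\infty}$. This set is analytically rich: for any $z_0\in\Knr\setminus K_{\infty}$ the distinct points $z_0+\pi_{\infty}^n$ again lie in $\Knr\setminus K_{\infty}$ (since $\pi_{\infty}^n\in K_{\infty}$ and $z_0\notin K_{\infty}$) and converge to $z_0$. Part (ii) is then immediate: if $f\in\mathcal{O}$ restricts to the zero function on $\Omega^{\varphi}(M)$, it vanishes along $\{z_0+\pi_{\infty}^n\}_{n\gg0}$, a sequence accumulating at $z_0\in\Omega$, so by the identity principle on the connected smooth rigid curve $\Omega$ we get $f\equiv0$.

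For part (i), I would argue by contradiction using that $\varphi$ fails to be $\CC_{\infty}$-linear on infinitesimals. Suppose $f$ is meromorphic on $\Omega$ with $f|_{\Omega^{\varphi}(M)}=\varphi$, and pick $z_0\in\Knr\setminus K_{\infty}$ at which $f$ is holomorphic (the poles of $f$ being discrete). For $z\in\Omega^{\varphi}(M)$ one has $f(z)-f(z_0)=\varphi(z-z_0)$, so the difference quotient at $z_0$ equals $\varphi(\epsilon)/\epsilon$ with $\epsilon=z-z_0$. Taking $\epsilon=\pi_{\infty}^n\to0$ yields the constant quotient $1$, whereas taking $\epsilon=c\,\pi_{\infty}^n\to0$ for a fixed $c\in\oFq\setminus\mathbb{F}_{q^{\mathfrak{d}}}$ yields the constant quotient $c^{q^{\mathfrak{d}}-1}\neq1$; after discarding at most one term, both sequences stay in $\Omega^{\varphi}(M)$. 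Hence $f'(z_0)$ cannot exist, contradicting holomorphy.

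The heart is part (iii). That $\iota$ is a ring homomorphism is formal: $\mathcal{O}\hookrightarrow C(\Omega^{\varphi}(M),\CC_{\infty})$ is a ring map, injective by (ii), and $z\mapsto 1/(z-\varphi(z))$ is a well-defined element of the target because $z-\varphi(z)\neq0$ on $\Omega^{\varphi}(M)$; the universal property of $\mathcal{O}[X]$ then produces $\iota$. After clearing the highest power of $z-\varphi(z)$, injectivity of $\iota$ is equivalent to the transcendence of $w:=z-\varphi(z)$ over $F:=\Frac(\mathcal{O})$, viewed as a function on $\Omega^{\varphi}(M)$. I would prove this by contradiction. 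The polynomials in $F[T]$ annihilating $w$ form an ideal $\mathfrak{a}$; assuming $\mathfrak{a}\neq0$ and using that $F[T]$ is a PID, let $m(T)=T^r+\sum_{k<r}b_kT^k$ be its unique monic generator. The two identities $w(z+a)=w(z)$ for $a\in K_{\infty}$ and $w(az)=a\,w(z)$ for $a\in K_{\infty}^{\times}$ — both immediate from $\varphi|_{K_{\infty}}=\Id$ together with additivity and multiplicativity of $\varphi$ — show that $\mathfrak{a}$ is stable under the translation and scaling automorphisms of $F[T]$ induced from $\Omega$. Uniqueness of $m$ then forces $b_k(z+a)=b_k(z)$ and $b_k(az)=a^{r-k}b_k(z)$ for every $k$.

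To finish I would invoke the lemma that a meromorphic function on $\Omega$ invariant under every translation by $K_{\infty}$ is constant; this follows by applying the identity principle to $b_k-b_k(z_0)$ along $z_0+\pi_{\infty}^n\to z_0$, exactly as in (ii). Each $b_k$ is thus a constant that is simultaneously homogeneous of degree $r-k$ under $K_{\infty}^{\times}$, and since $K_{\infty}^{\times}$ is infinite this forces $b_k=0$ for all $k<r$. Hence $m(T)=T^r$ and $w\equiv0$ on $\Omega^{\varphi}(M)$, contradicting $z-\varphi(z)\neq0$ there; therefore $\mathfrak{a}=0$, $w$ is transcendental over $F$, and $\iota$ is injective. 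I expect the transcendence of $z-\varphi(z)$ to be the main obstacle: the delicate point is to convert the two \emph{real-analytic} symmetries of $z-\varphi(z)$ into rigidity of the hypothetical minimal polynomial and then annihilate its coefficients via homogeneity, while keeping the uniqueness argument inside the PID $F[T]$ rather than in the function ring $C(\Omega^{\varphi}(M),\CC_{\infty})$, which has zero divisors.
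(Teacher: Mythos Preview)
The paper does not supply a proof of this theorem; it merely records the statement and cites \cite[Thm.~3.1.4, Prop.~3.1.9, Thm.~3.1.10]{CG23}, remarking that the arguments there carry over to the general admissible coefficient ring. There is therefore no in-text proof to compare against.

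Your argument is correct. The reduction in (ii) to the identity principle via the subset $\Knr\setminus K_{\infty}\subset\Omega^{\varphi}(M)$, and the two-sequence derivative computation in (i), are exactly the kind of argument one expects; your justification that at most one term of the sequence $z_0+c\pi_\infty^{n}$ can land in $K_\infty$ is clean. For (iii), the passage to the monic generator in the PID $F[T]$ and the exploitation of the translation and scaling symmetries of $w=z-\varphi(z)$ under $K_\infty$ and $K_\infty^{\times}$ is a valid and elegant route: the automorphism $b(z)\mapsto b(az)$, $T\mapsto aT$ of $F[T]$ preserves $\mathfrak{a}$, and comparing leading coefficients yields $b_k(az)=a^{r-k}b_k(z)$ as you claim. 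The constancy lemma for $K_\infty$-translation-invariant meromorphic functions on $\Omega$ is fine, since $\Omega$ is a connected smooth rigid curve and the zeros of a nonzero meromorphic function are isolated. The only cosmetic point is that the identity principle you invoke is the rigid-analytic one for curves (isolated zeros), which you use implicitly but correctly.
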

For any $f:\Omega^{\varphi}(M)\to \mathbb{C}_{\infty}$ and $m,k\in \mathbb{Z}$, we consider the slash operator
\[
(f|_{k,m}\gamma)(z):=j(\gamma;z)^{-k}\det(\gamma)^{m}f\Big(\frac{a_{\gamma}z+b_{\gamma}}{c_{\gamma}z+d_{\gamma}}\Big), \ \ \gamma=\begin{pmatrix}
a_{\gamma}&b_{\gamma}\\
c_{\gamma}& d_{\gamma}
\end{pmatrix}\in \GL_2(K).
\]

\begin{definition}\label{D:1} Let $\Gamma $ be a congruence subgroup of $\Gamma_Y$.  A continuous function $F:\Omega^{\varphi}(M)\to \mathbb{C}_{\infty}$ is called \textit{a weak nearly holomorphic Drinfeld modular form of weight $k$, type $m\in \mathbb{Z}/(q-1)\mathbb{Z}$ and depth $r$ for $\Gamma$} if 
	\begin{itemize}
		\item[(i)] for any $\gamma \in \Gamma$,
		\[
		F|_{k,m}\gamma=F,
		\] 
		\item[(ii)] there exist rigid analytic functions $f_0,\dots,f_r$ with $f_r\neq 0$ satisfying 
		\[
		F(z)=\sum_{i=0}^r\frac{f_i(z)}{(\xi(\mathfrak{g}^{-1}\mathfrak{h})z-\xi(\mathfrak{g}^{-1}\mathfrak{h})\varphi(z))^i}.
		\]
\end{itemize}

In what follows, we state an important property of the function $1/(\Id-\varphi)$.
\begin{lemma}\label{L:1} For any $z\in \Omega^{\varphi}(M)$ and $\gamma \in \GL_2(K_{\infty})$, we have 
	\[
	\frac{1}{\gamma\cdot z-\varphi(\gamma\cdot z)}=j(\gamma;z)^2\det(\gamma)^{-1}\Big( \frac{1}{z-\varphi(z)}-\frac{c_{\gamma}}{j(\gamma;z)} \Big).
	\]
\end{lemma}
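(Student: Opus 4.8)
The plan is to reduce the whole statement to one short computation, exploiting the single structural fact that $\varphi$ fixes $K_{\infty}$ pointwise. Write $\gamma=\begin{pmatrix} a & b \\ c & d\end{pmatrix}$ with $a,b,c,d\in K_{\infty}$, so that $j(\gamma;z)=cz+d$ and $c_{\gamma}=c$. Because $\varphi$ is a continuous field automorphism of $\mathbb{C}_{\infty}$ fixing $K_{\infty}$, it commutes with the M\"{o}bius action in the sense that
\[
\varphi(\gamma\cdot z)=\varphi\!\left(\frac{az+b}{cz+d}\right)=\frac{a\varphi(z)+b}{c\varphi(z)+d}.
\]
Before doing anything else I would record that both sides are well defined: since $z\in\Omega\subset\mathbb{C}_{\infty}\setminus K_{\infty}$ and $\varphi$ fixes $K_{\infty}$, also $\varphi(z)\notin K_{\infty}$; hence $cz+d$ and $c\varphi(z)+d$ are both nonzero (if $c\neq 0$ a vanishing value would force $z$ or $\varphi(z)$ into $K_{\infty}$, and if $c=0$ then $d\neq 0$ as $\det\gamma\neq 0$). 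This guarantees $\gamma\cdot z\in M$ and that the denominators appearing below are invertible.

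The heart of the argument is to compute $\gamma\cdot z-\varphi(\gamma\cdot z)$ over the common denominator $(cz+d)(c\varphi(z)+d)$ and to observe that the numerator collapses. Expanding $(az+b)(c\varphi(z)+d)-(a\varphi(z)+b)(cz+d)$, the mixed term $acz\varphi(z)$ and the constant term $bd$ cancel, leaving $ad(z-\varphi(z))-bc(z-\varphi(z))=\det(\gamma)(z-\varphi(z))$. Therefore
\[
\gamma\cdot z-\varphi(\gamma\cdot z)=\frac{\det(\gamma)(z-\varphi(z))}{(cz+d)(c\varphi(z)+d)},
\]
which is nonzero because $z\neq\varphi(z)$; in particular this confirms $\gamma\cdot z\in\Omega^{\varphi}(M)$. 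Taking reciprocals presents the left-hand side of the claimed identity as $(cz+d)(c\varphi(z)+d)\big/\big[\det(\gamma)(z-\varphi(z))\big]$.

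To finish I would simply verify that the right-hand side of the lemma equals this same expression. Expanding
\[
j(\gamma;z)^2\det(\gamma)^{-1}\!\left(\frac{1}{z-\varphi(z)}-\frac{c_{\gamma}}{j(\gamma;z)}\right)=\det(\gamma)^{-1}\cdot\frac{(cz+d)\big[(cz+d)-c(z-\varphi(z))\big]}{z-\varphi(z)},
\]
and noting that the bracket simplifies to $c\varphi(z)+d$, the two sides agree. Since everything takes place inside the field $\mathbb{C}_{\infty}$, there is no genuine obstacle here; the only points requiring care are the well-definedness and nonvanishing checks above (so that $\gamma\cdot z$ lands in $\Omega^{\varphi}(M)$ and the reciprocals make sense), together with the observation that $\varphi$ fixes the entries of $\gamma$, which is precisely what turns the abstract automorphism $\varphi$ into the concrete substitution $z\mapsto\varphi(z)$ driving the cancellation.
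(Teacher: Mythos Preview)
Your proof is correct. The paper takes a slightly different route: it observes that $\GL_2(K_{\infty})$ is generated by $\begin{pmatrix}0&1\\1&0\end{pmatrix}$ together with upper triangular matrices, and then appeals to the computation in \cite[Prop.~3.2.5]{CG23} for these generators. Your direct computation for an arbitrary $\gamma$ bypasses both the reduction to generators and the (implicit) verification that the transformation rule is compatible with matrix multiplication; it is therefore more self-contained. Both arguments hinge on the same observation, namely that $\varphi$ fixes the entries of $\gamma$, so that $\varphi(\gamma\cdot z)=\gamma\cdot\varphi(z)$. The generator approach has the minor advantage of mirroring the classical proof for $1/\operatorname{Im}(z)$ and of connecting explicitly to the earlier literature, but your version is arguably cleaner as a stand-alone argument.
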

\begin{proof} Since $\GL_2(K_{\infty})$ is generated by $\begin{pmatrix}
	0&1\\
	1&0
	\end{pmatrix}$ and  upper triangular matrices, the lemma follows from the same calculation done in the proof of \cite[Prop.~3.2.5]{CG23}.
\end{proof}

The next lemma immediately follows from \cite[Prop.~3.2.12]{CG23}.
\begin{lemma}\label{L:2} Let $F=\sum_{i=0}^r\frac{f_i}{\xi(\mathfrak{g}^{-1}\mathfrak{h})^i(\Id-\varphi)^i}$ be a weak nearly holomorphic Drinfeld modular form of weight $k$, type $m$ and depth $r$ for $\Gamma$. For any $\gamma=\begin{pmatrix}
    a&b\\
    c&d
\end{pmatrix}\in \Gamma$ and $z\in \Omega$, set  $\mathscr{J}(\gamma;z):=\frac{c}{j(\gamma;z)}$. Then we have
	\begin{align*}
	f_{r-i}(\gamma \cdot  z)&=j(\gamma;z)^{k-2(r-i)}\det(\gamma)^{-m+r-i}\bigg(f_{r-i}(z)+\mathscr{J}(\gamma;z)\binom{r-i+1}{1}f_{r-i+1}(z)\\
	&\ \  \ \ \ +\mathscr{J}(\gamma;z)^{2}\binom{r-i+2}{2}f_{r-i+2}(z)+\dots +\mathscr{J}(\gamma;z)^{i}\binom{r}{i}f_{r}(z)\bigg).
	\end{align*}
\end{lemma}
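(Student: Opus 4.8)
The plan is to reduce the statement, as the paper does, to the coefficient comparison underlying \cite[Prop.~3.2.12]{CG23}, the key algebraic input being the uniqueness of the representation of $F$. Writing $u:=1/(\Id-\varphi)$, Theorem \ref{T:1}(iii) asserts that $u$ is transcendental over the ring $\mathcal{O}$ of rigid analytic functions on $\Omega$; hence, after absorbing the fixed constant $\xi(\mathfrak{g}^{-1}\mathfrak{h})$, the presentation of $F$ as a polynomial of degree $r$ in the single non-holomorphic variable $u$ with rigid analytic coefficients is unique. This uniqueness is precisely what licenses comparing coefficients of powers of $u$ in any identity between two such polynomials, and it is the only structural fact about $\Omega^{\varphi}(M)$ that I would need.

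First I would record how the expansion variable transforms under $\gamma=\left(\begin{smallmatrix}a&b\\ c&d\end{smallmatrix}\right)\in\Gamma$. By Lemma \ref{L:1}, in the notation $\mathscr{J}(\gamma;z)=c/j(\gamma;z)$ one has
\[
\frac{1}{\gamma\cdot z-\varphi(\gamma\cdot z)}=j(\gamma;z)^{2}\det(\gamma)^{-1}\Big(\frac{1}{z-\varphi(z)}-\mathscr{J}(\gamma;z)\Big).
\]
The structural point to extract from this is that $\gamma$ acts on $u$ by an affine transformation whose linear part is the automorphy factor $j(\gamma;z)^{2}\det(\gamma)^{-1}$ and whose translation part is governed by $\mathscr{J}(\gamma;z)$; all the coefficient functions involved ($j(\gamma;\cdot)$, $\det(\gamma)$, $\mathscr{J}(\gamma;\cdot)=c/j(\gamma;\cdot)$ and each $f_{n}\circ\gamma$) are rigid analytic on $\Omega$, since $j(\gamma;z)=cz+d$ is nonvanishing on $\Omega=\mathbb{C}_{\infty}\setminus K_{\infty}$.

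Next I would combine this with the modularity relation from Definition \ref{D:1}(i), rewritten as $F(\gamma\cdot z)=j(\gamma;z)^{k}\det(\gamma)^{-m}F(z)$. Substituting the expansion of $F$ on the left and inverting the affine relation to write $u(z)=j(\gamma;z)^{-2}\det(\gamma)\,u(\gamma\cdot z)+\mathscr{J}(\gamma;z)$ on the right, I would expand each $u(z)^{j}$ by the binomial theorem in powers of the transformed variable $u(\gamma\cdot z)$. Collecting the coefficient of $u(\gamma\cdot z)^{r-i}$ on both sides and invoking the uniqueness above then isolates $f_{r-i}(\gamma\cdot z)$. Reindexing the resulting sum by $\ell=j-(r-i)$ and using $\binom{r-i+\ell}{r-i}=\binom{r-i+\ell}{\ell}$ produces exactly the asserted triangular expression, the factor $j(\gamma;z)^{2}\det(\gamma)^{-1}$ redistributing into the weight $k-2(r-i)$ and the determinant exponent $r-i-m$, while the powers $\mathscr{J}(\gamma;z)^{\ell}$ with coefficients $\binom{r-i+\ell}{\ell}$ emerge from the expansion of the translation part.

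The routine but delicate part will be the binomial bookkeeping together with the normalizing constant $\xi(\mathfrak{g}^{-1}\mathfrak{h})$. Two points require care. First, one must expand in powers of the \emph{transformed} variable $u(\gamma\cdot z)$, i.e.\ solve for $f_{r-i}(\gamma\cdot z)$ rather than for $f_{r-i}(z)$, since it is $f_{r-i}(\gamma\cdot z)$ that appears on the left of the claimed identity; inverting the affine change of variable in the correct direction is the only place an error could creep in. Second, one must keep track of the constant $\xi(\mathfrak{g}^{-1}\mathfrak{h})$ relating the two natural normalizations of the coefficient functions, namely those attached to $(z-\varphi(z))^{-i}$ versus $(\xi(\mathfrak{g}^{-1}\mathfrak{h})(z-\varphi(z)))^{-i}$, which is what ensures the shift is governed by $\mathscr{J}(\gamma;z)$ itself rather than a scalar multiple of it. Beyond Theorem \ref{T:1}(iii) and Lemma \ref{L:1} no further analytic input is needed; the entire content is the algebraic identity produced by comparing coefficients, which is why the statement follows immediately from \cite[Prop.~3.2.12]{CG23}.
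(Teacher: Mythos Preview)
Your proposal is correct and follows essentially the same approach as the paper: the paper's proof consists solely of the sentence ``The next lemma immediately follows from \cite[Prop.~3.2.12]{CG23},'' and what you have written is precisely a sketch of that proposition's argument transported to the present setting, using Theorem~\ref{T:1}(iii) for uniqueness, Lemma~\ref{L:1} for the affine transformation of $(\Id-\varphi)^{-1}$, and a binomial expansion followed by coefficient comparison. Your flagging of the $\xi(\mathfrak{g}^{-1}\mathfrak{h})$ normalization as the one delicate bookkeeping point is apt.
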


For any $\alpha\in \GL_2(K)$ and a weak nearly holomorphic Drinfeld modular form $F$ of weight $k$, type $m$ and depth $r$ for $\Gamma$, using \eqref{E:propslash}, which can be easily applied to our setting, we note that $F|_{k,m}\alpha$ is also a weak nearly holomorphic Drinfeld modular form $F$ of weight $k$, type $m$ and depth $r$ for $\alpha^{-1}\Gamma\alpha$ and we  write 
\[
F|_{k,m}\alpha=\sum_{i=0}^r\frac{\mathfrak{G}_i}{\xi(\mathfrak{g}^{-1}\mathfrak{h})^i(\Id-\varphi)^i}
\]
for some rigid analytic functions $\mathfrak{G}_0,\dots,\mathfrak{G}_r$ on $\Omega$. 
On the other hand, letting $\gamma=\begin{pmatrix}
    1&a\\
    0&1
\end{pmatrix}\in \alpha^{-1}\Gamma\alpha$, by Lemma \ref{L:2}, we see that 
\[
\mathfrak{G}_i(\gamma\cdot z)=\mathfrak{G}_i(z+a)=\mathfrak{G}_i(z)
\]
for all $i=1,\dots,r$ and $z\in \Omega$. Therefore, as discussed after Lemma \ref{L:algoft}, each $\mathfrak{G}_i$ admits a $t_{\alpha^{-1}\Gamma \alpha}$-expansion and hence there exists a Laurent series expansion 
		\[
			(F|_{k,m}\alpha)(z)=\sum_{i=0}^r\frac{1}{(\xi(\mathfrak{g}^{-1}\mathfrak{h})z-\xi(\mathfrak{g}^{-1}\mathfrak{h})\varphi(z))^{i}}\sum_{\ell=-\infty}^{\infty}a_{\alpha,i,\ell}t_{\alpha^{-1}\Gamma\alpha}(z)^{\ell}
		\]
		for some $a_{\alpha,i,\ell}\in \mathbb{C}_{\infty}$ valid on $\{z\in \Omega^{\varphi}(M)\ \ | \ \ \inorm{t_{\alpha^{-1}\Gamma\alpha}(z)}<c_\alpha\}$ for some $c_{\alpha}>0$. For each $0\leq i \leq r$,  set $Q_F^{(i)}(t_{\alpha^{-1}\Gamma \alpha}):= \sum_{\ell=-\infty}^{\infty}a_{\alpha,i,\ell}t_{\alpha^{-1}\Gamma\alpha}(z)^{\ell}$. We further call the family $\{Q_F^{(i)}(t_{\alpha^{-1}\Gamma \alpha})\}_{0\leq i \leq r}$ \textit{the $t_{\alpha^{-1}\Gamma \alpha}$-expansion of $F$}.	
        
\end{definition}

In what follows, we define nearly holomorphic Drinfeld modular forms.
\begin{definition}\label{D:2} We call a weak nearly holomorphic Drinfeld modular form $F$ of weight $k$, type $m$ and depth $r$ for $\Gamma$  \textit{a nearly holomorphic Drinfeld modular form} if for any $\alpha\in \GL_2(K)$, we have 
		\[
		(F|_{k,m}\alpha)(z)=\sum_{i=0}^r\frac{1}{(\xi(\mathfrak{g}^{-1}\mathfrak{h})z-\xi(\mathfrak{g}^{-1}\mathfrak{h})\varphi(z))^{i}}\sum_{\ell= 0}^{\infty}a_{\alpha,i,\ell}t_{\alpha^{-1}\Gamma\alpha}(z)^{\ell},
		\]
for some $a_{\alpha,i,\ell} \in \mathbb{C}_{\infty}$		 whenever $|z|_{\text{im}}$ is sufficiently large.
\end{definition}
\begin{remark} By Theorem \ref{T:1}(ii), each $f_i$ is uniquely determined by $F$. 
\end{remark}

We denote by $\mathcal{N}_k^{m,\leq r}(\Gamma)$  the $\mathbb{C}_{\infty}$-vector space of nearly holomorphic Drinfeld modular forms of weight $k$, type $m$ and depth less than or equal to $r$ for $\Gamma$. 

We further let 
\[
\mathcal{N}:=\bigcup_{\substack{k\in \mathbb{Z}_{\geq 0}\\ m\in \mathbb{Z}/(q-1)\mathbb{Z}\\ r\in \mathbb{Z}_{\geq 0} \\ \Gamma \text{ a congruence subgroup of $\GL_2(K)$}}}\mathcal{N}_k^{m,\leq r}(\Gamma).
\]

\begin{remark} \label{R:mf_nhdf}

Putting $r=0$, we have by Theorem \ref{T:1}(ii) that $\mathcal{N}^{m,\leq 0}_k(\Gamma)\cong\mathcal{M}^m_k(\Gamma)$. Hence the notion of nearly holomorphic Drinfeld modular forms generalizes Drinfeld modular forms. 
\end{remark}

Our goal now is to introduce an example of an element in $\mathcal{N}$ of depth one. Let $a$ be a non-constant element in $A$ and $\partial_z(g)$ denote the derivative of a rigid analytic function $g$ with respect to $z$. Consider the function $E:\Omega\to \mathbb{C}_{\infty}$ given by 
\[
E:=\frac{1}{\xi(\mathfrak{g}^{-1}\mathfrak{h})}\frac{\partial_z(\Delta_a)}{\Delta_a}.
\]
Since $\Delta_a$ is nowhere vanishing on $\Omega$, $E$ is a rigid analytic function. Moreover, by \cite[(6.7)]{Gek90}, $E$ does not depend on the choice of $a$.  Using the definition, for any $\gamma\in \GL_2(K)$, we immediately see that 
\begin{equation}\label{4.5}
	E(\gamma\cdot z)=
	j(\gamma;z)^2\det(\gamma)^{-1}\Big(E(z)-\frac{\xi(\mathfrak{g}^{-1}\mathfrak{h})^{-1}c_{\gamma}}{j(\gamma;z)}\Big).
\end{equation}
Furthermore, since the product formula for $\Delta_a$ given in Proposition \ref{P:prod} contains powers of $t$ not divisible by $q$, $\partial_z(\Delta_a)$ is not identically zero. Then taking  the logarithmic derivative of $\Delta_a$ implies that $E$ is holomorphic at $\infty$.

In what follows, we record a fundamental property of $E$.

\begin{lemma}\label{fEis} We have 
	\[
	E(z)=-\frac{1}{\mathcal{J}(\mathfrak{g}^{-1}\mathfrak{h})\partial(\rho_{\mathfrak{g}}^{(\mathfrak{h})})}\sum_{a\in \mathfrak{g}\setminus \{0\}}at_{\mathfrak{h}}(az)
	\]
	provided that the right hand side converges for all $z\in \Omega$ so that $|z|_{\text{im}}$ is sufficiently large.
\end{lemma}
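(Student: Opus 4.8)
The plan is to compute the logarithmic derivative of the product expansion of $\Delta_a$ supplied by Proposition \ref{P:prod} and to reorganize it into the asserted sum over $\mathfrak{g}\setminus\{0\}$. Since $\Delta_a$ is nowhere vanishing, $E=\xi(\mathfrak{g}^{-1}\mathfrak{h})^{-1}\,\partial_z(\Delta_a)/\Delta_a$ is a well-defined rigid analytic function, and as $E$ does not depend on the choice of $a$ I am free to keep $a$ fixed throughout. Writing $t:=t_{\Gamma_Y}=t_{\mathfrak{g}^{-1}\mathfrak{h}}$ and $\xi_0:=\xi(\mathfrak{g}^{-1}\mathfrak{h})$, the first observation is the characteristic-$p$ simplification: since $\exp_{\rho^{(\mathfrak{g}^{-1}\mathfrak{h})}}(w)=\sum_i\beta_i w^{q^i}$ has derivative $1$ (all higher Frobenius terms differentiate to $0$), one gets $\partial_z t=-\xi_0 t^2$ and hence $\partial_z\log t=-\xi_0 t$. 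Taking $\partial_z\log$ of the formula in Proposition \ref{P:prod} kills the constant factors $\xi(\mathfrak{h})^{1-q^{2\deg(a)}}$ and $c$ and leaves $\partial_z\log\Delta_a=-k\xi_0 t+(q^{2\deg(a)}-1)(q-1)\sum_{\mathfrak{p}}\partial_z\log\mathcal{P}_{\mathfrak{p}}^{(\mathfrak{g}^{-1}\mathfrak{h})}(t)$.

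The heart of the argument is to identify each factor $\partial_z\log\mathcal{P}_{\mathfrak{p}}^{(\mathfrak{g}^{-1}\mathfrak{h})}(t)$ with a single term $m\,\xi(\mathfrak{h})\,t_{\mathfrak{h}}(mz)$. Using $t^{-1}=\exp_{\rho^{(\mathfrak{g}^{-1}\mathfrak{h})}}(\xi_0 z)$ and the definition $\mathcal{P}_{\mathfrak{p}}^{(\mathfrak{g}^{-1}\mathfrak{h})}(t)=\rho_{\mathfrak{p}}^{(\mathfrak{g}^{-1}\mathfrak{h})}(t^{-1})\,t^{q^{\deg(\mathfrak{p})}}$, I would invoke the isogeny identity for exponentials $\rho_{\mathfrak{p}}^{(\mathfrak{g}^{-1}\mathfrak{h})}\bigl(\exp_{\rho^{(\mathfrak{g}^{-1}\mathfrak{h})}}(w)\bigr)=\exp_{\mathfrak{p}*\rho^{(\mathfrak{g}^{-1}\mathfrak{h})}}\bigl(\partial(\rho_{\mathfrak{p}}^{(\mathfrak{g}^{-1}\mathfrak{h})})\,w\bigr)$, which follows from \eqref{E:funceq} and \eqref{E:isog}. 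Writing $\mathfrak{p}=(m)\mathfrak{g}^{-1}$, the target module satisfies $\mathfrak{p}*\rho^{(\mathfrak{g}^{-1}\mathfrak{h})}\cong\rho^{((m)^{-1}\mathfrak{h})}$ through $\mathcal{J}(\mathfrak{p},\mathfrak{g}^{-1}\mathfrak{h})$, and \eqref{E:relxi} gives $\mathcal{J}(\mathfrak{p},\mathfrak{g}^{-1}\mathfrak{h})\,\partial(\rho_{\mathfrak{p}}^{(\mathfrak{g}^{-1}\mathfrak{h})})\,\xi_0=\xi((m)^{-1}\mathfrak{h})$; together with $\xi((m)^{-1}\mathfrak{h})=m\,\xi(\mathfrak{h})$ and $\rho^{((m)^{-1}\mathfrak{h})}=\rho^{(\mathfrak{h})}$ (same ideal class, so that $t_{(m)^{-1}\mathfrak{h}}(z)=t_{\mathfrak{h}}(mz)$), this turns $\partial_z\log\mathcal{P}_{\mathfrak{p}}^{(\mathfrak{g}^{-1}\mathfrak{h})}(t)$ into $m\,\xi(\mathfrak{h})\,t_{\mathfrak{h}}(mz)$. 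Here the residual summand $-q^{\deg(\mathfrak{p})}\xi_0 t$ produced by the factor $t^{q^{\deg(\mathfrak{p})}}$ vanishes identically in characteristic $p$ whenever $\deg(\mathfrak{p})\geq 1$.

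It then remains to reindex. Each integral ideal $\mathfrak{p}$ corresponds to $(q-1)$ elements $m\in\mathfrak{g}$ (those differing by $A^{\times}=\mathbb{F}_q^{\times}$), and $m\,t_{\mathfrak{h}}(mz)$ is invariant under $m\mapsto cm$ for $c\in\mathbb{F}_q^{\times}$ by the $\mathbb{F}_q$-linearity of $\exp_{\rho^{(\mathfrak{h})}}$; combined with $(q^{2\deg(a)}-1)(q-1)\equiv1\pmod p$ and $(q-1)\equiv-1\pmod p$, summing over $\mathfrak{p}$ reassembles $\sum_{m\in\mathfrak{g}\setminus\{0\}}m\,t_{\mathfrak{h}}(mz)$ with an overall sign $-1$. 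Matching the leftover $t^{k}$-contribution $-k\xi_0 t$ against the terms with $\deg(\mathfrak{p})=0$ (the units times a generator, present exactly when $\mathfrak{g}$ is principal) accounts for the lowest-order part of the sum. Finally, dividing by $\xi_0$ and using $\xi(\mathfrak{h})/\xi_0=\bigl(\mathcal{J}(\mathfrak{g},\mathfrak{h})\partial(\rho_{\mathfrak{g}}^{(\mathfrak{h})})\bigr)^{-1}$ from \eqref{E:relxi} yields exactly the stated constant $-\bigl(\mathcal{J}(\mathfrak{g}^{-1}\mathfrak{h})\partial(\rho_{\mathfrak{g}}^{(\mathfrak{h})})\bigr)^{-1}$. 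Convergence of the right-hand side for $|z|_{\text{im}}$ sufficiently large is clear, since $t_{\mathfrak{h}}(mz)\to0$ rapidly as $\deg(m)\to\infty$.

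The step I expect to be the main obstacle is the precise bookkeeping in this reindexing: reconciling the single scalar factor $t^{k}$ of Proposition \ref{P:prod} with the $\deg(\mathfrak{p})=0$ (unit) contributions to the $\mathfrak{g}$-sum, and tracking the characteristic-$p$ multiplicities $(q-1)$ and $(q^{2\deg(a)}-1)(q-1)$ so that the final constant and sign come out exactly as claimed. The remainder is a mechanical, if lengthy, unwinding of the functional equations \eqref{E:funceq}, \eqref{E:isog} and \eqref{E:relxi}; as a sanity check one recovers Gekeler's expansion $E=\sum_{a\in A_+}a\,t(az)$ in the base case $A=\mathbb{F}_q[\theta]$.
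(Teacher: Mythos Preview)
Your approach is genuinely different from the paper's and is worth contrasting. The paper does \emph{not} take the logarithmic derivative of the product expansion at all. Instead it invokes \cite[Cor.~7.10]{Gek90}, which expresses $\xi(\mathfrak{g}^{-1}\mathfrak{h})E(z)$ directly as the conditionally convergent Eisenstein-type sum $-\lim_{S\to\infty}\sum_{|az+b|\le S}\frac{a}{az+b}$ over $(a,b)\in(\mathfrak{g}\times\mathfrak{h})\setminus\{(0,0)\}$. It then fixes $z$ with $\log_q|z|\notin\mathbb{Z}$, collapses the inner sum over $b\in\mathfrak{h}_{S_2}$ via the partial-fraction identity $\sum_{b}\frac{1}{w-b}=\exp_{\mathfrak{h}_{S_2}}(w)^{-1}$, passes to the limit to get $-\xi(\mathfrak{h})\sum_{a\in\mathfrak{g}\setminus\{0\}}a\,t_{\mathfrak{h}}(az)$, and finishes with \eqref{E:relxi}. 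This avoids the product formula entirely; the only nontrivial input is the one-line Goss-polynomial identity. Your route, by contrast, is self-contained within the paper (only Proposition~\ref{P:prod}) and mirrors the classical derivation of the $q$-expansion of $E_2$ from $\Delta$, but at the cost of the ideal-to-element reindexing.

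There is, however, a genuine gap in your sketch, precisely at the point you flagged. Your proposed reconciliation of the $t^{k}$-factor --- ``matching $-k\xi_0 t$ against the terms with $\deg(\mathfrak{p})=0$'' --- does not work as written. When $\mathfrak{p}=A$ one has $\rho_A=1$, hence $\mathcal{P}_A^{(\mathfrak{g}^{-1}\mathfrak{h})}(t)=1$ and its log-derivative vanishes; so the product side contributes nothing from $\deg(\mathfrak{p})=0$, while the target sum $\sum_{m\in\mathfrak{g}\setminus\{0\}}$ \emph{does} include the generators of $\mathfrak{g}$ (when $\mathfrak{g}$ is principal). Thus the leftover $-k\xi_0 t$ must absorb both the missing generator terms \emph{and} whatever remains, and this forces a specific congruence on $k\pmod p$ that Proposition~\ref{P:prod} does not supply (it only gives $(q^{\mathfrak{d}}-1)\mid k$, and $q^{\mathfrak{d}}-1\equiv-1\pmod p$). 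In the non-principal case there is no $\deg(\mathfrak{p})=0$ term at all, yet $k$ is not asserted to be divisible by $p$. To close this you would need to go back to Gekeler's proof of the product formula and extract the exact value of $k\pmod p$ in each case --- at which point the paper's three-line argument via \cite[Cor.~7.10]{Gek90} is considerably more efficient. Your identification $\partial_z\log\mathcal{P}_{\mathfrak{p}}^{(\mathfrak{g}^{-1}\mathfrak{h})}(t)=m\,\xi(\mathfrak{h})\,t_{\mathfrak{h}}(mz)$ for $\deg(\mathfrak{p})\ge1$ is, on the other hand, correct and nicely argued.
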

\begin{proof}   By \cite[Cor.~7.10]{Gek90}, we have that 
	\[
	\xi(\mathfrak{g}^{-1}\mathfrak{h})E(z)=-\lim_{S\to \infty}\sum_{\substack{a\in \mathfrak{g},b\in \mathfrak{h}\\(a,b)\neq (0,0)\\\inorm{az+b}\leq S}}\frac{a}{az+b}.
	\]
	Note first that, by Example \ref{Ex:1}(ii), since $\Delta_a(z)$ has a unique $t_{\mathfrak{h}}(z)$-expansion, $E(z)$ also has a unique $t_{\mathfrak{h}}(z)$-expansion which determines $E$ whenever $|z|_{\text{im}}$ is sufficiently large. Hence, for simplicity, let $z\in \Omega$ be sufficiently large and be such that $\log_q|z|\in \mathbb{Q}\setminus \mathbb{Z}$ which guarantees that $|z|_{\text{im}}=|z|$. Then $|az+b|=\max\{|az|,|b|\}$. Let $S_1$ and $S_2$ be non-negative integers, depending on $S$ and $z$, such that if $|az+b|\leq S$, then $\deg(a)\leq S_1$ and $\deg(b)\leq S_2$. Define the finite $\mathbb{F}_q$-vector space $\mathfrak{h}_{S_2}:=\{b\in \mathfrak{h} | \ \ \deg(b)\leq S_2\}$. Then, letting $\exp_{\mathfrak{h}_{S_2}}(z):=z\prod_{\lambda\in \mathfrak{h}_{S_2}\setminus\{0\}}\Big(1-\frac{z}{\lambda}\Big)$, by \cite[Prop.~3.4 (i, v)]{Gek88}, we obtain 
\[
	\frac{1}{\exp_{\mathfrak{h}_{S_2}}(z)}=\sum_{b\in \mathfrak{h}_{S_2}}\frac{1}{z-b}.
	\]    
	Therefore, we have 
\[
\sum_{\substack{a\in \mathfrak{g}, b\in \mathfrak{h}\\(a,b)\neq (0,0)\\\inorm{az+b}\leq S}}\frac{a}{az+b}=\sum_{\substack{a\in \mathfrak{g}\setminus \{0\}\\ \deg(a)\leq S_1}}\frac{a}{\exp_{\mathfrak{h}_{S_2}}(az)}.
\]    
Thus, by letting $S_1,S_2\to \infty$, we now obtain
\[
\xi(\mathfrak{g}^{-1}\mathfrak{h})E(z)=-\lim_{S\to \infty}\sum_{\substack{a\in \mathfrak{g},b\in \mathfrak{h}\\(a,b)\neq (0,0)\\\inorm{az+b}\leq S}}\frac{a}{az+b}=-\sum_{a\in \mathfrak{g}\setminus \{0\}}\frac{a}{\exp_{\mathfrak{h}}(az)}=-\xi(\mathfrak{h})\sum_{a\in \mathfrak{g}\setminus \{0\}}at_{\mathfrak{h}}(az)
\]
where the last equality follows from the fact that $t_{\mathfrak{h}}(az)=\xi(\mathfrak{h})^{-1}\exp_{\mathfrak{h}}(az)^{-1}$ for all $a\in A\setminus\{0\}$. Now, using \eqref{E:relxi}, we finally obtain the desired result. 
\end{proof}

The next result is a consequence of Lemma \ref{L:1} and the above discussion.
\begin{lemma}\label{L:E2} The function $E_2: \Omega^{\varphi}(M)\to \mathbb{C}_{\infty}$ given by 
	\[
	E_2(z):=E(z)-\frac{1}{\xi(\mathfrak{g}^{-1}\mathfrak{h})z-\xi(\mathfrak{g}^{-1}\mathfrak{h})\varphi(z)}
	\]
	is an element in $\mathcal{N}_2^{1,\leq 1}(\Gamma_Y)$.
\end{lemma}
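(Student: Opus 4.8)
The plan is to verify, with weight $k=2$, type $m=1$ and depth $r=1$, the two conditions of Definition \ref{D:1} together with the cusp (growth) condition of Definition \ref{D:2}. First I would put $E_2$ into the shape required by Definition \ref{D:1}(ii). Since $\xi(\mathfrak{g}^{-1}\mathfrak{h})z-\xi(\mathfrak{g}^{-1}\mathfrak{h})\varphi(z)=\xi(\mathfrak{g}^{-1}\mathfrak{h})(z-\varphi(z))$, the definition \eqref{E:example} reads
\[
E_2(z)=f_0(z)+\frac{f_1(z)}{\xi(\mathfrak{g}^{-1}\mathfrak{h})(z-\varphi(z))},\qquad f_0:=E,\quad f_1:=-1.
\]
Here $f_0=E$ is rigid analytic on $\Omega$ because $\Delta_a$ is nowhere vanishing, and $f_1=-1\neq 0$. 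Moreover, by Theorem \ref{T:1}(i) the function $1/(z-\varphi(z))$ is not the restriction of any rigid analytic function, so $E_2$ is itself not rigid analytic and its depth is exactly $1$.

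Next I would establish the modularity condition Definition \ref{D:1}(i). Fix $\gamma=\begin{pmatrix} a&b\\ c&d\end{pmatrix}\in\Gamma_Y$. Substituting the transformation law \eqref{4.5} for $E$ and the identity of Lemma \ref{L:1} for $1/(z-\varphi(z))$ into $E_2(\gamma\cdot z)=E(\gamma\cdot z)-\xi(\mathfrak{g}^{-1}\mathfrak{h})^{-1}(\gamma\cdot z-\varphi(\gamma\cdot z))^{-1}$, the two anomalous summands $-\xi(\mathfrak{g}^{-1}\mathfrak{h})^{-1}c_{\gamma}/j(\gamma;z)$ (coming from $E$) and $+\xi(\mathfrak{g}^{-1}\mathfrak{h})^{-1}c_{\gamma}/j(\gamma;z)$ (coming from the correction term) cancel exactly, leaving
\[
E_2(\gamma\cdot z)=j(\gamma;z)^2\det(\gamma)^{-1}E_2(z).
\]
Equivalently $E_2|_{2,1}\gamma=E_2$, which is Definition \ref{D:1}(i) with $k=2$ and $m=1$. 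The same computation performed verbatim for an arbitrary $\alpha\in\GL_2(K)$ (using \eqref{4.5} and Lemma \ref{L:1} in full generality) will be reused in the final step.

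Finally I would check the growth condition of Definition \ref{D:2}, namely holomorphicity at every cusp. Fixing $\alpha\in\GL_2(K)$ and repeating the computation of the previous paragraph, the cancellation of the $c_{\alpha}/j(\alpha;z)$ terms forces $E_2|_{2,1}\alpha$ to retain the depth-one shape with rigid analytic components $\mathfrak{G}_0=E$ and $\mathfrak{G}_1=-1$. The top component $\mathfrak{G}_1=-1$ is constant, so its $t_{\alpha^{-1}\Gamma_Y\alpha}$-expansion trivially has no principal part. For $\mathfrak{G}_0=E$, the discussion following \eqref{4.5} shows that $E$ is holomorphic at $\infty$; feeding this through Lemma \ref{L:algoft}, which writes the relevant cusp uniformizers as power series in a common uniformizer $t_{\Gamma_Y(\mathfrak{m})}$ with coefficients in $H$, one concludes that the $t_{\alpha^{-1}\Gamma_Y\alpha}$-expansion of $E$ also has no principal part once $\inorm{z}_{\mathrm{im}}$ is sufficiently large. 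Hence both components are holomorphic at $\infty$, which is Definition \ref{D:2}, and therefore $E_2\in\mathcal{N}_2^{1,\leq 1}(\Gamma_Y)$.

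I expect this last step to be the main obstacle. The algebraic cancellation of the anomalous terms is immediate from \eqref{4.5} and Lemma \ref{L:1}, but verifying that the surviving holomorphic component $\mathfrak{G}_0$ genuinely has no principal part at \emph{each} cusp — independently of the chosen uniformizer — is the delicate point, and it is precisely where the holomorphicity of $E$ at $\infty$ and the comparison of uniformizers from Lemma \ref{L:algoft} must be combined, exactly as in the corresponding analysis of \cite[\S3]{CG23}.
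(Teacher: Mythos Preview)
Your verification of Definition~\ref{D:1}(i)--(ii) via \eqref{4.5} and Lemma~\ref{L:1} is exactly the paper's argument (the paper's entire proof reads ``a consequence of Lemma~\ref{L:1} and the above discussion''). The gap is in your treatment of Definition~\ref{D:2}. You assert that for every $\alpha\in\GL_2(K)$ the same cancellation occurs and hence $\mathfrak{G}_0=E$, $\mathfrak{G}_1=-1$; but despite the paper's phrasing, \eqref{4.5} holds only for $\gamma\in\Gamma_Y$, since the underlying identity $\Delta_a(\gamma z)=j(\gamma;z)^{q^{2\deg(a)}-1}\Delta_a(z)$ requires $Y\gamma=Y$. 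For $\alpha\notin\Gamma_Y$ one instead obtains
\[
\mathfrak{G}_0=(E|_{2,1}\alpha)+\frac{c_\alpha}{\xi(\mathfrak{g}^{-1}\mathfrak{h})\,j(\alpha;z)},
\]
which is not $E$. Nor does Lemma~\ref{L:algoft} help: it compares different uniformizers at the \emph{same} cusp $\infty$, whereas the issue is the behavior of $E_2$ at the cusp $\alpha^{-1}\cdot\infty\neq\infty$.

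The correct route is to recognize that $\Gamma_{Y\alpha}=\alpha^{-1}\Gamma_Y\alpha$ and that the function $\mathfrak{G}_0$ above is, up to an algebraic constant, the false Eisenstein series $E^{Y\alpha}$ attached to the projective module $Y\alpha$. Then the product formula of Proposition~\ref{P:prod} applied to $\Delta_a^{Y\alpha}$ (or equivalently the sum in Lemma~\ref{fEis} written for $Y\alpha$, which visibly tends to $0$ as $|z|_{\mathrm{im}}\to\infty$) shows that $\mathfrak{G}_0$ is holomorphic at $\infty$ in $t_{\alpha^{-1}\Gamma_Y\alpha}$, which is what Definition~\ref{D:2} demands.
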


We finish this section with a result on the decomposition of the space $\mathcal{N}_k^{m,\leq r}(\Gamma)$. We note that it follows easily from Lemma \ref{L:2}, noting that $f_r\in \mathcal{M}_{k-2r}^{m-r}(\Gamma)$ if $F\in \mathcal{N}_{k}^{m,\leq r}(\Gamma)$, and the method used in the proof of \cite[Thm.~3.2.18,  Cor.~3.2.20]{CG23}.

\begin{proposition}\label{P:str}
	 Any $F\in\mathcal{N}_k^{m,\leq r}(\Gamma)$ can be uniquely expressed as
	\begin{equation}\label{E:Repr1}
	F=\sum_{0\leq j\leq r}g_jE_2^j
	\end{equation}
	with $g_j\in\mathcal{M}_{k-2j}^{m-j} (\Gamma)$. In particular, $r\leq k/2$.
\end{proposition}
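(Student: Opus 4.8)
The plan is to prove existence of the expansion by induction on the depth $r$, isolating and removing the top-depth contribution with a power of $E_2$, and then to deduce uniqueness and the bound $r\le k/2$ separately. Throughout I write $w:=\big(\xi(\mathfrak{g}^{-1}\mathfrak{h})(\Id-\varphi)\big)^{-1}$, so that any weak nearly holomorphic form of depth $r$ reads $F=\sum_{i=0}^{r}f_i w^i$ with $f_i$ rigid analytic and $f_r\neq0$, while $E_2=E-w$ with $E\in\mathcal{O}$. Expanding $E_2^{\,j}=(E-w)^j$ as a polynomial in $w$ with coefficients in $\mathcal{O}$ shows it has degree $j$ and leading coefficient $(-1)^j$; hence $\{E_2^{\,j}\}_{0\le j\le r}$ and $\{w^i\}_{0\le i\le r}$ generate the same free $\mathcal{O}$-module, related by a triangular transition matrix with invertible diagonal entries $(-1)^j$.

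The base case $r=0$ is Remark~\ref{R:mf_nhdf}. For the inductive step the key claim is that the leading coefficient $f_r$ is a genuine Drinfeld modular form, i.e.\ $f_r\in\mathcal{M}_{k-2r}^{m-r}(\Gamma)$. Its modular transformation is the $i=0$ instance of Lemma~\ref{L:2}, which reads $f_r(\gamma\cdot z)=j(\gamma;z)^{k-2r}\det(\gamma)^{-(m-r)}f_r(z)$ for $\gamma\in\Gamma$. For holomorphy at the cusps, fix $\alpha\in\GL_2(K)\subset\GL_2(K_\infty)$ and apply Lemma~\ref{L:1}: it gives $w(\alpha\cdot z)=j(\alpha;z)^2\det(\alpha)^{-1}\big(w(z)-c_\alpha\,\xi(\mathfrak{g}^{-1}\mathfrak{h})^{-1}j(\alpha;z)^{-1}\big)$, so $w(\alpha\cdot z)^i$ is a polynomial of degree $i$ in $w(z)$ with leading coefficient $j(\alpha;z)^{2i}\det(\alpha)^{-i}$. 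Reading off the coefficient of $w(z)^r$ in $F|_{k,m}\alpha=\sum_i\mathfrak{G}_i w^i$ then yields $\mathfrak{G}_r=f_r|_{k-2r,m-r}\alpha$, whose $t_{\alpha^{-1}\Gamma\alpha}$-expansion is exactly the series $Q_F^{(r)}$ of Definition~\ref{D:2}. Since $F$ is nearly holomorphic, $Q_F^{(r)}$ has no principal part, so $f_r|_{k-2r,m-r}\alpha$ is holomorphic at $\infty$ for every $\alpha$; combined with the transformation law this gives $f_r\in\mathcal{M}_{k-2r}^{m-r}(\Gamma)$.

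Set $g_r:=(-1)^r f_r\in\mathcal{M}_{k-2r}^{m-r}(\Gamma)$. Because a Drinfeld modular form times a nearly holomorphic form is again nearly holomorphic, $g_r E_2^{\,r}\in\mathcal{N}_k^{m,\le r}(\Gamma)$, and by the leading-term computation its $w^r$-coefficient is $(-1)^r g_r=f_r$. Hence $F-g_r E_2^{\,r}$ is a nearly holomorphic form of weight $k$ and type $m$ whose $w^r$-term cancels, so it has depth at most $r-1$; by the inductive hypothesis it equals $\sum_{j=0}^{r-1}g_j E_2^{\,j}$ with $g_j\in\mathcal{M}_{k-2j}^{m-j}(\Gamma)$, and adding back $g_r E_2^{\,r}$ produces \eqref{E:Repr1}. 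For uniqueness, Theorem~\ref{T:1}(iii) makes the powers $\{w^i\}$ linearly independent over $\mathcal{O}$, so the $f_i$ are uniquely determined by $F$; the invertible triangular change of basis between $\{w^i\}$ and $\{E_2^{\,j}\}$ then forces the $g_j$ to be unique. Finally, if $F$ has depth exactly $r$ then $g_r\neq0$ is a nonzero Drinfeld modular form of weight $k-2r$, and since nonzero Drinfeld modular forms cannot have negative weight we conclude $k-2r\ge0$, i.e.\ $r\le k/2$.

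I expect the main obstacle to be the rigorous proof that $f_r$ (equivalently $g_r$) is holomorphic at every cusp rather than merely weakly modular: this is precisely where the full nearly holomorphic condition of Definition~\ref{D:2}, controlling the principal part of every $t_{\alpha^{-1}\Gamma\alpha}$-expansion, must be matched against the transformation law for $w$ from Lemma~\ref{L:1}, with careful bookkeeping of how slashing by $\alpha\in\GL_2(K)$ commutes with $\varphi$ (valid because $\varphi$ fixes $K_\infty\supset K$). A secondary point requiring care is the closure of the space of nearly holomorphic forms under multiplication, used to place $g_r E_2^{\,r}$ back in $\mathcal{N}_k^{m,\le r}(\Gamma)$; this reduces to holomorphy of $E$ and of $g_r$ at all cusps and is most cleanly handled alongside the verification that the coefficient functions of the product have no principal parts in their $t$-expansions.
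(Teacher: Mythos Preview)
Your proof is correct and follows essentially the same approach the paper indicates: you use Lemma~\ref{L:2} (together with Lemma~\ref{L:1} and Definition~\ref{D:2} for holomorphy at the cusps) to establish $f_r\in\mathcal{M}_{k-2r}^{m-r}(\Gamma)$, then peel off $g_rE_2^{\,r}$ and induct, exactly as in the method of \cite[Thm.~3.2.18, Cor.~3.2.20]{CG23} to which the paper defers. The points you flag as potential obstacles (cusp holomorphy of $f_r$ and closure under multiplication) are handled by your computation $\mathfrak{G}_r=f_r|_{k-2r,m-r}\alpha$ and by the multiplicativity of the slash operator together with the absence of principal parts, so no genuine gap remains.
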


\begin{definition} We call $F\in \mathcal{N}_k^{m,\leq r}(\Gamma)$ \textit{an arithmetic nearly holomorphic Drinfeld modular form of weight $k$, type $m$ and depth at most $r$ for $\Gamma$} if it can be written as in \eqref{E:Repr1} where each $g_j$ is an arithmetic Drinfeld modular form.
\end{definition}

\begin{remark} We remark that any arithmetic Drinfeld modular form is also an arithmetic nearly holomorphic Drinfeld modular form. As it is clear from the definition, $E_2$ is an arithmetic nearly holomorphic Drinfeld modular form.
\end{remark}

\subsection{Maass-Shimura operators} In this subsection, following \cite[\S4]{CG23}, we introduce the Maass-Shimura operator $\delta_k^r$ for any $k,r\in \mathbb{Z}_{\geq 0}$.

Let $f:\Omega\to \mathbb{C}_{\infty}$ be a holomorphic function and $n$ be a non-negative integer. We define \textit{the $n$-th hyperderivative $\D^nf:\Omega\to \mathbb{C}_{\infty}$ of $f$} by 
\[
f(z+\epsilon)=\sum_{n \geq 0}(\D^nf)(z)\epsilon^n
\]
where $\epsilon\in \mathbb{C}_{\infty}$, so that $\inorm{\epsilon}$ is sufficiently small. Observe that $\D^1=\partial_z$. We refer the reader to \cite[\S3.1]{BP08}  for more details on hyperderivatives.

In what follows, let $\der^n:=\frac{\D^n}{\xi(\mathfrak{g}^{-1}\mathfrak{h})^n}$. 
\begin{definition} 
	\begin{itemize}
		\item[(i)] Let $\mu\in \mathbb{Z}_{\geq 0}$ be such that that $k\geq 2\mu$. We define \textit{the Maass-Shimura operator} $\delta_k^r$  by $\delta_k^r:=\Id$ for $r=0$ and 
\begin{multline*}
\delta_k^r\Bigg(\frac{f}{(\xi(\mathfrak{g}^{-1}\mathfrak{h})\Id-\xi(\mathfrak{g}^{-1}\mathfrak{h})\varphi)^{\mu}}\Bigg):=\\\frac{1}{(\xi(\mathfrak{g}^{-1}\mathfrak{h})\Id-\xi(\mathfrak{g}^{-1}\mathfrak{h})\varphi)^{\mu}}\sum_{i=0}^r\binom{k-\mu+r-1}{i}\frac{\der^{r-i}f}{(\xi(\mathfrak{g}^{-1}\mathfrak{h})\Id-\xi(\mathfrak{g}^{-1}\mathfrak{h})\varphi)^i}, \ \ r\geq 1.
\end{multline*}
 For convenience, we further set $\der:=\der^1$ and  $\delta_{k}:=\delta_{k}^{1}$.
	\item[(ii)] For any $F=\sum_{\mu=0}^r\frac{f_\mu}{(\xi(\mathfrak{g}^{-1}\mathfrak{h})\Id-\xi(\mathfrak{g}^{-1}\mathfrak{h})\varphi)^{\mu}}\in \mathcal{N}_{k}^{m,\leq r}(\Gamma)$, we set 
	\[
	\delta_{k}^r(F):=\sum_{\mu=0}^r\delta_{k}^r\Bigg(\frac{f_{\mu}}{(\xi(\mathfrak{g}^{-1}\mathfrak{h})\Id-\xi(\mathfrak{g}^{-1}\mathfrak{h})\varphi)^{\mu}}\Bigg).
	\]
\end{itemize}
\end{definition}
 
In our general setting, one can still apply the idea of the proof of \cite[Lem.~4.1.2, Prop. 4.1.3]{CG23} as well as Lemma \ref{L:xi} to obtain the following properties of $\delta_k^r$.

\begin{lemma}\label{L:MaassShimura} 
\begin{itemize}
\item[(i)] Let $f,g\in \mathcal{O}$. For any non-negative integers $k$ and $\ell$, we have 
\[
\delta_{k+\ell}(fg)=f\delta_k(g)+g\delta_{\ell}(f).
\]
\item[(ii)] For any $\gamma\in \GL_2(K)$, we have 
\[
\delta_k^r(f)|_{k+2r,m+r}\gamma =\delta_k^r(f|_{k,m}\gamma).
\]
\item[(iii)] Let $\Gamma$ be a congruence subgroup. The Maass-Shimura operator $\delta_k^r$ sends an arithmetic Drinfeld modular form of weight $k$ for $\Gamma$ to an arithmetic nearly holomorphic Drinfeld modular form of weight $k+2r$ for $\Gamma$.
\end{itemize}
\end{lemma}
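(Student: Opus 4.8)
The plan is to handle the three parts in order, using the explicit depth‑one formula as the computational engine and then bootstrapping. Throughout write $\xi:=\xi(\mathfrak{g}^{-1}\mathfrak{h})$ for brevity, and recall that for a rigid analytic $f$ one has $\delta_k(f)=\der f+k\,f/(\xi(z-\varphi(z)))$, where $\der=\D^1/\xi=\partial_z/\xi$.

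For part (i) I would simply expand both sides. Since $\der$ is a scalar multiple of $\partial_z$, it obeys the Leibniz rule $\der(fg)=f\der g+g\der f$, and so
\[
\delta_{k+\ell}(fg)=\der(fg)+(k+\ell)\frac{fg}{\xi(z-\varphi(z))}=f\der g+g\der f+(k+\ell)\frac{fg}{\xi(z-\varphi(z))},
\]
which regroups exactly as $f\bigl(\der g+k\,g/(\xi(z-\varphi(z)))\bigr)+g\bigl(\der f+\ell\,f/(\xi(z-\varphi(z)))\bigr)=f\delta_k(g)+g\delta_\ell(f)$. This is routine once the depth‑one formula is in hand.

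For part (ii) I would first treat the base case $r=1$ and then induct. Both sides of the claimed identity are weak nearly holomorphic forms of weight $k+2r$, type $m+r$ and depth $r$, so by the uniqueness of the decomposition \eqref{E:nearlholdef} guaranteed by Theorem \ref{T:1}(ii),(iii) it suffices to compare their rigid analytic components. For $r=1$ this is a short computation: applying $|_{k+2,m+1}\gamma$ to $\der f$ and to $f/(\xi(z-\varphi(z)))$ produces two correction terms, one from the product rule for $\partial_z\bigl(j(\gamma;z)^{-k}f(\gamma\cdot z)\bigr)$ (contributing a factor $k\,c_\gamma/(\xi\,j(\gamma;z))$) and one from Lemma \ref{L:1} applied to $1/(z-\varphi(z))$ (contributing $-k\,c_\gamma/(\xi\,j(\gamma;z))$), and these cancel precisely, yielding $\delta_k(f)|_{k+2,m+1}\gamma=\delta_k(f|_{k,m}\gamma)$. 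For general $r$ the components of $\delta_k^r(f|_{k,m}\gamma)$ are the $\der^{r-i}(f|_{k,m}\gamma)$ up to the binomial weights, while the components of $\delta_k^r(f)|_{k+2r,m+r}\gamma$ are obtained by expanding each $(\der^{r-i}f)|\gamma$ via the transformation law of higher hyperderivatives under the slash operator and each power $1/(z-\varphi(z))^i$ via iterated application of Lemma \ref{L:1}; matching the coefficient of each $1/(\xi(z-\varphi(z)))^i$ reduces to a Vandermonde‑type binomial identity. This is the argument of \cite[Lem.~4.1.2]{CG23}, which applies verbatim here because $K_\infty=\mathbb{F}_{q^{\mathfrak{d}}}((\pi_\infty))$ and Lemma \ref{L:1} holds in the present generality.

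For part (iii) I would combine (ii) with the structural results of \S4. By (ii), $\delta_k^r(f)$ is invariant under $|_{k+2r,m+r}\gamma$ for all $\gamma\in\Gamma$, and it is manifestly of the shape \eqref{E:nearlholdef}; to see it is genuinely nearly holomorphic (not merely weak) one checks holomorphy of its $t_\Gamma$‑expansion at each cusp, which follows from holomorphy of the hyperderivatives $\der^{n}f$ together with the expansion $1/(\xi(z-\varphi(z)))=E-E_2$ coming from Lemma \ref{L:E2}. For arithmeticity I would rewrite $\delta_k^r(f)$ through Proposition \ref{P:str} as $\sum_{j}g_jE_2^{j}$ with $g_j\in\mathcal{M}_{k+2r-2j}(\Gamma)$ and show each $g_j$ is arithmetic: at $r=1$, substituting $1/(\xi(z-\varphi(z)))=E-E_2$ into $\delta_k(f)$ gives $\delta_k(f)=(\der f+kfE)-kfE_2$, where $-kf$ is arithmetic and the Serre‑type form $\der f+kfE$ has $\overline{K}$‑rational $t_\Gamma$‑coefficients because $E$ is arithmetic and $\der$ acts on the algebraic $t$‑expansion of $f$ through $\D/\xi$; the transcendental factor $\xi$ is absorbed into the chosen $\xi$‑normalisation of the expansion, and by Lemma \ref{L:xi} every period $\xi(\mathfrak{a})$ differs from $\xi$ only by an element of $H^\times\subset\overline{K}$, so these normalisations stay mutually compatible over $\overline{K}$. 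The general‑$r$ case follows by iterating this observation.

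The main obstacle will be part (ii): one must match the binomial coefficients in the closed‑form definition of $\delta_k^r$ against those produced jointly by the hyperderivative transformation law and the repeated use of Lemma \ref{L:1}, keeping careful track of the $c_\gamma/j(\gamma;z)$ correction terms across the induction (note that, because of the hyperderivatives, $\delta_k^r$ is \emph{not} simply the $r$‑fold composition of depth‑one operators, so the bookkeeping must be done on the closed form directly). The arithmeticity in (iii) is a secondary subtlety, since $\der$ involves division by the transcendental $\xi$; the resolution is that these transcendental factors are exactly the ones built into the $\xi$‑normalised $t$‑expansion, and Lemma \ref{L:xi} certifies that the normalisation remains $\overline{K}$‑rational.
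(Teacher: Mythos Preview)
Your proposal is correct and follows essentially the same route as the paper. The paper itself defers parts (i) and (ii) entirely to \cite[Lem.~4.1.2, Prop.~4.1.3]{CG23}, which is precisely the argument you sketch, and for part (iii) it argues exactly as you do: one invokes (a generalisation of) \cite[Lem.~3.5]{BP08} to see that $\D^n f$ has $t$-expansion of the form $\xi(\mathfrak{a}_\Gamma)^n\sum c_i t^i$ with $c_i\in\overline{K}$, applies Lemma~\ref{L:xi} to absorb the transcendental factor into the $\xi(\mathfrak{g}^{-1}\mathfrak{h})$-normalisation, and then rewrites $\delta_k^r(f)$ as $\sum g_i E_2^i$ with arithmetic $g_i$ via the identity $E_2=E-1/(\xi(\Id-\varphi))$.
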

\begin{proof} We only provide a proof for the third assertion. By (ii), we see that  $\delta_k^r$ sends Drinfeld modular forms of weight $k$ for $\Gamma$ to nearly holomorphic Drinfeld modular forms of weight $k+2r$ for the same congruence subgroup. On the other hand, by an immediate modification of \cite[Lem.~3.5]{BP08} to our setting, we see that if $f=\sum_{i\geq 0}a_it_{\Gamma_{Y}}^i$ where $a_i\in \overline{K}$, then for any $n\geq 0$, we have 
	$\D^n(f)=\sum_{i\geq 0}\xi(\mathfrak{a}_{\Gamma})^{n}c_it_{\Gamma_{Y}}^i$ where $c_i\in \overline{K}$. Since, by Lemma \ref{L:xi}, $\xi(\mathfrak{a}_{\Gamma})$ is an algebraic multiple of $\xi(\mathfrak{g}^{-1}\mathfrak{h})$, we obtain that the $t_{\Gamma}$-expansion of $\der^n(f)$ has algebraic coefficients. Now by using the definition of the Maass-Shimura operator $\delta_k^r$ and $E_2$, one can write $\delta_k^r(f)=\sum_{i=0}^{r} g_iE_2^i$ for some arithmetic Drinfeld modular forms $g_i$ for $\Gamma$ as desired.
\end{proof}

\section{Special values of nearly holomorphic Drinfeld modular forms at CM points}
Our aim in this section is to achieve analogues of some classical results on the special values of nearly holomorphic Drinfeld modular forms. Recall the projective $A$-module $Y$ given as in \eqref{E:defofY}. For any $z\in \Omega$, recall from \S2.8 that $Y_z=\mathfrak{g}z+\mathfrak{h}$. Observe that the uniformizer at the $\infty$-cusp in this case is given by $t_{\Gamma_Y}=\exp_{\rho^{(\mathfrak{g}^{-1}\mathfrak{h})}}(\xi(\mathfrak{g}^{-1}\mathfrak{h})z)^{-1}$.

Recall from \S1.4 that for any ideal $I$ of $A$, $V(I)$ is the set of prime ideals of $A$ dividing $I$. Assume that $|V(I)|\geq 2$. Consider the ring $\mathcal{S}$ so that $\Spec(\mathcal{S})=M^{2}_{I}$. Recall the universal Drinfeld $A$-module $\mathbb{E}_{I}^{un,2}=(\mathcal{L}^{un,2},\phi^{un,2})$ over $M^{2}_{I}$. From now on, to ease the notation, we set
\[
\mathbb{E}_{I}^{un}:=(\mathcal{L}^{un},\phi^{un}):=(\mathcal{L}^{un,2},\phi^{un,2}).
\]
Fixing a non-constant $a \in A$, we let
 \[
\phi^{un}_a:=\sum^{2\deg(a)}_{i=0} \tilde{g}_{i,a}\tau^i\in \End(\mathcal{L}^{un}) .
 \] 

Let $1\leq i,w \leq 2\deg(a)$. As in Example \ref{Ex:1}(iii), consider the relatively prime positive integers $\ell_1$ and $\ell_2$ such that $(q^i-1)\ell_1=(q^w-1)\ell_2$.
Recalling from Remark \ref{R:triv_univ} that $\mathcal{L}^{un}$ is trivial, we set
\[
\tilde{\mathfrak{j}}_{i,w}:=\frac{\tilde{g}_{i,a}^{\ell_1}}{\tilde{g}_{w,a}^{\ell_2}} \in \Frac(\mathcal{S}).
\]
 On the other hand, observe that by definition, since $\tilde{g}_{2\deg(a),a} \in H^{0}(M^2_I,(\mathcal{L}^{un})^{\otimes(1-q^{2\deg(a)})})$ is a non-vanishing section, we have 
\[
\tilde{\mathfrak{J}}:=\tilde{\mathfrak{j}}_{\deg(a),2\deg(a)} \in \mathcal{S}.
\]

Our next lemma is a consequence of a result of Drinfeld (see \cite[Prop.~9.3(i)]{Dri74}) combined with \cite[Prop.~5.1(iii)]{AM69} in the rank two case (see also \cite[Chap.~4, Prop.~2.3(iii)]{Leh09}).
\begin{lemma} \label{2.13}
 The injective map $A[\widetilde{\mathfrak{J}}] \hookrightarrow \mathcal{S}$ is a finite and an integral map. In particular, the extension $\Frac(\mathcal{S})/K(\tilde{\mathfrak{J}})$ of function fields is finite. Consequently, $\tilde{\mathfrak{j}}_{i,w} \in \Frac(\mathcal{S})$ is algebraic over $K(\widetilde{\mathfrak{J}})$.   
\end{lemma}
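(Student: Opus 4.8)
The plan is to deduce all three assertions from a single geometric input, namely the module-finiteness of $\mathcal{S}$ over the subring $A[\widetilde{\mathfrak{J}}]$, and then to read off the two field-theoretic consequences by elementary commutative algebra. First I would record that $\widetilde{\mathfrak{J}}$ is transcendental over $K$: since $M^2_I$ is a smooth affine curve over $K$ by Theorem \ref{T:fine}, its function field $\Frac(\mathcal{S})$ has transcendence degree one over $K$; as $\tilde{g}_{2\deg(a),a}$ is a nowhere vanishing section while $\tilde{g}_{\deg(a),a}$ is non-constant on the moduli, the $j$-invariant $\widetilde{\mathfrak{J}}$ is a non-constant regular function and hence transcendental over $K$. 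Consequently $A[\widetilde{\mathfrak{J}}]$ is a genuine polynomial ring with $\Frac(A[\widetilde{\mathfrak{J}}]) = K(\widetilde{\mathfrak{J}})$, which is exactly what makes the statement about $\Frac(\mathcal{S})/K(\widetilde{\mathfrak{J}})$ meaningful.

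The core of the argument is the module-finiteness of $\mathcal{S}$ over $A[\widetilde{\mathfrak{J}}]$. Here I would invoke Drinfeld's description of the coarse moduli space of rank two Drinfeld $A$-modules: by \cite[Prop.~9.3(i)]{Dri74}, forgetting the level structure realizes the $j$-invariant as a finite map onto the affine $\widetilde{\mathfrak{J}}$-line, so that the level one modular ring is module-finite over $A[\widetilde{\mathfrak{J}}]$. Adding the level $I$-structure alters this only by a finite cover: by \cite[Chap.~4, Prop.~2.3(iii)]{Leh09} the forgetful morphism from $M^2_I$ to the level one moduli is finite, whence $\mathcal{S}$ is finite as a module over $A[\widetilde{\mathfrak{J}}]$. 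Applying the standard integrality criterion \cite[Prop.~5.1(iii)]{AM69} to each element of $\mathcal{S}$, with the finite $A[\widetilde{\mathfrak{J}}]$-module $\mathcal{S}$ itself as the witnessing ring, then upgrades module-finiteness to the assertion that $A[\widetilde{\mathfrak{J}}] \hookrightarrow \mathcal{S}$ is an integral ring extension.

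Given this, the remaining statements are formal. Passing to fraction fields, a finite generating set of $\mathcal{S}$ over $A[\widetilde{\mathfrak{J}}]$ spans $\Frac(\mathcal{S})$ as a $K(\widetilde{\mathfrak{J}})$-vector space, so $\Frac(\mathcal{S})/K(\widetilde{\mathfrak{J}})$ is a finite extension; in particular every element of $\Frac(\mathcal{S})$ is algebraic over $K(\widetilde{\mathfrak{J}})$. Since $\tilde{g}_{w,a}$ is nonzero in $\Frac(\mathcal{S})$—though, unlike $\tilde{g}_{2\deg(a),a}$, it need not be a unit in $\mathcal{S}$, so that $\tilde{\mathfrak{j}}_{i,w}$ lies a priori only in $\Frac(\mathcal{S})$—this applies to $\tilde{\mathfrak{j}}_{i,w}$ and gives the final claim. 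I expect the genuine obstacle to lie in Drinfeld's step for an arbitrary admissible ring $A$ and an arbitrary non-constant $a$: a single invariant built from one element $a$ need not separate isomorphism classes, so one must argue that the map to the $\widetilde{\mathfrak{J}}$-line is nonetheless finite (equivalently, quasi-finite and proper on the coarse space) rather than an isomorphism, after which the commutative-algebra steps are routine.
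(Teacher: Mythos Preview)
Your proposal is correct and follows essentially the same route as the paper: the paper simply attributes the lemma to \cite[Prop.~9.3(i)]{Dri74} combined with \cite[Prop.~5.1(iii)]{AM69} (with the reference to \cite[Chap.~4, Prop.~2.3(iii)]{Leh09} for passing to level $I$), and you have accurately unpacked how these three inputs fit together. Your added remark that $\widetilde{\mathfrak{J}}$ is transcendental over $K$, and your closing caveat that only finiteness (not bijectivity) of the $j$-map is needed, are both appropriate clarifications of points the paper leaves implicit.
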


Let $\overline{K}(X(\Gamma_Y(\mathfrak{m})))$ be the function field over $\overline{K}$ of the smooth model $X(\Gamma_Y(\mathfrak{m}))$ over $\overline{K}$ of $\Gamma_Y(\mathfrak{m})\setminus \Omega$. Our next proposition is the key step to prove Theorem \ref{T:specvalue2}.
\begin{proposition}\label{P:jvalues} The following statements hold.
	\begin{itemize}
		\item[(i)] Let $\mathfrak{m}$ be an ideal of $A$ and $f\in \mathcal{A}_0(\Gamma_Y(\mathfrak{m}),\overline{K})$. We have 
		\[
		f\in \overline{K}(X(\Gamma_Y(\mathfrak{m})))=\overline{K}(\mathfrak{j}_{i,w},\mathbf{f}_{v}\mid 1\leq i,w \leq 2\deg(a),\ \  v=(v_1,v_2)\in \mathfrak{m}^{-1}Y\setminus Y).
		\]
        where $a$ is any non-constant element of $A$.
		\item[(ii)] Let $I$ be an ideal as before. Consider the function $\mathfrak{J}:=\frac{g_{i,a}^{q^{\deg(a)}+1}}{g_{2\deg(a),a}}$. Then the field $ \overline{K}(X(\Gamma_Y))=\overline{K}(\mathfrak{j}_{i,w} \ \ | \ \  1\leq i,w \leq 2\deg(a))$ is a finite algebraic extension of $\overline{K}(\mathfrak{J})$. In particular, the field $ \overline{K}(X(\Gamma_Y(I)))$ is a finite algebraic extension of $\overline{K}(\mathfrak{J})$.
	\end{itemize}
	\begin{proof} The first assertion follows from  Lemma \ref{L:funcfieldN} and the same idea used in the proof of \cite[Lem.~6.1.2]{CG23}. For the first part of the second assertion, note that, by Proposition \ref{2.4} and the remark below it, there exists a $g \in \GL_2(\mathbb{A}_f)$ such that $\pi^{\ast}_{g}(\mathbb{E}^{un}_I) \cong (\mathbb{G}_{a,\Omega},\phi^{Y})$. Then there exists an explicit trivialization $\pi^{\ast}_{g}\mathcal{L}^{un}  \cong \mathcal{O}_{\Omega}.$ Note that, by  \cite[\S10]{BBP21}, the section $\tilde{g}_i \in H^{0}(M^{2}_I,(\mathcal{L}^{un})^{\otimes (1-q^i)})$ induces the holomorphic function $\pi^{\ast}_g (\tilde{g}_i)=g_i:\Omega \rightarrow \mathbb{C}_\infty$.  Consequently, we see that $\pi^{\ast}_{g}(\tilde{\mathfrak{j}}_{i,w}) = \mathfrak{j}_{i,w}$. Since $\pi^{\ast}_{g}$ is also a ring homomorphism, together with Lemma \ref{2.13}, we see that any algebraic relation satisfied by the set $\{\tilde{\mathfrak{j}}_{i,w} \}$ over the field $K(\tilde{\mathfrak{J}})$ gives rise to an algebraic relation of $\{\mathfrak{j}_{i,w} \}$ over $K(\mathfrak{J})$ and hence this implies the desired statement. Finally the last assertion of (ii) follows from the fact that the surjective map $X(\Gamma_Y(I))\to X(\Gamma_Y)$ is finite.
	\end{proof}
	
\end{proposition}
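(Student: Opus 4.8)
The plan is to dispatch the two assertions separately, in each case reducing to a function-field identity already recorded in the excerpt. For (i), the essential input is Lemma~\ref{L:funcfieldN}, which presents $\mathbb{C}_\infty(X(\Gamma_Y(\mathfrak{m})))$ as the field generated over $\mathbb{C}_\infty$ by the functions $\mathfrak{j}_{i,w}$ and $\mathbf{f}_v$. First I would observe that every one of these generators is already defined over $\overline{K}$: by Example~\ref{Ex:1}(iii) we have $\mathfrak{j}_{i,w}\in\mathcal{A}_0(\Gamma_Y;\overline{K})$, and by Example~\ref{Ex:1}(iv) we have $\mathbf{f}_v\in\mathcal{A}_0(\Gamma_Y(\mathfrak{m}');\overline{K})$ for a suitable ideal $\mathfrak{m}'$, so all of them lie in the geometric function field $\overline{K}(X(\Gamma_Y(\mathfrak{m})))$. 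Since this model is geometrically integral over $\overline{K}$, the flat base change $\overline{K}\hookrightarrow\mathbb{C}_\infty$ realizes $\mathbb{C}_\infty(X(\Gamma_Y(\mathfrak{m})))$ as the compositum $\mathbb{C}_\infty\cdot\overline{K}(X(\Gamma_Y(\mathfrak{m})))$. A descent argument then shows that a family inside $\overline{K}(X(\Gamma_Y(\mathfrak{m})))$ which generates after base change to $\mathbb{C}_\infty$ must already generate over $\overline{K}$, upgrading Lemma~\ref{L:funcfieldN} to $\overline{K}(X(\Gamma_Y(\mathfrak{m})))=\overline{K}(\mathfrak{j}_{i,w},\mathbf{f}_v)$. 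Because any $f\in\mathcal{A}_0(\Gamma_Y(\mathfrak{m});\overline{K})$ is by construction an element of $\overline{K}(X(\Gamma_Y(\mathfrak{m})))$, the first assertion follows; this is exactly the route taken in \cite[Lem.~6.1.2]{CG23}.

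For (ii), the idea is to transport the scheme-theoretic algebraicity of Lemma~\ref{2.13} to the analytic functions $\mathfrak{j}_{i,w}$ through the comparison of Proposition~\ref{2.4}. Using Proposition~\ref{2.4} and the remark following it, I would fix $g\in\GL_2(\mathbb{A}_f)$ with $M_g=Y$, so that $\pi_g^\ast(\mathbb{E}^{un}_I)\cong(\mathbb{G}_{a,\Omega},\bold\Psi^Y)$ and in particular $\pi_g^\ast\mathcal{L}^{un}\cong\mathcal{O}_\Omega$. Under this trivialization the coefficient section $\tilde g_{i,a}\in H^0(M^2_I,(\mathcal{L}^{un})^{\otimes(1-q^i)})$ pulls back to the coefficient form $g_{i,a}$ on $\Omega$ by \cite[\S10]{BBP21}, so that $\pi_g^\ast(\tilde{\mathfrak{j}}_{i,w})=\mathfrak{j}_{i,w}$ and $\pi_g^\ast(\tilde{\mathfrak{J}})=\mathfrak{J}$. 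As $\pi_g^\ast$ is a homomorphism of fields, an algebraic relation satisfied by $\tilde{\mathfrak{j}}_{i,w}$ over $K(\tilde{\mathfrak{J}})$, whose existence is guaranteed by Lemma~\ref{2.13}, is carried to an algebraic relation for $\mathfrak{j}_{i,w}$ over $K(\mathfrak{J})$; hence each $\mathfrak{j}_{i,w}$ is algebraic over $K(\mathfrak{J})$, and a fortiori over $\overline{K}(\mathfrak{J})$.

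To finish (ii) I would invoke the base-changed form of Lemma~\ref{L:funcfield}, namely $\overline{K}(X(\Gamma_Y))=\overline{K}(\mathfrak{j}_{i,w}\mid 1\le i,w\le 2\deg(a))$, obtained by the same descent as in (i). Since this field is generated over $\overline{K}(\mathfrak{J})$ by finitely many elements, each algebraic over $\overline{K}(\mathfrak{J})$ by the previous paragraph, the extension $\overline{K}(X(\Gamma_Y))/\overline{K}(\mathfrak{J})$ is finite. For the last statement, the inclusion $\Gamma_Y(I)\leqslant\Gamma_Y$ induces a finite surjective morphism $X(\Gamma_Y(I))\to X(\Gamma_Y)$ of smooth projective curves over $\overline{K}$, so $\overline{K}(X(\Gamma_Y(I)))/\overline{K}(X(\Gamma_Y))$ is finite, and transitivity of finiteness of field extensions gives that $\overline{K}(X(\Gamma_Y(I)))$ is finite over $\overline{K}(\mathfrak{J})$.

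The step I expect to require the most care is the descent from $\mathbb{C}_\infty$ to $\overline{K}$ underlying both parts: one must ensure that the models $X(\Gamma_Y(\mathfrak{m}))$ and $X(\Gamma_Y)$ are genuinely defined over $\overline{K}$ and that the $\overline{K}$-rationality of the generators, encoded through their $t$-expansions, is compatible with the flat base change to $\mathbb{C}_\infty$; once this is in place the passage from generation over $\mathbb{C}_\infty$ to generation over $\overline{K}$ is formal. By contrast, the comparison bookkeeping in (ii)---checking that $\pi_g^\ast$ really identifies $\tilde{\mathfrak{j}}_{i,w}$ with $\mathfrak{j}_{i,w}$ and $\tilde{\mathfrak{J}}$ with $\mathfrak{J}$ so that relations transport correctly---is routine given Proposition~\ref{2.4} and \cite[\S10]{BBP21}.
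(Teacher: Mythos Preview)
Your proposal is correct and follows essentially the same approach as the paper: for (i) you invoke Lemma~\ref{L:funcfieldN} together with the descent argument of \cite[Lem.~6.1.2]{CG23}, and for (ii) you use Proposition~\ref{2.4} and \cite[\S10]{BBP21} to identify $\pi_g^\ast(\tilde{\mathfrak{j}}_{i,w})=\mathfrak{j}_{i,w}$, then transport the algebraic relations from Lemma~\ref{2.13}, finishing with the finiteness of $X(\Gamma_Y(I))\to X(\Gamma_Y)$. The only difference is that you spell out the $\mathbb{C}_\infty$-to-$\overline{K}$ descent and the use of Lemma~\ref{L:funcfield} more explicitly than the paper does.
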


Our next proposition can be seen as a generalization of a result of Gekeler \cite[Satz ~(4.3)]{Gek83} for the case $A=\mathbb{F}_q[\theta]$.
	
\begin{proposition}\label{P:jinvariant} Let $J$ be a Drinfeld modular function for $\Gamma_Y$ which is well-defined at a CM point $z_0$ and whose $t_{\Gamma_Y}$-expansion coefficients lie in $\overline{K}$. Then  $J(z_0)\in \overline{K}$. 
\end{proposition}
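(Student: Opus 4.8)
The plan is to reduce the statement to the single modular invariant $\mathfrak{J}$ and then to invoke the theory of complex multiplication for rank one Drinfeld modules over the quadratic field $K(z_0)$. First I would record that it suffices to prove $\mathfrak{J}(z_0)\in\overline{K}$. Indeed, since $J\in\mathcal{A}_0(\Gamma_Y;\overline{K})$, Proposition \ref{P:jvalues}(i) (applied with $\mathfrak{m}=A$) places $J$ in $\overline{K}(X(\Gamma_Y))$, and Proposition \ref{P:jvalues}(ii) shows that $\overline{K}(X(\Gamma_Y))$ is a finite algebraic extension of $\overline{K}(\mathfrak{J})$; in particular $J$ is algebraic over $\overline{K}(\mathfrak{J})$. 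Realizing $X(\Gamma_Y)$ as a smooth projective curve over $\overline{K}$ and viewing $\mathfrak{J}$ as a finite morphism $\mathfrak{J}\colon X(\Gamma_Y)\to\mathbb{P}^1_{\overline{K}}$, once $\beta:=\mathfrak{J}(z_0)\in\overline{K}$ is known the scheme-theoretic fibre $\mathfrak{J}^{-1}(\beta)$ is finite over $\Spec(\overline{K})$, so each of its points has residue field $\overline{K}$ (as $\overline{K}$ is algebraically closed). Hence the $\mathbb{C}_{\infty}$-point $[z_0]$ lying in this fibre is $\overline{K}$-rational, and since $J$ is well-defined (regular) at $z_0$ by hypothesis, its value $J(z_0)$ lies in the residue field $\overline{K}$.

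The heart of the argument is then to show $\mathfrak{J}(z_0)\in\overline{K}$. Because $z_0$ is a CM point, $L:=K(z_0)$ is a quadratic extension of $K$ in which the place $\infty$ does not split; in particular $z_0\in L\subset\overline{K}$, and the integral closure $\mathcal{O}_L$ of $A$ in $L$ is again an admissible coefficient ring, with a unique place $\infty_L$ above $\infty$. The lattice $Y_{z_0}=\mathfrak{g}z_0+\mathfrak{h}$ then lies inside $L$, and setting $R:=\{x\in L \mid xY_{z_0}\subseteq Y_{z_0}\}=\End(\phi^{Y_{z_0}})$, the module $Y_{z_0}$ is a proper (invertible) fractional $R$-ideal, i.e. a rank one projective $R$-module. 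Viewing $\phi^{Y_{z_0}}$ accordingly as a rank one Drinfeld module for the order $R$ — using the theory of Drinfeld $R$-modules for orders recalled in \S2.1 — Hayes's theory (\cite{Hay79} and \S2.2), applied now to $L$ and $R$, shows that this rank one object is isomorphic over $\mathbb{C}_{\infty}$ to a Drinfeld--Hayes $R$-module $\rho$ defined over the (ring) class field of $R$, a finite extension of $L$ contained in $\overline{K}$. Restricting scalars from $R$ to $A$, the module $\rho$ is a rank two Drinfeld $A$-module defined over $\overline{K}$, and $\phi^{Y_{z_0}}\cong\rho$ as Drinfeld $A$-modules over $\mathbb{C}_{\infty}$.

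Finally, since $\mathfrak{J}=g_{\deg(a),a}^{q^{\deg(a)}+1}/g_{2\deg(a),a}$ is a weight $0$ modular invariant, its value is unchanged under $A$-module isomorphisms of the underlying rank two Drinfeld module (equivalently, under homothety of the associated lattice). Hence $\mathfrak{J}(z_0)=\mathfrak{J}(\phi^{Y_{z_0}})$ equals the corresponding ratio of coefficients of $\rho_a$, which lies in $\overline{K}$ because $\rho$ is defined over $\overline{K}$. This gives $\mathfrak{J}(z_0)\in\overline{K}$ and, combined with the reduction above, completes the proof.

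The step I expect to be the main obstacle is the middle one: making the order version of Hayes's theory precise over the quadratic field $L$. Concretely, one must verify that the non-splitting of $\infty$ in $L$ renders $\mathcal{O}_L$ admissible (so that a rank one Drinfeld--Hayes theory is available over $L$), that $Y_{z_0}$ is genuinely an invertible ideal of the order $R=\End(\phi^{Y_{z_0}})$, and that the resulting rank one module descends to the ring class field of $R$ inside $\overline{K}$; this is exactly where Lemma \ref{2.13} and the CM hypothesis are used. By contrast, the reduction to $\mathfrak{J}$ and the weight $0$ invariance are comparatively routine.
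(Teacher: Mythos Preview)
Your proposal is correct and follows the same overall architecture as the paper: reduce to the single invariant $\mathfrak{J}$ via Proposition~\ref{P:jvalues}, then interpret $\phi^{Y_{z_0}}$ as a rank one Drinfeld module over the order $R=\End(\phi^{Y_{z_0}})$ in the imaginary quadratic function field $L=K(z_0)$, and invoke Hayes's theory for orders.

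The one genuine difference is in the final step establishing $\mathfrak{J}(z_0)\in\overline{K}$. You argue by \emph{explicit descent}: Hayes's theory produces a model $\rho$ of the rank one $R$-module defined over the ring class field of $R$ (a finite extension of $L$ inside $\overline{K}$), and isomorphism invariance of $\mathfrak{J}$ then gives $\mathfrak{J}(z_0)=\mathfrak{J}(\rho)\in\overline{K}$. The paper instead uses the classical \emph{Galois orbit / counting} argument: for $\psi\in\Aut(\mathbb{C}_\infty/\Frac(R))$ one has $\psi(\mathfrak{J}(z_0))=\mathfrak{J}^{\phi^{\psi}}$, and since by \cite[Cor.~5.12]{Hay79} there are only finitely many isomorphism classes of rank one Drinfeld $R$-modules, the orbit of $\mathfrak{J}(z_0)$ is finite, hence $\mathfrak{J}(z_0)$ is algebraic. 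Your route is more constructive and identifies the field of definition; the paper's route is slightly more economical in that it only needs finiteness of the class set, not the precise descent statement. Both are standard in the CM context and both ultimately rest on the same Hayes theory for orders.

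One small correction: in your final paragraph the reference to Lemma~\ref{2.13} is misplaced---that lemma concerns the finiteness of $\mathcal{S}$ over $A[\tilde{\mathfrak{J}}]$ and feeds into Proposition~\ref{P:jvalues}(ii), not into the order-theoretic Hayes step. The inputs you actually need there are \cite[Cor.~4.7.15]{Gos96} (identifying $R$ as an order in an imaginary quadratic function field) and the relevant sections of \cite{Hay79}.
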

\begin{proof} Our proof follows from the classical methods as well as Hayes's theory developed for rank one Drinfeld $R$-modules whenever $R$ is an order for an admissible coefficient ring. 

By Lemma \ref{L:funcfield} and Proposition \ref{P:jvalues}(ii), it suffices to show the proposition for $\mathfrak{J}$. Assume that $\phi$ is a CM Drinfeld $A$-module of rank two defined over $\mathbb{C}_{\infty}$ whose corresponding $A$-lattice is denoted by $Y_{z_0}$. Then, by \cite[Cor.~4.7.15]{Gos96}, the endomorphism ring $R:=\End(\phi)\cong \End(Y_{z_0})$ defines an order for an admissible coefficient ring whose field of fractions is a quadratic extension of $K$, in which the infinity place does not split. Hence one can consider $\phi$ as a Drinfeld $R$-module of rank one, via the embedding $R\hookrightarrow \mathbb{C}_{\infty}[\tau]$. Let $\psi\in \Aut(\mathbb{C}_{\infty}/\Frac(R))$. We set $\phi^{\psi}$ to be the Drinfeld $R$-module constructed by applying $\psi$ to the coefficients of $\phi_a$ for each $a\in R$.  Then by \cite[Prop.~8.1]{Hay79}, $\phi^{\psi}$ is isomorphic to a Drinfeld $R$-module $\tilde{\phi}$ of rank one. Consequently by \cite[Cor.~5.12]{Hay79}, the set $\{\phi^{\psi}\ \ | \ \ \psi\in \Aut(\mathbb{C}_{\infty}/\Frac(R))\}$ has finitely many isomorphism classes of Drinfeld $R$-modules of rank one. On the other hand, defining $\mathfrak{J}^\phi$ and $\mathfrak{J}^{\tilde{\phi}}$ similar to Example \ref{Ex:1}(iii) so that $\mathfrak{J}^\phi = \mathfrak{J}(z_0)$, we have $\mathfrak{J}^{\phi}=\mathfrak{J}^{\tilde{\phi}}$ whenever $\phi$ and $\tilde{\phi}$ are isomorphic over $\mathbb{C}_{\infty}$. Thus, we conclude that $\{\mathfrak{J}^{\phi^{\psi}}\ \ | \ \ \psi\in \Aut(\mathbb{C}_{\infty}/\Frac(R)) \}$ is a finite set. Since $\mathfrak{J}^{\phi^{\psi}}=\psi(\mathfrak{J}^{\phi})$ and $\psi$ is arbitrary, the algebraicity of $\mathfrak{J}^{\phi}$ follows.
\end{proof}

In what follows, we describe special values of arithmetic Drinfeld modular forms at CM points in terms of the ratio of periods of Drinfeld $A$-modules up to an algebraic constant. 

\begin{theorem}\label{T:specvalue} Let $f$ be an arithmetic Drinfeld modular form of weight $k$ for a congruence subgroup $\Gamma$ of $\Gamma_Y$. Then for any CM point $z_0\in \Omega$, there exists $w_{z_0}\in \mathbb{C}_{\infty}^{\times}$ such that 
	\[
	f(z_0)=c\Bigg(\frac{w_{z_0}}{\xi(\mathfrak{g}^{-1}\mathfrak{h})}\Bigg)^{k} 
	\]
	for some $c\in \overline{K}$. In particular, if $f(z_0)$ is non-zero, then it is transcendental over $\overline{K}$.
\end{theorem}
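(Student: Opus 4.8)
The plan is to reduce $f(z_0)$ to the value of a Drinfeld modular function at $z_0$, which is algebraic by Proposition~\ref{P:jvalues}, after factoring out an explicit period. Since $z_0$ is a CM point it lies in $\overline{K}$, and the Drinfeld $A$-module $\phi^{Y_{z_0}}$ attached to the lattice $Y_{z_0}=\mathfrak{g}z_0+\mathfrak{h}$ is a CM module. Applying Proposition~\ref{P:jinvariant} to $\mathfrak{J}$ gives $\mathfrak{J}(z_0)\in\overline{K}$, so $\phi^{Y_{z_0}}$ is isomorphic over $\mathbb{C}_{\infty}$ to a Drinfeld $A$-module defined over $\overline{K}$; concretely there is a scalar $w_{z_0}\in\mathbb{C}_{\infty}^{\times}$ with $w_{z_0}Y_{z_0}=\ker(\exp_{\phi^{w_{z_0}Y_{z_0}}})$ and $\phi^{w_{z_0}Y_{z_0}}$ defined over $\overline{K}$. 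As $Y_{z_0}\subset\overline{K}$, the element $w_{z_0}$ is an algebraic multiple of a period of this CM Drinfeld $A$-module, hence transcendental over $K$ by Yu's theorem (cf.\ Lemma~\ref{L:11} and \cite[Thm.~5.1]{Yu86}).

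First I would read off the values of the building-block forms from this lattice comparison. Since the $i$-th coefficient of $\phi^{w_{z_0}Y_{z_0}}_a$ equals $w_{z_0}^{1-q^i}g_{i,a}(z_0)$ and lies in $\overline{K}$, we obtain $g_{i,a}(z_0)\in w_{z_0}^{q^i-1}\overline{K}$; in particular $\Delta_a(z_0)=g_{2\deg(a),a}(z_0)\neq0$. Similarly, for $v\in\mathfrak{m}^{-1}Y\setminus Y$ the value $E_{1,v}(z_0)=\exp_{Y_{z_0}}(v_1z_0+v_2)^{-1}$ is the reciprocal of an $\mathfrak{m}$-torsion point of $\phi^{Y_{z_0}}$; using $\exp_{w_{z_0}Y_{z_0}}(w_{z_0}u)=w_{z_0}\exp_{Y_{z_0}}(u)$ together with the algebraicity of torsion points of $\phi^{w_{z_0}Y_{z_0}}$ gives $E_{1,v}(z_0)\in w_{z_0}\overline{K}$. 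Incorporating the $\xi(\mathfrak{g}^{-1}\mathfrak{h})$-normalisation of Example~\ref{Ex:1}, which is exactly what renders the $t$-expansion algebraic, every arithmetic building block of weight $\ell$ then takes a value in $\big(w_{z_0}/\xi(\mathfrak{g}^{-1}\mathfrak{h})\big)^{\ell}\overline{K}$. Since $E_{1,v}$ is nowhere vanishing on $\Omega$ (Example~\ref{Ex:1}(iv)), I would fix a nowhere-vanishing arithmetic reference form $h$ of weight $k$ for $\Gamma_Y(\mathfrak{m})$ — for instance a suitable $k$-th power of the normalised weight-one Eisenstein series — and record $h(z_0)=c_0\big(w_{z_0}/\xi(\mathfrak{g}^{-1}\mathfrak{h})\big)^{k}$ with $c_0\in\overline{K}^{\times}$.

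Next I would descend to a Drinfeld modular function. After replacing $\Gamma$ by a principal congruence subgroup $\Gamma_Y(\mathfrak{m})\subseteq\Gamma$ and invoking Lemma~\ref{L:algoft}, the form $f$ remains arithmetic for $\Gamma_Y(\mathfrak{m})$, so $f/h\in\mathcal{A}_0(\Gamma_Y(\mathfrak{m});\overline{K})$, and it is holomorphic at $z_0$ because $h$ has no zero there. By Proposition~\ref{P:jvalues} the function $f/h$ lies in $\overline{K}(X(\Gamma_Y(\mathfrak{m})))$, which is finite algebraic over $\overline{K}(\mathfrak{J})$; writing a polynomial relation between $f/h$ and $\mathfrak{J}$ over $\overline{K}$ and specialising at $z_0$, where $\mathfrak{J}(z_0)\in\overline{K}$ and $f/h$ is defined, shows $(f/h)(z_0)\in\overline{K}$. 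Therefore
\[
f(z_0)=(f/h)(z_0)\,h(z_0)=c\Big(\frac{w_{z_0}}{\xi(\mathfrak{g}^{-1}\mathfrak{h})}\Big)^{k},\qquad c\in\overline{K}.
\]

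Finally, for the transcendence clause, if $f(z_0)\neq0$ then $c\neq0$, so it suffices to see that $\big(w_{z_0}/\xi(\mathfrak{g}^{-1}\mathfrak{h})\big)^{k}$ is transcendental: were it algebraic, $w_{z_0}$ would be an algebraic multiple of $\xi(\mathfrak{g}^{-1}\mathfrak{h})$, i.e.\ a period of the rank-two CM module would be proportional to a period of the non-isogenous rank-one Drinfeld--Hayes module, contradicting Yu's theorem. I expect the main obstacle to be two-fold: pinning down the precise power of $\xi(\mathfrak{g}^{-1}\mathfrak{h})$ contributed by the arithmetic normalisation of the reference form, so that the period factor is exactly $\big(w_{z_0}/\xi(\mathfrak{g}^{-1}\mathfrak{h})\big)^{k}$ and not some other twist; and establishing the transcendence of the period quotient $w_{z_0}/\xi(\mathfrak{g}^{-1}\mathfrak{h})$, which genuinely uses Yu's transcendence theory for periods of distinct Drinfeld modules rather than the single-module statement recalled in Lemma~\ref{L:11}.
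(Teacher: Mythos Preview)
Your plan is the paper's plan: divide $f$ by a nowhere-vanishing arithmetic reference form of the same weight, land in $\mathcal{A}_0(\Gamma_Y(\mathfrak{m});\overline{K})$, and evaluate at $z_0$ via Proposition~\ref{P:jinvariant} (extended from $\Gamma_Y$ to $\Gamma_Y(\mathfrak{m})$ through Proposition~\ref{P:jvalues}, exactly as you spell out). The only real difference is the choice of reference form. You take a $k$-th power of the weight-one Eisenstein series $E_{1,v}$; the paper instead uses $\Delta_a=g_{2\deg(a),a}$, of weight $q^{2d}-1$, and compares $f^{q^{2d}-1}$ with $(\xi(\mathfrak{g}^{-1}\mathfrak{h})^{1-q^{2d}}\Delta_a)^k$, taking a $(q^{2d}-1)$-th root at the end.

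The paper's choice directly addresses your first self-flagged obstacle. Rather than inferring the existence of $w_{z_0}$ from $\mathfrak{J}(z_0)\in\overline{K}$ and then tracking the $\xi$-normalisation of $E_{1,v}$, the paper \emph{defines} $w_{z_0}:=\alpha/\eta$ by choosing $\eta$ so that the top coefficient of $\eta^{-1}\bold\Psi^{Y_{z_0}}\eta$ is $\alpha^{q^{2d}-1}\in\overline{K}$; then $\Delta_a(z_0)=w_{z_0}^{q^{2d}-1}$ holds \emph{by construction}, and the arithmetic form $\xi^{1-q^{2d}}\Delta_a$ takes the value $(w_{z_0}/\xi)^{q^{2d}-1}$ on the nose. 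Your route through $E_{1,v}(z_0)\in w_{z_0}\overline{K}$ is correct and has the advantage of matching weights exactly, but it forces you to pin down the sign of the $\xi$-exponent in the arithmetic normalisation of $E_{1,v}$ separately, which is the bookkeeping you were worried about. For your second flagged obstacle, the paper does not attempt the period-ratio argument you sketch; it simply cites \cite[Thm.~5.4]{Yu86}, which is precisely the ``periods of distinct Drinfeld modules'' input you anticipated needing beyond Lemma~\ref{L:11}.
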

\begin{proof} Our proof extends the strategy of the proof of \cite[Thm.~2.1.2]{Cha12}. Let $a\in A$ be of degree $d\geq 1$. Consider
\[
\Psi^{Y_{z_0}}_{a}=a+g_{1,a}(z_0)\tau+\cdots+g_{2d,a}(z_0)\tau^{2d}.
\]
For a fixed non-zero $\alpha\in \mathfrak{h}$, choose an element $\eta\in \mathbb{C}_{\infty}^{\times}$ such that $\eta^{q^{2d}-1}g_{2d,a}(z_0)=\alpha^{q^{2d}-1}$.  Set $\phi:=\eta^{-1}\Psi^{Y_{z_0}}\eta$, which is a Drinfeld $A$-module of rank two. Then 
\begin{multline*}
\phi_a=a+\alpha^{q-1}\sqrt[q^{2d}-1]{\frac{g_{1,a}(z_0)^{q^{2d}-1}}{g_{2d,a}(z_0)^{q-1}}}\tau +\cdots + \alpha^{q^{j}-1}\sqrt[q^{2d}-1]{\frac{g_{j,a}(z_0)^{q^{2d}-1}}{g_{2d,a}(z_0)^{q^j-1}}}\tau^j \\ +\cdots+\alpha^{q^{2d-1}-1}\sqrt[q^{2d}-1]{\frac{g_{2d-1,a}(z_0)^{q^{2d}-1}}{g_{2d,a}(z_0)^{q^{2d-1}-1}}}\tau^{2d-1}+\alpha^{q^{2d}-1}\tau^{2d}.
\end{multline*}
Setting $\mathfrak{c}_{i}$ to be the greatest common divisor of $q^{i}-1$ and $q^{2d}-1$, $\ell_1=\frac{q^{2d}-1}{\mathfrak{c}_i}$ and $\ell_2=\frac{q^{i}-1}{\mathfrak{c}_i}$, observe that each function $\frac{g_{i,a}^{q^{2d}-1}}{g_{2d,a}^{q^i-1}}=\Big( \frac{g_{i,a}^{\ell_1}}{g_{2d,a}^{\ell_2}}\Big)^{\mathfrak{c}_i}$ is a Drinfeld modular function for $\Gamma_Y$. By Proposition \ref{P:jinvariant}, their values at $z_0$ are elements in $\overline{K}$ and hence $\phi$ is a Drinfeld $A$-module defined over $\overline{K}$. We further set $w_{z_0}:=\alpha/\eta$, which is a period for $\phi$. Observe that
\begin{equation}\label{E:wz0}
g_{2d,a}(z_0)=w_{z_0}^{q^{2d}-1}.
\end{equation}
Since $\xi(\mathfrak{g}^{-1}\mathfrak{h})$ is transcendental over $K$, by Proposition \ref{P:prod}, we see that $\xi(\mathfrak{g}^{-1}\mathfrak{h})^{1-q^{2d}}g_{2d,a}$ is a Drinfeld modular form with algebraic $t_{\Gamma_Y}$-expansion coefficients at the cusp $\infty$.

Since, by assumption, $f$ has algebraic $t_{\Gamma}$-expansion coefficients, by Lemma \ref{L:algoft}, there exists an integral ideal $\mathfrak{m}$ such that $f$ has algebraic $t_{\Gamma_Y(\mathfrak{m})}$-expansion coefficients. Similarly, by Lemma \ref{L:algoft} and Proposition \ref{P:prod}, $\xi(\mathfrak{g}^{-1}\mathfrak{h})^{1-q^{2d}}g_{2d,a}$ has algebraic $t_{\Gamma_Y(\mathfrak{m})}$-expansion coefficients. Thus,
\[
f^{q^{2d}-1}/(\xi(\mathfrak{g}^{-1}\mathfrak{h})^{1-q^{2d}}g_{2d,a})^k\in \mathcal{A}_0(\Gamma_Y(\mathfrak{m});\overline{K}).
\]
Observe that by using \eqref{E:funceq}, $t_{\Gamma_Y(\mathfrak{m_1})}$ can be written as a power series in $t_{\Gamma_Y(\mathfrak{m_2})}$ with algebraic coefficients whenever $\mathfrak{m_1}\subseteq \mathfrak{m_2}\subset A$. Hence without loss of generality, we further assume that $|V(\mathfrak{m})|\geq 2$.  Then  by Proposition  \ref{P:jinvariant}, we see that 
\[
f^{q^{2d}-1}(z_0)/(\xi(\mathfrak{g}^{-1}\mathfrak{h})^{1-q^{2d}}g_{2d,a}(z_0))^k\in \overline{K}.\] Thus, by \eqref{E:wz0}, we obtain the first part. The second assertion is a consequence of the first assertion and \cite[Thm.~5.4]{Yu86}.
\end{proof}

\begin{remark} We note that, when $A=\mathbb{F}_q[\theta]$, Theorem \ref{T:specvalue} is proved by Chang in \cite[Thm.~2.2.1]{Cha12}.  
\end{remark}

Let $a\in A$ be non-constant. Recall that $\Delta_a=g_{2\deg(a),a}$. Let $z_0\in \Omega$ be a CM point and consider the extension $M_{z_0}$ of $\Knr$ containing $z_0$ and the extension $\varphi_{z_0}$ from Lemma \ref{L:ext}. Set $\mathfrak{f}:=\frac{\Delta_a}{\Delta_a|_{q^{2d}-1,0}\alpha_{z_0}}$ where 
\[
\alpha_{z_0}:=\begin{pmatrix}
\Tr(z_0)&-\Nr(z_0)\\
1&0
\end{pmatrix}\in \GL_2(K).
\]
Observe that $\alpha_{z_0}\cdot z_0=z_0$ and $\det(\alpha_{z_0})=\mathfrak{z}(z_0)z_0$ where $\mathfrak{z}:K(z_0)\to K(z_0)$ is the generator of $\Gal(K(z_0)/K)$. By Lemma \ref{L:MaassShimura}(i), we have 
\[
\delta_{q^{2d}-1}(\Delta_a)=\delta_{q^{2d}-1}(\mathfrak{f}\Delta_a|_{q^{2d}-1,0}\alpha_{z_0})=\delta_{q^{2d}-1}(\Delta_a|_{q^{2d}-1,0}\alpha_{z_0})\mathfrak{f}+\Delta_{a}|_{q^{2d}-1,0}\alpha_{z_0}\xi(\mathfrak{g}^{-1}\mathfrak{h})^{-1}\der\mathfrak{f}.
\]
Dividing above by $\Delta_a=\Delta_a|_{q^{2d}-1,0}\alpha_{z_0}\mathfrak{f}$ and using 
\[
\delta_{q^{2d}-1}(\Delta_a)=
\frac{\partial_z(\Delta_a)}{\xi(\mathfrak{g}^{-1}\mathfrak{h})}-\frac{1}{\xi(\mathfrak{g^{-1}\mathfrak{h}})}\frac{\Delta_a}{\Id-\varphi}=\Big(\frac{1}{\xi(\mathfrak{g}^{-1}\mathfrak{h})} \frac{\partial_z(\Delta_a)}{\Delta_a}-\frac{1}{\xi(\mathfrak{g^{-1}\mathfrak{h}})}\frac{1}{\Id-\varphi}\Big)\Delta_a=\Delta_a E_2
\]
yield
\begin{equation}\label{E:eq1}
E_2=\frac{\delta_{q^{2d}-1}(\Delta_a|_{q^{2d}-1}\alpha_{z_0})}{\Delta_a|_{q^{2d}-1,0}\alpha_{z_0}}+\frac{1}{\xi(\mathfrak{g}^{-1}\mathfrak{h})}\frac{\der \mathfrak{f}}{\mathfrak{f}}.
\end{equation}
Moreover, by Lemma \ref{L:MaassShimura}(ii), we have 
\begin{equation}\label{E:eq2}
(E_2|_{2,1}\alpha_{z_0})(\Delta_a|_{q^{2d}-1,0}\alpha_{z_0})=(E_2 \Delta_a)|_{q^{2d}+1,1}\alpha_{z_0}=\delta_{q^{2d}-1}(\Delta_a)|_{q^{2d}+1,1}\alpha_{z_0}=\delta_{q^{2d}-1}(\Delta_a|_{q^{2d}-1,0}\alpha_{z_0}).
\end{equation}
Set $\mathcal{G}:=\frac{-1}{\xi(\mathfrak{g}^{-1}\mathfrak{h})}\frac{\der\mathfrak{f}}{\mathfrak{f}}$. It is easy to see that $\mathcal{G}\in \mathcal{A}_2(\overline{K})$. Moreover, substituting \eqref{E:eq2} to \eqref{E:eq1} yields
\[
\mathcal{G}(z)=E_2|_{2,1}\alpha_{z_0}(z)-E_2(z).
\]
Using the action of $\alpha_{z_0}$ on $z_0$, we now obtain the following.
\begin{lemma}\label{L:specvalue} We have 
\[
\mathcal{G}(z_0)=\frac{\mathfrak{z}(z_0)-z_0}{z_0}E_2(z_0).
\]
\end{lemma}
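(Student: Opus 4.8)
The plan is to read off the value $\mathcal{G}(z_0)$ directly from the identity $\mathcal{G}(z)=(E_2|_{2,1}\alpha_{z_0})(z)-E_2(z)$ established just above the statement, by evaluating both terms at the fixed CM point $z_0$.

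First I would unwind the slash operator using the explicit shape of $\alpha_{z_0}$. Its bottom row is $(c_{\alpha_{z_0}},d_{\alpha_{z_0}})=(1,0)$, so the automorphy factor is $j(\alpha_{z_0};z)=z$, and the definition of $|_{2,1}$ from \S4 gives
\[
(E_2|_{2,1}\alpha_{z_0})(z)=z^{-2}\det(\alpha_{z_0})\,E_2(\alpha_{z_0}\cdot z).
\]
Next I would specialize to $z=z_0$, invoking the two identities recorded just before the statement, namely $\alpha_{z_0}\cdot z_0=z_0$ and $\det(\alpha_{z_0})=\mathfrak{z}(z_0)z_0$. These collapse the right-hand side to
\[
(E_2|_{2,1}\alpha_{z_0})(z_0)=z_0^{-2}\,\mathfrak{z}(z_0)z_0\,E_2(z_0)=\frac{\mathfrak{z}(z_0)}{z_0}E_2(z_0).
\]
Substituting into $\mathcal{G}(z_0)=(E_2|_{2,1}\alpha_{z_0})(z_0)-E_2(z_0)$ and factoring out $E_2(z_0)$ then yields the claimed formula $\mathcal{G}(z_0)=\tfrac{\mathfrak{z}(z_0)-z_0}{z_0}E_2(z_0)$.

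The computation is routine; the only points needing a word of justification are that every quantity appearing is actually defined at $z_0$ and that the functional identity may legitimately be evaluated there. For this I would invoke Lemma \ref{L:ext}, which guarantees $z_0\in\Omega^{\varphi_{z_0}}(M_{z_0})$ so that $E_2(z_0)$ makes sense, together with the observation that $\varphi_{z_0}$ fixes $K_{\infty}$ and hence commutes with the $K$-rational M\"{o}bius action of $\alpha_{z_0}$. This equivariance ensures $E_2|_{2,1}\alpha_{z_0}$ is a bona fide function near $z_0$ whose value there involves only $E_2(\alpha_{z_0}\cdot z_0)=E_2(z_0)$. The main (minor) obstacle is thus purely bookkeeping: confirming that the slash of the non-holomorphic function $E_2$ behaves at the fixed point $z_0$ exactly as the formal definition predicts. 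This follows because $\alpha_{z_0}$ stabilizes $z_0$ and because the preceding derivation already certifies the pointwise identity $\mathcal{G}=(E_2|_{2,1}\alpha_{z_0})-E_2$ on $\Omega^{\varphi_{z_0}}(M_{z_0})$, so no analytic continuation or limiting argument is required.
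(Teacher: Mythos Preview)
Your proof is correct and follows exactly the approach the paper intends: the paper's own proof is the one-line remark ``Using the action of $\alpha_{z_0}$ on $z_0$, we now obtain the following,'' which is precisely the computation you have spelled out, namely evaluating $\mathcal{G}=(E_2|_{2,1}\alpha_{z_0})-E_2$ at $z_0$ using $j(\alpha_{z_0};z_0)=z_0$, $\alpha_{z_0}\cdot z_0=z_0$, and $\det(\alpha_{z_0})=\mathfrak{z}(z_0)z_0$.
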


We end this section with our next theorem.

\begin{theorem}\label{T:specvalue2} Let $z_0\in \Omega$ be a CM point and $F:\Omega^{\varphi_{z_0}}(M_{z_0})\to \mathbb{C}_{\infty}$ be an arithmetic nearly holomorphic Drinfeld modular form of weight $k$ and depth $r$ for a congruence subgroup $\Gamma$ of $\Gamma_Y$. Then 
\[
F(z_0)=c\Bigg(\frac{w_{z_0}}{\xi(\mathfrak{g}^{-1}\mathfrak{h})}\Bigg)^{k} 
\]
for some $c\in \overline{K}$. In particular, if $f$ is a Drinfeld modular form of weight $k$ for $\Gamma$, then 
\[
\delta^r_{k}(f)(z_0)=\tilde{c}\Bigg(\frac{w_{z_0}}{\xi(\mathfrak{g}^{-1}\mathfrak{h})}\Bigg)^{k+2r} 
\]
for some $\tilde{c}\in \overline{K}$. Furthermore, if $F(z_0)$ and $\delta_k^r(f)(z_0)$ are non-zero, then they are transcendental over $\overline{K}$.
\end{theorem}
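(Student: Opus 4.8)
The plan is to reduce the statement to the already-established period relation for \emph{holomorphic} arithmetic Drinfeld modular forms (Theorem~\ref{T:specvalue}), by combining the structural decomposition of nearly holomorphic forms with an evaluation of $E_2$ at the CM point $z_0$. First I would use Proposition~\ref{P:str} together with the definition of arithmeticity to write $F=\sum_{j=0}^{r} g_j E_2^{\,j}$, where each $g_j$ is an arithmetic Drinfeld modular form of weight $k-2j$ for $\Gamma$. Since $z_0\in\Omega^{\varphi_{z_0}}(M_{z_0})$ by Lemma~\ref{L:ext}, evaluation gives $F(z_0)=\sum_{j=0}^{r} g_j(z_0)\,E_2(z_0)^{j}$, so that, granting Theorem~\ref{T:specvalue} for each $g_j$, everything reduces to controlling the single value $E_2(z_0)$.

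The heart of the argument is therefore the evaluation of $E_2$ at $z_0$, and here I would exploit Lemma~\ref{L:specvalue}, which gives $E_2(z_0)=\tfrac{z_0}{\mathfrak{z}(z_0)-z_0}\,\mathcal{G}(z_0)$. As $z_0$ is a CM point, both $z_0$ and its conjugate $\mathfrak{z}(z_0)$ lie in $\overline{K}$ and are distinct (because $K(z_0)/K$ is a nontrivial quadratic extension in which $\infty$ does not split), so the prefactor is a nonzero element of $\overline{K}$. One must also check that $\mathcal{G}$ is regular at $z_0$: using $\alpha_{z_0}\cdot z_0=z_0$, $j(\alpha_{z_0};z_0)=z_0$, and $\Delta_a(z_0)=g_{2d,a}(z_0)=w_{z_0}^{\,q^{2d}-1}\neq 0$ from \eqref{E:wz0}, one finds $\mathfrak{f}(z_0)=z_0^{\,q^{2d}-1}\neq 0$, so the logarithmic derivative defining $\mathcal{G}$ is well-defined at $z_0$. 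Because $\mathcal{G}\in\mathcal{A}_2(\overline{K})$ is a meromorphic arithmetic form of weight two that is regular at $z_0$, I would extend Theorem~\ref{T:specvalue} to the meromorphic setting: forming the weight-zero modular function $\mathcal{G}^{\,q^{2d}-1}/(\xi(\mathfrak{g}^{-1}\mathfrak{h})^{1-q^{2d}}g_{2d,a})^{2}\in\mathcal{A}_0(\Gamma;\overline{K})$, which is well-defined at $z_0$, and applying Proposition~\ref{P:jinvariant} (extended to $\Gamma$ via Proposition~\ref{P:jvalues}) yields $\mathcal{G}(z_0)=c'\,(w_{z_0}/\xi(\mathfrak{g}^{-1}\mathfrak{h}))^{2}$ with $c'\in\overline{K}$. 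Consequently $E_2(z_0)=c''\,(w_{z_0}/\xi(\mathfrak{g}^{-1}\mathfrak{h}))^{2}$ for some $c''\in\overline{K}$ (the case $\mathcal{G}(z_0)=0$ giving $c''=0$).

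To finish the first assertion I would substitute Theorem~\ref{T:specvalue}, namely $g_j(z_0)=c_j\,(w_{z_0}/\xi(\mathfrak{g}^{-1}\mathfrak{h}))^{k-2j}$ with $c_j\in\overline{K}$, to obtain $F(z_0)=\sum_{j=0}^{r} c_j (c'')^{j}\,(w_{z_0}/\xi(\mathfrak{g}^{-1}\mathfrak{h}))^{k}=c\,(w_{z_0}/\xi(\mathfrak{g}^{-1}\mathfrak{h}))^{k}$, where $c=\sum_{j} c_j (c'')^{j}\in\overline{K}$. For the statement on $\delta_k^r(f)$, Lemma~\ref{L:MaassShimura}(iii) shows $\delta_k^r(f)$ is an arithmetic nearly holomorphic form of weight $k+2r$, so the first part applied with weight $k+2r$ gives $\delta_k^r(f)(z_0)=\tilde{c}\,(w_{z_0}/\xi(\mathfrak{g}^{-1}\mathfrak{h}))^{k+2r}$ with $\tilde{c}\in\overline{K}$. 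Finally, both values have exactly the shape $(\text{algebraic})\cdot(w_{z_0}/\xi(\mathfrak{g}^{-1}\mathfrak{h}))^{\mathrm{weight}}$ appearing in Theorem~\ref{T:specvalue}; hence, when nonzero, their transcendence over $\overline{K}$ follows verbatim from the same transcendence input, \cite[Thm.~5.4]{Yu86} (together with the transcendence of $\xi(\mathfrak{g}^{-1}\mathfrak{h})$ from Lemma~\ref{L:11}).

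I expect the main obstacle to be the meromorphic extension of Theorem~\ref{T:specvalue} to the weight-two form $\mathcal{G}$, i.e.\ verifying that $\mathcal{G}$ and the associated weight-zero modular function are genuinely regular (well-defined) at $z_0$ so that Proposition~\ref{P:jinvariant} applies; once this regularity is secured, the remaining steps are formal manipulations of the power-of-period formula.
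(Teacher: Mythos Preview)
Your proposal is correct and follows essentially the same route as the paper: decompose $F$ via Proposition~\ref{P:str}, control $E_2(z_0)$ through Lemma~\ref{L:specvalue} by showing $\mathcal{G}(z_0)$ has the right period shape via Proposition~\ref{P:jinvariant}, then combine with Theorem~\ref{T:specvalue}. The only difference worth mentioning is the auxiliary form used to normalize the weight-two meromorphic form $\mathcal{G}$: the paper introduces the nowhere-vanishing weight-one Eisenstein series $E_u$ and works with the ratio $\mathcal{G}/E_u^2\in\mathcal{A}_0(\overline{K})$ (and simultaneously writes $g_i(z_0)E_u(z_0)^{2i}$ to invoke Theorem~\ref{T:specvalue} directly), whereas you reuse $g_{2d,a}$ and pass through $(q^{2d}-1)$-th powers as in the proof of Theorem~\ref{T:specvalue}. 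The paper's choice avoids taking roots and makes the regularity at $z_0$ immediate since $E_u$ is nowhere vanishing, but your explicit check that $\mathfrak{f}(z_0)=z_0^{\,q^{2d}-1}\neq 0$ (so $\mathcal{G}$ is regular at $z_0$) is a point the paper leaves implicit. One small imprecision: the weight-zero function you form is not modular for the original $\Gamma$ but for some congruence subgroup depending on $\alpha_{z_0}$; this is harmless since Proposition~\ref{P:jinvariant} extends to any congruence level via Proposition~\ref{P:jvalues}, exactly as you indicate.
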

\begin{proof} By Proposition \ref{P:str}, we can write $F:\Omega^{\varphi_{z_0}}(M_{z_0})\to \mathbb{C}_{\infty}$ as	\begin{equation}\label{E:repF} 
F=\sum_{0\leq j\leq r}g_iE_2^i
\end{equation}
with $g_i\in\mathcal{M}_{k-2i}(\Gamma;\overline{K})$. Choose some ideal $\mathfrak{n}\subseteq A$ such that  $\Gamma_Y(\mathfrak{n})\subseteq \Gamma$. For some $u\in \mathfrak{n}^{-1}Y\setminus Y$, consider $E_u$, the Eisenstein series of weight 1 for $\Gamma_Y(\mathfrak{n})$ given as in Example \ref{Ex:1}(i, iv).  By \eqref{E:repF}, we now have
\[
F(z_0)=\sum_{0\leq j\leq r}g_i(z_0)E_u(z_0)^{2i}\Bigg(\frac{E_2(z_0)}{E_u^2(z_0)}\Bigg)^i.
\] 
By Theorem \ref{T:specvalue}, $g_i(z_0)E_u(z_0)^{2i}$ is equal to $(w_{z_0}/\xi(\mathfrak{g}^{-1}\mathfrak{h}))^k$ up to some algebraic constant. On the other hand, since $\mathcal{G}/E_u^2\in \mathcal{A}_0(\overline{K})$, by Proposition \ref{P:jinvariant}, we have $\mathcal{G}(z_0)/E_u^2(z_0)\in \overline{K}$. Thus, by Lemma \ref{L:specvalue}, we also have $E_2(z_0)/E_u^2(z_0)\in \overline{K}$. Hence, we obtain the desired result. The second assertion is a simple consequence of Lemma \ref{L:MaassShimura}(ii) and the first assertion. 
\end{proof}

\section{Tate-Drinfeld modules and expansions at cusps}
Let $I$ be an ideal of $A$ such that $|V(I)| \geq 2$. From now on, in the present paper, we aim to introduce the necessary tools to describe nearly holomorphic Drinfeld modular forms as global sections of an explicit sheaf defined over a compactification of the Drinfeld moduli space $M_I^2$ (Theorem \ref{T:main1}). We would like to emphasize that we use the theory of Tate-Drinfeld modules, which is available in literature only with the assumption $|V(I)| \geq 2$. We expect that such a theory of Tate-Drinfeld modules may be established in the case $|V(I)|=1$ for moduli spaces over $\Spec(K)$. However, due to lack of a reference, we have refrained from assuming this  condition.

\subsection{Universal Tate-Drinfeld modules} In this subsection, our goal is to construct  a universal Tate-Drinfeld module  with  a level $I$-structure. Here, we follow mainly \cite[Chap.~3-5]{Leh09} and the reader is referred there for more details (see also \cite[\S5.6]{vdH03}). We remark that in contrast with \cite{Leh09}, our Tate-Drinfeld modules are base change over $A$ by $K$ of Tate-Drinfeld modules discussed therein, as the Drinfeld moduli space $M^2_I$ in our setting is defined over $\Spec(K)$. We also refer the reader to \cite[\S4]{Hat22} for a thorough discussion  when $A=\mathbb{F}_q[\theta]$. 

Let us fix an element $c\in I\setminus\{0\}$. Then $(c)I^{-1}$ is an integral ideal of $A$ and we set $\mathfrak{a}:=(c)I^{-1}$. Thus, the map $I^{-1}\cong \mathfrak{a}$, given by multiplication with $c$, is an $A$-module isomorphism which we denote by $\eta$. Define the ideal $\mathfrak{b}:=\eta(A)\subseteq \mathfrak{a}$. Recall that $M^{1}_I=\Spec(\mathcal{H})$, where $\mathcal{H}$ is the ray class field of $K$ with a suitable conductor totally split at $\infty$ (see \cite[Rem.~4.13]{Boc02}, \cite[Thm.~1]{Dri74}), is the moduli space of Drinfeld $A$-modules of rank one over a $K$-scheme with a level $I$-structure. Recall the universal Drinfeld $A$-module $\mathbb{E}^{un,1}_{I}$ over $M^{1}_I$ with a level $I$-structure $\lambda_0$. Recalling the $*$-action of integral ideals on Drinfeld $A$-modules defined in \S2.2, set $(\psi,\lambda'):=\mathfrak{a}*(\phi^{un,1},\lambda_0)$ where $\psi:=\mathfrak{a}*\phi^{un,1}$ and the level structure $\lambda'$ is determined by the isogeny $\phi^{un,1}_{\mathfrak{a}}$ described in \S2.2. Let $X$ be a formal variable and $\mathcal{H}((X))$ be the ring of formal Laurent series. Consider $\mathcal{H}((X))$ as an $A$-module via the $\psi$-action on $\mathcal{H}$. Let $\mathcal{H}((X))^{\text{sep}}$ be a fixed separable closure of $\mathcal{H}((X))$. In this context, an $A$-lattice of rank $s$ is a projective $A$-submodule of  $\mathcal{H}((X))^{\text{sep}}$ of rank $s$ which is $\Gal(\mathcal{H}((X))^{\text{sep}}/\mathcal{H}((X)))$-stable and has finite intersection with any ball of finite radius. Note that, in contrast to \cite[Chap.~5, \S2]{Leh09}, we use the formal variable $X$ instead of $1/X$ used there. 

We  define the rank one $A$-lattice $\Lambda^{\mathfrak{b}}_\psi \subset \mathcal{H}((X))^{\text{sep}}$ as 
\begin{equation}\label{E:lattice}
\Lambda^{\mathfrak{b}}_\psi:=\{\psi_{b}(1/X) \ \ | \ \ b\in \mathfrak{b}\}.
\end{equation}
 Via Tate-Drinfeld uniformization \cite[Prop.~7.2]{Dri74}, we can associate to the data $(\psi,\Lambda^{\mathfrak{b}}_\psi,\lambda')$ a  Drinfeld $A$-module $(\phi,\lambda)$ of rank two over $\mathcal{H}((X$)) with a level $I$-structure $\lambda$ as follows: Let $Z$ be another formal variable, independent from $X$, and
define formally the \textit{exponential function of $\Lambda^{\mathfrak{b}}_\psi$} by
\[
e_{\Lambda^{\mathfrak{b}}_\psi}(Z):=Z\prod_{\alpha \in \Lambda^{\mathfrak{b}}_\psi\setminus \{0\} }\Big(1-\frac{Z}{\alpha}\Big)\in \mathcal{H}[[X]][[Z]].
\] Note that $e_{\Lambda^{\mathfrak{b}}_\psi}$ is $\mathbb{F}_q$-linear in $Z$ and $e_{\Lambda^{\mathfrak{b}}_\psi} \equiv Z (\text{ mod }X )$.

For any $a\in A$, we further consider
\begin{equation}\label{E:Tate}
\phi_a(Z):=e_{\Lambda^{\mathfrak{b}}_\psi}(\psi_{a}(e_{\Lambda^{\mathfrak{b}}_\psi}^{-1}(Z))).
\end{equation}
Then by \cite[Chap.~5, Prop.~2.3]{Leh09}, $\phi$ is a Drinfeld $A$-module of rank two over $\mathcal{H}((X))$. Moreover, the level $I$-structure $\lambda'$ on $\psi$ gives rise to a canonical level $I$-structure $\lambda$ on $\phi$ by 
\[\lambda(x,\underline{y}):=e_{\Lambda^{\mathfrak{b}}_{\psi}}(\lambda'(x)+ \psi_y(1/X)) \in \mathcal{H}((X))\]
where $x \in I^{-1}/A$ and for $y \in I^{-1}$,  $\underline{y}$ is its image in $I^{-1}/A$.

Recall the profinite completion $\widehat{A}$ of $A$, the ring of finite adeles $\mathbb{A}_f$ and the subgroup $\mathcal{K}(I)\leq \GL_{2}(\widehat{A})$ from \S2.8. Set $\overline{\GL^{0}_2}:=\GL_{2}(\widehat{A})\mathbb{A}^{\times}_{f}/(\mathcal{K}(I)K^{\times})$. Then there exists an action of $\overline{\GL^{0}_2}$ on the Drinfeld moduli space $M^{2}_I$ as follows: 
\begin{itemize}
\item[(i)] First we let $S$ be a $K$-scheme and  $(\mathbb{E},\nu)\in M^{2}_I(S)$ be a Drinfeld $A$-module with a level $I$-structure $\nu$ over $S$. Then for any $\alpha \in \GL_{2}(\widehat{A})/(\mathcal{K}(I)\mathbb{F}^{\times}_{q})$, we define
\[
\alpha_\ast(\mathbb{E},\nu):=(\mathbb{E},\nu\circ\alpha^{-1}).
\]
where $\alpha$ acts on $v$ via the identification $I^{-1}\widehat{A}/\widehat{A} \cong I^{-1} /A$. 
\item[(ii)] Secondly, note that $\mathbb{A}^{\times}_{f}/\widehat{A}^{\times}K^{\times} \cong \Cl(A)$, where an isomorphism between the two groups is given by a map sending each class $[\mathbf{a}]\in \mathbb{A}^{\times}_{f}/\widehat{A}^{\times}K^{\times}$ to $[\mathbf{a}\widehat{A} \cap K]$. By \cite[Lem.~5.6.4]{vdH03}, one can choose a representative $\mathbf{a} \in \mathbb{A}^{\times}_f$ so that $\mathbf{a}\widehat{A} \cap K$ is an ideal of $A$ that is relatively prime to $I$. For such an $[\mathbf{a}]$, we define 
\[[\mathbf{a}]_\ast(\mathbb{E},\nu):=[\mathbf{a}\widehat{A}\cap K]\ast(\mathbb{E},\nu).\]
\end{itemize}

Let $\overline{G}\leq \overline{\GL^{0}_2}$ be the subgroup consisting of the image of matrices of the form 
$\begin{pmatrix}
    c_1 & c_2\\
    0 & 1
\end{pmatrix} \in \GL_2(\widehat{A})$ for $c_1,c_2\in \widehat{A}$ inside $\overline{\GL^{0}_2}$ and set $n_I:=[\overline{\GL^{0}_2} : \overline{G}]$. By \cite[Chap.~5, Prop.~3.5]{Leh09}, we know that $n_{I}$ is a finite number which can be described explicitly. For any $\alpha \in \overline{G}$, from the proof of \cite[Chap.~5, Prop.~2.5]{Leh09}, we see that $\alpha_\ast(\phi,\lambda)=(\phi,\lambda\circ \alpha^{-1})$. Consequently, for any two representatives $\sigma,\sigma' \in \overline{\GL^0_2}$ of the same coset in $\overline{\GL^0_2}/ \overline{G}$, we have that the underlying Drinfeld $A$-modules of $\sigma_\ast(\phi,\lambda)$ and $\sigma'_\ast(\phi,\lambda)$ are the same except that their level $I$-structure differs by an element in $\overline{G}$. We fix, $\sigma_1=1,\sigma_2,\dots,\sigma_{n_I}$, a set of left coset representatives of $\overline{\GL^{0}_2} / \overline{G}$, and each $(\sigma_{i})_{\ast}(\phi,\lambda)$ gives rise to a Drinfeld $A$-module $(\phi_i,\lambda_i)$ of rank two defined over $\mathcal{H}((X))$, which we call \textit{a Tate-Drinfeld module} (abbreviated as $\TD$-module). For $1\leq i \leq n_{I}$, let $X_i$ and $Z_i$ be indeterminates over $\mathcal{H}$. To distinguish these $\TD$-modules, we further consider $(\phi_i,\lambda_i)$ to be the $\TD$-module defined over $\mathcal{H}((X_i))$ so that for any $a\in A$, $(\phi_i)_{a}(Z_i)$ is an $\mathbb{F}_q$-linear polynomial in $Z_i$ with coefficients in $\mathcal{H}((X_i))$. By our convention, $X_1=X$ and $Z_1=Z$, so that $(\phi_1,\lambda_1)=(\phi,\lambda)$.

In what follows, we define \textit{a universal $\TD$-module} by the set 
\[\TD_I:=\{(\phi_i,\lambda_i) \ \ | \ \ \  1\leq i \leq n_I\}.
\]
Note that $\TD_I$ may be regarded as a  Drinfeld $A$-module of rank two with a level $I$-structure  over $\Spec(\oplus_{i=1}^{n_{I}} \mathcal{H}((X_i)))$  so that its restriction to $\Spec(\mathcal{H}((X_i)))$ is given by $(\phi_i,\lambda_i)$. 

We finish this subsection with some remarks on $\TD_I$.
\begin{remark}\label{R:uniTD0}
\begin{enumerate}
    \item[(i)] If one chooses a different set of representatives $\{\sigma'_i\}_{1 \leq i \leq n_I}$ of $\overline{\GL^0_2} / \overline{G}$, then, in a similar way, we have an induced map $\sqcup^{n_I}_{i=1} \Spec(\mathcal{H}((X'_i))) \rightarrow M^{2}_I$, for some indeterminates $X'_i$'s. This is determined by the set $\{(\tilde{\phi}_i,\tilde{\lambda}_i) \ \ | \ \ \  1\leq i \leq n_I\}$ where $(\tilde{\phi}_i,\tilde{\lambda}_i)$ corresponds to  $(\sigma'_{i})_\ast(\phi,\lambda)$. From \cite[Chap.~5, Prop.~2.5]{Leh09}, already alluded to above, we get a unique isomorphism $\sqcup^{n_I}_{i=1} \Spec(\mathcal{H}((X'_i))) \cong \sqcup^{n_I}_{i=1} \Spec(\mathcal{H}((X_i)))$, making the obvious triangle with $M^2_I$ commutative. 
    \item[(ii)] We note that one can obtain a Drinfeld $A$-module with \textit{bad reduction} via a pull-back of $\TD_{I}$ to an appropriate space and this fact indeed motivates the terminology  \textit{universal} for such objects. We refer the reader to  \cite[Chap.~5, Prop.~2.6]{Leh09} and \cite[\S5.7]{vdH03} for further details.
\end{enumerate} 
\end{remark}

\subsection{Algebraic cusps of $\Gamma_{Y}(I)$} Let $\mu_i:\Spec(\mathcal{H}((X_i))) \rightarrow M^{2}_I$ be the map so that   the pull back of the universal Drinfeld $A$-module $\mathbb{E}^{un}_I$ via $\mu_i$ is given by $(\phi_i,\lambda_i)$.  We call each such $\mu_i$ \textit{an algebraic cusp} of $M^{2}_I$ and set 
\[
\AlgCusps_{I}:=\{\mu_i\ \ | \ \ 1\leq i \leq n_{I}\}.
\]

\begin{remark}\label{R:uniTD} As described in \cite[Chap.~5~\S3]{Leh09} (see also \cite[\S5.6-9]{vdH03}), one can use  $\TD_I$ to construct a compactification  $\overline{M^{2}_I}$ of $M^{2}_I$ such that $\overline{M^{2}_I} \backslash M^{2}_I$  consists of $n_I$ points.  the formal completion $\overline{M^2_I}$ at the closed subscheme $\overline{M^{2}_I} \backslash M^2_I$ is isomorphic to $\sqcup^{n_I}_{i=1} \Spf(\mathcal{H}[[X_i]])$ and the induced map
    \[\mathcal{O}(M^2_I) \hookrightarrow K(\overline{M^2_I}) \hookrightarrow \Frac(\widehat{\mathcal{O}_{\overline{M^2_I}}}) \rightarrow 
\oplus^{n_I}_{i=1} \Frac(\mathcal{H}[[X_i]])=\oplus^{n_I}_{i=1} \mathcal{H}((X_i))    \] 
    where $\widehat{\mathcal{O}_{\overline{M^2_I}}}$ is the formal completion of the structure sheaf $\mathcal{O}_{\overline{M^2_I}}$ at $\overline{M^{2}_I} \setminus M^2_I$, is the map determined by the universal $\TD$-module $\TD_I$. Consequently, there exists an induced bijection between $\overline{M^2_I} \backslash M^2_I$ and $\AlgCusps_I$. 
\end{remark}

 Recall the projective $A$-module $Y$ given as in \eqref{E:defofY}. Consider the base change $M^2_{I,\mathbb{C}_{\infty}}=M_I^2\times_{K} \mathbb{C}_{\infty}$ and let $(M^{2}_{I,\mathbb{C}_\infty})^{an}$ be the analytification of $M^2_{I,\mathbb{C}_{\infty}}$. Let $M_Y \subset (M^{2}_{I,\mathbb{C}_\infty})^{an}$ be the connected component of $M^2_{I,\mathbb{C}_{\infty}}$ so that $M_{Y}(\mathbb{C}_\infty) = \Gamma_{Y}(I) \setminus \Omega$. Since, by \cite[Part III]{BBP21}, for any integral ideal $\mathfrak{n}$, $\Gamma_{\mathfrak{n}^{-1}Y}=\Gamma_{Y}$,  without loss of generality, we assume that $\mathfrak{g}$ and $\mathfrak{h}$ are integral ideals. Note that $\mathcal{H}$ is the field of constants of the moduli space $M^2_I$ so that there is an embedding $\mathcal{H} \rightarrow \mathbb{C}_\infty$ such that $M_Y =M^{2}_I \times_{\mathcal{H}} \mathbb{C}_\infty$ and $\overline{M_Y}:=\overline{M^2_I} \times_\mathcal{H} \mathbb{C}_\infty$(see \cite[\S8]{BBP21}). In what follows, whenever we mention a base change $\times_\mathcal{H} \mathbb{C}_\infty$, we will mean the base change with respect to the aforementioned embedding of $\mathcal{H}$ into $\mathbb{C}_\infty$. Consequently, after the base change $\times_\mathcal{H} \mathbb{C}_\infty$, the set of algebraic cusps will allow us to construct the following set:
\[
\AlgCusps^{Y}_I:=\{\mu^{Y}_i:=\mu_i \times_\mathcal{H} \mathbb{C}_\infty:\Spec(\mathbb{C}_\infty((X_i))) \rightarrow M_Y\ \ | \ 1\leq i \leq n_{I} \}.
\]
By Remark \ref{R:uniTD}, there also exists a bijection between $\overline{M_Y} \backslash M_Y$ and $\AlgCusps^Y_I$.

For any $1\leq i \leq n_{I}$, we denote by $(\phi_{i,Y},\lambda_{i,Y})$ the pullback of the Drinfeld $A$-module $\mathbb{E}^{un}_I \times_\mathcal{H} \mathbb{C}_\infty$ with a level $I$-structure  via $\mu^{Y}_i$, which is indeed the base change $\times_\mathcal{H} \mathbb{C}_\infty$ of $(\phi_i,\lambda_i)$. Observe that $(\phi_{i,Y},\lambda_{i,Y})$ is a Drinfeld $A$-module of rank two defined over $\mathbb{C}_{\infty}((X_i))$ with a level $I$-structure $\lambda_{i,Y}$. Moreover, letting $\psi_{Y}$ be the base change $\times_\mathcal{H} \mathbb{C}_\infty$ of $\psi$, we see that $(\phi_{1,Y},\lambda_{1,Y})$ also admits a triple $(\psi_{Y},\Lambda^{\mathfrak{b}}_{\psi_{Y}},\lambda'_{Y})$, which is indeed the base change of $(\psi,\Lambda^{\mathfrak{b}}_{\psi},\lambda')$ by $\times_\mathcal{H} \mathbb{C}_\infty$, so that $(\phi_{1,Y},\lambda_{1,Y})$ corresponds to the data $(\psi_{Y},\Lambda^{\mathfrak{b}}_{\psi_{Y}},\lambda'_{Y})$  as in \S6.1.  Furthermore, there exists a fractional ideal $\mathfrak{c}$ of $A$ and $\beta(\mathfrak{c}) \in \mathbb{C}^{\times}_\infty$ such that the $A$-lattice corresponding to $\psi_{Y}$ is $\beta(\mathfrak{c})\mathfrak{c}$. 

To ease the notation, in what follows, we set $\mathfrak{c}:=\mathfrak{c}_1$.  Let $\mathfrak{m}$ be a fractional ideal of $A$. Recall from \eqref{E:unifatideal} the definition of $t_{\mathfrak{m}}$ and for any $z\in\Omega$, let
\begin{equation}\label{unif_2}
t^{\mathfrak{c}}_{\mathfrak{m}}(z):= \exp_{\ \beta(\mathfrak{c})\mathfrak{m}}(\beta(\mathfrak{c}) z)^{-1}=\frac{\xi(\mathfrak{m})}{\beta(\mathfrak{c})}t_\mathfrak{m}(z).
\end{equation}
It is clear that the ratio $\frac{\xi(\mathfrak{m})}{\beta(\mathfrak{c})}$ lies in $\overline{K}$. Therefore, $t^{\mathfrak{c}}_\mathfrak{m}$ and $t_\mathfrak{m}$ differ up to scaling by an element in $\overline{K}$. 

In our next lemma, we obtain an analytic description for the Drinfeld $A$-module $(\phi_{i,Y},\lambda_{i,Y})$. Before stating it, recall from \S2.1 that for an $A$-lattice $\Lambda$ in $\mathbb{C}_{\infty}$, $\phi^{\Lambda}$ is the Drinfeld $A$-module corresponding to $\Lambda$.
\begin{lemma}\label{L:subs}
    Let $z \in \Omega$ be such that $\inorm{z}_{\text{im}} \gg 0$. For any $a\in A$, let 
    \[
    (\phi_{i,Y})_{a}(Z_i):=aZ_i+a_{i,1}(X_i)Z_i^q+\dots+a_{i,2\deg(a)}(X_i)Z_i^{q^{2\deg(a)}}.
    \]
Then there exist fractional ideals $\mathfrak{b}_i$ and $\mathfrak{c}_i$ of $A$ such that via the substitution $X_i \mapsto t^{\mathfrak{c}}_{\mathfrak{c}_i}(z)$, $\phi_{i,Y}$ is isomorphic to the Drinfeld $A$-module $\phi^{\mathfrak{b}_i z + \mathfrak{c}_i}$ over $\mathbb{C}_{\infty}$.
\end{lemma}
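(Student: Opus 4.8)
The plan is to render the substitution $X_i \mapsto t^{\mathfrak{c}}_{\mathfrak{c}_i}(z)$ explicit at the level of exponential functions, and thereby compute the rank-two $A$-lattice attached to $\phi_{i,Y}$ directly. I would first dispose of the base case $i=1$, where $(\phi_{1,Y},\lambda_{1,Y})$ arises from the triple $(\psi_{Y},\Lambda^{\mathfrak{b}}_{\psi_{Y}},\lambda'_{Y})$ with $\psi_{Y}$ of lattice $\beta(\mathfrak{c})\mathfrak{c}$. From the defining relation \eqref{E:Tate} one checks, using the functional equation \eqref{E:funceq} for $\psi_{Y}$, that the exponential of $\phi_{1,Y}$ factors as $\exp_{\phi_{1,Y}}=e_{\Lambda^{\mathfrak{b}}_{\psi_{Y}}}\circ\exp_{\psi_{Y}}$; hence its lattice is the preimage $\Ker(\exp_{\phi_{1,Y}})=\exp_{\psi_{Y}}^{-1}(\Lambda^{\mathfrak{b}}_{\psi_{Y}})$, once $X$ has been specialized so that $\Lambda^{\mathfrak{b}}_{\psi_{Y}}$ is a genuine discrete lattice in $\mathbb{C}_\infty$.

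Next I would carry out the specialization. Since $\exp_{\psi_{Y}}=\exp_{\beta(\mathfrak{c})\mathfrak{c}}$, the definition \eqref{unif_2} gives $1/X=\exp_{\psi_{Y}}(\beta(\mathfrak{c})z)$ after substitution, and the functional equation \eqref{E:funceq} turns the generators \eqref{E:lattice} into $\psi_{Y,b}(1/X)=\exp_{\psi_{Y}}(b\beta(\mathfrak{c})z)$ for each $b\in\mathfrak{b}$. Therefore the specialized lattice is $\exp_{\psi_{Y}}(\beta(\mathfrak{c})z\mathfrak{b})$, and taking preimages yields
\[
\Ker(\exp_{\phi_{1,Y}})=\beta(\mathfrak{c})z\mathfrak{b}+\beta(\mathfrak{c})\mathfrak{c}=\beta(\mathfrak{c})(\mathfrak{b}z+\mathfrak{c}).
\]
Scaling by $\beta(\mathfrak{c})^{-1}$ then identifies $\phi_{1,Y}$ with $\phi^{\mathfrak{b}z+\mathfrak{c}}$, so that $\mathfrak{b}_1=\mathfrak{b}$ and $\mathfrak{c}_1=\mathfrak{c}$.

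For general $i$, I would invoke $(\phi_i,\lambda_i)=(\sigma_i)_\ast(\phi,\lambda)$ and decompose the action of $\sigma_i$ into its $\GL_2(\widehat{A})$-part, which only alters the level structure and leaves the underlying module unchanged, and its ideal-part $[\mathbf{a}_i]_\ast$, which applies the action $\mathfrak{a}_i\ast(\,\cdot\,)$ and hence rescales the associated lattice. Under this action $\psi_{Y}$ is replaced by a rank-one module $\psi_{i,Y}$ of lattice $\beta(\mathfrak{c}_i)\mathfrak{c}_i$ and $\Lambda^{\mathfrak{b}}_{\psi_{Y}}$ by $\Lambda^{\mathfrak{b}_i}_{\psi_{i,Y}}$ for suitable fractional ideals $\mathfrak{b}_i,\mathfrak{c}_i$. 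Repeating the computation of the first two steps verbatim — with the normalization factor $\beta(\mathfrak{c})$ appearing in $t^{\mathfrak{c}}_{\mathfrak{c}_i}$ differing from the intrinsic scaling of $\psi_{i,Y}$ only by a factor in $\overline{K}^{\times}$, as follows from the algebraicity of the ratios noted after \eqref{unif_2}, so that the lattice is rescaled without changing its isomorphism class — gives $\phi_{i,Y}\cong\phi^{\mathfrak{b}_i z+\mathfrak{c}_i}$.

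The main obstacle will be the passage from the formal to the analytic setting. The data $e_{\Lambda^{\mathfrak{b}}_{\psi_{Y}}}$ and $\Lambda^{\mathfrak{b}}_{\psi_{Y}}$ are a priori defined over $\mathcal{H}((X))^{\text{sep}}$ as formal Laurent-series objects, whereas after substituting the honest element $X=t^{\mathfrak{c}}_{\mathfrak{c}}(z)\in\mathbb{C}_\infty$ one must know that the defining products converge to rigid analytic functions on $\Omega$ and that the formal kernel $\Lambda^{\mathfrak{b}}_{\psi_{Y}}$ specializes to a genuine discrete $A$-lattice. This is precisely what the hypothesis $\inorm{z}_{\text{im}}\gg 0$ secures: it forces $\inorm{t^{\mathfrak{c}}_{\mathfrak{c}}(z)}$ to be small, placing $z$ in the region of analytic validity of the Tate-Drinfeld uniformization \cite[Prop.~7.2]{Dri74}, where all the series in question converge and the identities above may be evaluated termwise. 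I would make this interchange rigorous by first restricting to such $z$ and then verifying that substitution commutes with the formation of kernels, which completes the proof.
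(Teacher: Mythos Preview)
Your argument for $i=1$ is essentially identical to the paper's: both compute that under the substitution $X\mapsto t^{\mathfrak c}_{\mathfrak c}(z)$ the lattice $\Lambda^{\mathfrak b}_{\psi_Y}$ specializes to $\exp_{\beta(\mathfrak c)\mathfrak c}(\beta(\mathfrak c)\mathfrak b z)$, and then the factorization $\exp_{\phi_{1,Y}}=e_{\Lambda^{\mathfrak b}_{\psi_Y}}\circ\exp_{\psi_Y}$ yields the rank-two lattice $\beta(\mathfrak c)(\mathfrak b z+\mathfrak c)$. Your discussion of the formal-to-analytic passage is also in line with what is needed.

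For general $i$ the paper takes a slightly different route than you suggest. You propose to push the action of $\sigma_i$ down to the level of the triple, replacing $(\psi_Y,\Lambda^{\mathfrak b}_{\psi_Y})$ by some $(\psi_{i,Y},\Lambda^{\mathfrak b_i}_{\psi_{i,Y}})$, and then to rerun the $i=1$ computation. This is plausible but requires checking that the $\ast$-action on the rank-two module $\phi$ is compatible with the Tate--Drinfeld construction in this precise sense, which you have not justified. The paper instead works entirely on the rank-two side: after substitution one already knows $\phi_{1,Y}=\phi^{\beta(\mathfrak c)\mathfrak b z+\beta(\mathfrak c)\mathfrak c}$, and since $\phi_{i,Y}=J_i\ast\phi_{1,Y}$ for an integral ideal $J_i$ coprime to $I$, one applies \cite[Cor.~4.9.5(2)]{Gos96} directly to the lattice $\beta(\mathfrak c)\mathfrak b z+\beta(\mathfrak c)\mathfrak c$ to obtain $J_i\ast\phi^{\beta(\mathfrak c)\mathfrak b z+\beta(\mathfrak c)\mathfrak c}=\phi^{\alpha_i(\beta(\mathfrak c)\mathfrak b_i z+\beta(\mathfrak c)\mathfrak c_i)}$ with $\mathfrak b_i=J_i^{-1}\mathfrak b$, $\mathfrak c_i=J_i^{-1}\mathfrak c$, and then invokes uniqueness of $J_i\ast\phi_{1,Y}$. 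This avoids any claim about how the $\ast$-action interacts with the triple, at the cost of a short argument identifying the correct parameter $X_i=t^{\mathfrak c}_{\mathfrak c_i}(z)$. Either approach works, but the paper's is more self-contained given the references available.
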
  
\begin{proof} First we analyze the case $i=1$. Let $\mathfrak{b}_1:=\mathfrak{b}$ and define 
\[
\exp_{\beta({\mathfrak{c}})\mathfrak{c}}(\beta(\mathfrak{c})\mathfrak{b}z):=\{\exp_{\beta({\mathfrak{c}})\mathfrak{c}}(b\beta(\mathfrak{c})z)\ \ | \ \ b\in \mathfrak{b} \}.
\]
Since we have the inclusion $\beta(\mathfrak{c})\mathfrak{c}\subset \beta(\mathfrak{c})\mathfrak{b}z + \beta(\mathfrak{c})\mathfrak{c}$ of $A$-lattices, by \cite[Prop.~2.3(a)]{BBP21}, we see that $\exp_{\beta({\mathfrak{c}})\mathfrak{c}}(\beta(\mathfrak{c})\mathfrak{b}z)$ is a discrete set in $\mathbb{C}_{\infty}$, and moreover, over $\mathbb{C}_{\infty}[[Z]]$, we obtain
\begin{equation}\label{E:lattices}
\exp_{\beta(\mathfrak{c})\mathfrak{b}z+\beta(\mathfrak{c})\mathfrak{c}}(Z) = \exp_{\exp_{\beta({\mathfrak{c}})\mathfrak{c}}(\beta(\mathfrak{c})\mathfrak{b}z)}( \exp_{\beta({\mathfrak{c}})\mathfrak{c}}(Z)).
\end{equation}
On the other hand, using \eqref{E:funceq},  we see that 
\[
\Lambda^{\mathfrak{b}}_{\psi_Y}|_{X =t^{\mathfrak{c}}_\mathfrak{c}(z)}= \exp_{\beta({\mathfrak{c}})\mathfrak{c}}(\beta(\mathfrak{c})\mathfrak{b}z).
\]
Hence, upon the substitution $X \mapsto t^{\mathfrak{c}}_\mathfrak{c}(z)$, by \eqref{E:Tate}, we see that 
\begin{equation}\label{E:latticeeq}
(\phi_{1,Y})_a(Z)=\exp_{\exp_{\beta({\mathfrak{c}})\mathfrak{c}}(\beta(\mathfrak{c})\mathfrak{b}z)}(\psi_a(\exp^{-1}_{\exp_{\beta({\mathfrak{c}})\mathfrak{c}}(\beta(\mathfrak{c})\mathfrak{b}z)}(Z))).
\end{equation}
Let $\tilde{\phi}_a$ be an $\mathbb{F}_q$-linear power series in $\mathbb{C}_{\infty}[[Z]]$ satisfying
\begin{equation}\label{E:lattice2}
\tilde{\phi}_a(\exp_{\beta(\mathfrak{c})\mathfrak{b}z+\beta(\mathfrak{c})\mathfrak{c}}(Z))=(\exp_{\exp_{\beta({\mathfrak{c}})\mathfrak{c}}(\beta(\mathfrak{c})\mathfrak{b}z)}\psi_a\exp_{\exp_{\beta({\mathfrak{c}})\mathfrak{c}}(\beta(\mathfrak{c})\mathfrak{b}z)}^{-1})(\exp_{\beta(\mathfrak{c})\mathfrak{b}z+\beta(\mathfrak{c})\mathfrak{c}}(Z)).
\end{equation}
Then by \eqref{E:funceq} and \eqref{E:lattices}, \eqref{E:lattice2} becomes
\begin{equation}\label{E:lattice3}
\tilde{\phi}_a(\exp_{\beta(\mathfrak{c})\mathfrak{b}z+\beta(\mathfrak{c})\mathfrak{c}}(Z))=\exp_{\exp_{\beta({\mathfrak{c}})\mathfrak{c}}(\beta(\mathfrak{c})\mathfrak{b}z)}(\exp_{\beta({\mathfrak{c}})\mathfrak{c}}(aZ))=\exp_{\beta(\mathfrak{c})\mathfrak{b}z+\beta(\mathfrak{c})\mathfrak{c}}(aZ).
\end{equation}
Since such $\tilde{\phi}_a$ satisfying \eqref{E:lattice3} for any $a\in A$ is unique, by \eqref{E:funceq}, we obtain 
\begin{equation}\label{E:lattice4}
    \tilde{\phi}_a(Z)=\phi_a^{\beta(\mathfrak{c})\mathfrak{b}z + \beta(\mathfrak{c})\mathfrak{c}}(Z)=\exp_{\exp_{\beta({\mathfrak{c}})\mathfrak{c}}(\beta(\mathfrak{c})\mathfrak{b}z)}(\psi_a(\exp_{\exp_{\beta({\mathfrak{c}})\mathfrak{c}}(\beta(\mathfrak{c})\mathfrak{b}z)}^{-1}(Z))).
    \end{equation}
Hence, by \eqref{E:latticeeq} and \eqref{E:lattice4}, we finish the proof of the lemma for $i=1$. We now prove the lemma for the case $2\leq i \leq n$. Observe that any $g\in \overline{\GL^{0}_2}\setminus \overline{G}$ acts on $(\phi_{1,Y},\lambda_{1,Y})$ either as in the case (i) described in \S6.1 and hence $\phi_{1,Y}=\phi_{i,Y}$, or $g$ acts on $(\phi_{1,Y},\lambda_{1,Y})$ as in the case (ii) and thus we obtain $J_i \ast \phi_{1,Y}=\phi_{i,Y}$ for some integral ideal $J_i$ of $A$ relatively prime to $I$. Then, by \cite[Cor.~4.9.5(2)]{Gos96}, setting  $\mathfrak{b}_i:=J_i^{-1}\mathfrak{b}$ and $\mathfrak{c}_i:=J_i^{-1}\mathfrak{c}$, we have
\begin{equation}\label{E:isom}
J_i \ast \phi^{\beta(\mathfrak{c})\mathfrak{b}z + \beta(\mathfrak{c}) \mathfrak{c}} = \phi^{\alpha_i (\beta(\mathfrak{c})\mathfrak{b}_iz + \beta(\mathfrak{c}) \mathfrak{c}_i)}
\end{equation}
for some constant $\alpha_i\in \mathcal{H}^{\times}$ described in terms of a certain coefficient of the isogeny $\phi^{\beta(\mathfrak{c})\mathfrak{b}z + \beta(\mathfrak{c}) \mathfrak{c}}_{J_i}$. Consequently, by \eqref{E:isog}, $\phi^{\alpha_i (\beta(\mathfrak{c})\mathfrak{b}_iz + \beta(\mathfrak{c}) \mathfrak{c}_i)}$ is the unique Drinfeld $A$-module satisfying
\begin{equation}\label{E:isogeny}
\phi_a^{\alpha_i (\beta(\mathfrak{c})\mathfrak{b}_iz + \beta(\mathfrak{c}) \mathfrak{c}_i)} \phi^{\beta(\mathfrak{c})\mathfrak{b}z + \beta(\mathfrak{c}) \mathfrak{c}}_{J_i} = \phi^{\beta(\mathfrak{c})\mathfrak{b}z + \beta(\mathfrak{c}) \mathfrak{c}}_{J_i} \phi^{\beta(\mathfrak{c})\mathfrak{b}z + \beta(\mathfrak{c}) \mathfrak{c}}_a, \ \ a\in A.
\end{equation} 
Since the Drinfeld $A$-module $\phi_{1,Y}$ over $\mathbb{C}_{\infty}((X))$ is, via the substitution $X=t_{\mathfrak{c}}^{\mathfrak{c}}$, equal to $ \phi^{\beta(\mathfrak{c})\mathfrak{b}z + \beta(\mathfrak{c}) \mathfrak{c}}$, regarding $\phi^{\alpha_i(\beta(\mathfrak{c})\mathfrak{b}_iz + \beta(\mathfrak{c}) \mathfrak{c}_i)}$ as a Drinfeld $A$-module $\widetilde{\phi_{i,Y}}$ over $\mathbb{C}_\infty((X_i))$ with $X_i=t^{\mathfrak{c}}_{\mathfrak{c}_i}(z)$, we see, by the  uniqueness of $J_i\ast\phi_{1,Y}$, that $\phi_{i,Y}=\widetilde{\phi_{i,Y}}$. Thus, upon the substitution $X_i \mapsto t^{\mathfrak{c}}_{\mathfrak{c}_i}(z)$, $\phi_{i,Y}(a)=\phi_a^{\alpha_i (\beta(\mathfrak{c})\mathfrak{b}_iz + \beta(\mathfrak{c}) \mathfrak{c}_i)}$ for all $a\in A$ and hence is isomorphic to $\phi^{\mathfrak{b}_iz +  \mathfrak{c}_i}$, via $\alpha_i\beta(\mathfrak{c})\in \mathbb{C}_{\infty}^{\times}$, as desired.    
\end{proof}

\subsection{Correspondence between algebraic cusps and  cusps of $\Gamma_{Y}(I)$} In what follows, we fix an algebraic cusp $\mu^{Y}_i \in \AlgCusps^Y_I$ and describe how to associate an element in $\Cusps^Y_I$ corresponding to it. 

Recall, from \S2.6, $\theta\in \mathbb{C}_{\infty}^{\times}$ so that $\inorm{\theta}=q$ as well as the affinoid algebras $\mathbb{C}_\infty\big\langle \frac{X_i}{\theta^{\mathfrak{r}}}\big\rangle$ and $
\mathbb{C}_\infty \big\langle \frac{X_i}{\theta^{\mathfrak{r}}},\frac{\theta^{\mathfrak{s}}}{X_i}\big\rangle$ where $\mathfrak{r},\mathfrak{s}\in \mathbb{Z}_{\geq 0}$. As an immediate consequence of Lemma \ref{L:subs}, there exists $ \mathfrak{r}$ such that the Drinfeld $A$-module $\phi_{i,Y}$ has coefficients in $\mathbb{C}_\infty\Big\langle \frac{X_i}{\theta^{\mathfrak{r}}}\Big\rangle$. 
We further consider the ring $\mathcal{R}$ of formal Laurent series  given by
\[
\mathcal{R}:=\Bigg\{\sum_{k\geq i_0}a_kX_{i}^k \in \mathbb{C}_{\infty}((X_i)) \ \ | \ \ i_0\in\mathbb{Z}_{\leq 0}\ , \ \  \lim_{k\to \infty}a_kq^{\mathfrak{r}k}=0\Bigg\} \subset \mathbb{C}_{\infty}((X_i)).
\]

In particular, $\mu^Y_i$ factors through the map $\tilde{p}_i:\Spec(\mathcal{R}) \rightarrow M_Y$. Now, for each $\mathfrak{s}$, we have a composition of maps of ringed spaces 
\[
\Sp\Big(\mathbb{C}_\infty\Big\langle \frac{t^{\mathfrak{c}}_{\mathfrak{c}_i}}{\theta^{\mathfrak{r}}},\frac{\theta^\mathfrak{s}}{t^{\mathfrak{c}}_{\mathfrak{c}_i}}\Big\rangle\Big) \rightarrow \Spec\Big(\mathbb{C}_\infty\Big\langle \frac{t^{\mathfrak{c}}_{\mathfrak{c}_i}}{\theta^{\mathfrak{r}}},\frac{\theta^\mathfrak{s}}{t^{\mathfrak{c}}_{\mathfrak{c}_i}}\Big\rangle\Big)  \rightarrow \Spec(\mathcal{R}) \xrightarrow{\tilde{p}_i} M_Y
\]
where the second map is determined by sending $X_i \mapsto t^{\mathfrak{c}}_{\mathfrak{c}_i}$. By the universal property of rigid analytification, it lifts to a unique morphism  
\[
p^{(\mathfrak{s})}_i:\Sp\Big(\mathbb{C}_\infty\Big\langle \frac{t^{\mathfrak{c}}_{\mathfrak{c}_i}}{\theta^{\mathfrak{r}}},\frac{\theta^\mathfrak{s}}{t^{\mathfrak{c}}_{\mathfrak{c}_i}}\Big\rangle\Big)\to M^{an}_Y=\Gamma_Y(I) \backslash \Omega
\]
of rigid analytic spaces. Thus,  we have the following commutative diagram 
\begin{equation}\label{E:1}
\begin{tikzcd}
    \Sp\Big(\mathbb{C}_\infty\Big\langle \frac{t^{\mathfrak{c}}_{\mathfrak{c}_i}}{\theta^{\mathfrak{r}}},\frac{\theta^\mathfrak{s}}{t^{\mathfrak{c}}_{\mathfrak{c}_i}}\Big\rangle\Big) \arrow[r,"p^{(\mathfrak{s})}_{i}"] \arrow[d] 
    & M^{an}_Y=\Gamma_Y(I) \backslash \Omega \arrow[d]\\
    \Spec(\mathcal{R}) \arrow[r,"\tilde{p}_i"] 
    & M_Y.
\end{tikzcd}
\end{equation}
Recall the punctured disc $\mathbb{D}^{\ast}_{\mathfrak{r}}(t^{\mathfrak{c}}_{\mathfrak{c}_i})\subset \Omega$ from \S2.6. Note that, as $\mathfrak{s} \to -\infty$,  $p^{(\mathfrak{s})}_{i}$ gives rise to a map 
\[
p_i:\mathbb{D}^{\ast}_{\mathfrak{r}}(t^{\mathfrak{c}}_{\mathfrak{c}_i}) \rightarrow \Gamma_Y(I) \backslash \Omega.
\]
By the construction of $p_i$, the image of the composition $\mathbb{D}^{\ast}_{\mathfrak{r}}(t^{\mathfrak{c}}_{\mathfrak{c}_i}) \xrightarrow{p_i} M^{an}_Y \rightarrow \overline{M}^{an}_Y$ contains a punctured disc around the point in $ \overline{M}^{an}_Y \backslash M^{an}_Y=\overline{M}_Y \backslash M_Y$ corresponding to $\mu^{Y}_i$. Furthermore, by \cite[Thm.~4.16]{Boc02}, such a point indeed corresponds to a unique $b_i \in \Cusps^Y_{I}$.
We summarize the above discussion in our next proposition.
\begin{proposition}  \label{u_expn_0} There exists a one-to-one correspondence between $\AlgCusps^{Y}_I$ and $\Cusps^Y_{I}$ sending each $\mu^Y_i\in \AlgCusps^{Y}_I $ to $b_i\in\Cusps^Y_{I} $ as above.
\end{proposition}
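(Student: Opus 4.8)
The plan is to exhibit the assignment $\mu^Y_i \mapsto b_i$ as the composite of three natural bijections, so that its bijectivity becomes automatic, and to isolate the one point that requires genuine input rather than bookkeeping. First I would record the source of the correspondence on the algebraic side: by Remark \ref{R:uniTD}, after the base change $\times_{\mathcal{H}} \mathbb{C}_\infty$, the universal $\TD$-module $\TD_I$ induces a bijection between $\AlgCusps^Y_I$ and the finite boundary set $\overline{M_Y} \backslash M_Y$, sending each $\mu^Y_i$ to the closed point $P_i$ around which the formal completion of $\overline{M_Y}$ is $\Spf(\mathbb{C}_\infty[[X_i]])$. This accounts for the first of the three bijections.

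Next I would pass to the analytic side via B\"{o}ckle's isomorphism $\overline{M_Y}^{an} \cong \overline{\Gamma_Y(I) \backslash \Omega}$ from \cite[Thm.~4.16]{Boc02}. Since analytification induces a bijection on the finite set of boundary points, and since $\overline{\Gamma_Y(I) \backslash \Omega} \backslash (\Gamma_Y(I) \backslash \Omega) = \Cusps^Y_I$ by the construction in \S2.6, this produces a bijection $\overline{M_Y} \backslash M_Y \leftrightarrow \Cusps^Y_I$. In particular the cardinalities match, $|\AlgCusps^Y_I| = n_I = |\Cusps^Y_I|$, and it remains only to verify that the composite of these two bijections coincides with the assignment $\mu^Y_i \mapsto b_i$ constructed before the proposition.

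This compatibility is the heart of the matter, and it is where Lemma \ref{L:subs} enters. By that lemma the substitution $X_i \mapsto t^{\mathfrak{c}}_{\mathfrak{c}_i}(z)$ identifies the Tate-Drinfeld module $\phi_{i,Y}$ with the analytic family $\phi^{\mathfrak{b}_i z + \mathfrak{c}_i}$ on a neighborhood of infinity, and the resulting rigid analytic map $p_i:\mathbb{D}^{\ast}_{\mathfrak{r}}(t^{\mathfrak{c}}_{\mathfrak{c}_i}) \to M^{an}_Y$ is, by the universal property of analytification, precisely the analytification of the algebraic map $\tilde{p}_i$ that factors $\mu^Y_i$. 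Consequently the boundary point of $\overline{M_Y}^{an}$ onto which $p_i$ limits is the image of $P_i$ under analytification, and by the commutativity of the diagram \eqref{E:1} together with the uniqueness clause of B\"{o}ckle's theorem this boundary point corresponds to the cusp $b_i$. Hence the composite of the two bijections above sends $\mu^Y_i$ to $b_i$, and the assignment $\mu^Y_i \mapsto b_i$ is a bijection.

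The only delicate point I anticipate is ensuring that the analytic punctured disc $p_i(\mathbb{D}^{\ast}_{\mathfrak{r}}(t^{\mathfrak{c}}_{\mathfrak{c}_i}))$ accumulates onto the \emph{correct} boundary point of $\overline{M_Y}^{an}$, rather than at some other cusp; this is forced because $p_i$ is a lift of $\tilde{p}_i$ and not merely an arbitrary map with the same target, so that the desired compatibility is dictated by \eqref{E:1}. Everything else is a matter of assembling the bijections already established in \S6.1 and \S2.8.
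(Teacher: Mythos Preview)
Your proposal is correct and follows essentially the same approach as the paper: the proposition there is stated as a summary of the preceding discussion, which constructs $b_i$ from $\mu^Y_i$ exactly by composing the bijection $\AlgCusps^Y_I \leftrightarrow \overline{M_Y}\backslash M_Y$ from Remark~\ref{R:uniTD} with B\"ockle's isomorphism $\overline{M_Y}^{an} \cong \overline{\Gamma_Y(I)\backslash\Omega}$ from \cite[Thm.~4.16]{Boc02}, using Lemma~\ref{L:subs} and the diagram~\eqref{E:1} to ensure the composite really sends $\mu^Y_i$ to the cusp $b_i$ described just before the proposition. Your identification of the compatibility step as the only substantive point is exactly right.
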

\subsection{$t$-expansion} Recall \textit{the Hodge bundle} $\omega_{un}$ from \S1. We denote by $\omega_Y$ the restriction of $\omega_{un}$, after base change with $\mathbb{C}_\infty$, to $M_Y$. In what follows, for each $1\leq i \leq n_{I}$, we let $dZ_i$ be the differential on the sheaf $(\mu^{Y}_i)^\ast(\omega_Y)$ of invariant differentials so that $(\mu^{Y}_i)^\ast(\omega_Y)=\mathbb{C}_\infty((X_i))dZ_i$. 
\begin{definition}\label{6.5}
       Let $k$ be a positive integer and $f \in H^{0}(M_Y,(\omega_Y)^{\otimes k})$.  Denote by $P_f(X_i) \in \mathbb{C}_\infty((X_i))$ the unique Laurent series such that 
        \[
        (\mu^{Y}_i)^\ast(f)=P_f(X_i)(dZ_i)^{\otimes k}.
        \]
         We call $P_f(X_i)$ \textit{the $t$-expansion of $f$ at the cusp $\mu^{Y}_i$}.     
    \end{definition}
Let $\delta_i \cdot b_i=\infty$ for some $\delta_i \in \GL_2(K)$. Recall the notation from \eqref{E:unifdef} and set \[
t_{b_i}:=t_{\delta_i \Gamma_{Y}(I) \delta_i^{-1}}.
\]
Via rigid analytification we have the inclusion $H^0 (M_Y,(\omega_{Y})^{\otimes k}) \hookrightarrow H^0 (\Gamma_Y(I)\backslash \Omega,((\omega_Y)^{\otimes k})^{an})$. Moreover, due to the seminal work of Goss \cite[\S1]{Gos80} (see also \cite[Lem.~10.6]{BBP21}), we know that the set $H^{0}(\Gamma_{Y}(I) \setminus \Omega, (\omega_{Y}^{\otimes k})^{an})$ is the $\mathbb{C}_{\infty}$-vector space of weak Drinfeld modular forms of weight $k$ for $\Gamma_{Y}(I)$. Hence each $f\in H^0 (M_Y,(\omega_{Y})^{\otimes k}) $ admits a unique $t_{b_i}$-expansion.

Assume that $b_i\in \Cusps^Y_{I}$ corresponds to $\mu_i^Y$ as in Proposition \ref{u_expn_0}. In this subsection, we aim to obtain a certain relation between the $t$-expansion of $f$ at $\mu^Y_i$ and the corresponding $t_{b_i}$-expansion.

By Corollary \ref{L:nbd_at_infty}, for some integer $e$, there exists an embedding 
\[
g_i:\mathbb{D}^{\ast}_e(t_{b_i}) \to \Gamma_Y(I) \backslash \Omega
\]
of a punctured disc around the cusp $b_i$. Moreover, as in the construction of $p_i^{(\mathfrak{s})}$, one can also form a unique morphism  
\[
g^{(h)}_i:\Sp\Big(\mathbb{C}_\infty\Big\langle \frac{t_{b_i}}{\theta^{e}},\frac{\theta^h}{t_{b_i}}\Big\rangle\Big)\to M^{an}_Y=\Gamma_Y(I) \backslash \Omega
\]
of rigid analytic spaces.

Since the image of both $p_i$ and $g_i$ contain punctured discs around $b_i$, one can find integers $\ell$ and $h$ such that the image of $\Sp\Big(\mathbb{C}_\infty\Big\langle \frac{t^{\mathfrak{c}}_{\mathfrak{c}_i}}{\theta^{\mathfrak{r}}},\frac{\theta^\mathfrak{\ell}}{t^{\mathfrak{c}}_{\mathfrak{c}_i}}\Big\rangle\Big)\subset \mathbb{D}^{\ast}_{\mathfrak{r}}(t^{\mathfrak{c}}_{\mathfrak{c}_i})$ under $p_i$ and the image of $\Sp\Big(\mathbb{C}_{\infty}\Big\langle \frac{t_{b_i}}{\theta^e},\frac{\theta^{h}}{t_{b_i}}\Big\rangle\Big)\subset \mathbb{D}^{\ast}_e(t_{b_i})$ under $g_i$ intersect in a non-empty admissible open subset. Throughout this subsection, we fix such an $\ell$ and $h$. On the other hand, by Proposition \ref{P:expression}, we see that there exists a proper integral ideal $\mathfrak{n}_i$ of $A$ such that  both $t^{\mathfrak{c}}_{\mathfrak{c}_i}$ and $t_{b_i}$ may be written as a power series of $t_i:=t_{\Gamma_{Y}(\mathfrak{n}_i)}$. Furthermore one can choose non-negative integers $m$ and $k$ so that 
\[\mathbb{C}_{\infty}\Big\langle \frac{t_{b_i}}{\theta^e},\frac{\theta^{h}}{t_{b_i}}\Big\rangle \subset \mathbb{C}_\infty\Big\langle \frac{t_i}{\theta^{m}},\frac{\theta^k}{t_i}\Big\rangle \text{ and } \mathbb{C}_\infty\Big\langle \frac{t_{\mathfrak{c}_i}^{\mathfrak{c}}}{\theta^{\mathfrak{r}}},\frac{\theta^{\ell}}{t_{\mathfrak{c}_i}^{\mathfrak{c}}}\Big\rangle \subset \mathbb{C}_\infty\Big\langle \frac{t_i}{\theta^{m}},\frac{\theta^k}{t_i}\Big\rangle.
\]
We thus obtain the following commutative diagram:
\begin{equation}\label{E:2}
\begin{split}
	\begin{tikzpicture}
    		\node (P0) at (90+45:2.5cm) {$\Sp\Big(\mathbb{C}_{\infty}\Big\langle \frac{t_{b_i}}{\theta^e},\frac{\theta^{h}}{t_{b_i}}\Big\rangle\Big)$};
    		\node (P1) at (90+2*45:2.5cm) {$\Sp\Big(\mathbb{C}_\infty\Big\langle \frac{t_i}{\theta^{m}},\frac{\theta^k}{t_i}\Big\rangle\Big)$};
            \node (P2)  at (90 + 3*45:2.5cm) {$\Sp\Big(\mathbb{C}_\infty\Big\langle \frac{t^{\mathfrak{c}}_{\mathfrak{c}_i}}{\theta^{\mathfrak{r}}},\frac{\theta^{\ell}}{t^{\mathfrak{c}}_{\mathfrak{c}_i}}\Big\rangle\Big)$};
    		\node (P3) at (90 + 6*45:2.5cm) {$\Gamma_{Y}(I)\backslash \Omega$.};
    		\draw
    		(P0) edge[->,>=angle 90] node[above] {$g^{(h)}_i$} (P3)
    		(P0) edge[<-,>=angle 90] node[right] {} (P1)
    		(P2) edge[->,=angle 90] node[above] {$p_i^{(\ell)}$} (P3)
            (P1) edge[->,=angle 90] node[midway,above] {} (P2);
    	\end{tikzpicture}
    \end{split}    
\end{equation}

  We are now ready to prove our next proposition.

    \begin{proposition}  \label{u_expn_1}
    Let $f \in H^{0}(M_Y,\omega^{\otimes k}_Y)$. Then the $t$-expansion of $f$ at the cusp $\mu^{Y}_i$ has no principal part if and only if $f$ has a $t_{b_i}$-expansion at the cusp $b_i$ with no principal part, where, in the latter statement, we regard $f$ as an element of $H^{0}(\Gamma_Y(I)\backslash \Omega,(\omega^{\otimes k}_Y)^{an})$.     	  
    \end{proposition}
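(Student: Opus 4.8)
The plan is to reduce both expansions to a single Laurent expansion in the finer uniformizer $t_i:=t_{\Gamma_Y(\mathfrak{n}_i)}$ appearing in diagram \eqref{E:2}, and to track how the order of a Laurent series behaves under the relevant substitutions. By Proposition~\ref{P:expression} both $t^{\mathfrak{c}}_{\mathfrak{c}_i}$ and $t_{b_i}$ can be written as power series in $t_i$ whose lowest-order term is a nonzero multiple of $t_i^{q^{d_1}}$, respectively $t_i^{q^{d_2}}$, exactly as in \eqref{E:algexpansion01}. Since each such substitution has strictly positive leading order, substituting it into a Laurent series multiplies its order by the positive integer $q^{d_1}$ (respectively $q^{d_2}$) without cancelling the leading coefficient; concretely, $\ord_{t_i}\big(P(t^{\mathfrak{c}}_{\mathfrak{c}_i})\big)=q^{d_1}\,\ord_{X_i}(P)$ for every Laurent series $P$, and analogously on the $t_{b_i}$ side. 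Consequently $P$ has no principal part in $X_i$ if and only if $P(t^{\mathfrak{c}}_{\mathfrak{c}_i})$ has no principal part in $t_i$, and the corresponding statement holds for $t_{b_i}$.

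The analytic content enters through Lemma~\ref{L:subs}. Under the specialization $X_i\mapsto t^{\mathfrak{c}}_{\mathfrak{c}_i}(z)$ the pullback $(\mu^Y_i)^\ast f$ is identified, via the map $p_i$ of \eqref{E:1}, with the restriction of $f$ regarded as a weak Drinfeld modular form; moreover the generator $dZ_i$ of $(\mu^Y_i)^\ast\omega_Y$ is matched with the invariant differential of $\phi^{\mathfrak{b}_iz+\mathfrak{c}_i}$ through the explicit isomorphism $\alpha_i\beta(\mathfrak{c})\in\mathbb{C}_\infty^\times$ of Lemma~\ref{L:subs}. As $\alpha_i$ and $\beta(\mathfrak{c})$ are nonzero constants, this matching differs from the analytic trivialization underlying the $t_{b_i}$-expansion only by a nowhere-vanishing factor. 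Therefore the $t$-expansion $P_f(X_i)$, evaluated at $X_i=t^{\mathfrak{c}}_{\mathfrak{c}_i}(z)$, and the $t_{b_i}$-expansion of $f$ agree up to this unit and a change of local coordinate, and hence have the same $t_i$-order up to the positive ramification factors recorded in the first paragraph.

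To conclude I would invoke the commutative diagram \eqref{E:2}: both $p^{(\ell)}_i$ and $g^{(h)}_i$ factor through the common affinoid chart $\Sp\big(\mathbb{C}_\infty\langle t_i/\theta^m,\theta^k/t_i\rangle\big)$ and land in $\Gamma_Y(I)\backslash\Omega$, so on the overlap the $t_i$-expansion obtained from the $X_i$-side coincides with the one obtained from the $t_{b_i}$-side, up to the nowhere-vanishing unit of the previous paragraph. Since ``$P_f$ has no principal part in $X_i$'' and ``the $t_{b_i}$-expansion of $f$ has no principal part'' are each equivalent to ``this single common expansion has no principal part in $t_i$'', the two conditions are equivalent, which is the desired assertion.

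The hard part will be the differential matching in the second paragraph: one must verify that the algebraic trivialization $dZ_i$ corresponds, near the cusp, to a nowhere-vanishing analytic section of $\omega_Y$, so that passing between the section picture and the weak-modular-form picture neither creates nor removes a pole. This is precisely where the explicit constant isomorphism of Lemma~\ref{L:subs} is indispensable. A secondary point requiring care is that the reduction through $t_i$ is legitimate only because the ramification exponents $q^{d_1},q^{d_2}$ are strictly positive, so that orders are rescaled but their signs—and hence the presence or absence of a principal part—are preserved.
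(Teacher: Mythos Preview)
Your approach is correct and is essentially the same as the paper's: both arguments pass through the common uniformizer $t_i$ via the commutative diagrams \eqref{E:1} and \eqref{E:2}, identify the substitution $X_i\mapsto t^{\mathfrak{c}}_{\mathfrak{c}_i}$ of $P_f$ with the analytic pullback of $f^{an}$, and then use that the changes of variable $t^{\mathfrak{c}}_{\mathfrak{c}_i},t_{b_i}\rightsquigarrow t_i$ have strictly positive order to transfer the ``no principal part'' condition. Your explicit order-tracking and your caution about the trivialization of $dZ_i$ make transparent what the paper leaves implicit in the commutativity of the diagrams, but the strategy is the same.
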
  
\begin{proof} We prove one direction and the other direction is similar. Assume that the $t$-expansion $P_f(X_i)$ of $f$ at the cusp $\mu^{Y}_i$ has no principal part as a Laurent series in $X_i$. Since $P_f(X_i)\in \mathcal{R}$, by the definition of $P_f(X_i)$, we have
   \[
   (\tilde{p}_i)^\ast f = P_f(X_i)(dZ_i)^{\otimes k} \in \mathbb{C}_\infty((X_i))(dZ_i)^{\otimes k}.
   \]
Let $f^{an}$ be the pullback of $f$ to an element in $H^{0}(\Gamma_Y(I) \backslash \Omega,(\omega_Y^{\otimes k})^{an})$. Then there exists a unique Laurent series $ Q_f(t^\mathfrak{c}_{\mathfrak{c}_i})$ such that
\[
   (p^{(\ell)}_i)^\ast f^{an} = Q_f(t^\mathfrak{c}_{\mathfrak{c}_i})(dZ_i)^{\otimes k} \in \mathbb{C}_\infty\Big\langle \frac{t^\mathfrak{c}_{\mathfrak{c}_i}}{\theta^{\mathfrak{r}}},\frac{\theta^\ell}{t^{\mathfrak{c}}_{\mathfrak{c}_i}}\Big\rangle (dZ_i)^{\otimes k}.
   \]
   Thus, by the commutativity of \eqref{E:1}, we see that upon the substitution $X_i \mapsto t^\mathfrak{c}_{\mathfrak{c}_i}$, $P_f(X_i)$ and $Q_f(t^\mathfrak{c}_{\mathfrak{c}_i})$ coincide. On the other hand, we have  
    \[
    (g^{(h)}_i)^\ast(f^{an}) = \widetilde{Q}_{f^{an}}(t_{b_i})(dZ_i)^{\otimes k} \in   \mathbb{C}_\infty \Big\langle \frac{t_{b_i}}{\theta^e},\frac{\theta^{h}}{t_{b_i}}\Big\rangle (dZ_i)^{\otimes k}
    \]  
    for some unique Laurent series $\widetilde{Q}_{f^{an}}(t_{b_i})$, which is the $t_{b_i}$-expansion of $f^{an}$. Thus, by the commutativity of \eqref{E:2}, we see that $\widetilde{Q}_{f^{an}}(t_{b_i})$ has no principal part as a Laurent series in $t_{b_i}$ if and only if 
    $Q_f(t^\mathfrak{c}_{\mathfrak{c}_i})$ has no principal part as a Laurent series in $t_{b_i}$. Since $Q_f(t^\mathfrak{c}_{\mathfrak{c}_i})$ is identified with $P_f(X_i)$ upon the substitution $X_i \mapsto t^\mathfrak{c}_{\mathfrak{c}_i}$ and $P_f(X_i)$ has no principal part, we conclude that $\widetilde{Q}_{f^{an}}(t_{b_i})$ has no principal part as a Laurent series in $t_{b_i}$.
\end{proof}

\begin{remark} As a consequence of the results described in this subsection, letting $\textit{an}:\Gamma_{Y}(I)\backslash \Omega\to M_Y$ be the analytification map, we obtain the following commutative diagram:
 \begin{equation}\label{D:3}	
 \begin{split}
 \begin{tikzpicture}
    		\node (P0) at (90+45:2.7cm) {$\Sp\Big(\mathbb{C}_{\infty}\Big\langle \frac{t_{b_i}}{\theta^e},\frac{\theta^{h}}{t_{b_i}}\Big\rangle\Big)$};
    		\node (P1) at (90+90:2.5cm) {$\Sp\Big(\mathbb{C}_\infty\Big\langle \frac{t_i}{\theta^{m}},\frac{\theta^k}{t_i}\Big\rangle\Big)$};
    		\node  (P2) at (90+3*45:2.5cm) {$\Spec(\mathcal{R})$} ;
    		\node (P3) at (90+5*45:2.5cm) {$M_Y.$};
    		\node (P4) at (90+7*45:2.5cm) {$\Gamma_Y(I) \backslash \Omega$};
    		\draw
    		(P0) edge[<-,>=angle 90] node[left] {} (P1)
    		(P1) edge[->,>=angle 90] node[right] {} (P2)
    		(P2) edge[->,=angle 90] node[above] {$\tilde{p}_i$} (P3)
    		(P4) edge[->,=angle 90] node[right] {\textit{an}} (P3)
    		(P0) edge[->,=angle 90] node[midway,above] {$g^{(h)}_i$} (P4);
    	\end{tikzpicture}
        \end{split}
    \end{equation}
More precisely, Lemma \ref{L:subs} shows that for each TD module $\mu_i:\Spec(\mathcal{H}((X_i))) \rightarrow M^2_I$, its base change $\times_{\mathcal{H}} \mathbb{C}_\infty$ corresponds to the family of Drinfeld $A$-modules $\phi^{\mathfrak{b}_i z +\mathfrak{c}_i}$ for some fractional $A$-ideals $\mathfrak{b}_i$ and $\mathfrak{c}_i$. In Proposition \ref{u_expn_0}, this allows us to associate to each such TD module a unique analytic cusp $b_i \in \Cusps^Y_I$ and hence it gives rise to  \eqref{E:1}. Then using Proposition \ref{P:expression}, we construct $t_i=t_{\Gamma_{Y}(\mathfrak{n}_i)}$ so that  both  $t_{b_i}$ and $t^{\mathfrak{c}}_{\mathfrak{c}_i}$ may be expressed in terms of a  power series in $t_i$ with a finite radius of convergence. Finally, combining \eqref{E:1} and \eqref{E:2} yield \eqref{D:3}.
\end{remark}

\section{Extension of the de Rham sheaf to $\overline{M^2_I}$ and de Rham sheaf on $\Omega$}

We continue to use the notation from \S6 and again assume that $I$ is an ideal of $A$ so that $| V(I)|\geq 2$. Recall that $\mathbb{E}^{un}_{I}=(\mathcal{L}^{un},\phi^{un})$ is the universal Drinfeld $A$-module over $M^2_I$. Consequently, as described in \S2.3, we can attach to $\mathbb{E}^{un}_{I}$ the de Rham cohomology sheaf $\mathbb{H}_{\DR,un}:=\mathbb{H}_{\DR}(\mathbb{E}^{un}_{I})$. Recall from Remark \ref{H1_hodge} that $\mathbb{H}_{1,un}=\mathbb{H}_1(\mathbb{E}^{un}_{I})$ and $\mathbb{H}_{2,un}=\mathbb{H}_2(\mathbb{E}^{un}_{I})$. 

Our goal in this section is, on the one hand, to study an extension of $\mathbb{H}_{\DR,un}$ to $\overline{M^{2}_I}$, and on the other, to analyze the structure of the pull back of the de Rham sheaf to $\Omega$ via $\pi_Y:\Omega \rightarrow \Gamma_Y(I)\backslash \Omega \cong M^{an}_Y$.

\subsection{Extension of the de Rham sheaf to $\overline{M_{I}^2}$}  We start by analyzing the description of the de Rham sheaf $\mathbb{H}_{\DR}$ at an algebraic  cusp  $\mu_i:\Spec(\mathcal{H}((X_i)))\rightarrow M^{2}_I$ for each $1\leq i \leq |\AlgCusps_{I}|$ which is determined by the $\TD$-module $(\phi_i,\lambda_i)$ described in \S6.1. Let $\frac{d}{dX_i}$ be the $A$-linear derivation that sends any power series in $\mathcal{H}((X_i))$ to its derivative with respect to $X_i$.

\begin{definition} For any non-constant $a\in A$, let 
\[
(\phi_i)_{a}(Z_i):=aZ_i+g_{i,1}(X_i)Z_i^{q}+\cdots+g_{i,2\deg(a)}(X_i)Z_i^{q^{2\deg(a)}}\in \mathcal{H}((X_i))[Z_i].
\]
\textit{The false Eisenstein series $E(\mu_i)$ at the cusp $\mu_i:\Spec(\mathcal{H}((X_i))) \rightarrow M^2_{I}$ } is defined by
\[
E(\mu_i):=-\frac{X_i^{2}}{g_{i,2\deg(a)}(X_i)}\frac{d}{dX_i}g_{i,2\deg(a)}(X_i)\in \mathcal{H}((X_i)).
\]
\end{definition}

Using \cite[Lem.~6.7]{Gek90}, one can immediately see that $E(\mu_i)$ is independent of the choice of $a\in A\setminus\mathbb{F}_q$.

\begin{remark} \begin{itemize}
    \item[(i)] The motivation to consider the above definition of the false Eisenstein series at $\mu_i$ is due to the following relation between differential operators on the $t$-expansions of Drinfeld modular forms (see also \cite[(7.19)]{Gek90}): Let $z$ be an analytic parameter on $\Omega$ and for a fractional ideal $\mathfrak{l}$ and $\xi\in \mathbb{C}_{\infty}\setminus \overline{K}$, set $t:=t(z):=\exp_{\xi\mathfrak{l}}(\xi z)^{-1}$. Then for the operator $\frac{d}{dt}$ which sends $\sum_{i=0}^\infty a_i t(z)^i \mapsto \sum_{i=0}^\infty ia_i t(z)^{i-1}$ (with $a_i\in \mathbb{C}_{\infty}$), we have 
    \begin{equation}\label{E:relationoft}
        \partial_z = -\xi t^2
\frac{d}{dt},   \end{equation}where $\partial_z$ is the operator defined in \S4. Then given a projective $A$-module $Y$ of rank two as before, upon choosing $\xi$ and $\mathfrak{l}$ appropriately, the substitution $X_i \mapsto t(z)$ yields the $t$-expansion of $E$ at the cusp $\mu^Y_i$ up to a non-zero multiple from $\overline{K}$. We also note that setting $q(\zz):=e^{2\pi z \sqrt{-1}}$ for an analytic parameter $\zz$ on the upper half plane, \eqref{E:relationoft} may be seen as a characteristic $p$ analogue of the classical relation \[
\frac{d}{d\zz} = (2\pi \sqrt{-1})q\frac{d}{dq}.
\]
    \item[(ii)] When $i=1$, from Lemma \ref{fEis}, we indeed have that $E(\mu_1)$ agrees with 
    \[
    -\sum_{a\in \mathfrak{b}\setminus \{0\}}a\psi_{a}\Big(\frac{1}{X}\Big)^{-1}\in  \mathcal{H}[[X]],
    \] 
    upto a non-zero constant from $\mathcal{H}$, where as in \S6, $\psi$ is the universal Drinfeld $A$-module of rank one over $\mathcal{H}$ and $\mathfrak{b}$ is the integral ideal defined as in \S6.1. 
\end{itemize} 
\end{remark}

\begin{lemma}[{The Hodge Decomposition}] \label{L:basisderham}
    We have 
    \[
    \h_{\DR}(\phi_i) = \h_1(\phi_i) \oplus \h_2(\phi_i)
    \]
   so that the $\mathcal{H}((X_i))$-vector space $\h_1(\phi_i)$ is spanned by the biderivation $\eta_{i,1}:=[a \mapsto a - (\phi_i)_{a}]$ and the $\mathcal{H}((X_i))$-vector space $\h_2(\phi_i)$ is spanned by the biderivation $\eta_{i,2}:= -X_i^{2}\frac{d}{dX_i}\eta_{i,1} - E(\mu_i)\eta_{i,1}$. 
\end{lemma}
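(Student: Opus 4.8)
The plan is to read off the two basis vectors from the general Hodge decomposition of \S2.5 applied to the Drinfeld $A$-module $\phi_i$ over the field $B:=\mathcal{H}((X_i))$, for which $\h_{\DR}(\phi_i,B)$ is free of rank $2$ by Theorem \ref{deRham}, with $\h_1(\phi_i)=\D_i/\D_{si}\cong B\eta^{(1)}$ and $\h_2(\phi_i)\cong\D_{sr}(\phi_i)$ free of rank $1$. First I would observe that $\eta_{i,1}$ is exactly the canonical inner biderivation $\eta^{(1)}$: since the structural map sends $a\mapsto a$, one has $\eta^{(1)}(a)=a-(\phi_i)_a=\eta_{i,1}(a)$, which lies in $\N(\phi_i,B)=B[\tau]\tau$ and is reduced but not strictly reduced. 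By \eqref{E:sidecom} its class spans $\h_1(\phi_i)$, so it remains only to prove that $\eta_{i,2}$ is a biderivation whose class spans the one-dimensional space $\h_2(\phi_i)$.

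The key structural point is that coefficient-wise differentiation preserves biderivations. Writing $D:=\frac{d}{dX_i}$ and, for a biderivation $\eta$ with $\eta(a)=\sum_{j\ge 1}c_j(X_i)\tau^j$, setting $(D\eta)(a):=\sum_{j\ge 1}D(c_j)\tau^j$, I would check that $D\eta$ again satisfies the biderivation identity. Applying $D$ to $\eta(ab)=\gamma(a)\eta(b)+\eta(a)\phi_b$ and using that $\gamma(a)\in\mathcal{H}$ is a scalar killed by $D$, the only nontrivial term is $D(\eta(a)\phi_b)$; expanding $\eta(a)\phi_b=\sum_{j,\ell}c_j d_\ell^{q^j}\tau^{j+\ell}$ and noting $D(d_\ell^{q^j})=0$ for every $j\ge 1$ because $q^j$ is a power of the characteristic, one obtains $D(\eta(a)\phi_b)=(D\eta(a))\phi_b$, whence $D\eta$ is a biderivation. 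Since left multiplication by $E(\mu_i)\in B$ also preserves biderivations, the combination $\eta_{i,2}=-X_i^2 D\eta_{i,1}-E(\mu_i)\eta_{i,1}$ is a biderivation valued in $\N(\phi_i,B)$.

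Next I would show $\eta_{i,2}$ is strictly reduced. With $(\phi_i)_a=a+\sum_{j=1}^{2\deg(a)}g_{i,j}\tau^j$ one has $\eta_{i,1}(a)=-\sum_j g_{i,j}\tau^j$, so $\eta_{i,2}(a)=\sum_{j=1}^{2\deg(a)}\big(X_i^2 D(g_{i,j})+E(\mu_i)g_{i,j}\big)\tau^j$. The top coefficient, at $\tau^{2\deg(a)}$, equals $X_i^2 D(g_{i,2\deg(a)})+E(\mu_i)g_{i,2\deg(a)}$, which vanishes identically by the very definition $E(\mu_i)=-\frac{X_i^2}{g_{i,2\deg(a)}}D(g_{i,2\deg(a)})$. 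Hence $\deg_\tau(\eta_{i,2}(a))<2\deg(a)$, so $\eta_{i,2}$ is strictly reduced (the notion being independent of the chosen $a$, as recalled in \S2.5), and its class lies in $\h_2(\phi_i)\cong\D_{sr}(\phi_i)$. Since $\D=B\eta^{(1)}\oplus\D_{sr}\oplus\D_{si}$ gives $\D_{sr}\cap\D_{si}=0$, the class of $\eta_{i,2}$ is nonzero precisely when $\eta_{i,2}\ne 0$ as a biderivation.

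The main obstacle is this final nonvanishing, equivalently that $\eta_{i,2}$ spans the line $\h_2(\phi_i)$. Were $\eta_{i,2}=0$, then $D(g_{i,j}/g_{i,2\deg(a)})=0$ for all $j$, i.e. all normalized coefficient forms would be $D$-constant; conceptually this asserts that the Gauss--Manin connection $D$ preserves the Hodge line, contradicting the Kodaira--Spencer isomorphism for the non-isotrivial family $\phi_i$. To make this rigorous I would pass to the analytic realization of Lemma \ref{L:subs}: under $X_i\mapsto t^{\mathfrak{c}}_{\mathfrak{c}_i}(z)$ the module $\phi_{i,Y}$ becomes $\phi^{\mathfrak{b}_iz+\mathfrak{c}_i}$, and by the relation \eqref{E:relationoft} between $d/dX_i$ and $\partial_z$ the biderivation $\eta_{i,2}$ specializes to the strictly reduced biderivation obtained by differentiating $\eta^{(1)}$ in $z$ and correcting by the false Eisenstein series $E$ of \S4, which is a nonzero class in the de Rham cohomology of the analytic family over $\Omega$. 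A nonzero specialization forces $\eta_{i,2}\ne 0$, completing the proof that $\{\eta_{i,1},\eta_{i,2}\}$ is a $B$-basis realizing the Hodge decomposition of $\h_{\DR}(\phi_i)$.
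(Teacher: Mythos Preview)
Your proof is correct and follows the same approach as the paper, which simply observes that $\eta_{i,1}$ is reduced and $\eta_{i,2}$ is strictly reduced. You go further than the paper's one-line argument by explicitly verifying that coefficient-wise differentiation preserves biderivations, by computing the vanishing of the top $\tau$-coefficient of $\eta_{i,2}(a)$ from the definition of $E(\mu_i)$, and by addressing the nonvanishing of $\eta_{i,2}$ via specialization to the analytic family, a point the paper leaves implicit.
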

\begin{proof} This simply follows from the observation that $\eta_{i,1}$ ($\eta_{i,2}$ respectively) is a reduced (strictly reduced respectively) biderivation.
\end{proof}

In what follows, we set 
\[
\eta'_{i,2}:=\eta_{i,2} + E(\mu_i)\eta_{i,1}\in \h_{\DR}(\phi_i).
\]
Note that $\{\eta_{i,1},\eta'_{i,2}\}$ is also a $\mathcal{H}((X_i))$-basis of $\h_{\DR}(\phi_i)$. The base change matrix sending $\{\eta_{i,1},\eta_{i,2}\}$ to $\{\eta_{i,1},\eta'_{i,2}\}$ is $\begin{pmatrix}
        1 & E(\mu_i) \\
        0 & 1
    \end{pmatrix}$, which lies in $\GL_2(\mathcal{H}((X_i))$. 

Now we are ready to define the extension $\overline{\mathbb{H}_{\DR,un}}$ of $\mathbb{H}_{\DR,un}$ to $\overline{M_{I}^2}$. 
\begin{definition}We denote by $\overline{\mathbb{H}_{\DR,un}}$ the unique locally free sheaf extension of $\mathbb{H}_{\DR,un}$ to $\overline{M^{2}_I}$ such that its formal completion at $\overline{M^{2}_I} \backslash M^2_I$ is determined by the module $\oplus^{n_I}_{i=1} (\mathcal{H}[[X_i]]\eta_{i,1} \oplus \mathcal{H}[[X_i]]\eta_{i,2})$.
\end{definition} 

Note that by Theorem \ref{deRham}, $\mathbb{H}_{\DR,un}$ is a locally free sheaf of rank $2$ over $M^2_I$ which implies the local freeness of $\overline{\mathbb{H}_{\DR,un}}$.

\subsection{The de Rham sheaf on $\Omega$} Let $B$ be an affinoid algebra so that $\Sp(B)$ is an admissible open subset of $\Omega$. Recall the Drinfeld $A$-module $(\mathbb{G}_{a,\Omega},\bold\Psi^{Y})$ from \S2.8 and let $\bold\Psi^Y_{|\Sp(B)}$ denote its restriction to $\Sp(B)$. Let 
\[
\bold\Psi^{Y}_{a|\Sp(B)} = a + \sum^{2\deg(a)}_{i=1} (g_{i,a})_{|\Sp(B)}  \ \tau^i\in B[\tau].
\]

Recall the quotient map $\pi$ from \S2.6. We denote by $\pi_Y:\Omega \rightarrow M_Y$ the composite map \[\Omega \xrightarrow{\pi} \Gamma_Y(I)\backslash \Omega \xrightarrow{i_Y} M_Y.\]
Let $\mathbb{H}_{\DR,Y}$ be the restriction of $\mathbb{H}_{\DR,un}$, after base change with $\mathbb{C}_\infty$, to $M_Y$ and $\mathbb{H}^{an}_{\DR,Y} $ be its analytification. In what follows, we explicitly describe the pull backs $i^\ast_Y(\mathbb{H}_{\DR,Y})=\mathbb{H}^{an}_{\DR,Y}$ and $\pi^\ast_Y(\mathbb{H}_{\DR,Y})$ which are sheaves on $\Gamma_Y(I) \backslash \Omega$ and $\Omega$ respectively. Note that, by Corollary \ref{a12},  $\mathbb{H}^{an}_{\DR,Y} = (\pi^\ast_Y(\mathbb{H}_{\DR,Y}))^{\Gamma_Y(I)}$. Therefore it suffices to analyze the sheaf $\pi^\ast_Y(\mathbb{H}_{\DR,Y})$ on $\Omega$. 

\begin{proposition} The sheaf 
    $\pi^\ast_Y(\mathbb{H}_{\DR,Y})$ is the unique sheaf on $\Omega$ such that for any affinoid subdomain $j:\Sp(B) \rightarrow \Omega$, its section over $\Sp(B)$ is given by the $B$-module $\h_{\DR} (\bold\Psi^{Y}_{|\Sp(B)})$.
\end{proposition}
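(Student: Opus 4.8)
The plan is to establish the statement by reducing the global problem to the local affinoid description already developed for de Rham modules in \S2.4, and then to invoke the equivalence of categories from Corollary \ref{a12}(ii). First I would recall that $\mathbb{H}_{\DR,Y}$ is a coherent $\mathcal{O}_{M_Y}$-sheaf (indeed locally free of rank two by Theorem \ref{deRham}) and that its pullback $\pi^\ast_Y(\mathbb{H}_{\DR,Y})$ is a well-defined coherent sheaf on $\Omega$, equipped with a canonical $\Gamma_Y(I)$-sheaf structure in the sense of Definition \ref{D: G_sheaf}. Since $\Omega$ is covered by the admissible affinoid subdomains $\mathfrak{U}_\nu$ of Proposition \ref{P: covering}, it suffices by gluing to identify the sections over an arbitrary affinoid $\Sp(B) \hookrightarrow \Omega$, and to check that these identifications are compatible with restriction.

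The key step is the local computation. Over an affinoid $j:\Sp(B)\rightarrow \Omega$ on which the relevant line bundle is trivial, the restriction of the universal Drinfeld $A$-module pulls back, by Proposition \ref{2.4} together with the descent discussed after it, to the Drinfeld $A$-module $\bold\Psi^{Y}_{|\Sp(B)}$ whose coefficient forms $(g_{i,a})_{|\Sp(B)}$ lie in $B$. I would then invoke the construction of the de Rham module of \S2.4: for the reduced $K$-algebra $B$ (here a $\mathbb{C}_\infty$-affinoid algebra, which is reduced), the de Rham module $\h_{\DR}(\bold\Psi^{Y}_{|\Sp(B)},B)$ is defined purely in terms of the biderivations attached to $\bold\Psi^{Y}_{|\Sp(B)}$, and by Theorem \ref{deRham} it is locally free of rank two over $B$. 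Because forming the de Rham module commutes with the base change $\mathcal{O}_{M_Y}\rightarrow B$ induced by $\pi_Y\circ j$ — this is exactly the functoriality built into the definition $\mathbb{H}_{\DR}(\mathbb{E})(S)=\h_{\DR}(\mathbb{E},B)$, together with the compatibility of biderivations with base change — one obtains a canonical isomorphism
\[
\big(\pi^\ast_Y(\mathbb{H}_{\DR,Y})\big)(\Sp(B)) \cong \h_{\DR}\big(\bold\Psi^{Y}_{|\Sp(B)}\big).
\]
Uniqueness of the sheaf with the stated sections follows since a coherent sheaf on the rigid analytic space $\Omega$ is determined by its sections on an admissible affinoid cover together with the restriction maps, via \cite[Chap.~6, Cor.~5]{Bos14} as used in the proof of Corollary \ref{a12}.

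The main obstacle I anticipate is the verification that base change of the de Rham module is exact and compatible, i.e.\ that $\h_{\DR}(\bold\Psi^{Y})\otimes_{\mathcal{O}(U)} B \cong \h_{\DR}(\bold\Psi^{Y}_{|\Sp(B)})$ in the analytic setting, rather than merely an algebraic one. The subtlety is that the de Rham module is a quotient of biderivations by strictly inner ones, so commuting it with the (flat but non-algebraic) affinoid base change requires knowing that the decomposition $\D(\mathbb{E},B)=\D_r(\mathbb{E},B)\oplus \D_{si}(\mathbb{E},B)$ of \S2.5 is preserved — this holds because reducedness is a condition on $\tau$-degrees that is stable under base change, and because $\h_{\DR}$ is locally free, hence flat, so its formation commutes with the faithfully flat maps entering the gluing. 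I would therefore structure the proof around the Hodge decomposition \eqref{E:decompose}, checking termwise that $B\eta^{(1)}$ and $\D_{sr}$ base change correctly, and then conclude by the gluing and uniqueness argument above. The $\Gamma_Y(I)$-equivariance needed to descend back to $M^{an}_Y$ via $\mathbb{H}^{an}_{\DR,Y}=(\pi^\ast_Y(\mathbb{H}_{\DR,Y}))^{\Gamma_Y(I)}$ is automatic from Corollary \ref{a12}, so no separate argument is required there.
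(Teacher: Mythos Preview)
Your approach is correct in outline but takes a longer route than the paper. The paper's proof is a three-line argument: the rigid-analytic morphism $\pi_Y \circ j:\Sp(B) \rightarrow M_Y$ factors canonically through an algebraic map $\tilde{j}:\Spec(B) \rightarrow M_Y$ (by the universal property of analytification, since $M_Y$ is a scheme), and the coherent sheaves $j^\ast(\pi^\ast_Y(\mathbb{H}_{\DR,Y}))$ on $\Sp(B)$ and $(\tilde{j})^\ast \mathbb{H}_{\DR,Y}$ on $\Spec(B)$ both arise from the same finitely generated $B$-module. The ``main obstacle'' you anticipate --- compatibility of the de Rham module with base change --- is then handled in one stroke by citing the affine base change property of de Rham cohomology, \cite[Thm.~4.5]{Gek89}, which gives $(\tilde{j})^\ast \mathbb{H}_{\DR,Y} \cong \mathbb{H}_{\DR}(\bold\Psi^Y_{|\Sp(B)})$ directly.

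Your plan to verify base change by hand via the Hodge decomposition \eqref{E:decompose} would work, since reducedness of biderivations is a $\tau$-degree condition stable under ring extension, but it re-derives a result already available in the literature. The invocation of Corollary~\ref{a12}(ii) is also unnecessary here: that equivalence concerns descent along $\pi:\Omega\to\Gamma_Y(I)\backslash\Omega$, whereas the present statement is purely about identifying sections on $\Omega$ itself; uniqueness follows simply because a coherent sheaf on a rigid space is determined by its values on an admissible affinoid cover. In short, the key simplification you miss is the factorization through $\Spec(B)$, which transports the question into the algebraic category where Gekeler's theorem applies off the shelf.
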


\begin{proof}
    The morphism $\pi_Y \circ j:\Sp(B) \rightarrow M_Y$ factors through a canonical map $\tilde{j}:\Spec(B) \rightarrow M_Y$. Note that the coherent sheaves $j^\ast(\pi^\ast_Y(\mathbb{H}_{\DR,Y}))$ on $\Sp(B)$ and $(\tilde{j})^{\ast} \mathbb{H}_{\DR,Y} $ on $\Spec(B)$ correspond to the same finitely generated $B$-module, say $N$. In other words, they are obtained by the $\tilde{()}$ operation on $N$, with respect to the corresponding topologies.  But by the affine base change property of de Rham cohomology \cite[Thm.~4.5]{Gek89}, we have  $(\tilde{j})^{\ast}\mathbb{H}_{\DR,Y} = \mathbb{H}_{\DR} (\bold\Psi^{Y}_{|\Sp(B)})$ which implies the desired statement. 
\end{proof}
 
In what follows, we denote the sheaf $\pi^\ast_Y(\mathbb{H}_{\DR,Y})$ by $\mathbb{H}_{\DR}(\bold\Psi^{Y})$. Since $\mathbb{H}_{\DR,Y}$ is a locally free sheaf of rank $2$ on $M_Y$, so is $\mathbb{H}_{\DR}(\bold\Psi^Y)$  on $\Omega$. Furthermore, since $\Omega$ is a Stein space \cite[Thm.~4]{SS91}, $\mathbb{H}_{\DR}(\bold\Psi^Y)$ is generated by its global sections. In our next proposition, we explicitly describe these sections to show that $\mathbb{H}_{\DR}(\bold\Psi^Y)$ is indeed a free sheaf of rank two over $\mathcal{O}_\Omega$.

Let $\eta_{1,B}$ be the local section of $\mathbb{H}_{\DR} (\bold\Psi^{Y}_{|\Sp(B)})$ given by
\[
\eta_{1,B}:=\eta^{(1)}_{B}:=[a \mapsto a - \bold\Psi^{Y}_{a|\Sp(B)}] \in H^{0}(\Sp(B),\mathbb{H}_{\DR} (\bold\Psi^{Y}_{|\Sp(B)})).
\]
Recall the false Eisenstein series $E$ defined in \S4. We further define
the local section $\eta_{2,B}$ of $\mathbb{H}_{\DR} (\bold\Psi^{Y}_{|\Sp(B)})$ given by 
\[
\eta_{2,B}:=\frac{1}{\xi(\mathfrak{g}^{-1}\mathfrak{h})}\partial_z(\eta_{1,B})-E\eta_{1,B} \in H^{0}(\Sp(B),\mathbb{H}_{\DR} (\bold\Psi^{Y}_{|\Sp(B)})).
\]
Note that for affinoid subdomains $j:\Sp(B) \rightarrow \Omega$, the local sections $\eta_{1,B}$ ($\eta_{2,B}$ respectively), as $B$ varies, glue together to form a global section of $\mathbb{H}_{\DR}(\bold\Psi^Y)$, denoted by $\eta_1$ ($\eta_2$ respectively).

\begin{lemma}[{Hodge decomposition}] \label{hodge_2} 
     Let $\eta_1,\eta_2 \in \mathbb{H}_{\DR}(\bold\Psi^Y)(\Omega)$ be as above. Then the natural map \[\mathcal{O}_{\Omega}  \oplus \mathcal{O}_{\Omega}  \rightarrow \mathbb{H}_{\DR}(\bold\Psi^Y)\] sending $(1,0) \mapsto \eta_1$ and $(0,1) \mapsto \eta_2$ is an isomorphism of $\mathcal{O}_\Omega$-sheaves.

\end{lemma}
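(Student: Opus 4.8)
The plan is to prove Lemma \ref{hodge_2} by exhibiting the stated map as a morphism of locally free sheaves of the same rank and checking that it is an isomorphism on stalks (equivalently, over each affinoid $\Sp(B)$ of the admissible cover). Since $\mathbb{H}_{\DR}(\bold\Psi^Y)$ is locally free of rank two and $\mathcal{O}_\Omega\oplus\mathcal{O}_\Omega$ is free of rank two, it suffices to show that $\{\eta_1,\eta_2\}$ forms a $B$-basis of $H^0(\Sp(B),\mathbb{H}_{\DR}(\bold\Psi^Y_{|\Sp(B)}))$ for each affinoid $\Sp(B)$ in a trivializing cover. The key is that the Hodge decomposition \eqref{E:Hodge0} already guarantees that the de Rham module decomposes, over any reduced affinoid $B$, as the direct sum of the inner part $B\eta^{(1)}$ and the strictly reduced part; I would identify $\eta_{1,B}$ with the canonical generator $\eta^{(1)}_B$ of $\h_1$, so the only real content is to verify that $\eta_{2,B}$ generates the strictly reduced complement $\h_2(\bold\Psi^Y_{|\Sp(B)})\cong\D_{sr}(\bold\Psi^Y_{|\Sp(B)})$.

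First I would record that $\eta_{1,B}=\eta^{(1)}_B$ is a reduced but not strictly reduced biderivation whose class spans $\h_1$, exactly as in the general theory of \S2.5 and Remark \ref{H1_hodge}. Next I would compute the reduction type of $\eta_{2,B}=\frac{1}{\xi(\mathfrak{g}^{-1}\mathfrak{h})}\partial_z(\eta_{1,B})-E\eta_{1,B}$. Applying $\partial_z$ to the biderivation $a\mapsto a-\bold\Psi^Y_{a|\Sp(B)}$ differentiates the coefficient forms $g_{i,a}$ term by term, raising the $\tau$-degree bookkeeping in a controlled way; the subtraction of $E\eta_{1,B}$, where $E=\frac{1}{\xi(\mathfrak{g}^{-1}\mathfrak{h})}\partial_z(\Delta_a)/\Delta_a$, is precisely the correction needed to cancel the leading (top $\tau$-degree) term coming from differentiating the leading coefficient $g_{2\deg(a),a}=\Delta_a$. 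This cancellation makes $\eta_{2,B}$ strictly reduced, so its class lies in $\h_2$. This is the analytic mirror of Lemma \ref{L:basisderham}, where the analogous combination $\eta_{i,2}=-X_i^2\frac{d}{dX_i}\eta_{i,1}-E(\mu_i)\eta_{i,1}$ is shown to be strictly reduced at each algebraic cusp; indeed the relation \eqref{E:relationoft} between $\partial_z$ and $X_i^2\frac{d}{dX_i}$ shows that the two constructions match up to the substitution $X_i\mapsto t(z)$, so the degree computation is formally identical.

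Having placed $\eta_{1,B}$ in $\h_1$ and $\eta_{2,B}$ in $\h_2$, the Hodge decomposition \eqref{E:Hodge0} for the rank-two Drinfeld module $\bold\Psi^Y_{|\Sp(B)}$ gives $\h_{\DR}(\bold\Psi^Y_{|\Sp(B)})=B\,\eta_{1,B}\oplus\h_2(\bold\Psi^Y_{|\Sp(B)})$, so I only need that $\eta_{2,B}$ generates the rank-one free $B$-module $\h_2(\bold\Psi^Y_{|\Sp(B)})$. For this I would argue that the leading coefficient of the strictly reduced biderivation $\eta_{2,B}$ as a polynomial in $\tau$ is (up to the nowhere-vanishing $\Delta_a$ and the transcendental but invertible period $\xi(\mathfrak{g}^{-1}\mathfrak{h})$) a unit in $B$, since $g_{2\deg(a),a}=\Delta_a$ is nowhere vanishing on $\Omega$ (Example \ref{Ex:1}(ii)) and hence a unit in each affinoid coordinate ring $B$. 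Thus $\eta_{2,B}$ is a generator, not merely an element, of the strictly reduced part, and $\{\eta_{1,B},\eta_{2,B}\}$ is a $B$-basis. Gluing over the admissible cover, the map $\mathcal{O}_\Omega\oplus\mathcal{O}_\Omega\to\mathbb{H}_{\DR}(\bold\Psi^Y)$ is an isomorphism on each affinoid and hence globally, as claimed.

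The main obstacle I anticipate is the degree computation showing that $\eta_{2,B}$ is strictly reduced, i.e. that the top-$\tau$-degree contribution genuinely cancels. This requires carefully tracking how $\partial_z$ acts on the biderivation $\eta_{1,B}$ at the level of $\tau$-coefficients and confirming that the logarithmic-derivative definition of $E$ is exactly the right shift; the fact that reducedness is independent of the choice of $a\in A\setminus\mathbb{F}_q$ (\cite[Prop.~3.9]{Gek90}) lets me fix a single convenient non-constant $a$, and the relation \eqref{E:coefEis} among coefficient forms together with the product formula of Proposition \ref{P:prod} should make the cancellation transparent, mirroring Gekeler's original computation in \cite[\S7]{Gek90} and the algebraic version in Lemma \ref{L:basisderham}.
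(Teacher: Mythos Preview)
Your overall strategy matches the paper's: both arguments use the Hodge decomposition \eqref{E:Hodge0} to split the problem into showing that $\eta_1$ generates $\h_1$ (which is immediate) and that $\eta_2$ generates $\h_2$. Your verification that $\eta_{2,B}$ is \emph{strictly reduced}---via the cancellation of the top $\tau$-coefficient $-\frac{1}{\xi(\mathfrak{g}^{-1}\mathfrak{h})}\partial_z(\Delta_a)$ against $-E\Delta_a$---is correct and is exactly the content behind \cite[Prop.~7.7]{Gek90}.

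However, there is a gap in your argument that $\eta_{2,B}$ \emph{generates} the rank-one $B$-module $\h_2(\bold\Psi^Y_{|\Sp(B)})$. You claim the leading $\tau$-coefficient of $\eta_{2,B}(a)$ is a unit ``since $g_{2\deg(a),a}=\Delta_a$ is nowhere vanishing,'' but after the cancellation you just performed, the $\tau^{2\deg(a)}$-coefficient of $\eta_{2,B}(a)$ is zero, and the new leading coefficient sits at degree $\leq 2\deg(a)-1$; it has the form $E\,g_{2\deg(a)-1,a}-\frac{1}{\xi(\mathfrak{g}^{-1}\mathfrak{h})}\partial_z(g_{2\deg(a)-1,a})$, which is not visibly a unit in $B$ and has no direct relation to the nonvanishing of $\Delta_a$. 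Over a general reduced affinoid $B$, a nonzero element of a free rank-one module need not be a generator, so strict reducedness alone is insufficient.

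The paper closes this gap differently: rather than working over $B$, it localizes and completes at each point $z$, using the isomorphisms $\widehat{T_z}\cong\widehat{\mathcal{O}_{M_Y,\pi_Y(z)}}$ to reduce to a local ring, where $\h_2$ is free of rank one and it suffices that $\eta_2$ be nonzero in the residue field---precisely the pointwise statement established in \cite[Prop.~7.7]{Gek90}. A Nakayama-type argument then kills the cokernel globally. Your approach could be repaired along the same lines: instead of asserting the leading coefficient is a unit, check that $\eta_{2,B}$ specializes to a nonzero element of $\h_2(\bold\Psi^{Y_z},\mathbb{C}_\infty)$ at every $z\in\Sp(B)$, which is the pointwise content of Gekeler's proposition.
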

\begin{proof} The proof is essentially the content of \cite[Prop.~7.7]{Gek90} but we give it for the sake of completeness. Observe that it is enough to show the isomorphism when restricted to any affinoid subdomain $\Sp(T) \rightarrow \Omega$. For all $z \in \Sp(T)$, let $T_z$ denote the localization of $T$ at the maximal ideal corresponding to $z$. This is a local ring and let $\widehat{T_z}$ denote the corresponding completion. We have
\begin{equation}\label{E:maps}
\widehat{T_z} \cong \widehat{\mathcal{O}_{\Omega,z}} \cong \widehat{\mathcal{O}_{\Gamma_Y(I) \backslash \Omega,\pi(z)}} \cong \widehat{\mathcal{O}_{M_Y,\pi_Y(z)}}
\end{equation}
where the first isomorphism follows from \cite[Prop.~4.6.1]{FvdP04}, the second isomorphism from Proposition \ref{a6} and the last one follows since $\Gamma_Y(I) \backslash \Omega = M^{an}_Y$. The completed localization at $z$ of the map $\mathcal{O}_{\Omega}  \oplus \mathcal{O}_{\Omega}  \rightarrow \mathbb{H}_{\DR}(\bold\Psi^Y)$ via \eqref{E:maps} is given by the injective map
\[ 
\widehat{\mathfrak{i}}: \widehat{\mathcal{O}_{M_Y,\pi_Y(z)}} \oplus \widehat{\mathcal{O}_{M_Y,\pi_Y(z)}} \rightarrow \h_{\DR}(\bold\Phi_{\widehat{\mathcal{O}_{M_Y,\pi_Y(z)}}},\widehat{\mathcal{O}_{M_Y,\pi_Y(z)}}) 
\] 
where $\bold\Phi_{\widehat{\mathcal{O}_{M_Y,\pi_Y(z)}}}$ is the pull back of the universal Drinfeld $A$-module via $\Spec(\widehat{\mathcal{O}_{M_Y,\pi_Y(z)}}) \rightarrow M_Y$. As in the proof of \cite[Prop.~7.7]{Gek90}, $\eta_2$ generates  $\h_2(\bold\Phi_{\widehat{\mathcal{O}_{M_Y,\pi_Y(z)}}},\widehat{\mathcal{O}_{M_Y,\pi_Y(z)}})$. Moreover, since, by construction, $\eta_1$ generates  $\h_1(\bold\Phi_{\widehat{\mathcal{O}_{M_Y,\pi_Y(z)}}},\widehat{\mathcal{O}_{M_Y,\pi_Y(z)}})$, we obtain \[
\im(\widehat{\mathfrak{i}})=\h_{1}(\bold\Phi_{\widehat{\mathcal{O}_{M_Y,\pi_Y(z)}}},\widehat{\mathcal{O}_{M_Y,\pi_Y(z)}}) \oplus \h_2(\bold\Phi_{\widehat{\mathcal{O}_{M_Y,\pi_Y(z)}}},\widehat{\mathcal{O}_{M_Y,\pi_Y(z)}}).
\]
 Thus, by \eqref{E:Hodge0}, $\widehat{\mathfrak{i}}$  is surjective and hence it is an isomorphism. We obtain that the cokernel of the map $\mathcal{O}_{\Omega|\Sp(T)}  \oplus \mathcal{O}_{\Omega|\Sp(T)}  \rightarrow \mathbb{H}_{\DR}(\bold\Psi^Y)_{|\Sp(T)}$, being represented by a finitely generated $T$-module, vanishes at the localization by maximal ideals of $T$, hence must be zero. Consequently the map is surjective.
\end{proof}  

Note that, by definition, $\mathbb{H}_{\DR}(\bold\Psi^Y)$ is the pull back of a coherent sheaf on $\Gamma_Y(I)\backslash \Omega$ by $\pi$ and hence is a $\Gamma_Y(I)$-sheaf in the sense of Definition \ref{D: G_sheaf}. In what follows, we explicitly describe this action, which is induced from the action of $\Gamma_Y(I)$ on $(\mathbb{G}_{a,\Omega},\bold\Psi^{Y})$ described at the end of \S2.8.

\begin{proposition} \label{trans} For any $\gamma  = \begin{pmatrix}
	a&b\\
	c&d
	\end{pmatrix}
 \in \Gamma_Y(I)$, let \[\alpha_\gamma:\mathbb{H}_{\DR}(\bold\Psi^Y) = \mathcal{O}_\Omega \eta_1 \oplus \mathcal{O}_\Omega \eta_2 \rightarrow \gamma_\ast \mathbb{H}_{\DR}(\bold\Psi^Y) := (\gamma_\ast\mathcal{O}_\Omega) \gamma_\ast\eta_1 \oplus (\gamma_\ast\mathcal{O}_\Omega )\gamma_\ast\eta_2 \] be the $\Gamma_{Y}(I)$-sheaf structure. The following statements hold.  
 \begin{enumerate}
		\item[(i)] The map $\alpha_\gamma$ can be represented by
		\[\begin{bmatrix}
			\eta_1 \\
			\eta_2 
		\end{bmatrix} \mapsto \begin{bmatrix}
			j(\gamma;-)^{-1} & 0 \\
			0   & j(\gamma;-) 
\end{bmatrix}
		\begin{bmatrix}
			\gamma_\ast\eta_1 \\
			\gamma_\ast\eta_2
		\end{bmatrix}.
		\] and the maps $\mathcal{O}_\Omega \rightarrow \gamma_\ast \mathcal{O}_\Omega$ in the first and second coordinates, sends a function $f \in \mathcal{O}_\Omega(U)$ to $f^\gamma :=[z \mapsto f(\gamma z)] \in \mathcal{O}_\Omega(\gamma^{-1}U)$.
		\item[(ii)] Let $\eta'_2:=\eta_2+E\eta_1$. Then $\eta_2'$ is a global section of $\mathbb{H}_{\DR}(\bold\Psi^Y)$. Moreover,  $\alpha_\gamma$ can be also represented by
		\[ \begin{bmatrix}
			\eta_1 \\
			\eta'_2 
		\end{bmatrix} \mapsto \begin{bmatrix}
			j(\gamma;-)^{-1} & 0 \\
			-c   & j(\gamma;-) 
			\end{bmatrix}
		\begin{bmatrix}
			\gamma_\ast\eta_1 \\
			\gamma_\ast\eta'_2
		\end{bmatrix}.
		\]
	\end{enumerate}
	\label{7.1}
\end{proposition}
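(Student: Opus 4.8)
The plan is to compute the $\Gamma_Y(I)$-sheaf structure on $\mathbb{H}_{\DR}(\bold\Psi^Y)$ explicitly by tracking how the basis biderivations $\eta_1,\eta_2$ transform under $\gamma$, using the description of the action on $(\mathbb{G}_{a,\Omega},\bold\Psi^Y)$ given at the end of \S2.8. The starting point is that $\alpha_\gamma$ is induced by the isomorphism $\gamma_\ast(\mathcal{O}_\Omega,\underline{Y},s)\cong(\gamma_\ast\mathcal{O}_\Omega,\underline{Y}\gamma^{-1},j(\gamma;-)s^\gamma)$, so on the level of $A$-lattices the transformation is governed by the factor of automorphy $j(\gamma;z)=c_\gamma z+d_\gamma$. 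First I would verify the transformation law for $\eta_1$: since $\eta_1=\eta^{(1)}=[a\mapsto a-\bold\Psi^Y_a]$ is the inner biderivation associated to $1\in\M$, and the Hodge component $\mathbb{H}_1\cong\omega_Y$ (Remark \ref{H1_hodge}) carries the Hodge bundle structure, $\eta_1$ transforms with weight one, i.e. $\eta_1\mapsto j(\gamma;-)^{-1}\gamma_\ast\eta_1$. This pins down the $(1,1)$-entry of the matrix.

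Next I would analyze $\eta_2$. By Lemma \ref{hodge_2}, $\eta_2=\frac{1}{\xi(\mathfrak{g}^{-1}\mathfrak{h})}\partial_z(\eta_1)-E\eta_1$, so its transformation mixes the effect of $\partial_z$ applied to $\eta_1^\gamma$ with the transformation of $E$ itself. The crucial input here is the transformation formula \eqref{4.5} for the false Eisenstein series $E$, namely $E(\gamma\cdot z)=j(\gamma;z)^2\det(\gamma)^{-1}\bigl(E(z)-\xi(\mathfrak{g}^{-1}\mathfrak{h})^{-1}c_\gamma/j(\gamma;z)\bigr)$, combined with the chain rule for $\partial_z$ that produces the derivative $\partial_z(j(\gamma;-)^{-1})$. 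Here $\det(\gamma)\in\mathbb{F}_q^\times$ for $\gamma\in\Gamma_Y(I)$, which simplifies the determinant factors. These two contributions should combine so that $\eta_2\mapsto j(\gamma;-)\gamma_\ast\eta_2$ with no off-diagonal term, establishing the matrix in part (i) as diagonal; the cancellation of the would-be off-diagonal entry is precisely where the $E$-term in the definition of $\eta_2$ is designed to absorb the $\partial_z(j(\gamma;-)^{-1})$ contribution.

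For part (ii), I would observe that $\eta'_2=\eta_2+E\eta_1=\frac{1}{\xi(\mathfrak{g}^{-1}\mathfrak{h})}\partial_z(\eta_1)$ is a global section since $\eta_1$ and $\eta_2$ are, and then recompute the transformation of $\eta'_2$ directly. Writing $\eta'_2$ in terms of $\eta_1,\eta_2$ and the transformation laws just obtained, together with \eqref{4.5} for the $E\eta_1$ correction, I expect the diagonal behavior of $\eta_2$ to be perturbed by exactly the $E$-dependent term, which after substituting \eqref{4.5} yields the off-diagonal entry $-c=-c_\gamma$ coupling $\gamma_\ast\eta_1$ into the $\eta'_2$-row. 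The key bookkeeping is that the $E(z)$ part transforms diagonally while the residual $-\xi(\mathfrak{g}^{-1}\mathfrak{h})^{-1}c_\gamma/j(\gamma;z)$ term, multiplied by $\xi(\mathfrak{g}^{-1}\mathfrak{h})$ from the normalization in $\eta_1\mapsto j(\gamma;-)^{-1}\gamma_\ast\eta_1$, produces precisely $-c_\gamma$ as the lower-left entry.

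The main obstacle will be correctly handling the interplay between the formal/algebraic definition of $\alpha_\gamma$ (as the descent isomorphism for the $\Gamma_Y(I)$-sheaf) and the analytic formulas for $\eta_1,\eta_2$ via $\partial_z$: I must ensure that applying the derivation $\partial_z$ and then transforming by $\gamma$ agrees with first transforming and then differentiating, up to the explicit correction terms. This amounts to a careful application of the Leibniz/chain rule together with \eqref{4.5}, and verifying that the normalization constant $\xi(\mathfrak{g}^{-1}\mathfrak{h})$ threads through consistently so that the off-diagonal entry emerges as exactly $-c_\gamma$ and not some scalar multiple thereof. I would double-check this against the known analogue \cite[Prop.~7.7]{Gek90} and the weight-two transformation behavior recorded in Lemma \ref{L:1}, which describes precisely how $1/(z-\varphi(z))$ transforms and mirrors the structure of the matrix in part (ii).
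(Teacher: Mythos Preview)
Your proposal is correct and follows essentially the same approach as the paper: the paper simply cites \cite[(7.16)]{Gek90} for part (i) and then derives part (ii) from (i) together with the functional equation \eqref{4.5} of $E$, which is exactly the computation you outline. Your more detailed account of why the off-diagonal term cancels in (i) (the $E$-correction absorbing the chain-rule contribution from $\partial_z(j(\gamma;-)^{-1})$) is precisely the content of Gekeler's cited formula, so you are reconstructing rather than diverging from the paper's argument.
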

\begin{proof} Part (i) is a consequence of \cite[(7.16)]{Gek90}. Part (ii) follows from (i) and the functional equation \eqref{4.5} of the false Eisenstein series $E$.
\end{proof}

Observe that $\mathcal{O}_\Omega \eta_1 \subset \mathbb{H}_{\DR}(\bold\Psi^Y)$ is a $\Gamma_Y(I)$-subsheaf. Before we state our next lemma, recall that $\omega_{un}=\Lie(\mathbb{E}^{un}_I)^{\vee}$ and $\omega_Y$ is its restriction to $M_Y$ after base change with $\mathbb{C}_\infty$.

\begin{lemma}\label{ana_H1_hodge} We have
    $\pi^{\ast}_Y(\omega_{Y}) \cong \mathcal{O}_\Omega \eta_1$ as $\Gamma_Y(I)$-sheaves. 
\end{lemma}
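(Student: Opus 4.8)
The plan is to establish the isomorphism $\pi_Y^\ast(\omega_Y) \cong \mathcal{O}_\Omega \eta_1$ of $\Gamma_Y(I)$-sheaves by combining the global Hodge-type identification from Remark~\ref{H1_hodge} with the explicit transformation behavior recorded in Proposition~\ref{7.1}. The starting point is Remark~\ref{H1_hodge}, which gives a natural isomorphism $\mathbb{H}_{1,un} \cong \omega_{un}$ on $M_I^2$; restricting to $M_Y$ after base change by $\mathbb{C}_\infty$ yields $\mathbb{H}_{1,Y} \cong \omega_Y$ as sheaves on $M_Y$. Since pullback along $\pi_Y$ is exact and compatible with the Hodge decomposition, applying $\pi_Y^\ast$ gives $\pi_Y^\ast(\omega_Y) \cong \pi_Y^\ast(\mathbb{H}_{1,Y})$.

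First I would identify $\pi_Y^\ast(\mathbb{H}_{1,Y})$ concretely inside $\mathbb{H}_{\DR}(\bold\Psi^Y) = \pi_Y^\ast(\mathbb{H}_{\DR,Y})$. By the Hodge decomposition on $M_Y$, we have $\mathbb{H}_{\DR,Y} = \mathbb{H}_{1,Y} \oplus \mathbb{H}_{2,Y}$, and pulling back by $\pi_Y$ gives the corresponding decomposition on $\Omega$. By Lemma~\ref{hodge_2}, the pulled-back sheaf $\mathbb{H}_{\DR}(\bold\Psi^Y)$ is free of rank two with basis $\{\eta_1,\eta_2\}$, where $\eta_1$ is the global section arising from the inner biderivation $\eta^{(1)}_B = [a \mapsto a - \bold\Psi^Y_{a|\Sp(B)}]$. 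Since $\eta_1$ generates the first Hodge piece $\h_1$ locally (as used in the proof of Lemma~\ref{hodge_2}, it generates $\h_1(\bold\Phi_{\widehat{\mathcal{O}_{M_Y,\pi_Y(z)}}})$ at each completed stalk), the subsheaf $\mathcal{O}_\Omega \eta_1$ is precisely the image of $\pi_Y^\ast(\mathbb{H}_{1,Y})$ under this identification. This establishes an isomorphism $\pi_Y^\ast(\omega_Y) \cong \mathcal{O}_\Omega \eta_1$ of $\mathcal{O}_\Omega$-sheaves.

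The remaining task, and the genuinely substantive one, is to check that this isomorphism is $\Gamma_Y(I)$-equivariant, that is, it respects the $\Gamma_Y(I)$-sheaf structures in the sense of Definition~\ref{D: G_sheaf}. The $\Gamma_Y(I)$-structure on $\mathcal{O}_\Omega \eta_1$ is read off from Proposition~\ref{7.1}(i): for $\gamma = \begin{pmatrix} a & b \\ c & d \end{pmatrix} \in \Gamma_Y(I)$, the section $\eta_1$ transforms by the factor $j(\gamma;-)^{-1}$ onto $\gamma_\ast \eta_1$, with the underlying function transforming by $f \mapsto f^\gamma$. One must verify that this matches the intrinsic $\Gamma_Y(I)$-action on $\pi_Y^\ast(\omega_Y)$ coming from descent of $\omega_Y$ to $\Gamma_Y(I)\backslash \Omega$. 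Here the key point is that the weight-one automorphy factor $j(\gamma;-)$ governing $\omega_Y = \Lie(\mathcal{L}^{un})^\vee$ — which records precisely how the invariant differential transforms under the action of $\Gamma_Y(I)$ on $(\mathbb{G}_{a,\Omega},\bold\Psi^Y)$ described at the end of \S2.8 — coincides with the factor appearing for $\eta_1$. The main obstacle will be checking this compatibility of cocycles carefully: one needs to confirm that the identification in Remark~\ref{H1_hodge} is natural enough to intertwine the descent data on both sides, which ultimately reduces to the fact that $\eta^{(1)}$ is defined functorially from the Drinfeld module structure and hence transforms by the same rule as the dual Lie algebra generator.

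Concretely, I would pick an affinoid subdomain $\Sp(B) \hookrightarrow \Omega$, write down the trivialization of $\omega_Y$ given by the invariant differential $dZ$ (equivalently the section of $\Lie^\vee$), trace through how $\gamma \in \Gamma_Y(I)$ acts via $(\mathbb{G}_{a,\Omega},\bold\Psi^Y) \mapsto \gamma_\ast(\mathbb{G}_{a,\Omega},\bold\Psi^Y)$, and compare the resulting transformation factor with the $j(\gamma;-)^{-1}$ of Proposition~\ref{7.1}(i). Since both sheaves are line bundles, once the generators $\eta_1$ and the image of the canonical generator of $\omega_Y$ are shown to transform identically, the equivariance of the isomorphism follows immediately, completing the proof.
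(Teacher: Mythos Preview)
Your approach is correct but differs from the paper's. The paper does not route through Remark~\ref{H1_hodge} at all: instead it trivializes $\pi_Y^\ast(\omega_Y)\cong\mathcal{O}_\Omega$ directly via Proposition~\ref{2.4}, then invokes the commutative diagram from \cite[Lem.~10.5]{BBP21} to read off that the $\Gamma_Y(I)$-structure on this trivial line bundle is multiplication by $j(\gamma;-)^{-1}$, which matches the structure on $\mathcal{O}_\Omega\eta_1$ from Proposition~\ref{7.1}(i). Your route via $\omega_Y\cong\mathbb{H}_{1,Y}$ on $M_Y$ and then pulling back is arguably cleaner, since it avoids the external citation and keeps everything internal to the paper's own Hodge machinery.

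That said, you are overcomplicating the equivariance step. Once you have the isomorphism $\omega_Y\cong\mathbb{H}_{1,Y}$ on $M_Y$ from Remark~\ref{H1_hodge}, applying $\pi_Y^\ast$ to it automatically yields an isomorphism of $\Gamma_Y(I)$-sheaves, by Corollary~\ref{a12}(ii): both sides are pullbacks from the quotient, so the descent data are intertwined by construction. There is no need to trace cocycles by hand. The only residual check is that $\pi_Y^\ast(\mathbb{H}_{1,Y})$ coincides with $\mathcal{O}_\Omega\eta_1$ as a $\Gamma_Y(I)$-subsheaf of $\mathbb{H}_{\DR}(\bold\Psi^Y)$, and this is immediate because the Hodge decomposition \eqref{E:Hodge} is already defined on $M_Y$ and $\eta_1=\eta^{(1)}$ is the pullback of the universal generator of $\mathbb{H}_{1,un}$.
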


\begin{proof}
     We aim to show that $\pi^{\ast}_Y(\omega_{Y})$ is isomorphic to $\mathcal{O}_\Omega$ with a $\Gamma_Y(I)$-structure given by multiplication by $j(\gamma,-)^{-1}$. Recall the notation in \S2.8 and let $g \in \GL_2(\widehat{A})$ be such that $\pi_g=\pi_Y$. By Proposition \ref{2.4}, we see that $\pi^{\ast}_Y(\omega_{Y}) \cong \mathcal{O}_\Omega$. Using the commutative diagram in the proof of \cite[Lem.~10.5]{BBP21}, we obtain
\begin{equation*}
\begin{tikzcd}
         \pi^{\ast}_{Y}(\omega_{Y}) \arrow[r,"\sim"] \arrow[d,"="] & 
         \mathcal{O}_{\Omega} \arrow[d,"\text{multiplication by }j(\gamma\text{,}-)"] \\
         \gamma^{\ast} \pi^{\ast}_{Y}(\omega_{Y}) \arrow[r,"\sim"] &
         \gamma^{\ast} \mathcal{O}_{\Omega}=\mathcal{O}_{\Omega}.
\end{tikzcd}
\end{equation*}
Hence under the trivialization $\pi^{\ast}_Y(\omega_{Y}) \cong \mathcal{O}_\Omega$, we see that $\gamma^{\ast} \mathcal{O}_{\Omega}=\mathcal{O}_{\Omega} \rightarrow \mathcal{O}_{\Omega}$ is the map given by the multiplication by $j(\gamma,-)^{-1}$. Equivalently, the map $ \mathcal{O}_{\Omega} \rightarrow \gamma_{\ast} \mathcal{O}_{\Omega}=\mathcal{O}_{\Omega} $ obtained by adjunction, is also the multiplication by $j(\gamma,-)^{-1}$, finishing the proof of the lemma. 
\end{proof}

\begin{definition} We define $\mathbb{H}_{\DR}(\overline{\bold\Psi}^Y):=\mathbb{H}_{\DR}(\bold\Psi^Y)^{\Gamma_Y(I)}$.  Furthermore, we set  $\omega(\bold\Psi^Y):=\mathcal{O}_\Omega \eta_1$ and $\omega(\overline{\bold\Psi}^Y):= \omega(\bold\Psi^Y)^{\Gamma_Y(I)}$. 
\end{definition}

\begin{remark}
    By Corollary \ref{a12}, $\mathbb{H}_{\DR}(\overline{\bold\Psi}^Y) = i^{\ast}_Y(\mathbb{H}_{\DR,Y}) = \mathbb{H}^{an}_{\DR,Y}$.  Similarly $\omega(\overline{\bold\Psi}^Y) = \omega^{an}_Y$. 
\end{remark}

We conclude this section with the following useful lemma.
 
\begin{lemma}\label{derham_2}
    Let $r$ and $k$ be non-negative integers so that $k\geq r$. Then there exists a canonical isomorphism \[\Sym^r(\mathbb{H}_{\DR}(\overline{\bold\Psi}^Y)) \otimes \omega(\overline{\bold\Psi}^Y)^{\otimes (k-r)} \cong (\Sym^r(\mathbb{H}_{\DR}(\bold\Psi^Y)) \otimes \omega(\bold\Psi^Y)^{\otimes (k-r)})^{\Gamma_Y(I)}. \]
\end{lemma}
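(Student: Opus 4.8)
The plan is to reduce this statement to the general fact about the functor $(-)^{\Gamma_Y(I)}$ established in Corollary \ref{a12}, namely that $\pi^{\ast}:\Coh_{\Gamma_Y(I)\backslash\Omega}\to\Coh^{\Gamma_Y(I)}_{\Omega}$ is an equivalence of categories with quasi-inverse $\mathcal{F}\mapsto\mathcal{F}^{\Gamma_Y(I)}$, and crucially that \emph{both} functors commute with $-\otimes-$ and with $\Sym^n$ (Corollary \ref{a12}(iii)). The point is that taking $\Gamma_Y(I)$-invariants of a symmetric power or a tensor product should agree with forming the symmetric power or tensor product of the invariants, and the desired identity is exactly an instance of this compatibility once we identify the two factors correctly.

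First I would record that $\mathbb{H}_{\DR}(\bold\Psi^Y)=\pi^{\ast}_Y(\mathbb{H}_{\DR,Y})$ by definition and $\omega(\bold\Psi^Y)=\mathcal{O}_\Omega\eta_1\cong\pi^{\ast}_Y(\omega_Y)$ by Lemma \ref{ana_H1_hodge}; both are therefore objects of $\Coh^{\Gamma_Y(I)}_{\Omega}$ lying in the image of $\pi^{\ast}_Y$. Hence their invariants are, by definition, $\mathbb{H}_{\DR}(\overline{\bold\Psi}^Y)=\mathbb{H}_{\DR}(\bold\Psi^Y)^{\Gamma_Y(I)}$ and $\omega(\overline{\bold\Psi}^Y)=\omega(\bold\Psi^Y)^{\Gamma_Y(I)}$. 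Applying Corollary \ref{a12}(iii) to the quasi-inverse functor $(-)^{\Gamma_Y(I)}$, which commutes with $\Sym^r$ and with $-\otimes-$, I would compute
\[
\bigl(\Sym^r(\mathbb{H}_{\DR}(\bold\Psi^Y))\otimes\omega(\bold\Psi^Y)^{\otimes(k-r)}\bigr)^{\Gamma_Y(I)}
\cong \Sym^r\bigl(\mathbb{H}_{\DR}(\bold\Psi^Y)^{\Gamma_Y(I)}\bigr)\otimes\bigl(\omega(\bold\Psi^Y)^{\Gamma_Y(I)}\bigr)^{\otimes(k-r)},
\]
and the right-hand side is precisely $\Sym^r(\mathbb{H}_{\DR}(\overline{\bold\Psi}^Y))\otimes\omega(\overline{\bold\Psi}^Y)^{\otimes(k-r)}$, which gives the claim.

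For this to be clean I should verify that the constituent sheaves are indeed $\Gamma_Y(I)$-sheaves in the sense of Definition \ref{D: G_sheaf}: $\mathbb{H}_{\DR}(\bold\Psi^Y)$ carries its $\Gamma_Y(I)$-structure $\alpha_\gamma$ from Proposition \ref{trans}, and $\omega(\bold\Psi^Y)=\mathcal{O}_\Omega\eta_1$ is the $\Gamma_Y(I)$-subsheaf identified in Lemma \ref{ana_H1_hodge}; tensor products and symmetric powers of $\Gamma_Y(I)$-sheaves inherit a canonical $\Gamma_Y(I)$-structure (diagonal action of the $f_\gamma$), so $\Sym^r(\mathbb{H}_{\DR}(\bold\Psi^Y))\otimes\omega(\bold\Psi^Y)^{\otimes(k-r)}$ is again an object of $\Coh^{\Gamma_Y(I)}_{\Omega}$, and the isomorphisms above are isomorphisms compatible with these structures. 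The main (and really the only) obstacle is a bookkeeping one: ensuring that the $\Gamma_Y(I)$-equivariant isomorphisms produced by $\pi^{\ast}_Y$ and by $(-)^{\Gamma_Y(I)}$ are the canonical ones and that the two compatibilities (with $\Sym^r$ and with $\otimes$) can be applied simultaneously, which follows since both are natural transformations and $\Sym^r(\mathcal{F})\otimes\mathcal{G}^{\otimes(k-r)}$ is built functorially from $\mathcal{F}$ and $\mathcal{G}$. No genuinely new computation is required beyond invoking Corollary \ref{a12}(iii); the restriction $k\geq r$ simply ensures the exponent $k-r$ is non-negative so that the tensor power is defined.
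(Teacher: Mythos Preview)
Your proposal is correct and follows essentially the same approach as the paper: unwind the definitions $\mathbb{H}_{\DR}(\overline{\bold\Psi}^Y)=\mathbb{H}_{\DR}(\bold\Psi^Y)^{\Gamma_Y(I)}$ and $\omega(\overline{\bold\Psi}^Y)=\omega(\bold\Psi^Y)^{\Gamma_Y(I)}$, then invoke Corollary~\ref{a12}(iii) to commute $(-)^{\Gamma_Y(I)}$ past $\Sym^r$ and $\otimes$. The paper's proof is just a two-line version of what you wrote, omitting the surrounding verification that the objects are $\Gamma_Y(I)$-sheaves.
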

\begin{proof}
    By definition, we have \[\Sym^r(\mathbb{H}_{\DR}(\overline{\bold\Psi}^Y)) \otimes \omega(\overline{\bold\Psi}^Y)^{\otimes (k-r)} \cong \Sym^r(\mathbb{H}_{\DR}(\bold\Psi^Y)^{\Gamma_Y(I)}) \otimes (\omega(\bold\Psi^Y)^{\Gamma_Y(I)})^{\otimes (k-r)}.\] By Corollary \ref{a12}, applying $()^{\Gamma_Y(I)}$ commutes with $\Sym^r$ and $\otimes$, and hence we obtain the result.  
\end{proof}
\section{Algebraic nearly holomorphic Drinfeld modular forms} For the convenience of the reader, we recall our notation from \S6. Recall the  projective $A$-module $Y$ given as in \eqref{E:defofY} and note from the beginning of  \S3, without loss of generality, we assume that $\mathfrak{g}$ and $\mathfrak{h}$ are integral ideals of $A$. Let $I$ be an ideal of $A$ such that $|V(I)|\geq 2$.  Let $M^2_{I,\mathbb{C}_{\infty}}=\Spec(\mathbb{C}_{\infty})\times_{\Spec(A)} M_I^2$ and set $(M^{2}_{I,\mathbb{C}_\infty})^{an}$ to be the analytification of $M^2_{I,\mathbb{C}_{\infty}}$. Let $M_Y \subset (M^{2}_{I,\mathbb{C}_\infty})^{an}$ be the connected component of $M^2_{I,\mathbb{C}_{\infty}}$ so that $M_{Y}(\mathbb{C}_\infty) = \Gamma_{Y}(I) \setminus \Omega$. We also fix an embedding $\mathcal{H} \rightarrow \mathbb{C}_\infty$ so that $M_Y =M^{2}_I \times_{\mathcal{H}} \mathbb{C}_\infty$ and $\overline{M_Y}:=\overline{M^2_I} \times_\mathcal{H} \mathbb{C}_\infty$. We further denote by $\omega_Y$ the restriction of $\omega_{un}$, after base change with $\mathbb{C}_\infty$, to $M_Y$.

 Let us denote by $\overline{\omega_{un}}$ the unique line bundle over $\overline{M^{2}_I}$ such that the restriction of $\overline{\omega_{un}}$ to $M^2_I$ is $\omega_{un}$ and at each algebraic cusp $\mu_i:\Spec(\mathcal{H}((X_i))) \rightarrow M^{2}_I$, its formal completion at the corresponding point in $\overline{M^2_I} \backslash M^2_I$ (Remark \ref{R:uniTD}) is $\mathcal{H}[[X_{i}]]dZ_{i}$. Furthermore, we denote the restriction of $\overline{\omega_{un}}$ to $M_Y$ by $\overline{\omega_Y}$ so that for any cusp $\mu^{Y}_i \in \AlgCusps^{Y}_I$, its formal completion at the cusp in $\overline{M_Y} \backslash M_Y$ corresponding to $\mu^{Y}_i$, is given by $\mathbb{C}_\infty[[X_i]]dZ_i$. In this section, we describe the nearly holomorphic Drinfeld modular  forms as the global sections of the sheaf $\overline{\mathcal{H}^{r}_k}:= \Sym^{r}(\overline{\mathbb{H}_{\DR,un}} )\otimes \overline{\omega_{un}}^{\otimes (k-r)}$ pulled back to the appropriate component of $M^{2}_{I,\mathbb{C}_\infty}$. To achieve our goal, in what follows, we recall the sheaf $\mathbb{H}_{\DR,Y}$ ($\overline{\mathbb{H}_{\DR,Y}}$ respectively) which is the pull back of $\mathbb{H}_{\DR,un}$ ($\overline{\mathbb{H}_{\DR,un}}$ respectively), after base change with $\mathbb{C}_\infty$, to $M_Y$ ($\overline{M_Y}$ respectively) Moreover, we let
\[
\mathcal{H}^r_{k,Y}:= \Sym^r(\mathbb{H}_{\DR,Y}) \otimes \omega^{\otimes (k-r)}_Y \ \ \text{ and } \ \ 
\overline{\mathcal{H}^r_{k,Y}}:= \Sym^r(\overline{\mathbb{H}_{\DR,Y}}) \otimes \overline{\omega}^{\otimes (k-r)}_Y.
\]

Let $\mathcal{WN}_k^{\leq r}(\Gamma_Y(I))$ be the $\mathbb{C}_{\infty}$-vector space of weak nearly holomorphic Drinfeld modular forms of weight $k$ and depth less than or equal to $r$ for $\Gamma_Y(I)$.

\begin{remark}
     The construction of $\overline{\omega_{un}}$ above and $\overline{\mathbb{H}_{1,un}}$ from \S7.1, imply that the isomorphism $\mathbb{H}_{1,un} \cong \omega_{un}$ from Remark \ref{H1_hodge} extends to
     a natural isomorphism $\overline{\mathbb{H}_{1,un}} \cong \overline{\omega_{un}}$.  
\end{remark}

\begin{theorem}\label{T:weak}
    There is a natural isomorphism of $\mathbb{C}_\infty$-vector spaces \[H^{0}( \Gamma_{Y}(I) \setminus \Omega,\mathcal{H}^{r,an}_{k,Y}) \cong \mathcal{WN}^{\leq r}_{k}(\Gamma_Y(I)).\]
	In particular, the analytification morphism induces a canonical injective map \[H^{0}(M_Y,\mathcal{H}^{r}_{k,Y}) \hookrightarrow \mathcal{WN}^{\leq r}_{k}(\Gamma_Y(I)).\]
\end{theorem}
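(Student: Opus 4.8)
The plan is to pull everything back along $\pi_Y:\Omega\to\Gamma_Y(I)\backslash\Omega\cong M_Y^{an}$ and thereby reduce the computation of global sections to an explicit description of $\Gamma_Y(I)$-invariant sections of a \emph{free} sheaf on the Stein space $\Omega$. First I would use that $()^{\Gamma_Y(I)}$ commutes with $\Sym^r$ and $\otimes$ (Corollary \ref{a12}(iii)), together with the identifications $\mathbb{H}_{\DR}(\overline{\bold\Psi}^Y)=\mathbb{H}^{an}_{\DR,Y}$ and $\omega(\overline{\bold\Psi}^Y)=\omega_Y^{an}$ and Lemma \ref{derham_2}, to rewrite the analytified sheaf as
\[\mathcal{H}^{r,an}_{k,Y}\cong\bigl(\Sym^r(\mathbb{H}_{\DR}(\bold\Psi^Y))\otimes\omega(\bold\Psi^Y)^{\otimes(k-r)}\bigr)^{\Gamma_Y(I)}.\]
Since $\mathcal{F}^{\Gamma_Y(I)}(U)=\mathcal{F}(\pi^{-1}U)^{\Gamma_Y(I)}$ by construction, taking $U=\Gamma_Y(I)\backslash\Omega$ gives
\[H^0(\Gamma_Y(I)\backslash\Omega,\mathcal{H}^{r,an}_{k,Y})\cong H^0\bigl(\Omega,\Sym^r(\mathbb{H}_{\DR}(\bold\Psi^Y))\otimes\omega(\bold\Psi^Y)^{\otimes(k-r)}\bigr)^{\Gamma_Y(I)}.\]

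Next I would make the right-hand side concrete. By Lemma \ref{hodge_2} the sheaf $\mathbb{H}_{\DR}(\bold\Psi^Y)$ is free over $\mathcal{O}_\Omega$ with basis $\{\eta_1,\eta'_2\}$, and by Lemma \ref{ana_H1_hodge} we have $\omega(\bold\Psi^Y)=\mathcal{O}_\Omega\eta_1$. As $\Omega$ is Stein, the global sections of $\Sym^r(\mathbb{H}_{\DR}(\bold\Psi^Y))\otimes\omega(\bold\Psi^Y)^{\otimes(k-r)}$ are exactly the $(r+1)$-tuples $(f_0,\dots,f_r)$ of rigid analytic functions on $\Omega$, a section being written uniquely as $\sum_{j=0}^r f_j\,\eta_1^{\otimes(r-j)}\otimes(\eta'_2)^{\otimes j}\otimes\eta_1^{\otimes(k-r)}$. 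To such a tuple I would attach the function
\[z\longmapsto\sum_{j=0}^r f_j(z)\Bigl(\frac{1}{\xi(\mathfrak{g}^{-1}\mathfrak{h})(z-\varphi(z))}\Bigr)^{\!j}\]
on $\Omega^{\varphi}(M)$. By Theorem \ref{T:1}(iii) the powers of $1/(\Id-\varphi)$ are linearly independent over $\mathcal{O}$, so this assignment is injective and the $f_j$ are recovered uniquely, exactly matching the uniqueness built into Definition \ref{D:1}.

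The core step is to show that $\Gamma_Y(I)$-invariance of the section is equivalent to the modularity condition (i) of Definition \ref{D:1} for the attached function. For this I would feed the transformation law of the basis from Proposition \ref{7.1}(ii), namely $\eta_1\mapsto j(\gamma;-)^{-1}\gamma_\ast\eta_1$ and $\eta'_2\mapsto -c\,\gamma_\ast\eta_1+j(\gamma;-)\gamma_\ast\eta'_2$, into the symmetric power, expand $(\eta'_2)^{\otimes j}$ by the binomial theorem, and read off the induced action on $(f_0,\dots,f_r)$. The claim is that this action coincides with the cocycle of Lemma \ref{L:2} once $\eta'_2/\eta_1$ is matched with the transcendental coordinate $1/(\xi(\mathfrak{g}^{-1}\mathfrak{h})(z-\varphi(z)))$ whose transformation is governed by Lemma \ref{L:1}; the off-diagonal entry $-c$ in Proposition \ref{7.1}(ii) is precisely what produces the factor $\mathscr{J}(\gamma;z)=c/j(\gamma;z)$ of Lemma \ref{L:2}. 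Granting this, a section is $\Gamma_Y(I)$-invariant if and only if the attached function satisfies $F|_{k,m}\gamma=F$ and has depth at most $r$, i.e.\ lies in $\mathcal{WN}^{\leq r}_k(\Gamma_Y(I))$; conversely every weak nearly holomorphic form has a unique such presentation with rigid analytic $f_j$ and thus defines an invariant section. This yields the first isomorphism, which is $\mathbb{C}_\infty$-linear and natural by construction. The ``in particular'' statement then follows because the analytification map $H^0(M_Y,\mathcal{H}^r_{k,Y})\to H^0(M_Y^{an},\mathcal{H}^{r,an}_{k,Y})$ is injective: $M_Y$ is a reduced smooth curve over $\mathbb{C}_\infty$, the sheaf is coherent, and a regular section vanishing after analytification vanishes; composing with the isomorphism above gives the desired injection into $\mathcal{WN}^{\leq r}_k(\Gamma_Y(I))$.

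The hardest part will be the bookkeeping in this last matching: I must track the binomial coefficients produced by expanding $(\eta'_2)^{\otimes j}$ under the unipotent-times-diagonal cocycle through the extra tensor factor $\eta_1^{\otimes(k-r)}$, verify that the resulting weights $k-2j$ agree with those in Lemma \ref{L:2}, and confirm that the normalization by $\xi(\mathfrak{g}^{-1}\mathfrak{h})$ in the evaluation map renders the geometric transformation from Proposition \ref{7.1}(ii) and the analytic one from Lemma \ref{L:1} literally identical rather than merely proportional. Care will also be needed with the pushforward conventions for $\Gamma$-sheaves so that the direction of the cocycle in Proposition \ref{7.1} is applied correctly.
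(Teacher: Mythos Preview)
Your proposal is correct and follows essentially the same route as the paper: reduce to $\Gamma_Y(I)$-invariant sections on $\Omega$ via Lemma \ref{derham_2} and Lemma \ref{ana_H1_hodge}, write sections in the basis $\{\eta_1,\eta'_2\}$, and match the cocycle of Proposition \ref{7.1}(ii) against the transformation of $1/(\Id-\varphi)$ from Lemma \ref{L:1}. The paper carries out the coefficient comparison you describe explicitly (yielding the identity $j(\gamma,z)^{-k}\sum_{u}\binom{\ell+u}{u}(-c/j(\gamma;z))^{u}j(\gamma;z)^{2(\ell+u)}f_{\ell+u}(\gamma z)=f_\ell(z)$), cites Theorem \ref{T:1}(ii) rather than (iii) for injectivity, and omits the $\xi(\mathfrak{g}^{-1}\mathfrak{h})$ normalization in the displayed map, but these are cosmetic differences.
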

\begin{proof} By Lemma \ref{ana_H1_hodge} and Lemma \ref{derham_2}, we have
 \[
 H^{0}( \Gamma_{Y}(I) \setminus \Omega,\mathcal{H}^{r,an}_{k,Y}) = H^{0}(\Omega,\Sym^{r}(\mathbb{H}_{\DR}(\bold\Psi^{Y})) \otimes \omega(\bold\Psi^{Y})^{\otimes (k-r)})^{\Gamma_Y(I)}.
 \]
  Note that any section $s \in H^{0}(\Omega,\Sym^{r}(\mathbb{H}_{\DR}(\bold\Psi^{Y}) \otimes \omega(\bold\Psi^{Y})^{\otimes (k-r)})^{\Gamma_Y(I)}$ may be written in the form $s=\sum^{r}_{l=0} f_l \eta_1^{\otimes (k-l)} (\eta_2')^{\otimes l}$  for some rigid analytic functions $f_l$ and by using the $\mathcal{O}_{\Omega}$-basis $\{\eta_1,\eta_2'\}$ of $\mathbb{H}_{\DR}(\bold\Psi^{Y})$  from Proposition \ref{7.1}(ii). Moreover, it satisfies the transformation property 
	 \begin{equation}\label{E:weakforms}
	 \sum^{r}_{l=0} f_l(\gamma\cdot z) (j(\gamma,z)^{-1}\eta_1)^{\otimes (k-l)} (-c\eta_1 + j(\gamma;z)\eta_2')^{\otimes l} = \sum^{r}_{l=0} f_l(z) \eta_1^{\otimes (k-l)}(\eta_2')^{\otimes l}
	 \end{equation}
	  for all $\gamma \in \Gamma_Y(I)$ and $z\in \Omega$. For each $\ell\geq 0$, comparing the coefficients of $\eta_1^{\otimes (k-\ell)}{(\eta_2')}^{\otimes \ell}$ on both sides of \eqref{E:weakforms}, one can see that 
	  \[
	  j(\gamma,z)^{-k}\sum_{u=0}^{r-\ell}\binom{\ell+u}{u}\Big(\frac{-c}{j(\gamma;z)}\Big)^{u}j(\gamma;z)^{2(\ell+u)}f_{\ell+u}(\gamma\cdot z)=f_{\ell}(z)
	  \]
	 which, combining with Lemma \ref{L:1}, implies that the function $\sum^{r}_{l=0} \frac{f_l}{(\Id-\varphi)^{l}}$ on $\Omega^{\varphi}(M)$ is a weak nearly holomorphic Drinfeld modular form of weight $k$ and depth less than or equal to $r$ for $\Gamma_Y(I)$. Consequently there exists a well-defined map 
	 \[
	 H^{0}(\Omega,\Sym^{r}(\mathbb{H}_{\DR}(\bold\Psi^Y)) \otimes \omega(\bold\Psi^{Y})^{\otimes (k-r)})^{\Gamma_Y(I)} \rightarrow \mathcal{WN}^{\leq r}_{k}(\Gamma_Y(I))
	 \]
	 sending
	 $
	 s=\sum^{r}_{l=0} f_l \eta_1^{\otimes (k-l)} (\eta_2')^{\otimes l} \mapsto \sum^{r}_{l=0} \frac{f_l}{(\Id-\varphi)^{l}}. $ 
	 Note that it is injective by virtue of Theorem \ref{T:1}(ii). Since, for a given element $\sum_{l=0}^{r} \frac{h_l}{(\Id - \varphi)^l}\in \mathcal{W}\mathcal{N}_{k}^{\leq r}(\Gamma_Y(I))$, from the discussion above, we obtain a $\Gamma_Y(I)$-invariant section $\sum^{r}_{l=0} h_l \eta_1^{\otimes (k-l)} (\eta_2')^{\otimes l}$ of $H^{0}(\Omega,\Sym^{r}(\mathbb{H}_{\DR}(\bold\Psi^Y)) \otimes \omega(\bold\Psi^{Y})^{\otimes (k-r)})$, the surjectivity also follows and it finishes the proof of the theorem. 
\end{proof}

Before stating the main theorem of this section, analogous to Definition \ref{6.5}, we define the $t_{\mathfrak{c}_i}^{\mathfrak{c}}$-expansion and $t$-expansion of a nearly holomorphic Drinfeld modular form at an algebraic cusp. For each $1\leq i \leq n_{I}$, recall the Drinfeld $A$-module $\phi_{i,Y}$ corresponding to an algebraic cusp $\mu_{i,Y}:\Spec(\mathbb{C}_\infty((X_i))) \rightarrow M_Y$. In what follows, by abuse of notation in \S7.1, we denote by $\{\eta_{i,1},\eta'_{i,2}\}$ the $\mathbb{C}_{\infty}((X_i))$-basis for  $\mathbb{H}_{\DR}(\phi_{i,Y})$.
\begin{definition}
    Let $k$ and $r$ be non-negative integers such that $k\geq r$.  Let $F \in H^{0}(M_Y,\mathcal{H}^r_{k,Y})$. Then by Remark \ref{H1_hodge}, we have $ \mathbb{H}_1(\phi_{i,Y}) \cong \omega(\phi_{i,Y}) $. Thus one can identify $dZ_i$ with $\eta_{i,1}$ and hence
  \begin{equation}\label{E:decomNHMF}
(\mu^{Y}_i)^\ast(F)\in \bigoplus^r_{j=0} \mathbb{C}_\infty((X_i))(\eta_{i,1})^{\otimes (k-r+j)} \otimes (\eta'_{i,2})^{\otimes (r-j)}.
\end{equation}
 Then there exists an $(r+1)$-tuple of unique Laurent series $\{P^{(j)}_F(X_i)\}_{0 \leq j \leq r}$ such that 
    \[
 (\mu^{Y}_i)^\ast(F)=\sum^r_{j=0}P^{(j)}_F(X_i)(\eta_{i,1})^{\otimes (k-r+j)} \otimes (\eta'_{i,2})^{\otimes (r-j)}.
   \] 
We call the tuple $\{P^{(j)}_F(X_i)\}_{0 \leq j \leq r}$   \textit{the $t$-expansion of $F$ at the cusp $\mu^{Y}_i$}. 
\end{definition}

Applying Proposition \ref{u_expn_1} to each component of the right hand side of \eqref{E:decomNHMF}, we immediately obtain our next lemma.

\begin{lemma}\label{L:nhf_t_expn} Let $b_i\in \Cusps^Y_{I}$ correspond to $\mu^Y_i$ as in Proposition \ref{u_expn_0}. Consider an element $F \in H^{0}(M_Y,\mathcal{H}^r_{k,Y})$. Then  each Laurent series in the $t_{b_i}$-expansion of $F$ has no principal part if and only if each Laurent series in the $t$-expansion of $F$ at $\mu^Y_i$ has no principal part.     
\end{lemma}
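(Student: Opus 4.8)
The plan is to reduce the statement for $F\in H^0(M_Y,\mathcal{H}^r_{k,Y})$ to the weight-$k$ case already handled in Proposition~\ref{u_expn_1}, by exploiting the Hodge decomposition that underlies the very definition of the $t$-expansion of $F$. The starting point is the explicit expansion
\[
(\mu^{Y}_i)^\ast(F)=\sum^r_{j=0}P^{(j)}_F(X_i)\,(\eta_{i,1})^{\otimes (k-r+j)} \otimes (\eta'_{i,2})^{\otimes (r-j)},
\]
in which the coefficients $P^{(j)}_F(X_i)$ are, by construction, the projections of $(\mu^Y_i)^\ast(F)$ onto the free rank-one summands of $\Sym^r(\mathbb{H}_{\DR}(\phi_{i,Y}))\otimes\omega(\phi_{i,Y})^{\otimes(k-r)}$ determined by the basis $\{\eta_{i,1},\eta'_{i,2}\}$. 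The key observation is that each such coefficient $P^{(j)}_F(X_i)$ is precisely the $t$-expansion at $\mu^Y_i$ of some element of $H^0(M_Y,(\omega_Y)^{\otimes k})$, namely the image of $F$ under the $\mathcal{O}_{M_Y}$-linear projection sending $(\eta_{i,1})^{\otimes(k-r+j)}\otimes(\eta'_{i,2})^{\otimes(r-j)}$ to a suitable power $(dZ_i)^{\otimes k}$ via the identification $dZ_i\leftrightarrow\eta_{i,1}$ from Remark~\ref{H1_hodge}. Thus the single de Rham statement for $F$ decomposes into $r+1$ instances of the purely one-dimensional statement for sections of $(\omega_Y)^{\otimes k}$.

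Concretely, first I would record that the Hodge decomposition $\mathbb{H}_{\DR,Y}=\mathbb{H}_{1,Y}\oplus\mathbb{H}_{2,Y}$, together with $\mathbb{H}_{1,un}\cong\omega_{un}$, induces a direct-sum decomposition of $\mathcal{H}^r_{k,Y}$ into line bundles, each isomorphic to $(\omega_Y)^{\otimes k}$ after a twist, and that the projection onto the $j$-th summand commutes with pullback along $\mu^Y_i$. Next I would apply Proposition~\ref{u_expn_1} to each of the $r+1$ resulting sections of $(\omega_Y)^{\otimes k}$: for the $j$-th summand this yields that $P^{(j)}_F(X_i)$ has no principal part as a Laurent series in $X_i$ if and only if the corresponding $j$-th component of the $t_{b_i}$-expansion of $F$, viewed as an element of $\mathcal{WN}^{\leq r}_k(\Gamma_Y(I))$ via Theorem~\ref{T:weak}, has no principal part in $t_{b_i}$. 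Since this equivalence holds componentwise for each $0\le j\le r$, the conjunction over all $j$ is exactly the asserted statement.

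The main point requiring care is the compatibility between the two bases of the de Rham module and the way the $t_{b_i}$-expansion of $F$ as a weak nearly holomorphic form is organized, because the $t_{b_i}$-expansion in the sense of Definition~\ref{D:1} is indexed by the depth filtration in powers of $1/(\Id-\varphi)$ rather than directly by the summands $(\eta_{i,1})^{\otimes(k-r+j)}\otimes(\eta'_{i,2})^{\otimes(r-j)}$. I expect the main obstacle to be verifying that the identification of $F\in H^0(M_Y,\mathcal{H}^r_{k,Y})$ with an element of $\mathcal{WN}^{\leq r}_k(\Gamma_Y(I))$ given by Theorem~\ref{T:weak} matches the $j$-th algebraic component $P^{(j)}_F$ with the $j$-th analytic component $Q^{(j)}_F(t_{b_i})$, i.e.\ that the change of basis via the unipotent matrix $\begin{pmatrix}1&E(\mu_i)\\0&1\end{pmatrix}$ on the algebraic side corresponds correctly to the analytic passage through $\eta'_2=\eta_2+E\eta_1$ in Proposition~\ref{7.1}(ii). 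Once this bookkeeping is in place, the principal-part statement transfers summand by summand, and since a finite sum of Laurent series has no principal part if and only if each summand does, the equivalence for $F$ follows directly from the $r+1$ applications of Proposition~\ref{u_expn_1}.
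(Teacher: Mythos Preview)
Your approach is essentially the same as the paper's: the paper's entire proof is the single sentence ``Applying Proposition~\ref{u_expn_1} to each component of the right hand side of \eqref{E:decomNHMF}, we immediately obtain our next lemma,'' i.e.\ the componentwise reduction you describe. One small inaccuracy in your write-up: the $j$-th projection of $F$ is not a section of $(\omega_Y)^{\otimes k}$ but of $\omega_Y^{\otimes(k-r+j)}\otimes\mathbb{H}_{2,Y}^{\otimes(r-j)}$; however, the commutative-diagram argument underlying Proposition~\ref{u_expn_1} works verbatim for sections of any locally free sheaf with a chosen trivialization at the cusp, so this does not affect the conclusion. The basis-compatibility concern you raise (that $\{\eta_1,\eta'_2\}$ on $\Omega$ matches $\{\eta_{i,1},\eta'_{i,2}\}$ under pullback) is legitimate bookkeeping but is handled by construction: both $\eta'_2$ and $\eta'_{i,2}$ are defined by the same recipe $\partial_z$-derivative of $\eta_1$ (resp.\ $-X_i^2\frac{d}{dX_i}$ of $\eta_{i,1}$), and the relation \eqref{E:relationoft} makes these agree under the substitution $X_i\mapsto t^{\mathfrak{c}}_{\mathfrak{c}_i}$, so the paper is justified in treating the matching as immediate.
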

We are now ready to state the main result of this section.
\begin{theorem}\label{T:main1}
	The map constructed in Theorem \ref{T:weak} induces an isomorphism of $\mathbb{C}_{\infty}$-vector spaces \[H^{0}(\overline{M_Y},\overline{\mathcal{H}^{r}_{k,Y}}) \cong \mathcal{N}^{\leq r}_{k}(\Gamma_{Y}(I)).\]
\end{theorem}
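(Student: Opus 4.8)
The plan is to establish the isomorphism by combining Theorem~\ref{T:weak} with the expansion analysis developed in this section, following the strategy of \cite[Prop.~2.2.3]{Urb14} adapted to the function field setting. The starting point is the injective map constructed in Theorem~\ref{T:weak}, namely the analytification morphism $H^{0}(M_Y,\mathcal{H}^{r}_{k,Y}) \hookrightarrow \mathcal{WN}^{\leq r}_{k}(\Gamma_Y(I))$. First I would observe that both sides of the desired isomorphism sit inside their weak counterparts: an element of $H^{0}(\overline{M_Y},\overline{\mathcal{H}^{r}_{k,Y}})$ is precisely an element of $H^{0}(M_Y,\mathcal{H}^{r}_{k,Y})$ that extends as a section of $\overline{\mathcal{H}^{r}_{k,Y}}$ across the finitely many cusps $\overline{M_Y} \backslash M_Y$, while an element of $\mathcal{N}^{\leq r}_{k}(\Gamma_Y(I))$ is precisely an element of $\mathcal{WN}^{\leq r}_{k}(\Gamma_Y(I))$ satisfying the holomorphy condition at each cusp as in Definition~\ref{D:2}. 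Thus the problem reduces to matching these two holomorphy conditions under the map of Theorem~\ref{T:weak}.

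The key step is to interpret the extendability of a section $F\in H^{0}(M_Y,\mathcal{H}^{r}_{k,Y})$ across the cusp corresponding to $\mu^{Y}_i$ in terms of its $t$-expansion at $\mu^{Y}_i$. Since $\overline{\mathcal{H}^{r}_{k,Y}}=\Sym^{r}(\overline{\mathbb{H}_{\DR,Y}}) \otimes \overline{\omega}^{\otimes (k-r)}_{Y}$ and the formal completions of $\overline{\mathbb{H}_{\DR,Y}}$ and $\overline{\omega_Y}$ at the cusp are the free $\mathbb{C}_\infty[[X_i]]$-modules generated by $\{\eta_{i,1},\eta_{i,2}\}$ and $dZ_i\cong\eta_{i,1}$ respectively, a section extends across $\mu^{Y}_i$ if and only if each Laurent series $P^{(j)}_F(X_i)$ in its $t$-expansion lies in $\mathbb{C}_\infty[[X_i]]$, that is, has no principal part. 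Here I would note the subtlety that the natural basis from the Hodge decomposition is $\{\eta_{i,1},\eta_{i,2}\}$, whereas the $t$-expansion is recorded against $\{\eta_{i,1},\eta'_{i,2}\}$; since these differ by the unipotent matrix $\begin{pmatrix} 1 & E(\mu_i) \\ 0 & 1\end{pmatrix}$ and $E(\mu_i)\in \mathcal{H}[[X_i]]$ has no principal part, the no-principal-part condition is preserved under this change of basis, so the extendability criterion is faithfully captured by the absence of principal parts in $\{P^{(j)}_F(X_i)\}_{0\leq j \leq r}$. Then Lemma~\ref{L:nhf_t_expn} translates this into the statement that the corresponding $t_{b_i}$-expansion $\{Q^{(j)}_F(t_{b_i})\}_{0\leq j\leq r}$ has no principal part, which is exactly the holomorphy condition at the cusp $b_i$ appearing in Definition~\ref{D:2}.

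I would then run this correspondence over all cusps. By Proposition~\ref{u_expn_0} the algebraic cusps $\AlgCusps^{Y}_I$ are in bijection with the analytic cusps $\Cusps^Y_{I}$, so extendability of $F$ across every point of $\overline{M_Y}\backslash M_Y$ corresponds bijectively to holomorphy of the associated weak nearly holomorphic form at every cusp of $\Gamma_Y(I)$. Combining the injectivity from Theorem~\ref{T:weak} with this cusp-by-cusp matching shows that the map of Theorem~\ref{T:weak} restricts to an injection $H^{0}(\overline{M_Y},\overline{\mathcal{H}^{r}_{k,Y}}) \hookrightarrow \mathcal{N}^{\leq r}_{k}(\Gamma_Y(I))$; surjectivity follows because, given any nearly holomorphic form, Theorem~\ref{T:weak} produces a weak section of $\mathcal{H}^{r}_{k,Y}$ whose $t_{b_i}$-expansions have no principal parts, and the chain of equivalences above forces this section to extend to a global section of $\overline{\mathcal{H}^{r}_{k,Y}}$.

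The main obstacle I anticipate is the careful bookkeeping at the cusps: precisely verifying that the no-principal-part condition for a section of the tensor-symmetric sheaf $\overline{\mathcal{H}^{r}_{k,Y}}$ is equivalent to the simultaneous no-principal-part condition on each component $P^{(j)}_F(X_i)$, and that this is unaffected by the unipotent change of basis relating $\{\eta_{i,1},\eta_{i,2}\}$ to $\{\eta_{i,1},\eta'_{i,2}\}$. Once this compatibility is pinned down, Lemma~\ref{L:nhf_t_expn} and Proposition~\ref{u_expn_0} do the essential translation, and the remaining argument is a formal assembly of the injectivity and surjectivity statements. I would also remark, as the paper does, that setting $r=0$ recovers the Drinfeld modular forms case, i.e.\ the result of Goss \cite[Thm.~1.79]{Gos80}.
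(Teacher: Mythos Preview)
Your injectivity argument is essentially identical to the paper's: you correctly identify that extendability of a section across the algebraic cusps is encoded by the absence of principal parts in the $t$-expansion components $P^{(j)}_F(X_i)$, that the unipotent change of basis between $\{\eta_{i,1},\eta_{i,2}\}$ and $\{\eta_{i,1},\eta'_{i,2}\}$ preserves this condition since $E(\mu_i)\in\mathcal{H}[[X_i]]$, and that Lemma~\ref{L:nhf_t_expn} together with Proposition~\ref{u_expn_0} translates this into the analytic holomorphy condition of Definition~\ref{D:2}.

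Your surjectivity argument, however, has a genuine gap. Given $G\in\mathcal{N}^{\leq r}_k(\Gamma_Y(I))$, Theorem~\ref{T:weak} only hands you an \emph{analytic} section in $H^{0}(\Gamma_Y(I)\backslash\Omega,\mathcal{H}^{r,an}_{k,Y})$, not an algebraic section of $\mathcal{H}^r_{k,Y}$ over $M_Y$; the injection $H^{0}(M_Y,\mathcal{H}^{r}_{k,Y})\hookrightarrow\mathcal{WN}^{\leq r}_k(\Gamma_Y(I))$ is not known a priori to be surjective. Your ``chain of equivalences'' concerns formal Laurent series and tells you the analytic section extends across the cusps \emph{analytically}, but it does not by itself produce an element of $H^{0}(\overline{M_Y},\overline{\mathcal{H}^{r}_{k,Y}})$, which is an algebraic object. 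The paper closes this gap as follows: one glues the analytic section on $\Gamma_Y(I)\backslash\Omega$ with the local sections on the disks $\Sp(\mathbb{C}_\infty\langle t_{b_i}/\theta^e\rangle)$ around each cusp to obtain a global section of $\overline{(\mathcal{H}^r_{k,Y})^{an}}$ over the \emph{proper} rigid space $\overline{\Gamma_Y(I)\backslash\Omega}=\overline{M_Y}^{an}$, and then invokes rigid analytic GAGA to conclude that this section is algebraic, i.e.\ lies in $H^{0}(\overline{M_Y},\overline{\mathcal{H}^{r}_{k,Y}})$. This GAGA step is the essential missing ingredient in your outline; the ``main obstacle'' you anticipate (bookkeeping of principal parts under the basis change) is real but comparatively routine.
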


\begin{proof} We first claim that the injection 
\[
H^{0}(\overline{M_Y},\overline{\mathcal{H}^{r}_{k,Y}})\hookrightarrow H^{0}(M_Y,\mathcal{H}^{r}_{k,Y}) \xhookrightarrow{\text{Theorem  }\ref{T:weak}}\mathcal{WN}^{\leq r}_{k}(\Gamma_{Y}(I))
\]
has image in $\mathcal{N}^{\leq r}_k(\Gamma_Y(I))$. This is equivalent to showing that for any $F \in H^{0}(\overline{M_Y},\overline{\mathcal{H}^{r}_{k,Y}})$ which is regarded as an element of $\mathcal{WN}^{\leq r}_k(\Gamma_Y(I))$,  its $t_{b_i}$-expansion has no principal part for all $1 \leq i \leq n_I$. By Lemma \ref{L:nhf_t_expn}, each Laurent series in the $t_{b_i}$-expansion of $F$ has no principal part if and only if each Laurent series in its $t$-expansion at $\mu^Y_i$ has no principal part. But the latter is a consequence of the construction of $\overline{\mathcal{H}_{k,Y}^r}$, finishing the proof of the claim. Thus we establish an injective map $H^{0}(\overline{M_Y},\overline{\mathcal{H}^{r}_{k,Y}}) \hookrightarrow \mathcal{N}^{\leq r}_{k}(\Gamma_{Y}(I))$. To show the surjectivity, consider $G \in \mathcal{N}^{\leq r}_{k}(\Gamma_{Y}(I))$. By definition, at each cusp $b_i$, each Laurent series in the  $t_{b_i}$-expansion of $G$ has no principal part. Hence its $t_{b_i}$-expansion defines a section in $H^{0}\Big(\mathbb{C}_\infty\Big\langle \frac{t_{b_i}}{\theta^e}\Big\rangle, g^\ast\overline{\mathcal{H}^r_{k,Y}}\Big)$ where $e$ is some non-negative integer and $g:\Sp\Big(\mathbb{C}_\infty\Big\langle \frac{t_{b_i}}{\theta^e}\Big\rangle\Big) \rightarrow \overline{\Gamma_Y(I) \backslash \Omega}$ is the neighborhood at the cusp $b_i$ as described in \S2.6. Letting $\overline{(\mathcal{H}^r_{k,Y})^{an}}$  be the analytification of $\overline{\mathcal{H}^r_{k,Y}}$, we see that these $n_I$-many sections can be glued together with $G$ (regarded as an element in $\h^0(\Gamma_Y(I)\backslash \Omega,\mathcal{H}^{r,an}_{k,Y})$ via Theorem \ref{T:weak}) to a section in $H^{0}(\overline{\Gamma_Y(I)\backslash \Omega},\overline{(\mathcal{H}^r_{k,Y})^{an}}) = H^{0}(\overline{(M_Y)^{an}},\overline{(\mathcal{H}^r_{k,Y})^{an}})$. By rigid analytic GAGA, the latter is isomorphic to $H^{0}(\overline{M_Y},\overline{\mathcal{H}^r_{k,Y}})$ and hence it implies the surjectivity of the map $H^{0}(\overline{M_Y},\overline{\mathcal{H}^{r}_{k,Y}}) \hookrightarrow \mathcal{N}^{\leq r}_{k}(\Gamma_{Y}(I))$, finishing the proof of the theorem.
\end{proof}

Letting $r=0$ in Theorem \ref{T:main1} (see Remark \ref{R:mf_nhdf}), we have the following algebraic description of Drinfeld modular forms, which was originally obtained by Goss.
	
	\begin{corollary}[{\cite[Prop.~1.79]{Gos80}}]\label{alg_forms} We have an isomorphism of $\mathbb{C}_{\infty}$-vector spaces $$H^{0}(\overline{M_Y},\overline{\omega_{Y}}^{\otimes k}) \cong \mathcal{M}_k(\Gamma_Y(I)).$$
	\end{corollary}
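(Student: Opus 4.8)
The plan is to obtain the statement as the immediate specialization of Theorem \ref{T:main1} to the case $r=0$, so that the entire task reduces to unwinding the two sides of that isomorphism under this specialization and matching them with the objects appearing in the corollary. I would organize the argument into the algebraic identification of the sheaf, the analytic identification of the space of forms, and the concatenation of the two with Theorem \ref{T:main1}.

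On the algebraic side, I would first record that $\Sym^{0}(\overline{\mathbb{H}_{\DR,Y}})$ is canonically the structure sheaf $\mathcal{O}_{\overline{M_Y}}$, since the zeroth symmetric power of any locally free sheaf is the trivial line bundle. Consequently the sheaf
\[
\overline{\mathcal{H}^{0}_{k,Y}} = \Sym^{0}(\overline{\mathbb{H}_{\DR,Y}}) \otimes \overline{\omega_Y}^{\otimes k}
\]
is canonically isomorphic to $\overline{\omega_Y}^{\otimes k}$, whence $H^{0}(\overline{M_Y}, \overline{\mathcal{H}^{0}_{k,Y}}) \cong H^{0}(\overline{M_Y}, \overline{\omega_Y}^{\otimes k})$. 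On the analytic side, I would invoke Remark \ref{R:mf_nhdf}: putting $r=0$, Theorem \ref{T:1}(ii) forces a nearly holomorphic form of depth at most $0$ to be given by its single rigid analytic coefficient $f_0$, so that $\mathcal{N}^{m,\leq 0}_{k}(\Gamma_Y(I)) \cong \mathcal{M}^{m}_{k}(\Gamma_Y(I))$ for each type $m$. Taking the union over $m \in \mathbb{Z}/(q-1)\mathbb{Z}$ and recalling the definition $\mathcal{M}_k(\Gamma_Y(I)) = \cup_m \mathcal{M}^m_k(\Gamma_Y(I))$, this yields $\mathcal{N}^{\leq 0}_{k}(\Gamma_Y(I)) \cong \mathcal{M}_{k}(\Gamma_Y(I))$.

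Combining the two identifications with Theorem \ref{T:main1} at $r=0$ then gives the chain
\[
H^{0}(\overline{M_Y}, \overline{\omega_Y}^{\otimes k}) \cong H^{0}(\overline{M_Y}, \overline{\mathcal{H}^{0}_{k,Y}}) \cong \mathcal{N}^{\leq 0}_{k}(\Gamma_Y(I)) \cong \mathcal{M}_{k}(\Gamma_Y(I)),
\]
which is the asserted isomorphism. There is no genuine obstacle here beyond bookkeeping; the only point meriting a moment's care is verifying that the explicit map of Theorem \ref{T:main1} is compatible with these specializations, that is, that the assignment sending a section $\sum_{l=0}^{r} f_l\, \eta_1^{\otimes(k-l)} \otimes (\eta_2')^{\otimes l}$ to $\sum_{l=0}^{r} f_l/(\Id-\varphi)^l$ collapses, at $r=0$, to the identification of $f_0\, \eta_1^{\otimes k}$ with the holomorphic form $f_0$. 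This is consistent with Proposition \ref{P:str}, which at depth $0$ gives $F = g_0$ with no $E_2$-contribution, confirming that the de Rham and false-Eisenstein data disappear and only the Hodge line bundle $\omega_Y$ survives.
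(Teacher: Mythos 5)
Your proposal is correct and matches the paper's own argument, which consists precisely of setting $r=0$ in Theorem \ref{T:main1} and invoking Remark \ref{R:mf_nhdf}: your identification $\Sym^{0}(\overline{\mathbb{H}_{\DR,Y}})\otimes\overline{\omega_Y}^{\otimes k}\cong\overline{\omega_Y}^{\otimes k}$ together with the depth-zero isomorphism $\mathcal{N}^{\leq 0}_{k}(\Gamma_Y(I))\cong\mathcal{M}_{k}(\Gamma_Y(I))$ (via Theorem \ref{T:1}(ii)) is exactly what the paper's one-line deduction records. The additional check that the explicit map of Theorem \ref{T:main1} collapses at $r=0$ to the identification of $f_0\,\eta_1^{\otimes k}$ with $f_0$ is sound, consistent with Proposition \ref{P:str}, and merely makes explicit what the paper leaves implicit.
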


\end{document}